\numberwithin{equation}{section}
\newtheorem{theorem}{Theorem}[section]
\newtheorem{corollary}[theorem]{Corollary}
\newtheorem{lemma}[theorem]{Lemma}
\newtheorem{prop}[theorem]{Proposition}
\theoremstyle{definition}
\newtheorem{defn}[theorem]{Definition}
\newtheorem{remark}[theorem]{Remark}  
\newcommand{\Le}{\rotatebox[origin=c]{180}{$\Gamma$}}
\newcommand{\Gr}{\mathrm{Gr}}
\newcommand{\mat}{p}
\newcommand{\postdiag}{P}
\newcommand{\dimerpart}{\text{\rm \DJ}}
\newcommand{\uu}{\widetilde{\dimerpart}}
\newcommand{\dimerconfig}{\delta}
\newcommand{\weight}{\mathrm{w}}
\newcommand{\vectwo}{v'}
\newcommand{\twist}[1]{\overleftarrow{#1}}
\newcommand{\dtwist}[1]{\overleftarrow{\overleftarrow{#1}}}
\newcommand{\tmat}{\twist{\mat}}
\newcommand{\tmats}[1]{\left( \vphantom{A^b} \raisebox{-1pt}{$\tmat$} \right)_{#1}}
\newcommand{\twistbracket}[1]{\left( \vphantom{A^b} \raisebox{-1pt}{$\twist{#1}$} \right)}
\newcommand{\dtwistbracket}[1]{\left( \vphantom{A^b} \raisebox{-1pt}{$\dtwist{#1}$} \right)}
\newcommand{\dtmat}{\dtwist{\mat}}
\newcommand{\dtmats}[1]{\left( \vphantom{A^{b^A}} \raisebox{-1.5pt}{$\dtmat$} \right)_{#1}}
\newcommand{\fn}{f}
\newcommand{\bilinear}[2]{\left\langle #1 \,\middle\|\, #2 \right\rangle}
\newcommand{\bilinearb}[2]{{\Big \langle} #1 \, {\Big \|}\, #2 {\Big \rangle}}
\newcommand{\coeff}[1]{\mathbf{#1}}
\newcommand{\mcoeff}[1]{[\mathbf{#1}]}
\newcommand{\mscoeff}[2]{[\sigma^{#1}(\mathbf{#2})]}
\newcommand{\minor}[1]{[#1]}
\newcommand{\mint}[2]{\Delta_{#1}^{#2}(p)}
\newcommand{\BFZcheck}[1]{\widehat{#1}}
\newcommand{\sign}{\mathrm{sign}}
\newcommand{\xx}{\widetilde{\mathbf{x}}}
\newcommand{\Q}{\widetilde{Q}}
\newcommand{\HH}{\mathcal{H}}
\newcommand{\R}{\mathcal{R}}
\newcommand{\G}{\Gamma}
\newcommand{\rr}{r}
\newcommand{\Mut}{\mathrm{Mut}}
\begin{document}

\author{R. J. Marsh}
\address{School of Mathematics, University of Leeds, Leeds, LS2 9JT, UK}
\email{marsh@maths.leeds.ac.uk}

\author{J. S. Scott}
\address{Universidad de los Andes \\
Departamento de Matem\'{a}ticas \\
Carrera 1 No. 18a 10 \\
Edificio H \\
Primer \indent Piso \\
111711 Bogot\'{a} \\
Colombia}
\email{j.scott@uniandes.edu.co}

\begin{abstract}
The homogeneous coordinate ring of the Grassmannian $\Gr_{k,n}$ has
a cluster structure defined in terms of planar diagrams known as Postnikov diagrams.
The cluster corresponding to such a diagram consists entirely of Pl\"{ucker} coordinates. We introduce a twist map on $\Gr_{k,n}$, related to the Berenstein-Fomin-Zelevinsky-twist,
and give an explicit Laurent expansion for the twist of an arbitrary Pl\"{u}cker coordinate in terms of the cluster variables associated with a fixed Postnikov diagram. The expansion arises as a (scaled) dimer partition function of a weighted version of the bipartite graph dual to the Postnikov diagram, modified by a boundary condition determined by the Pl\"{u}cker coordinate. We also relate the twist map to a maximal green sequence.
\end{abstract}

\keywords{Grassmannian, cluster algebra, minor, Postnikov diagram,
alternating strand diagram, bipartite graph, perfect matching, dimer, partition functions, twist, Laurent phenomenon, Turnbull's identity, Pl\"{u}cker relations}

\subjclass[2010]{Primary 13F60, 14M15; Secondary 05C22, 05E15, 82B20}

\thanks{This work was supported by the Indian Department of Atomic Energy, the Institute of Mathematical Sciences, Chennai, India, the Engineering and Physical Sciences Research Council [grant numbers EP/C01040X/2 and EP/G007497/1], the Mittag-Leffler Institute and the Sonderforschungsbereich (Collaborative Research Centre) 701 at the University of Bielefeld, Germany.}

\title{Twists of Pl\"{u}cker coordinates as dimer partition functions}
\date{5 September 2015}
\maketitle

\section{Introduction}
For positive integers $k \leq n$ let $\Gr_{k,n}$ denote the Grassmannian of all $k$-dimensional vector subspaces of $\mathbb{C}^n$. The results of~\cite{scott06} (see also~\cite{GSV03,GSV10}) prove that its homogeneous coordinate ring
$\mathbb{C}[ \Gr_{k,n} ]$ has the structure of a cluster algebra which possesses a distinguished finite family of seeds $(\xx_P, \widetilde{Q}_P)$ constructed from certain planar diagrams $P$, known as \emph{alternating strand diagrams} or \emph{Postnikov diagrams}.

The extended cluster $\xx_P$ of each seed of this kind consists entirely of Pl\"{u}cker coordinates which, in addition to the associated quiver $\widetilde{Q}_P$, can be read off directly from the Postnikov diagram. Moreover, every Pl\"ucker coordinate occurs as
an element of $\xx_P$ for some
Postnikov diagram $P$ and thus every Pl\"{u}cker coordinate is either a cluster variable or a coefficient. When $k=2$ every seed is of this form and consequently every cluster variable is a Pl\"{u}cker coordinate. In general the homogeneous coordinate ring is of wild type --- possessing infinitely many seeds and infinitely many cluster variables, which in general will not be Pl\"{u}cker coordinates and which are, at present, unclassified.

In this paper we consider a certain rational map from the Grassmannian
to itself which we call the \emph{twist}. This map may be pre-composed
with any regular function $f$ in $\mathbb{C}[\Gr_{k,n}]$ to form a twisted version $\twist{f}$; here, we consider \emph{twisted} Pl\"{u}cker coordinates.
Up to coefficients\footnote{Equal, upon specializing all coefficients
to $1$; see Definition~\ref{d:uptocoefficients}.},
the twist of any cluster variable is a cluster variable (see
Proposition~\ref{p:twistofclustervariable}).\footnote{We thank David Speyer for pointing this out.}

By the Laurent Phenomenon~\cite{fominzelevinsky02}, each cluster variable
in $\mathbb{C}[\Gr_{k,n}]$ can be expressed uniquely as a Laurent polynomial in the extended cluster of any seed. In view of this, we compute Laurent expansions for
twisted Pl\"{u}cker coordinates in the extended cluster $\xx_P$ of a seed attached to any fixed Postnikov diagram $P$. We express these Laurent expansions in terms of \emph{dimer configurations} (also known as perfect matchings) for a weighted bipartite graph $G$ subject to boundary conditions determined by the Pl\"{u}cker coordinate.
The graph $G_P$ is dual (in an appropriate sense) to the Postnikov diagram, and
its edges are weighted by monomials taken from the extended cluster $\xx_P$; see
Definition~\ref{d:weights} in Section~\ref{s:clusterstructure}.

Recall that a bipartite graph is a graph whose vertices are partitioned into two
types, or colours (black and white), where edges join vertices of a different colour.
Such a graph is said to be \emph{balanced} if there is an equal number of black and
white vertices. A \emph{dimer configuration} $\dimerconfig$ of a balanced bipartite
graph, $G$, is a collection of edges of $G$ such that each vertex of $G$ is incident
with precisely one edge in the collection. If each edge $e$ in $G$ is assigned a
weight $\weight_e$, then we define the weight $\weight_{\dimerconfig}$ of the dimer configuration
$\dimerconfig$ to be the product
$$\weight_{\dimerconfig}=\prod_{e\in \dimerconfig} \weight_e.$$
The \emph{dimer partition function} (also known as the matching polynomial) of
$G$ is given by:
$$\dimerpart_G=\sum_{\dimerconfig} \weight_{\dimerconfig},$$
where the sum is over all dimer configurations $\delta$ of $G$.

If $P$ is a Postnikov diagram, the dual bipartite graph $G_P$ is
naturally embedded in a disk, with boundary vertices labelled $1,\ldots ,n$.
If $I$ is a $k$-subset of $\{1,\ldots ,n\}$, then the induced bipartite
subgraph $G_P(I)$ of $G$ obtained by removing the boundary vertices labelled by
the elements of $I$ is balanced.

Our main result is:

\begin{theorem} \label{t:mainresult}
Let $P$ be a Postnikov diagram, with corresponding seed $(\xx_P,\widetilde{Q}_P)$ and let $G_P$ be its weighted bipartite dual graph. For any $k$-subset $I$ of $\{1,\ldots ,n\}$, we have:
$$\frac{\dimerpart_{G_P(I)}}{\prod_{x\in \mathbf{x}_P} x}=\twist{\minor{I}},$$
where $[I]$ denotes the Pl\"{u}cker coordinate of $\Gr_{k,n}$ associated to $I$, $\twist{\minor{I}}$ denotes its twist, and $\mathbf{x}_P\subseteq \xx_P$ is the (non-extended) cluster corresponding to $P$.
\end{theorem}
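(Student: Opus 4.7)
The plan is to fix a Postnikov diagram $P$ and induct on the index set $I$, matching the three-term Pl\"ucker relations satisfied by the twisted coordinates $\twist{\minor{I}}$ on the right-hand side against a Kuo / Desnanot--Jacobi / Turnbull-type family of condensation identities satisfied by the weighted dimer partition functions $\dimerpart_{G_P(\cdot)}$ on the left. Because the twist is a regular map $\Gr_{k,n} \dashrightarrow \Gr_{k,n}$, the functions $\twist{\minor{I}}$ satisfy the pulled-back Pl\"ucker relations; on the graph side one expects a dual combinatorial identity obtained by superposition of matchings on $G_P$ with different subsets of boundary vertices removed.

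For the base case I would treat those $\minor{I}$ that already appear in the extended cluster $\xx_P$. For such an $I$ the subgraph $G_P(I)$ admits an explicit small family of dimer configurations that can be read directly off the strands of $P$. A direct computation on a representative Stiefel matrix, using the definition of the twist, should then identify the scaled partition function $\dimerpart_{G_P(I)}/\prod_{x \in \mathbf{x}_P} x$ with the resulting cluster-monomial expression for $\twist{\minor{I}}$. It is likely efficient to further specialize first to the frozen Pl\"ucker coordinates as an outer base case, then build up to the remaining cluster indices by small mutation arguments along the quiver $Q_P$.

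For the inductive step I would choose, for each new $I$, a short three-term Pl\"ucker relation $\minor{I}\minor{J}=\minor{K}\minor{L}\pm\minor{M}\minor{N}$ whose remaining index sets are of strictly smaller complexity in a chosen measure (for instance, their distance in a weak-separation graph from the cluster indices of $P$). Applying the twist and invoking the inductive hypothesis on the five known terms reduces the theorem to the Kuo-style dimer identity
\[
\dimerpart_{G_P(I)}\,\dimerpart_{G_P(J)} = \dimerpart_{G_P(K)}\,\dimerpart_{G_P(L)} \pm \dimerpart_{G_P(M)}\,\dimerpart_{G_P(N)},
\]
with the scale factor $\prod_{x \in \mathbf{x}_P} x$ appearing to the same power on both sides and therefore cancelling.

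The main obstacle is proving this weighted condensation identity directly for $G_P$. The natural technique is a superposition / symmetric-difference argument: given a pair of dimer configurations on $G_P(I) \sqcup G_P(J)$, form the symmetric difference with a reference pair adapted to one of the other boundary conditions, analyze the resulting alternating cycles and boundary-endpoint paths, and produce a weight-preserving bijection with pairs of configurations for the other index sets. The delicate points are tracking the monomial weights inherited from $\xx_P$ under the bijection and verifying that the boundary-vertex removals combine correctly across all six index sets; this is presumably the role played by the Turnbull identity flagged in the keywords. Once this combinatorial identity is in hand, the induction closes and the theorem follows, with the Laurent phenomenon providing independent confirmation that the outcome is a well-defined Laurent polynomial in $\xx_P$.
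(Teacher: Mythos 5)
Your high-level ingredients---both sides satisfy short Pl\"ucker relations, the dimer side via a Kuo-condensation identity, the twist side because the twist is a ring homomorphism, and then an induction on $I$---are exactly the engine of the paper's proof. But the way you set up the induction has a real gap at the base case that the paper avoids by a step you have omitted entirely.

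You propose to fix the Postnikov diagram $P$ from the theorem statement and verify the identity directly for the $\minor{I}$ that appear in $\xx_P$, ``by direct computation on a representative Stiefel matrix.'' For a generic reduced $P$, the face labels $I_F$ are \emph{not} disjoint unions of two cyclic intervals, and for such $I$ the twist $\twist{\minor{I}}$ is not a Pl\"ucker coordinate (even up to coefficients)---the paper points this out explicitly for $\Gr_{3,n}$. The Turnbull-identity machinery in Proposition~\ref{p:twistcomputation}, which is the only closed-form handle on $\twist{\minor{I}}$, applies precisely to two-interval $k$-subsets, so it cannot close your base case at an arbitrary $P$. The paper circumvents this by first proving (Proposition~\ref{prop:flipmoveinvariance}, plus Corollary~\ref{c:blowdown} and connectivity of the quadrilateral-move graph) that the \emph{scaled} partition function $\dimerpart_{G_P(I)}/\prod_{x\in\mathbf{x}_P}x$ is \emph{independent of $P$}. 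This turns the left-hand side into a $P$-free quantity $\uu(I)$, which can then be computed at the most convenient diagram: the regular $\R^*_{n-k,n}$, where $G_{\R^*_{n-k,n}}(I)$ has a \emph{unique} dimer configuration for every $I$ that labels $\R_{k,n}$ (Proposition~\ref{p:uniquematching}) and where every such $I$ is a union of two cyclic intervals, so Turnbull applies. Your outline never proves this $P$-independence, and without it the base case does not go through.

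Two smaller points. First, the Kuo condensation identity you propose to establish by a bare-hands superposition argument is taken off the shelf in the paper (the proof of Proposition~\ref{p:uIshort} cites Kuo's theorem and observes it carries over to weighted graphs), so you are duplicating work---but there is no error in reproving it. Second, you conjecture Turnbull's identity is what underlies the dimer condensation; in fact in this paper Turnbull is used only on the algebraic side, in the tableau manipulations of Proposition~\ref{p:twistcomputation} and Lemma~\ref{l:phiminor}, to express twisted Pl\"ucker coordinates; the dimer-side three-term identity is Kuo's. Finally, your induction order (``distance in a weak-separation graph'') is vaguer than the paper's---number of quadrilateral moves from $\R_{k,n}$---and you would still need to argue that at each step there is a short Pl\"ucker relation all of whose other five terms are of strictly smaller complexity; the quadrilateral-move induction gives this for free because each move introduces exactly one new face label, related to the old labels by exactly one short Pl\"ucker relation.
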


In particular, as an element of the rational function field of the Grassmannian, the ratio on the left hand side of this formula does not depend on $P$; this is a key step
in the proof.
This rational expression is in fact the Laurent expansion of the twisted Pl\"{u}cker coordinate $\twist{\minor{I}}$ for the seed $(\xx_P, \widetilde{Q}_P)$, since the edge-weights which contribute to the dimer partition function in the numerator are monomials in the extended cluster $\xx_P$.

We remark that it follows from the formula in Theorem~\ref{t:mainresult} that the twist map preserves the positive Grassmannian (see Corollary~\ref{c:preservespositive}).
We note that the cluster algebra structure on $\mathbb{C}[\Gr_{k,n}]$ and the positive Grassmannian play an important r\^{o}le in the study of scattering amplitudes via on-shell diagrams in~\cite{ABCGPT}.

Dimer configurations have also been used as a method for computing Laurent expansions for cluster variables for cluster algebras of finite classical type~\cite{carollprice03,musiker11}, and for cluster algebras associated to triangulations of surfaces~\cite{canakci,MS10,MSW11,MSW13}. Both cases involve cluster
algebras of finite mutation type, the homogeneous coordinate ring of the Grassmannian $\Gr_{2,n}$ (up to coefficients) being common to both cases. Note that the case considered here, i.e.\ $\mathbb{C}[\Gr_{k,n}]$, is not of finite mutation type in general.

Our interest in the twist map stems partly from its close relationship with the
BFZ-twist automorphism~\cite{BFZ96, BZ97}
defined on a unipotent cell $N^w$ in the group $N$ of all complex $n \times n$ unipotent matrices, where $w$ is the Grassmann permutation.
Note that the BFZ-twist
has a representation-theoretic formulation~\cite[Theorem 6]{GLS12}
in terms of the Auslander-Reiten translate on a categorification of the cluster algebra structure on $N^w$ using the type $A_{n-1}$ preprojective algebra.
The cell $N^w$
is birationally equivalent to $\Gr_{k,n}$, so the BFZ-twist can be transported to $\Gr_{k,n}$. We show in Section~\ref{s:BFZtwist} that the (transported) BFZ-twist and the twist defined in Section~\ref{s:twist} coincide, up to coefficients.

The results of~\cite{CAIII}, together with the BFZ-ansatz~\cite{BFZ96, BZ97},
can be used to compute Laurent expansions of BFZ-twisted minors on $N^w$ in terms of any seed associated to a reduced expression for $w$. The cluster algebra structures on $N^w$ and $\Gr_{k,n}$ are identified, up to coefficients, through the birational equivalence. In particular, every seed associated to a reduced expression
for $w$ corresponds to a seed attached to a Postnikov diagram. However, not
every Postnikov diagram is of this form, and, in view of this, our results can be seen as a proper extension of the combined results of~\cite{CAIII, BZ97}. On a combinatorial level, the approach taken by~\cite{CAIII} is different from the approach here: specifically, the formulas in~\cite{CAIII} involve an analysis of families of noncrossing paths in planar diagrams, while our formulas use dimer configurations.
An interesting question is how to reconcile these two approaches. It is also
interesting to compare the formulas obtained in this paper with the formulas in~\cite[\S1]{talaska11}, also given in terms of families of paths.

We also show that, up to coefficients, the twist can be implemented by a maximal green sequence (Theorem~\ref{t:greensummary}).
Such sequences occur both in
cluster theory~\cite{keller11} and in the analysis of BPS states; see~\cite{ACCERV14,CCV,CNV}. 
The maximal green sequence we give here can be regarded as a two-dimensional version of the sequence given in~\cite{BDP14} for quivers of type $A$, and should be compared with the mutation sequence $\tau_-\tau_+$
occurring in the Zamolodchikov periodicity conjecture~\cite{keller13,RVT93,zamolodchikov91} which was proved in the case $A_{k-1}\times A_{n-k-1}$
by~\cite{volkov07}.
Note that the cluster algebra of type $A_{k-1}\times A_{n-k-1}$ coincides with $\mathbb{C}[\Gr_{k,n}]_1$,
the cluster algebra obtained from $\mathbb{C}[\Gr_{k,n}]$ by specializing all of the coefficients to $1$ (see Definition~\ref{d:specializedcoefficients}).

The general case of the Zamolodchikov periodicity conjecture
was proved by B. Keller in~\cite{keller13}: see the introduction to~\cite{keller13} for many further references relating to this conjecture. We note that a modified version of the mutation sequence appearing in~\cite{keller13} in the type $A_{k-1} \times A_{n-k-1}$ case
corresponds to an operator product $\widehat{m}_{\square}$ studied in~\cite[\S8]{CNV} in the context of a $4d$ $\mathcal{N}=2$ field theory whose quantum monodromy is related to $(\widehat{m}_{\square})^k$.

We note that the quiver $\widetilde{Q}_{k,n}$ used in~\cite{scott06} in the analysis of the cluster algebra structure of the Grassmannian coincides with Keller's box quiver $Q\Box Q'$ (where $Q, Q'$ are alternating orientations of $A_{k-1}$ and $A_{n-k-1}$ respectively), together with a rule for attaching the coefficient vertices along the boundary.
In this case $\tau_-$ (respectively, $\tau_+$) is the composition of mutations at the
odd (respectively, even) internal vertices of $Q\Box Q'$. One can check directly that
the effect of $\tau_-\tau_+$ is to shift the coefficient vertices of $\widetilde{Q}_{k,n}$ counter-clockwise by one step, while leaving the internal part, $Q\Box Q'$, unchanged.
As $\widetilde{Q}_{k,n}$ is the quiver of a Postnikov diagram, one concludes that the effect of $\tau_-\tau_+$ on a Pl\"{u}cker coordinate $[I]$ in the initial seed is to send it to $[\sigma^{-1}(I)]$, where $\sigma$ is
anticlockwise rotation of its indices (see Definition~\ref{d:alpha}).
Consequently, $\tau_-\tau_+$ coincides
with the automorphism of $\mathbb{C}[Gr_{k,n}]$
sending $[I]$ to $[\sigma^{-1}(I)]$ for any
Pl\"{u}cker coordinate $[I]$.
This, together with Theorem~\ref{t:greensummary}, gives us the identity $(\tau^-\tau^+)^k={\gamma}^2$ up to coefficients (Corollary~\ref{c:greenidentity}), where
${\gamma}$ is an automorphism of $\mathbb{C}[\Gr_{k,n}]_1$ corresponding to a maximal green sequence.

In~\cite[\S13]{GLS11}, a mutation sequence
for the categorification of a unipotent cell
corresponding to any Weyl group element is
given; according to Keller~\cite[\S5]
{keller11} this is a maximal green sequence.
In the case of the Grassmann permutation
this mutation sequence should also give a 
maximal green sequence for the Grassmannian.
As well as relating our choice of maximal 
green sequence to the twist, we show that it 
passes only through seeds given by Postnikov 
diagrams.

The paper is organized as follows.
In Section~\ref{s:twist}, we introduce the twist map on the Grassmannian. This is
defined in terms of generalized cross products of the columns of the $k\times n$ matrix representing a point in the Grassmannian.

In Section~\ref{s:twistPluecker}, we show (in Proposition~\ref{p:twistcomputation})
that the twist of a Pl\"{u}cker coordinate $\minor{I}$ given by a $k$-subset $I$
which is a disjoint union of two cyclic intervals is, up to coefficients, a Pl\"{u}cker coordinate of the same kind, using Turnbull's identity (as expressed in the article~\cite{leclerc93}).
We suspect that these are the only Pl\"{u}cker coordinates with this property.
For example, it can be verified by hand, in the case of the Grassmannian $\Gr_{3,n}$,
that a twisted Pl\"{u}cker coordinate $\twist{\minor{I}}$, where $I$ is not a disjoint union of two cyclic intervals, is a cluster variable which is not a Pl\"{u}cker coordinate (even up to coefficients).

In Section~\ref{s:periodicity}, we compute the double twist of
a Pl\"{u}cker coordinate and use this to show that the twist map is periodic, up to coefficients (Proposition~\ref{p:periodicitymonomial}; see also Proposition~\ref{p:periodicitycoefficients}).
We then explain the relationship to the BFZ-twist in Proposition~\ref{p:relationship} in Section~\ref{s:BFZtwist}.

In Section~\ref{s:clusterstructure}, we recall the cluster structure of
the Grassmannian as described in~\cite{scott06} in terms of Postnikov diagrams~\cite{postnikov}. 

In Section~\ref{s:dimer}
we review the definition of the bipartite
graph dual to a Postnikov diagram and the blow-up and blow-down equivalences inherited from the oriented lens creation and annihilation moves for Postnikov
diagrams. We introduce a scheme for weighting the edges of such a bipartite graph, with the property that the induced weighting on dimer configurations is invariant
under blow-ups and blow-downs.
We then fix a Postnikov diagram, $P$, and a $k$-subset $I$.
We show, in Proposition~\ref{prop:flipmoveinvariance}, that the dimer partition function of $G_P(I)$, divided by the product of the elements in $\mathbf{x}_P$, is invariant under quadrilateral moves. Since any Postnikov diagram can be reached from any
other by a sequence of such moves, it follows that this scaled dimer partition function is independent of the choice of Postnikov diagram.

In Section~\ref{s:regular}, we show that the main result is true for the Pl\"{u}cker coordinates in $\xx_{\R_{k,n}}$ for a regular Postnikov diagram $\R_{k,n}$
whose dual bipartite graph, $G_{\R_{k,n}}$, is, up to some boundary
edges, part of a hexagonal tiling of the plane.
The diagram obtained from $\R_{n-k,n}$ by reversing its strands (and adding crossings
at the boundary) is again a Postnikov diagram, which we denote by $\R_{n-k,n}^*$.
We prove, in Proposition~\ref{p:uniquematching}, that $G_{\R^*_{n-k,n}}(I)$ has a unique dimer configuration whenever $[I]$ lies in $\xx_{\R_{k,n}}$.
This $k$-subset, $I$, is a disjoint union of two cyclic intervals in $\{1,\ldots ,n\}$, which allows us to compare the dimer partition function
$\dimerpart_{G_P(I)}$, where $P=\R_{n-k,n}^*$, with
the formula for $\twist{\minor{I}}$ given by Proposition~\ref{p:twistcomputation}
in Section~\ref{s:twist}.

The main result, Theorem~\ref{t:mainresult}, is shown in Section~\ref{s:mainresult},
using the fact that twists of Pl\"{u}cker coordinates and the scaled dimer partition functions both satisfy the short Pl\"{u}cker relations. In Section~\ref{s:example}, we give an example. In Section~\ref{s:green} we consider the
relationship between the twist and maximal green sequences
and in Section~\ref{s:surfaces} we discuss generalization to the surface case.

\section{Twist}
\label{s:twist}
For a positive integer $r$, an \emph{$r$-subset} of a set $S$ is a subset of $S$ of cardinality $r$.

Let $M_{k,n}(\mathbb{C})$ denote the set of complex $k\times n$ matrices.
Recall that an element of the Grassmannian $\Gr_{k,n}$ of $k$-dimensional
subspaces of $\mathbb{C}^n$ can be regarded as a rank $k$ matrix
$\mat=(\mat_{ij})\in M_{k,n}$ up to left multiplication by an element
of $\text{GL}_k(\mathbb{C})$. The rows correspond to a choice of basis of the
subspace of $\mathbb{C}^n$ and the action of $\text{GL}_k(\mathbb{C})$
corresponds to a change of basis.

Each $k$-subset $I=\{i_1<i_2<\ldots <i_k\}$ of $\{1,\ldots ,n\}$ defines a
minor of $\mat$ associated to the row-set $\{1,\ldots ,k\}$ and the column-set $I$
(written in increasing order). We denote this minor by $\minor{I}$.
Then the map taking $\mat\in M_{k,n}$ to the tuple consisting of all of the minors of $\mat$ of this form is a well-defined map (the Pl\"{u}cker embedding) from $\Gr_{k,n}$ to the projective space $\mathbb{P}^{\binom{n}{k}-1}$, identifying $\Gr_{k,n}$ with a projective subvariety defined by the Pl\"{u}cker relations.

Let $\sigma:\{1,\ldots ,n\}\rightarrow
\{1,\ldots ,n\}$ be the map taking
$i$ to $i-1$ reduced modulo $n$.
Then $\sigma$ induces a map on the
set of $k$-subsets of $\{1,\ldots ,n\}$ which we also denote by $\sigma$.

For $i\in \{1,\ldots ,n\}$, we write $\coeff{i}$ for the $k$-subset
$\{\sigma^{k-1}(i),\ldots ,\sigma(i),i\}$; the corresponding Pl\"{u}cker
coordinate is denoted $\mcoeff{i}$.

Given vectors $v_1,\ldots ,v_{k-1}$ in $\mathbb{C}^k$, the \emph{generalized cross-product} $v_1\times \cdots \times v_{k-1}$ is the unique vector in $\mathbb{C}^k$ satisfying the constraint:

$$\bilinearb{v_1 \times \cdots \times v_{k-1}}{v}=
\det(v_1,\ldots ,v_{k-1},v)$$
for all $v\in \mathbb{C}^k$, where $\bilinear{v}{v'}:=v^T v'$ is
the standard scalar product for $v,v'\in \mathbb{C}^k$. We interpret an empty
cross product (the case $k=1$) as $1$.
It follows from
basic multi-linear algebra that the cross product satisfies the \emph{contraction formula}:
$$\bilinearb{v_1 \times \cdots \times v_{k-1}}{\vectwo_1\times \cdots \times \vectwo_{k-1}}=
\det \begin{pmatrix}
\bilinear{v_1}{\vectwo_1} & \cdots & \bilinear{v_1}{\vectwo_{k-1}} \\
\vdots && \vdots \\
\bilinear{v_{k-1}}{\vectwo_1} & \cdots & \bilinear{v_{k-1}}{\vectwo_{k-1}}
\end{pmatrix}.$$

Let $\mat\in M_{k,n}$ be a $k\times n$ matrix with column vectors
$\mat_1,\ldots ,\mat_n\in\mathbb{C}^k$.

\begin{defn} \label{d:twist}
The (left) \emph{twist} $\tmat\in M_{k,n}$ is defined to be the $k\times n$ matrix whose $i$th column vector is:
\begin{equation*}
\tmats{i}=\varepsilon_i\cdot(\mat_{\sigma^{k-1}(i)}\times \mat_{\sigma^{k-2}(i)}\times \cdots \mat_{\sigma(i)}),
\end{equation*}
where
$$\varepsilon_i=\begin{cases}
(-1)^{i(k-i)} & \text{if }i\leq k-1; \\
1 & \text{if }i\geq k.
\end{cases}$$
\end{defn}

This may also be written:
$$\tmats{i}=\begin{cases}
(-1)^{k-i}\mat_1 \times \cdots \times \mat_{i-1} \times \mat_{i-k+1+n} \times \cdots \times
\mat_n & \text{if } i\leq k-1, \\
\mat_{i-k+1} \times \cdots \times \mat_{i-1} & \text{if } i\geq k.
\end{cases}$$

Note that $\tmats{\rr i}$ is the determinant of the submatrix of $p$ with column
set $\{\sigma^{k-1}(i),\ldots ,\sigma(i),i\}$ (appearing in numerical order),
in which the column where the column $i$ of $p$ appears is replaced with the vector $e_{\rr}$ with a $1$ in its $\rr$th position and zeros everywhere else.
For example, if $k=3$ and $n=5$, if $p=(p_{ri})_{1\leq r\leq 3,\,1\leq i\leq 5}$, then
\begin{align*}
&\tmat =
\begin{pmatrix}
\mint{23}{45} &
-\mint{23}{15} &
\mint{23}{12} &
\mint{23}{23} &
\mint{23}{34} \\
-\mint{13}{45} &
\mint{13}{15} &
-\mint{13}{12} &
-\mint{13}{23} &
-\mint{13}{34} \\
\mint{12}{45} &
-\mint{12}{15} &
\mint{12}{12} &
\mint{12}{23} &
\mint{12}{34}
\end{pmatrix}= \\
& \begin{pmatrix}
p_{24}p_{35}-p_{25}p_{34} &
-p_{21}p_{35}+p_{25}p_{31} &
p_{21}p_{32}-p_{22}p_{31} &
p_{22}p_{33}-p_{23}p_{32} &
p_{23}p_{34}-p_{24}p_{33} \\
-p_{14}p_{35}+p_{15}p_{34} &
p_{11}p_{35}-p_{15}p_{31} &
-p_{11}p_{32}+p_{12}p_{31} &
-p_{12}p_{33}+p_{13}p_{32} &
-p_{13}p_{34}+p_{14}p_{33} \\
p_{14}p_{25}-p_{15}p_{24} &
-p_{11}p_{25}+p_{15}p_{21} &
p_{11}p_{22}-p_{12}p_{21} &
p_{12}p_{23}-p_{13}p_{22} &
p_{13}p_{24}-p_{14}p_{23}
\end{pmatrix},
\end{align*}
where $\Delta_{X}^Y(p)$ stands for the minor of $p$ with rows
$X$ and columns $Y$.

\begin{remark} \label{r:twistk1}
Note that, if $k=1$, then $\tmat$ 
is always the $1\times n$ matrix whose entries are all equal to $1$.
\end{remark}

We shall use the
notation $\dtmat$ to denote the result of applying the twist twice to $\mat$.

\begin{lemma}
The map $\mat\mapsto \tmat$ from $M_{k,n}$ to itself induces a
well-defined rational map from $\Gr_{k,n}$ to itself.
\end{lemma}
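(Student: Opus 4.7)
The plan is to verify two things: equivariance of the polynomial map $\mat \mapsto \tmat$ with respect to left multiplication by $\text{GL}_k(\mathbb{C})$ on source and target (so that it descends to the quotient $\Gr_{k,n}$), and that $\tmat$ has rank $k$ for a Zariski-dense open subset of rank-$k$ matrices $\mat$ (so that the image genuinely lies in $\Gr_{k,n}$ on a dense open set).

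For the equivariance, I will establish the transformation rule
$$(g v_1) \times \cdots \times (g v_{k-1}) = \det(g) \cdot g^{-T}(v_1 \times \cdots \times v_{k-1}), \quad g \in \text{GL}_k(\mathbb{C}),$$
directly from the defining property of the cross product. Indeed, for any $v \in \mathbb{C}^k$, factoring $g$ out of the first $k-1$ columns of the determinant gives
\begin{align*}
\bilinearb{(g v_1) \times \cdots \times (g v_{k-1})}{v} &= \det(g v_1, \ldots, g v_{k-1}, v) \\
&= \det(g) \cdot \det(v_1, \ldots, v_{k-1}, g^{-1} v) \\
&= \det(g) \cdot \bilinear{v_1 \times \cdots \times v_{k-1}}{g^{-1} v} \\
&= \det(g) \cdot \bilinear{g^{-T}(v_1 \times \cdots \times v_{k-1})}{v},
\end{align*}
and the rule follows by uniqueness. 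Applied column by column, and since the sign factors $\varepsilon_i$ depend only on the index $i$ and not on $\mat$, this yields $\twist{g \mat} = \det(g) \cdot g^{-T} \cdot \tmat$. Since $\det(g) g^{-T} \in \text{GL}_k(\mathbb{C})$, left multiplication by it preserves the rowspan, so the map descends to equivalence classes.

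For the rank claim, the $k \times k$ minors of $\tmat$ are polynomials in the entries of $\mat$, so the locus on which at least one of them is nonzero is Zariski open; it therefore suffices to exhibit a single rank-$k$ matrix $\mat$ for which $\tmat$ also has rank $k$. I will take $\mat$ whose first $k$ columns are the standard basis $e_1, \ldots, e_k$ of $\mathbb{C}^k$ (and the remaining $n-k$ columns arbitrary, e.g.\ repeating $e_1, e_2, \ldots$ cyclically). Direct evaluation from the defining relation gives at least $\tmats{k} = e_1 \times \cdots \times e_{k-1} = e_k$ and $\tmats{k+1} = e_2 \times \cdots \times e_k = (-1)^{k-1} e_1$ when $n \geq k+1$, and analogous computations for further columns (either additional forward terms when $n$ is large or the cyclic wrap-around terms $\tmats{i}$ for $i < k$ when $n$ is small) produce, up to sign, all of $e_1, \ldots, e_k$ among the columns of $\tmat$; hence $\tmat$ has rank $k$.

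The argument is essentially linear-algebraic and no substantive obstacle arises. The most delicate point, and the one that requires careful bookkeeping, is ensuring that the sign factors $\varepsilon_i$ and the cyclic boundary conventions in the definition of $\tmats{i}$ are handled consistently in both steps; but neither step depends on them in a meaningful way beyond checking that the transformation law applies uniformly to every column.
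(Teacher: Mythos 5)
Your equivariance step is correct and is exactly the paper's argument: the rule $(gv_1)\times\cdots\times(gv_{k-1})=\det(g)\,(g^{-1})^T(v_1\times\cdots\times v_{k-1})$, applied column by column, gives $\twist{g\mat}=\det(g)(g^{-1})^T\tmat$, so the construction descends to row spans. The genuine gap is in your non-degeneracy step: the witness you propose (columns $e_1,\ldots,e_k$ continued cyclically) does \emph{not} in general have a rank-$k$ twist, so the claim that ``analogous computations produce, up to sign, all of $e_1,\ldots,e_k$ among the columns of $\tmat$'' fails. Concretely, for $(k,n)=(4,5)$ the columns of $\mat$ are $e_1,e_2,e_3,e_4,e_1$, and one computes $\tmats{4}=\pm e_4$, $\tmats{5}=\pm e_1$, $\tmats{1}=\pm\, e_3\times e_4\times e_1=\pm e_2$, while $\tmats{2}=\pm\, e_1\times e_4\times e_1=0$ and $\tmats{3}=\pm\, e_1\times e_2\times e_1=0$; hence $\tmat$ has rank $3<4$. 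The failure is structural: when $k\nmid n$ the wrap-around windows $\{1,\ldots,i-1\}\cup\{n+i-k+1,\ldots,n\}$ for $2\le i\le k-1$ contain a repeated basis vector (those columns of $\tmat$ vanish), and the remaining windows can only produce basis vectors whose index is congruent to one of $k,k+1,\ldots,n+1$ modulo $k$; so your matrix works only when $n\ge 2k-2$ or $k\mid n$, and fails for infinitely many pairs, e.g.\ $(4,5)$, $(5,6)$, $(5,7)$, $(6,7)$. Nor can you fall back on an arbitrary full-rank $\mat$: full rank of $\mat$ does not imply full rank of $\tmat$ (for $k=3$, $n=4$, the columns $e_1,e_3,e_2,e_3$ give $\tmat$ with columns $e_1,e_2,-e_2,-e_1$, of rank $2$), so a witness genuinely has to be verified, and this is precisely the point your ``no substantive obstacle arises'' glosses over.

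Two ways to close the gap. The paper's route is different: it applies the contraction formula to $\det(\tmats{i_1},\ldots,\tmats{i_k})$, expressing each twisted Pl\"{u}cker coordinate as a $k\times k$ determinant in the Pl\"{u}cker coordinates of $\mat$, hence as a homogeneous polynomial of fixed degree; besides giving the rational map, this polynomiality is what the paper uses immediately afterwards to obtain the endomorphism of $\mathbb{C}[\Gr_{k,n}]$, which your argument would not provide. If you want to keep your witness strategy, choose $\mat$ with all cyclic-interval minors nonzero (e.g.\ columns on the moment curve) and pair twisted columns against original ones: for $i,j$ in a cyclic window $\coeff{m}$ one has $\bilinear{\tmats{i}}{\mat_j}=\varepsilon_i\det\left(\mat_{\sigma^{k-1}(i)},\ldots,\mat_{\sigma(i)},\mat_j\right)$, so this $k\times k$ matrix is triangular with diagonal entries $\pm\mcoeff{i}(\mat)\neq 0$; since it is the transpose of the matrix with columns $\tmats{i}$ times the matrix with columns $\mat_j$, it follows that $\det(\tmats{i})_{i\in\coeff{m}}\neq 0$ and $\tmat$ has rank $k$ for such $\mat$, which is all your Zariski-openness argument needs.
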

\begin{proof}
We note first that for any vectors $v,v_1,\ldots ,v_{k-1}$ in $\mathbb{C}^k$ and $g\in GL_k(\mathbb{C})$,
\begin{align*}
\begin{split}
\bilinearb{(gv_1)\times \cdots \times (gv_{k-1})}{v} &=
\det \left( gv_1,\ldots ,gv_{k-1},v \right) \\
&= \det(g) \det \left( v_1,\ldots ,v_{k-1},g^{-1}v \right) \\
&= \det(g) \bilinearb{v_1\times \cdots \times v_{k-1}}{g^{-1}v} \\
&= \det(g) \bilinearb{(g^{-1})^T(v_1\times \cdots \times v_{k-1})}{v}.
\end{split}
\end{align*}
Hence, $$(gv_1)\times \cdots \times (gv_{k-1})=\det(g)(g^{-1})^T (v_1\times \cdots
\times v_{k-1}).$$
Let $\mat$ be a maximal rank $k\times n$ matrix and $g\in \text{GL}_k(\mathbb{C})$.
Then the $i$th column of $\twist{g\mat}$ is equal to

\begin{align*}
\begin{split}
\varepsilon_i \cdot (g\mat_{\sigma^{k-1}(i)})\times \cdots \times (g\mat_{\sigma(i)}) &=
\varepsilon_i \det(g)(g^{-1})^T (\mat_{\sigma_{k-1}(i)}\times \cdots \times \mat_{\sigma(i)}) \\
&=\det(g)(g^{-1})^T \tmats{i}.
\end{split}
\end{align*}
so the twist preserves maximal rank and does not depend on a choice of representative $p\in M_{k,n}$.

If $I$ is a $k$-subset of $\{1,\ldots ,n\}$, then we have, up to sign:
\begin{align*} 
\begin{split}
\minor{I}\twistbracket{\mat} &= \det ( \tmats{i_1},\ldots ,\tmats{i_k}) \\
&= \det(
\mat_{\sigma^{k-1}(i_1)}\times \cdots \times \mat_{\sigma(i_1)},
\ldots,
\mat_{\sigma^{k-1}(i_k)}\times \cdots \times \mat_{\sigma(i_k)})  \\
&=
\bilinearb{
(\mat_{\sigma^{k-1}(i_1)}\times \cdots \times \mat_{\sigma(i_1)})
\times \cdots \times
(\mat_{\sigma^{k-1}(i_{k-1})}\times \cdots \times \mat_{\sigma(i_{k-1})})}{
\mat_{\sigma^{k-1}(i_k)}\times \cdots \times \mat_{\sigma(i_k)}}.
\end{split}
\end{align*}
Applying the contraction formula, we see that this is a determinant whose
entries are Pl\"{u}cker coordinates. This is a homogeneous polynomial map of degree independent of the choice of $I$. It follows that the twist is a rational
map from $\Gr_{k,n}$ to itself.
\end{proof}

For example, if $k=3$ and $n=5$, we have:
\begin{equation} \label{e:twistexample}
\begin{split}
[124]\twistbracket{\mat} &=\bilinearb
{
(\mat_{4}\times \mat_{5})
\times
(\mat_{5}\times \mat_{1})
}
{
\mat_{2}\times \cdots \times \mat_{3}
} \\
&=
\begin{vmatrix} 
[245] & [345] \\
[125] & [135]
\end{vmatrix} \\
&=[245][135]-[125][345]=[145][235],
\end{split}
\end{equation}
using a Pl\"{u}cker relation (and the fact that
determinants change sign when columns are interchanged).

We shall see later (see Corollary~\ref{c:preservespositive})
that the twist preserves the totally positive Grassmannian and
the totally nonnegative Grassmannian.

The above shows that the rational map $\mat \mapsto \tmat$ induces a regular map from the affine cone of $\Gr_{k,n}$ to itself (given by the same polynomials). We denote
the induced homomorphism from the homogeneous coordinate ring $\mathbb{C}[\Gr_{k,n}]$ to itself by $\fn\mapsto \twist{\fn}$.

\section{Twists of Pl\"{u}cker coordinates}
\label{s:twistPluecker}
We adopt a notation similar to that of~\cite{leclerc93}.
Given vectors $v_1,\ldots ,v_k\in\mathbb{C}^k$, we write
the determinant of the matrix whose columns are
$v_1,\ldots ,v_k$ by the rectangular $1\times k$ tableau:
$$
\boxed{
\begin{matrix}
v_1 & v_2 & \cdots & v_k
\end{matrix}
}
$$
Given $p \in \text{Gr}_{k,n}$ and $i \in \{1, \dots, n \}$, let $p_i$ denote the $i$th column of $p$.
We shall sometimes just denote
this by $i$ when using the above notation.
So, for example, the Pl\"{u}cker coordinate $\minor{I}=\minor{\{i_1,\ldots i_k\}}$ is given by:
$$
\minor{I}=
\boxed{
\begin{matrix}
i_1 & i_2 & \cdots & i_k
\end{matrix}
}
$$
A tableau with several rows denotes the product of minors corresponding to the rows; thus, for example:
$$
\minor{I}\cdot \minor{J}=
\boxed{
\begin{matrix}
i_1 & i_2 & \cdots & i_k \\
j_1 & j_2 & \cdots & j_k
\end{matrix}
}
$$

As in~\cite[\S 1.1]{leclerc93}, we use the box notation to denote an
alternating sum of products of minors, i.e.\ if $\tau$ is a pair consisting of
a tableau $T$ as above together with a subset $A$ of the entries (indicated by drawing boxes around the elements of $A$), then $\tau$ represents the element:
$$\tau:=\sum_{w} \sign(w)\cdot w(T),$$
where the sum is over cosets $w$ in the symmetric group on the elements of $A$
(of degree $|A|$) of the subgroup preserving the boxed elements in each row of $T$, with each
$w(T)$ interpreted as a product of minors as above.

For example, we have:
\begin{spreadlines}{2ex}
\begin{align*}
\boxed{
\begin{matrix}
\boxed{\scriptstyle a} & b & \boxed{\scriptstyle c} \\
d & e & f
\end{matrix}
}&=
\boxed{\begin{matrix}
a & b & c \\
d & e & f
\end{matrix}}\,;
\\
\boxed{
\begin{matrix}
\boxed{\scriptstyle a} & \boxed{\scriptstyle b} & c \\
d & e & \boxed{\scriptstyle f}
\end{matrix}
}&=
\boxed{\begin{matrix}
a & b & c \\
d & e & f
\end{matrix}}
-
\boxed{
\begin{matrix}
f & b & c \\
d & e & a
\end{matrix}}
-
\boxed{\begin{matrix}
a & f & c \\
d & e & b
\end{matrix}}\,;
\\
\boxed{
\begin{matrix}
\boxed{\scriptstyle a} & \boxed{\scriptstyle b} & c \\[0.1cm]
\boxed{\scriptstyle d} & e & \boxed{\scriptstyle f}
\end{matrix}}
&=
\boxed{\begin{matrix}
a & b & c \\
d & e & f
\end{matrix}}
-
\boxed{\begin{matrix}
a & d & c \\
b & e & f
\end{matrix}}
+
\boxed{\begin{matrix}
a & f & c \\
b & e & d
\end{matrix}}
+
\boxed{\begin{matrix}
b & d & c \\
a & e & f
\end{matrix}}
-
\boxed{\begin{matrix}
b & f & c \\
a & e & d
\end{matrix}}
+
\boxed{\begin{matrix}
d & f & c \\
a & e & b
\end{matrix}}\, .
\end{align*}
\end{spreadlines}

\begin{remark} \label{r:permutation}
By~\cite[Prop. 1.2.1]{leclerc93}, a permutation of the
vectors (boxed or otherwise) lying in a single row of a tableau changes its value by the sign of the permutation. 
A permutation of the boxed vectors (possibly in several rows)
changes the value by the sign of the permutation. A
permutation of the rows of a tableau does not change its
value.
\end{remark}

We shall need Turnbull's identity, as stated
in~\cite[Prop. 1.2.2]{leclerc93}.

\begin{prop} \label{p:turnbull} \textbf{(Turnbull's Identity)}
Let $\tau$ be a rectangular tableau with $k$ columns. If the number of boxed entries in $\tau$ is greater than $k$ then
$\tau=0$. If not, fix a row $r$ of $\tau$. Let $A$ be the
set of boxed entries in row $r$ of $\tau$ and let $D$ be the set of boxed entries in the remaining rows of $\tau$.
Let $B$ be a subset of the unboxed entries in row $r$ of
$\tau$ of cardinality $|D|$. Let $C$ be the set
of entries in row $r$ not in $A$ or $B$.
Let $\nu$ the tableau obtained from $\tau$ by carrying out
the following operations:
\begin{itemize}
\item[(a)] Exchanging the entries in $B$ with the entries
in $D$, but not the boxes. The boxes originally around the entries in $D$ are not moved and box the entries of $B$ after the move.
\item[(b)] Boxing the elements of $C$;
\item[(c)] Removing the boxes from the entries in $A$.
\end{itemize}
Then $\tau=\nu$.
\end{prop}

Note that the value of the tableau thus created is independent of
the choice of exchange in part (a), since any two such exchanges
are related by a permutation $\alpha$ of $B$ followed by a permutation of $D$ which is identical to $\alpha$ under an identification of $B$ and $D$.

An example involving the first row is shown below.
The elements of $A$ are denoted $a_1,a_2$, and similarly
for $B$, $C$ and $D$. Note however that the entries in $A$
do not have to be adjacent in the chosen row (similarly for $B$ and $C$), and that the entries in $D$ can occur anywhere in the remaining rows.
$$
\renewcommand*{\arraystretch}{1.3}
\boxed{
\begin{matrix}
\boxed{\scriptstyle a_1} & \boxed{\scriptstyle a_2} & b_1 & b_2 & b_3 & b_4 & b_5 & c_1 & c_2 & c_3 \\
\boxed{\scriptstyle d_1} & \boxed{\scriptstyle d_2} &
\boxed{\scriptstyle d_3} & e & f & g & h & i & j & k \\
\boxed{\scriptstyle d_4} & \boxed{\scriptstyle d_5} &
l & m & n & p & q & r & s & t
\end{matrix}
}=
\boxed{
\begin{matrix}
a_1 & a_2 & d_1 & d_2 & d_3 & d_4 & d_5 & \boxed{\scriptstyle c_1} & \boxed{\scriptstyle c_2} & \boxed{\scriptstyle c_3} \\
\boxed{\scriptstyle b_1} & \boxed{\scriptstyle b_2} &
\boxed{\scriptstyle b_3} & e & f & g & h & i & j & k \\
\boxed{\scriptstyle b_4} & \boxed{\scriptstyle b_5} &
l & m & n & p & q & r & s & t
\end{matrix}
}
$$
\vskip 0.2cm
The twist of a $k\times n$ matrix $\mat$ in the notation
defined above is the $k\times n$ matrix with entries:
$$
\tmats{\rr i}=
\begin{dcases}
\boxed{
\begin{matrix}
1 & 2 & \cdots & i-1 & e_{\rr} & n+i-k+1 & n+i-k+2 & \cdots & n
\end{matrix}
}
&
1\leq i\leq k; \\
\boxed{
\begin{matrix}
i-k+1 & i-k+2 & \cdots & i-1 & e_{\rr}
\end{matrix}
}
&
k+1\leq i\leq n,
\end{dcases}
$$
for $1\leq \rr\leq k$ and $1\leq i\leq n$.

\begin{defn} \label{d:sigma}
The permutation $\sigma$ (see the start of Section~\ref{s:twist}) induces a
well-defined automorphism of $\Gr_{k,n}$:
$$\sigma(\mat)_{ri}=\begin{cases}
\mat_{r,\sigma(i)}, & i\not=1; \\
(-1)^{k-1}\mat_{r,\sigma(i)}, & i=1.
\end{cases}$$
This induces an automorphism of $\mathbb{C}[\Gr_{k,n}]$, also denoted
$\sigma$.
\end{defn}

\begin{lemma} \label{l:rotatesigns}
Let $I$ be a $k$-subset of $\{1,\ldots ,n\}$ and $p\in Gr_{k,n}$. Then we have:
\begin{enumerate}
\setlength{\itemsep}{4pt}
\item[(a)] $\sigma(\minor{I})=\minor{\sigma(I)}$;
\item[(b)] $\twist{\sigma(p)}=\sigma\left( \twist{p} \right)$;
\item[(c)] $\sigma \left( \twist{\minor{I}} \right)= \overleftarrow{\sigma(\minor{I})}.$
\end{enumerate}
\end{lemma}

\begin{proof}
We first prove (a). If $1\not\in I$, then
$$\sigma(\minor{I})(\mat)=\boxed{\begin{matrix} \sigma(i_1) & \sigma(i_2) & \cdots
& \sigma(i_k) \end{matrix}}=\minor{\sigma(I)}(p).$$
If $1\in I$ then $i_1=1$, so
\begin{align*}
\sigma(\minor{I})(\mat) &=(-1)^{k-1}\,\boxed{\begin{matrix} n & \sigma(i_2) & \cdots
& \sigma(i_k) \end{matrix}} \\
&=\boxed{\begin{matrix} \sigma(i_2) & \cdots
& \sigma(i_k) & n \end{matrix}} \\
&=\minor{\sigma(I)}(p),
\end{align*}
giving (a).

For (b), we have, by Defnition~\ref{d:twist}, that
\begin{align}
\label{e:firsttwist}
\begin{split}
\left( \twist{\sigma(\mat)} \right)_i &=
\varepsilon_i\cdot \sigma(\mat)_{\sigma^{k-1}(i)}\times
\sigma(\mat)_{\sigma^{k-2}(i)}\times \cdots 
\sigma(\mat)_{\sigma(i)} \\
&=
\begin{cases}
\varepsilon_i\cdot \mat_{\sigma^k(i)}\times
\mat_{\sigma^{k-1}(i)}\times \cdots \mat_{\sigma^2(i)},
& i\not\in \{2,\ldots ,k\}; \\
(-1)^{k-1}\varepsilon_i\cdot \mat_{\sigma^k(i)}\times
\mat_{\sigma^{k-1}(i)}\times \cdots \mat_{\sigma^2(i)},
& i\in \{2,\ldots ,k\}.
\end{cases} 
\end{split}
\end{align}
On the other hand, we have
\begin{align*}
\left( \sigma\left( \twist{p} \right)\right)_i &=
\begin{cases}
\left(\twist{p}\right)_{\sigma(i)}, & i\not=1; \\
(-1)^{k-1} \left(\twist{p}\right)_{\sigma(i)}, & i=1
\end{cases} \\
&=
\begin{cases}
\varepsilon_{\sigma(i)}p_{\sigma^k(i)}\times p_{\sigma^{k-1}(i)}\times \cdots p_{\sigma^2(i)}, & i\not=1; \\
(-1)^{k-1}\varepsilon_{\sigma(i)}p_{\sigma^k(i)}\times p_{\sigma^{k-1}(i)}\times \cdots p_{\sigma^2(i)}, & i=1.
\end{cases}
\end{align*}
If $i=1$, the sign appearing here is $(-1)^{k-1}\varepsilon_{\sigma(1)}=(-1)^{k-1}\varepsilon_n=(-1)^{k-1}=\varepsilon_1$.
If $i\in [2,k-1]$, the sign is $\varepsilon_{\sigma(i)}=(-1)^{(i-1)(k-i+1)}=(-1)^{i(k-i)}(-1)^{k-1}=(-1)^{k-1}\varepsilon_i$.
If $i=k$, the sign is $\varepsilon_{\sigma(k)}=(-1)^{k-1}=(-1)^{k-1}\varepsilon_k$.
If $i\not\in \{1,\ldots ,k\}$,
the sign is $\varepsilon_{\sigma(i)}=\varepsilon_{i-1}=1=\varepsilon_i$. Hence, in all cases,
we obtain the same sign as in~\eqref{e:firsttwist}.
The result follows.

For (c), we have:
$$\sigma \left( \twist{\minor{I}}\right)(p)=
\minor{I}(\twist{\sigma(p)})=
\minor{I}\left( \sigma\left( \twist{p} \right) \right)=
\sigma(\minor{I})(\twist{p})=
\twist{\sigma(\minor{I})}(p).$$
\end{proof}

We note in passing that it follows from Lemma~\ref{l:rotatesigns} that $\sigma$ preserves the totally positive part of $\Gr_{k,n}$.

\begin{prop} \label{p:twistcomputation}
Let $I$ be a $k$-subset of $\{1,\ldots ,n\}$ expressed as a disjoint union
of the form $I_1\cup I_2$, where
$$I_1=\{\sigma^p(i),\ldots ,\sigma(i),i\}$$
and
$$I_2=\{\sigma^q(j),\ldots ,\sigma(j),j\},$$
with $p\geq 0$, $q\geq 0$ and $p+q+2=k$.
Let $J$ be the $k$-subset:
$$\{\sigma^{p+q+1}(i),\ldots ,\sigma^{p+1}(i)\}\cup
\{\sigma^{p+q+1}(j),\ldots ,\sigma^{q+1}(j)\}.$$
Then
\begin{equation} \label{e:twistofminor}
\twist{\minor{I}}=\minor{J}\prod_{r=1}^p \mscoeff{r}{i} \prod_{r=1}^q \mscoeff{r}{j}.
\end{equation}
\end{prop}

For example, we have seen (equation~\eqref{e:twistexample})
that when $k=3$ and $n=5$,
$$[124]\twistbracket{\mat}=[145][235].$$
Note that formula~\eqref{e:twistofminor}
has the interesting property that it is 
multiplicity-free, i.e.\ no Pl\"{u}cker coordinate appears
more than once in the product.

In order to prove Proposition~\ref{p:twistcomputation}, we recall the following way of expressing
compound determinants:

\begin{prop} \cite[\S3]{leclerc93} \label{p:compound}
Let $\tau=(\tau_{ij})$ be an $m\times n$ rectangular tableau
with entries $\tau_{ij}$. Suppose that
$\tau_{i,r_i}$ is boxed for $1\leq i\leq m$, where
$1\leq r_i\leq n$ for all $i$.
Let $X$ be the $m\times m$ matrix with $X_{ij}$ given by
the single row tableau whose entries are the $i$th row of
$\tau$ with $\tau_{i,r_i}$ replaced with $\tau_{j,r_j}$.
Then $\tau=\det(X)$.
\end{prop}

In~\cite[\S3]{leclerc93}, the author gives the
following examples:

\begin{spreadlines}{2ex}
\begin{align*}
\boxed{
\begin{matrix}
\boxed{\scriptstyle a} & b & c & d \\
\boxed{\scriptstyle e} & f & g & h
\end{matrix}
}&=
\renewcommand*{\arraystretch}{2}
\left| \,\,
\begin{matrix}
\boxed{\renewcommand*{\arraystretch}{0.7}
\begin{matrix} \mathbf{a} & b & c & d \end{matrix}}
&
\boxed{\renewcommand*{\arraystretch}{0.7}
\begin{matrix} \mathbf{e} & b & c & d \end{matrix}}
\\
\boxed{\renewcommand*{\arraystretch}{0.7}
\begin{matrix} \mathbf{a} & f & g & h \end{matrix}}
&
\boxed{\renewcommand*{\arraystretch}{0.7}
\begin{matrix} \mathbf{e} & f & g & h \end{matrix}}
\end{matrix}\,\,\right|\,;
\\
\boxed{
\begin{matrix}
\boxed{\scriptstyle a} & b & c & d \\
e & \boxed{\scriptstyle{f}} & g & h \\
i & j & \boxed{\scriptstyle{k}} & l \\
\end{matrix}
}&=
\renewcommand*{\arraystretch}{2}
\left| \,\,
\begin{matrix}
\boxed{\renewcommand*{\arraystretch}{0.7}
\begin{matrix} \mathbf{a} & b & c & d \end{matrix}}
&
\boxed{\renewcommand*{\arraystretch}{0.7}
\begin{matrix} \mathbf{f} & b & c & d \end{matrix}}
&
\boxed{\renewcommand*{\arraystretch}{0.7}
\begin{matrix} \mathbf{k} & b & c & d \end{matrix}}
\\
\boxed{\renewcommand*{\arraystretch}{0.7}
\begin{matrix} e & \mathbf{a} & g & h \end{matrix}}
&
\boxed{\renewcommand*{\arraystretch}{0.7}
\begin{matrix} e & \mathbf{f} & g & h \end{matrix}}
&
\boxed{\renewcommand*{\arraystretch}{0.7}
\begin{matrix} e & \mathbf{k} & g & h \end{matrix}}
\\
\boxed{\renewcommand*{\arraystretch}{0.7}
\begin{matrix} i & j & \mathbf{a} & l \end{matrix}}
&
\boxed{\renewcommand*{\arraystretch}{0.7}
\begin{matrix} i & j & \mathbf{f} & l \end{matrix}}
&
\boxed{\renewcommand*{\arraystretch}{0.7}
\begin{matrix} i & j & \mathbf{k} & l \end{matrix}}
\end{matrix}\,\,\right|\,.
\end{align*}
\end{spreadlines}

\begin{proof}[Proof of Proposition~\ref{p:twistcomputation}.]
Let $I,I_1,I_2,J$ be as in the statement of Proposition~\ref{p:twistcomputation}.
By Lemma~\ref{l:rotatesigns}, if Proposition~\ref{p:twistcomputation}
holds for $I,I_1,I_2,J$, then it also holds for
$\sigma(I),\sigma(I_1),\sigma(I_2),\sigma(J)$.
Hence, we may assume that
$I_1=\{1,\ldots ,p+1\}$ and
$I_2=\{j-q,\ldots ,j\}$ where
$1\leq p+1<\sigma^q(j)=j-q\leq j\leq n$.
We consider two possible cases.

\noindent \textbf{Case I}: $j-q>k$. \\
By Proposition~\ref{p:compound}:
$$\twist{\minor{I}}=
\boxed{
\begin{smallmatrix}
\boxed{\scriptstyle e_1} & n-k+2 & n-k+3 && \cdots & n-1 & n \\
1 & \boxed{\scriptstyle e_2} & n-k+3 && \cdots & n-1 & n \\
1 & 2 & \boxed{\scriptstyle e_3} && \cdots & n-1 & n \\
\vdots &&\ddots &&& \vdots \\
1 & \cdots & p & \boxed{\scriptstyle e_{p+1}} & \cdots & n-1 & n \\
j-q-k+1 && \cdots &&& j-q-1 & \boxed{\scriptstyle e_{p+2}} \\
j-q-k+2 && \cdots &&& j-q & \boxed{\scriptstyle e_{p+3}} \\
\vdots &&&&& \vdots \\
j-k+1 && \cdots &&& j-1 & \boxed{\scriptstyle e_k}
\end{smallmatrix}
}
$$
Applying Turnbull's identity (Proposition~\ref{p:turnbull}) for the first row with $B$ given by the
set of all unboxed entries in the first row, we have:
$$\twist{\minor{I}}=
\boxed{
\begin{smallmatrix}
e_1 & e_2 & e_3 && \cdots & e_{k-1} & e_k \\
1 & \boxed{\scriptstyle n-k+2} & n-k+3 && \cdots & n-1 & n \\
1 & 2 & \boxed{\scriptstyle n-k+3} && \cdots & n-1 & n \\
\vdots &&\ddots &&& \vdots \\
1 & \cdots & p & \boxed{\scriptstyle n-k+p+1} & \cdots & n-1 & n \\
j-q-k+1 && \cdots &&& j-q-1 & \boxed{\scriptstyle n-k+p+2} \\
j-q-k+2 && \cdots &&& j-q & \boxed{\scriptstyle n-k+p+3} \\
\vdots &&&&& \vdots \\
j-k+1 && \cdots &&& j-1 & \boxed{\scriptstyle n}
\end{smallmatrix}
}
$$

Since rows $2$ to $p+1$ contain the entries $n-k+p+2,n-k+p+3,\ldots ,n$
(not in boxes), any permutation of the entries in the boxes giving rise to a
non-zero product of minors must insert $n-k+2,n-k+3,\ldots ,n-k+p+1$ into
the boxes in rows $2$ to $p+1$. Since $n-k+3,n-k+4,\ldots ,n$ are non-boxed
entries in row $2$, $n-k+2$ must go in row $2$ (to get a non-zero term).
Using similar arguments for $n-k+3,\ldots , n-k+p+1$, we see that any
permutation giving rise to a non-zero term must fix the boxed elements
in rows $2,\ldots ,p+1$, so we can remove those boxes. We can also remove
the first row, as it is equal to $1$. Hence,
$$\twist{\minor{I}}=
\boxed{
\begin{smallmatrix}
1 & n-k+2 & n-k+3 && \cdots & n-1 & n \\
1 & 2 & n-k+3 && \cdots & n-1 & n \\
\vdots &&\ddots &&& \vdots \\
1 & \cdots & p & n-k+p+1 & \cdots & n-1 & n \\
j-q-k+1 && \cdots &&& j-q-1 & \boxed{\scriptstyle n-k+p+2} \\
j-q-k+2 && \cdots &&& j-q & \boxed{\scriptstyle n-k+p+3} \\
\vdots &&&&& \vdots \\
j-k+1 && \cdots &&& j-1 & \boxed{\scriptstyle n}
\end{smallmatrix}
}
$$
Applying Turnbull's identity for row $p+1$ (the first row
containing boxes), taking $B$ to be the first $q$ entries
in row $p+1$, we obtain:
$$\twist{\minor{I}}=
\boxed{
\begin{smallmatrix}
1 & n-k+2 & n-k+3 &&& \cdots && n-1 & n \\
1 & 2 & n-k+3 &&& \cdots && n-1 & n \\
\vdots &&\ddots &&&&&& \vdots \\
1 & \cdots & p & n-k+p+1 && \cdots && n-1 & n \\
n-k+p+3 & \cdots &&& n & \boxed{\scriptstyle j-k+1} & \cdots & \boxed{\scriptstyle j-q-1} & n-k+p+2 \\
j-q-k+2 &&& \cdots &&&& j-q & \boxed{\scriptstyle j-q-k+1} \\
\vdots &&&&&&&& \vdots \\
j-k+1 &&& \cdots &&&& j-1 & \boxed{\scriptstyle j-k}
\end{smallmatrix}
}\, ,
$$
noting that the $n$ in the row with first entry $n-k+p+3$ could be to the
left of the $p$ in the row above.

Note that $j-k+1,\ldots ,j-q-1$ occur in the last $q$ rows, so if the
boxed entries in the row beginning $n-k+p+3$ are permuted into one of
these rows, we get a zero term. Hence the boxed entries in the last
column must be permuted within this column. But each such entry cannot
be permuted into an earlier row (without getting zero), so they must
be fixed, and we can remove the boxes in the last column. But then all
the remaining boxes lie in a single row, and we are left only with the
identity permutation (as we ignore permutations of boxes in a single
row), and thus we can remove all of the boxes to obtain:

$$
\twist{\minor{I}} =
\boxed{
\begin{smallmatrix}
1 & n-k+2 & n-k+3 &&& \cdots && n-1 & n \\
1 & 2 & n-k+3 &&& \cdots && n-1 & n \\
\vdots &&\ddots &&&&&& \vdots \\
1 & \cdots & p & n-k+p+1 && \cdots && n-1 & n \\
n-k+p+3 && \cdots && n & j-k+1 & \cdots & j-q-1 & n-k+p+2 \\
j-q-k+2 &&& \cdots &&&& j-q & j-q-k+1 \\
\vdots &&&&&&&& \vdots \\
j-k+1 &&& \cdots &&&& j-1 & j-k
\end{smallmatrix}
}\, .
$$

We may then permute the entries in the last rows, applying a cyclic
permutation once to each of the last $q$ rows and a cyclic permutation
$q$ times to the row immediately above them. This gives us a sign
contribution, $(-1)^{(k-1)(q+q)}=1$, and we obtain, noting that
$n-k+p+2=n-q$:

\begin{align*}
\begin{split}
\twist{\minor{I}} &=
\boxed{
\begin{smallmatrix}
1 & n-k+2 & n-k+3 &&& \cdots && n-1 & n \\
1 & 2 & n-k+3 &&& \cdots && n-1 & n \\
\vdots &&\ddots &&&&&& \vdots \\
1 & \cdots & p & n-k+p+1 && \cdots && n-1 & n \\
j-k+1 && \cdots && j-q-1 & n-q & \cdots & n-1 & n \\
j-q-k+1 &&& \cdots &&&&& j-q \\
\vdots &&&&&&&& \vdots \\
j-k &&& \cdots &&&&& j-1
\end{smallmatrix}
}
\\
&=\minor{J} \prod_{r=1}^p \mcoeff{r} \prod_{r=j-q}^{j-1} \mcoeff{r},
\end{split}
\end{align*}
as required.

\noindent \textbf{Case II}: $j-q\leq k\leq j$. \\
By Proposition~\ref{p:compound}:
$$\twist{\minor{I}}=
\boxed{
\begin{smallmatrix}
\boxed{\scriptstyle e_1} & n-k+2 & n-k+3 &&&& \cdots &&&& n-1 & n \\
1 & \boxed{\scriptstyle e_2} & n-k+3 &&&& \cdots &&&& n-1 & n \\
1 & 2 & \boxed{\scriptstyle e_3} &&&& \cdots &&&& n-1 & n \\
\vdots &&\ddots &&& \vdots \\
1 & \cdots & p & \boxed{\scriptstyle e_{p+1}} &&& \cdots &&&& n-1 & n \\
1 && \cdots && j-q-2 & j-q-1 & \boxed{\scriptstyle e_{p+2}} & j-q-k+1+n && \cdots & n-1 & n \\
1 && \cdots &&& j-q-1 & j-q & \boxed{\scriptstyle e_{p+3}} & j-q-k+2+n & \cdots & n-1 & n \\
\vdots &&&&& &&& \ddots &&& \vdots \\
1 &&&&& \cdots &&&&& k-1 & \boxed{\scriptstyle 2k-j} \\
2 &&&&& \cdots &&&&& k & \boxed{\scriptstyle 2k+1-j} \\
\vdots &&&&& &&&&&& \vdots \\
j-k+1 &&&&& \cdots &&&&& j-1 & \boxed{\scriptstyle k} \\
\end{smallmatrix}
}
$$
Applying Turnbull's identity for the first row,
taking $B$ to be the set of all unboxed entries in the first
row, we obtain that $\twist{\minor{I}}$ is given by:
$$\boxed{
\begin{smallmatrix}
e_1 & e_2 & e_3 &&&& \cdots &&&& e_{n-1} & e_n \\
1 & \boxed{\scriptstyle n-k+2} & n-k+3 &&&& \cdots &&&& n-1 & n \\
1 & 2 & \boxed{\scriptstyle n-k+3} &&&& \cdots &&&& n-1 & n \\
\vdots &&\ddots &&& \vdots \\
1 & \cdots & p & \boxed{\scriptstyle n-k+p+1} &&& \cdots &&&& n-1 & n \\
1 && \cdots && j-q-2 & j-q-1 & \boxed{\scriptstyle n-k+p+2} & j-q-k+1+n && \cdots & n-1 & n \\
1 && \cdots &&& j-q-1 & j-q & \boxed{\scriptstyle n-k+p+3} & j-q-k+2+n & \cdots & n-1 & n \\
\vdots &&&&& &&& \ddots &&& \vdots \\
1 &&&&& \cdots &&&&& k-1 & \boxed{\scriptstyle n+k-j} \\
2 &&&&& \cdots &&&&& k & \boxed{\scriptstyle n+k+1-j} \\
\vdots &&&&& &&&&&& \vdots \\
j-k+1 &&&&& \cdots &&&&& j-1 & \boxed{\scriptstyle n} \\
\end{smallmatrix}
}
$$
As in Case I, we can remove row $1$ and the boxes in rows $2$ to $p+1$
to obtain that $\twist{\minor{I}}$ is given by:
$$\boxed{
\begin{smallmatrix}
1 & n-k+2 & n-k+3 &&&& \cdots &&&& n-1 & n \\
1 & 2 & n-k+3 &&&& \cdots &&&& n-1 & n \\
\vdots &&\ddots &&& \vdots \\
1 & \cdots & p & n-k+p+1 &&& \cdots &&&& n-1 & n \\
1 && \cdots && j-q-2 & j-q-1 & \boxed{\scriptstyle n-k+p+2} & j-q-k+1+n && \cdots & n-1 & n \\
1 && \cdots &&& j-q-1 & j-q & \boxed{\scriptstyle n-k+p+3} & j-q-k+2+n & \cdots & n-1 & n \\
1 && \cdots &&&& j-q & j-q+1 & \boxed{\scriptstyle n-k+p+4} & \cdots & n-1 & n \\
\vdots &&&&& &&& \ddots &&& \vdots \\
1 &&&&& \cdots &&&&& k-1 & \boxed{\scriptstyle n+k-j} \\
2 &&&&& \cdots &&&&& k & \boxed{\scriptstyle n+k+1-j} \\
\vdots &&&&& &&&&&& \vdots \\
j-k+1 &&&&& \cdots &&&&& j-1 & \boxed{\scriptstyle n} \\
\end{smallmatrix}
}
$$
Applying an appropriate cyclic permutation to row $p+1$, we obtain that
$\twist{\minor{I}}$ is $(-1)^{(k-1)(j-q-1)}$ times the following:
$$
\boxed{
\begin{smallmatrix}
1 & n-k+2 & n-k+3 &&&& \cdots &&&& n-1 & n \\
1 & 2 & n-k+3 &&&& \cdots &&&& n-1 & n \\
\vdots &&\ddots &&& \vdots \\
1 & \cdots & p & n-k+p+1 &&& \cdots &&&& n-1 & n \\
\boxed{\scriptstyle n-k+p+2} & j-q-k+1+n & \cdots & n-1 & n & 1 && \cdots &&& j-q-2 & j-q-1 \\
1 && \cdots &&&& j-q & \boxed{\scriptstyle n-k+p+3} & j-q-k+2+n & \cdots & n-1 & n \\
1 && \cdots &&&& j-q & j-q+1 & \boxed{\scriptstyle n-k+p+4} & \cdots & n-1 & n \\
\vdots &&&&& &&& \ddots &&& \vdots \\
1 &&&&& \cdots &&&&& k-1 & \boxed{\scriptstyle n+k-j} \\
2 &&&&& \cdots &&&&& k & \boxed{\scriptstyle n+k+1-j} \\
\vdots &&&&& &&&&&& \vdots \\
j-k+1 &&&&& \cdots &&&&& j-1 & \boxed{\scriptstyle n} \\
\end{smallmatrix}
}
$$
Applying Turnbull's identity to row $p+1$ (the first
row containing boxed elements) with $B$ given by
the entries in row $p+1$ in columns $2,\ldots ,q+1$,
we obtain that $\twist{\minor{I}}$ is $(-1)^{(k-1)(j-q-1)}$ times the following:
$$
\boxed{
\begin{smallmatrix}
1 & n-k+2 & n-k+3 &&&& \cdots &&&& n-1 & n \\
1 & 2 & n-k+3 &&&& \cdots &&&& n-1 & n \\
\vdots &&\ddots &&& \vdots \\
1 & \cdots & p & n-k+p+1 &&& \cdots &&&& n-1 & n \\
n-k+p+2 & n-k+p+3 & \cdots && n & \boxed{\scriptstyle j-k+1} && \cdots &&& \boxed{\scriptstyle j-q-2} & \boxed{\scriptstyle j-q-1} \\
1 && \cdots &&&& j-q & \boxed{\scriptstyle j-q-k+1+n} & j-q-k+2+n & \cdots & n-1 & n \\
1 && \cdots &&&& j-q & j-q+1 & \boxed{\scriptstyle j-q-k+2+n} & \cdots & n-1 & n \\
\vdots &&&&& &&& \ddots &&& \vdots \\
1 &&&&& \cdots &&&&& k-1 & \boxed{\scriptstyle n} \\
2 &&&&& \cdots &&&&& k & \boxed{\scriptstyle 1} \\
\vdots &&&&& &&&&&& \vdots \\
j-k+1 &&&&& \cdots &&&&& j-1 & \boxed{\scriptstyle j-k} \\
\end{smallmatrix}
}
$$
As in Case (I), we can remove all the boxes, to obtain
that $\twist{\minor{I}}$ is $(-1)^{(k-1)(j-q-1)}$ times the following:
$$
\boxed{
\begin{smallmatrix}
1 & n-k+2 & n-k+3 &&&& \cdots &&&& n-1 & n \\
1 & 2 & n-k+3 &&&& \cdots &&&& n-1 & n \\
\vdots &&\ddots &&& \vdots \\
1 & \cdots & p & n-k+p+1 &&& \cdots &&&& n-1 & n \\
n-k+p+2 & n-k+p+3 & \cdots && n & j-k+1 && \cdots &&& j-q-2 & j-q-1 \\
1 && \cdots &&&& j-q & j-q-k+1+n & j-q-k+2+n & \cdots & n-1 & n \\
1 && \cdots &&&& j-q & j-q+1 & j-q-k+2+n & \cdots & n-1 & n \\
\vdots &&&&& &&& \ddots &&& \vdots \\
1 &&&&& \cdots &&&&& k-1 & n \\
2 &&&&& \cdots &&&&& k & 1 \\
\vdots &&&&& &&&&&& \vdots \\
j-k+1 &&&&& \cdots &&&&& j-1 & j-k \\
\end{smallmatrix}
}
$$
We may then permute the entries in the last rows, applying a cyclic
permutation once to each of the last $j-k$ rows and a cyclic permutation
$q+1$ times to the row immediately above them. This gives us a sign
contribution, $(-1)^{(k-1)(q+1+j-k)}$. Combined with the sign $(-1)^{(k-1)(j-q-1)}$
we already have, this becomes $1$. We obtain, noting that
$n-k+p+2=n-q$:
\begin{align*}
\begin{split}
\twist{\minor{I}} &=
\boxed{
\begin{smallmatrix}
1 & n-k+2 & n-k+3 &&&& \cdots &&&& n-1 & n \\
1 & 2 & n-k+3 &&&& \cdots &&&& n-1 & n \\
\vdots &&\ddots &&& \vdots \\
1 & \cdots & p & n-k+p+1 &&& \cdots &&&& n-1 & n \\
j-k+1 & \cdots & j-q-2 & j-q-1 & n-q & n-q+1 &&& \cdots && n-1 & n \\
1 && \cdots &&&& j-q & j-q-k+1+n & j-q-k+2+n & \cdots & n-1 & n \\
1 && \cdots &&&& j-q & j-q+1 & j-q-k+2+n & \cdots & n-1 & n \\
\vdots &&&&& &&& \ddots &&& \vdots \\
1 &&&&& \cdots &&&&& k-1 & n \\
1 &&&&& \cdots &&&&& k-1 & k \\
\vdots &&&&& &&&&&& \vdots \\
j-k & j-k+1 &&&& \cdots &&&&& j-2 & j-1 \\
\end{smallmatrix}
}
\\
&=\minor{J} \prod_{r=1}^p \mcoeff{r} \prod_{r=j-q}^{j-1} \mcoeff{r},
\end{split}
\end{align*}
as required.
The proposition is proved.
\end{proof}

\begin{remark} \label{r:coefficientcase}
If $k>1$, we can take $I$ and $J$
in Proposition~\ref{p:twistcomputation}
to be non-empty disjoint subsets whose
union is $\mcoeff{i}$. We obtain:
\begin{equation} \label{e:coefficienttwist}
\twist{\mcoeff{i}}=\mscoeff{k-1}{i} \cdots 
\mscoeff{}{i}.
\end{equation}
Note that it is easy to check this
directly in the case $k=1$: the left
hand side evaluates to $1$ and we view the
product on the right-hand-side as an empty
product.

By~\eqref{e:coefficienttwist}, the 
domain of the twist contains
the subvariety $\Gr_{k,n}^*$ consisting of 
matrices $p$ for which $\mcoeff{i}(p)\not=0$ 
for $1\leq i\leq n$; furthermore, this
subvariety is stable under the twist.
Note that this variety is the largest
\emph{open positroid subvariety} of $\Gr_{k,n}$
(see~\cite[\S1]{mullerspeyer}).
\end{remark}

\section{Periodicity}
\label{s:periodicity}

In this section, we use the cross product formulation to show that,
if $k>1$, then applying the twist $2n$ times to a Pl\"{u}cker coordinate gives back the same Pl\"{u}cker coordinate multiplied by a
monomial in the Pl\"{u}cker coordinates $\mcoeff{i}$, $i\in \{1,2,\ldots ,n\}$.

\begin{lemma} \label{l:doubletwistlemma}
Suppose that $k>1$, and let $\mat\in M_{k,n}$ be a $k\times n$ matrix with $k>1$. Then, for $1\leq i\leq n$, we have:
\begin{equation}
\dtmats{i}=(-1)^{k-1}\varepsilon_{\sigma(i)}\varepsilon_i
\mscoeff{2}{i}(\mat)\cdots \mscoeff{k-1}{i}(\mat) \cdot \mat_{\sigma^k(i)}.
\label{e:doubletwist}
\end{equation}
\end{lemma}
\begin{proof}
The formula~\eqref{e:doubletwist} is a polynomial identity in the matrix entries
of $\mat$ and so it is enough to verify this formula when $\mat$ varies over any fixed non-empty Zariski-open subset of the variety of all $k\times n$ matrices.
We restrict attention to $k\times n$ matrices $\mat$ satisfying the (open)
determinantal conditions $\mcoeff{i}(\mat)\not=0$ for all $i\in \{1,\ldots ,n\}$. In particular,
this implies that each column vector $\mat_i\not=0$ for $i\in \{1,\ldots ,n\}$.

We begin by computing the scalar product of the $i$th column $\dtmats{i}$ of $\dtmat$ with $\mat_{\sigma^k(i)}$. We have:

\begin{align} \label{e:scalarproduct}
\begin{split}
\bilinearb{\dtmats{i}}{\mat_{\sigma^k(i)}} &=
\varepsilon_i \bilinearb{\tmats{\sigma^{k-1}(i)} \times \cdots \times \tmats{\sigma(i)}}{\mat_{\sigma^k(i)}} \\
&= \varepsilon_i \det \left( \tmats{\sigma^{k-1}(i)},\ldots ,\tmats{\sigma(i)},\mat_{\sigma^k(i)} \right) \\
&=(-1)^{k-1} \varepsilon_i \bilinearb{\mat_{\sigma^k(i)}\times \tmats{\sigma^{k-1}(i)}\times
\cdots \times \tmats{\sigma^2(i)}}{\tmats{\sigma(i)}} \\
&= (-1)^{k-1}\varepsilon_{\sigma(i)} \varepsilon_i \bilinearb{
\mat_{\sigma^k(i)}\times \tmats{\sigma^{k-1}(i)} \times\cdots \times \tmats{\sigma^2(i)}}{\mat_{\sigma^k(i)}\times \cdots \times \mat_{\sigma^2(i)}}.
\end{split}
\end{align}
For $1\leq s,t\leq k-1$, we have:
\begin{align*}
\begin{split}
\bilinearb{\tmats{\sigma^s(i)}}{\mat_{\sigma^t(i)}}&=
\varepsilon_{\sigma^s(i)} \bilinearb{\mat_{\sigma^{s+k-1}(i)}\times \cdots \times \mat_{\sigma^{s+1}(i)}}{\mat_{\sigma^t(i)}} \\
&=\varepsilon_{\sigma^s(i)} \det \left( \mat_{\sigma^{s+k-1}(i)},\ldots ,\mat_{\sigma^{s+1}(i)},
\mat_{\sigma^t(i)} \right).
\end{split}
\end{align*}
If $2\leq s<t\leq k-1$, then this is zero since the column $\mat_{\sigma^t(i)}$
is repeated.
If $s=t$, we obtain                  
\begin{align*}
\begin{split}
\bilinearb{\tmats{\sigma^s(i)}}{\mat_{\sigma^s(i)}}&=
\varepsilon_{\sigma^s(i)} \bilinearb{\mat_{\sigma^{s+k-1}(i)}\times \cdots \times \mat_{\sigma^{s+1}(i)}}{\mat_{\sigma^s(i)}} \\
&=\mscoeff{s}{i}(\mat).
\end{split}
\end{align*}
Consequently, after applying the contraction formula, we obtain
$$\bilinearb{\dtmats{i}}{\mat_{\sigma^k(i)}} =
(-1)^{k-1} \varepsilon_{\sigma(i)}\varepsilon_i \det
\begin{pmatrix}
|\mat_{\sigma^k(i)}|^2 & \cdots && * \\
\vdots & \mscoeff{k-1}{i}(\mat) && \\
&& \ddots &  \\
0 &  &  & \mscoeff{2}{i}(\mat)
\end{pmatrix}.
$$
It follows that
$$\bilinearb{\dtmats{i}}{\mat_{\sigma^k(i)}} =
(-1)^{k-1}\varepsilon_{\sigma(i)}\varepsilon_i \cdot \mscoeff{2}{i}(\mat) \cdots \mscoeff{k-1}{i}(\mat) \cdot |\mat_{\sigma^k(i)}|^2,$$
which, in view of our assumptions, is non-zero.
From this and the second line of~\eqref{e:scalarproduct} we may conclude that
the vectors
$$\tmats{\sigma^{k-1}(i)},\ldots ,\tmats{\sigma(i)},\mat_{\sigma^k(i)}$$
form a basis for $\mathbb{C}^k$. Since
$$\dtmats{i}=\varepsilon_i \left( \tmats{\sigma^{k-1}(i)}\times \cdots \times \tmats{\sigma(i)} \right)$$
is clearly orthogonal to each of the basis vectors
$$\tmats{\sigma^{k-1}(i)},\ldots ,\tmats{\sigma(i)}$$
(as is $\mat_{\sigma^k(i)}$) it follows that
$$\dtmats{i}=(-1)^{k-1}\varepsilon_{\sigma(i)}\varepsilon_i
\mscoeff{2}{i}(\mat)\cdots \mscoeff{k-1}{i}(\mat) \cdot \mat_{\sigma^k(i)},$$
and we are done.
\end{proof}

\begin{corollary} \label{c:periodic}
Let $I$ be a $k$-subset of $\{1,\ldots ,n\}$, with $k>1$. Then we have:
$$\dtwist{\minor{I}}=\minor{\sigma^k(I)}\cdot \prod_{i\in I} \mscoeff{2}{i}
\cdots \mscoeff{k-1}{i}.$$
\end{corollary}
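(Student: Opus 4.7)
The plan is to apply Lemma~\ref{l:doubletwistlemma} column-by-column and then take the $I$-indexed minor. The proof of that lemma establishes (and indeed the equation $\dtmat = (-1)^{k-1}\mat\sigma h(\mat)$ encodes) the explicit column formula
$$\dtmats{i} = (-1)^{k-1}\varepsilon_{\sigma(i)}\varepsilon_i \cdot \mscoeff{2}{i}(\mat)\cdots \mscoeff{k-1}{i}(\mat) \cdot \mat_{\sigma^k(i)},$$
valid on the open subvariety where all $\mcoeff{i}$ are nonzero; since both sides of the corollary are polynomial in the entries of $\mat$, this is enough. First I would substitute this formula into $\dtwist{\minor{I}}(\mat) = \det(\dtmats{i_1},\ldots,\dtmats{i_k})$, pulling all the scalar factors out of the determinant to obtain
$$\dtwist{\minor{I}} = \left(\prod_{a=1}^{k} (-1)^{k-1}\varepsilon_{\sigma(i_a)}\varepsilon_{i_a}\right)\cdot \left(\prod_{i\in I} \mscoeff{2}{i}\cdots \mscoeff{k-1}{i}\right)\cdot \det(\mat_{\sigma^k(i_1)},\ldots,\mat_{\sigma^k(i_k)}).$$
The factor $(-1)^{k(k-1)}$ is $+1$, so all the work concentrates in the remaining sign.

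Next I would reorder the columns of the final determinant to match the standard increasing order used to define $\minor{\sigma^k(I)}$. Let $m = |I \cap \{1,\ldots,k\}|$, i.e., the number of $i_a$ such that $\sigma^k(i_a) = i_a - k + n$ (wraps around) rather than $i_a - k$. These are precisely the indices that get shifted to the ``top'' of the range under $\sigma^k$, so the list $(\sigma^k(i_1),\ldots,\sigma^k(i_k))$ is a cyclic shift by $m$ places of $(j_1<j_2<\cdots<j_k) = \sigma^k(I)$ read in increasing order; the sign of this reordering is $(-1)^{m(k-m)}$, giving
$$\det(\mat_{\sigma^k(i_1)},\ldots,\mat_{\sigma^k(i_k)}) = (-1)^{m(k-m)}\minor{\sigma^k(I)}.$$

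Then I would compute the product $\prod_{a}\varepsilon_{\sigma(i_a)}\varepsilon_{i_a}$ using the explicit definition of $\varepsilon_i$: a short case check on whether $i_a$ lies in $\{1\}$, in $\{2,\ldots,k-1\}$, in $\{k\}$, or in $\{k+1,\ldots,n\}$ shows that each index $i_a \in \{1,\ldots,k\}$ contributes a factor of $(-1)^{k-1}$ while each $i_a \geq k+1$ contributes $1$. Hence $\prod_a \varepsilon_{\sigma(i_a)}\varepsilon_{i_a} = (-1)^{m(k-1)}$, and the total sign is
$$(-1)^{m(k-1) + m(k-m)} = (-1)^{m(2k-1-m)} = (-1)^{m(1-m)} = +1,$$
since $m(m-1)$ is always even. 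This yields the claimed formula.

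The main obstacle is not conceptual but the sign bookkeeping in the last step: the contributions from the $\varepsilon_i$'s and from the cyclic reordering of $\sigma^k(I)$ into increasing order must be tracked carefully and shown to cancel exactly. A cleaner alternative would be to work with the abuse-of-notation matrix identity $\dtmat = (-1)^{k-1}\mat\sigma h(\mat)$ and take $k\times k$ minors via the Cauchy--Binet/multiplicativity of determinants, reading the sign of the reordering directly off the determinant of the restriction of the permutation matrix $\sigma^k$ to the columns $I$; but in either approach one must ultimately verify the parity identity above.
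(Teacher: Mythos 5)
Your proposal is correct and is essentially the paper's own argument: both apply the column formula from Lemma~\ref{l:doubletwistlemma}, pull the scalars out of the determinant, pick up the block-reordering sign $(-1)^{m(k-m)}$ (the paper's $(-1)^{r(k-r)}$ with $r=m=|I\cap\{1,\ldots,k\}|$), and cancel it against the $\varepsilon$-contributions by the same parity computation, the paper merely keeping the factor $(-1)^{k-1}\varepsilon_{\sigma(i)}\varepsilon_i$ together per column instead of splitting it as you do.
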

\begin{proof}
By Lemma~\ref{l:doubletwistlemma}, we have:
$$\dtwist{\minor{I}}=(-1)^{s(k-s)} \minor{\sigma^k(I)}\cdot \prod_{i\in I} (-1)^{k-1}\varepsilon_{\sigma(i)}\varepsilon_i \mscoeff{2}{i}\cdots \mscoeff{k-1}{i},$$
where
$$s=|\{i\in I\,:i\leq k\}|.$$
An easy computation shows that:
$$(-1)^{k-1}\varepsilon_{\sigma(i)}\varepsilon_i=\begin{cases}
1, & \text{if $i\leq k$}; \\
(-1)^{k-1}, & \text{otherwise}.
\end{cases}$$
Hence
$$(-1)^{s(k-s)}\prod_{i\in I}(-1)^{k-1}\varepsilon_{\sigma(i)}\varepsilon_i=
(-1)^{s(k-s)}(-1)^{(k-s)(k-1)}=(-1)^{k(k-1)-s(s-1)}=1,$$
and the result follows.
\end{proof}

\begin{prop} \label{p:periodicitymonomial}
Suppose that $k>1$ and $I$ is a $k$-subset of $\{1,\ldots ,n\}$.
Then, applying the twist $2n$ times to $[I]\in \mathbb{C}[\Gr_{k,n}]$
gives $[I]$ multiplied by a monomial in the Pl\"{u}cker coordinates
$\mcoeff{i}$, $i\in \{1,\ldots ,n\}$.
\end{prop}

\begin{proof}
The result follows from Corollary~\ref{c:periodic} and the fact
that the twist of a Pl\"{u}cker coordinate of the form $\mcoeff{i}$
with $1\leq i \leq n$ is a monomial in the
Pl\"{u}cker coordinates of the same form
(see Remark~\ref{r:coefficientcase}).
\end{proof}

We will give an cluster algebra-theoretic periodicity statement for the twist in Proposition~\ref{p:periodicitycoefficients}.

\section{Relationship to the BFZ-twist}
\label{s:BFZtwist}

Our aim in this section is to the explain the relationship between the twist discussed 
in Section~\ref{s:twist} and the
BFZ-twist map~\cite{BFZ96, BZ97}.
We consider the quotient map
from $SL_n(\mathbb{C})$ to the Grassmannian, restricted to
the appropriate unipotent cell. The image of this map
is the open positroid variety $\Gr_{k,n-k}^*$, and we give an
explicit inverse from $\Gr_{k,n-k}^*$ to the unipotent cell
(Theorem~\ref{t:inverse}).
This allows us to give a formula (Proposition~\ref{p:relationship}) relating the twist defined
in Section~\ref{s:twist} and the BFZ-twist.

Let $G=\text{SL}_n(\mathbb{C})$, and let $N$ denote the unipotent group of all complex $n\times n$ unipotent matrices in $G$.
The BFZ-\emph{twist} is 
a regular automorphism defined on a stratum
of $N$ known as a \emph{unipotent cell},
defined as follows.

Let $T$ denote the torus of diagonal matrices in $SL_n(\mathbb{C})$.
The Weyl group $W=N_{SL_n(\mathbb{C})}(T)/T$ of $SL_n(\mathbb{C})$ is the symmetric group $S_n$.
For each $w\in W$ we shall denote by $\dot{w}$ a lifting of $w$
to $N_{SL_n(\mathbb{C})}(T)$. We shall often write this just as
$w$ if there is no ambiguity.

Given $w\in S_n$, the unipotent cell $N^w$ 
associated to $w$ is defined as the intersection
$N \cap B_{-}wB_{-}$; here $B_{-}$ is the group of 
all invertible complex $n \times n$ lower 
triangular matrices.
Each unipotent cell is a quasi-affine complex 
algebraic variety whose dimension is $\ell(w)$.
Given an element $g \in N^w$ and its transpose
$g^T \in B_{-}$ the intersection $ N \cap B_{-}wg^T$ always consists a single element, denoted $\eta(g)$. The map $g \mapsto \eta(g)$ defines a regular automorphism of $N^w$ which is called the \emph{BFZ-twist}.

The concept was introduced as a tool for evaluating factorizations of unipotent matrices within a fixed cell $N^w$ into elementary \emph{Jacobi matrices}. More specifically, a combinatorial ansatz expresses the complex parameters associated
to a unique factorization of a (generic) element in a fixed unipotent cell as
a Laurent monomial in matrix minors of the corresponding (inverse) twisted element.

The Grassmannian $\Gr_{k,n}$ inherits a birational version of the twist
from the unipotent cell $N^w$ associated to the Grassmann permutation $w$
given by $w(i)= i + k \mod n$ for $i \in \{1 \dots n\}$
via the restriction $q$ of the natural projection from $G$ to $\Gr_{k,n}$, defined by
mapping an element $g \in \text{SL}_n\big( \mathbb{C} \big)$ to the $k \times n$ submatrix given by its first $k$ rows, as we shall now explain. Note that $q$ can be identified with
the quotient map $G\rightarrow P_- \backslash G$
for a maximal parabolic subgroup $P_-$.

For subsets $I,J$ of $\{1,\ldots ,n\}$ of the same cardinality, let $\Delta_{I,J}$ denote the corresponding minor, as a function on $SL_n(\mathbb{C})$,
with row-set $I$ and column-set $J$.
We first check that the image of $q$ is
contained in the open positroid variety $\Gr_{k,n}^*$
in $\Gr_{k,n}$ defined by the non-vanishing of the
Pl\"{u}cker coordinates $\mcoeff{i}$ for $i\in \{1,\ldots ,n\}$.

\begin{lemma} \label{l:qtarget}
Let $x\in N^w$. Then $q(x)\in \Gr_{k,n}^*$.
\end{lemma}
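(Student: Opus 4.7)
The plan is to show that $\mcoeff{i}(q(x)) \neq 0$ for each $i \in \{1, \ldots, n\}$. Translating, $\mcoeff{i}(q(x)) = \Delta_{[1,k], J_i}(x)$, where $J_i = \{i-k+1, \ldots, i\} \bmod n$ is the cyclic interval of length $k$ ending at $i$. The case $i = k$ is immediate: $J_k = [1, k]$, and since $x \in N$ is upper triangular unipotent, $\Delta_{[1,k], [1,k]}(x) = \prod_{j=1}^k x_{jj} = 1$. So the task reduces to verifying the remaining $n-1$ cyclic-interval minors.

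My approach to these is a direct factorisation. Write $x = b w b'$ with $b, b' \in B_-$ using $x \in B_- w B_-$. The lower triangularity of $b$ forces the first $k$ rows of $b$ to vanish outside columns $[1, k]$; combined with the explicit form of the permutation matrix $w = \sigma^{-k}$, whose only non-zero entries in rows $[1,k]$ sit in columns $[n-k+1, n]$, the first $k$ rows of $x$ factor as $L \cdot M$, where $L$ is the leading $k \times k$ block of $b$ (invertible lower triangular) and $M$ consists of the last $k$ rows of $b'$. Consequently $q(x)$ equals the row span of $M$, and for every $k$-subset $J$
\begin{equation*}
\Delta_{[1,k], J}(x) = \det(L) \cdot \Delta_{[n-k+1, n], J}(b').
\end{equation*}
The prefactor $\det(L) = \prod_j b_{jj}$ is non-zero, so the task further reduces to $\Delta_{[n-k+1, n], J_i}(b') \neq 0$ for every cyclic interval $J_i$.

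The main obstacle is this last non-vanishing. The Bruhat rank conditions $\mathrm{rank}(x_{[1, a], [b, n]}) = (n - b + 1) - \#\{j \geq a + 1 : w^{-1}(j) \geq b\}$ characterising $B_- w B_-$ force, at $a = k$ and $b = i - k + 1$, the submatrix $x_{[1,k], [i-k+1, n]}$ to have full row rank $k$, but this only yields that \emph{some} $k \times k$ minor with columns inside $[i-k+1, n]$ is non-zero --- not that the specific minor on columns $J_i$ is. The additional ingredient is the unipotency of $x$, which imposes a whole family of polynomial identities on the entries of $b$ and $b'$ (one for each vanishing condition $x_{ij} = 0$ with $i > j$, together with $x_{ii} = 1$). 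I would organise these constraints via the Berenstein--Zelevinsky chamber-minor description of the double Bruhat cell $G^{e, w}$ and its unipotent slice $N^w$~\cite{BZ97}: under the factorisation above, the cyclic-interval minors $\Delta_{[n-k+1, n], J_i}(b')$ are identified with generalized minors that appear as chamber minors for a suitable reduced expression of the Grassmann permutation $w = \sigma^{-k}$, and hence are non-vanishing on $N^w$ by the very definition of the cell. This identification, which is the crux of the argument, should reduce to a routine but combinatorially involved check using the Chamber Ansatz.
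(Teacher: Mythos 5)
Your reduction is sound as far as it goes: the case $i=k$ is immediate as you say, and the factorisation of the first $k$ rows of $x=bwb'$ as $L\cdot M$ (equivalently, Cauchy--Binet combined with the lower-triangularity of $b$ and the shape of the permutation matrix $w$) correctly yields $\Delta_{[1,k],J}(x)=\det(L)\,\Delta_{[n-k+1,n],J}(b')$; this is essentially half of the paper's own Cauchy--Binet computation. The genuine gap is the step you yourself call the crux: the non-vanishing of $\Delta_{[n-k+1,n],J_i}(b')$ is not proved, and the justification you sketch --- that these are chamber minors for a reduced word of $w$ and hence ``non-vanishing on $N^w$ by the very definition of the cell'' --- is incorrect as stated. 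Membership in $B_-wB_-$ forces non-vanishing only of the minors $\Delta_{[1,i],w^{-1}([1,i])}$ (the criterion of Proposition~\ref{p:criterionminus}); among the cyclic minors $\Delta_{[1,k],\coeff{r}}$ this covers only $r=n$. The chamber minors attached to a reduced word form the initial extended cluster of the CAIII structure, and the mutable ones do vanish somewhere on the cell; to conclude you would need to know that each $\Delta_{[1,k],\coeff{r}}$ (equivalently your $\Delta_{[n-k+1,n],J_i}(b')$, which moreover is not by itself a function of $x$, only the product with $\det(L)$ is) is a unit on $N^w$, i.e.\ a Laurent monomial in the frozen data --- and under the identification of the $N^w$ and Grassmannian cluster structures that is exactly the non-vanishing you are trying to establish. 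So the proposed route is either circular or rests on heavy, uncarried-out machinery.

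What is missing is an elementary use of the unipotency of $x$ \emph{before} factorising: the upper-unitriangular shape of $x$ converts each cyclic-interval minor into a single corner minor of $x$, namely $\Delta_{[1,k],[1,r]\cup[r-k+1+n,n]}(x)=\Delta_{[r+1,k],[r-k+1+n,n]}(x)$ for $r\le k-1$, and $\Delta_{[1,k],[r-k+1,r]}(x)=\Delta_{[1,k]\cup[r+1,n],[r-k+1,n]}(x)$ for $r\ge k$ (Laplace expansion along the block where $x$ is unitriangular, resp.\ zero). For a corner minor, Cauchy--Binet applied to $x=y_1wy_2$ leaves exactly one term: lower-triangularity of $y_2$ forces the intermediate column set to be the terminal interval, the permutation then forces the row set to be its image under $w$, and the surviving coefficient is a product of diagonal minors of $y_1$ and $y_2$, hence non-zero. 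This is the paper's argument; your Bruhat rank observation alone cannot recover it because, as you note, rank conditions single out no particular $k$-subset.
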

\begin{proof}
Let $x\in N^w$. We can write $x$ in the form $y_1wy_2$,
where $y_1,y_2\in B_-$.
We must show that $\Delta_{\{1,\ldots ,k\},\coeff{r}}(x)\not=0$,
for $1\leq r\leq n$.
Suppose first that $1\leq r\leq k-1$.
Then, first using the fact that $x\in N$ and then using
the Cauchy-Binet formula, we have:
\begin{align*}
\Delta_{\{1,\ldots ,k\},\{1,\ldots ,r\}\cup \{r-k+1+n,\ldots ,n\}]}(x)
&=
\Delta_{\{r+1,\ldots ,k\},\{r-k+1+n,\ldots ,n\}]}(x) \\
&=\sum_{I,J} \Delta_{\{r+1,\ldots ,k\}],I}(y_1)\Delta_{I,J}(w)
\Delta_{J,\{r-k+1+n,\ldots ,n\}]}(y_2),
\end{align*}
where the sum is over all $(k-r)$-subsets $I,J$ of $\{1,\ldots ,n\}$.
Since $y_2\in B_-$, $\Delta_{J,\{r-k+1+n,\ldots n\}}(y_2)$ is only
non-zero if $J=\{r-k+1+n,\ldots ,n\}$. Furthermore, $\Delta_{I,J}(w)$
is non-zero if and only if $I=w(J)=\{r+1,\ldots ,k\}$. We also have
$\Delta_{\{r+1,\ldots ,k\},\{r+1,\ldots ,k\}}(y_1)\not=0$, since $y_1\in B_-$.
It follows that
$\Delta_{\{1,\ldots ,k\},\{1,\ldots ,r\}\cup \{r-k+1+n,\ldots ,n\}}(x)\not=0$, as
required.

Suppose next that $k\leq r\leq n$.
Then, first using the fact that $x\in N$ and then using
the Cauchy-Binet formula, we have:
\begin{align*}
\Delta_{\{1,\ldots k\},\{r-k+1,\ldots r\}}(x)
&=
\Delta_{\{1,\ldots ,k\}\cup \{r+1,\ldots ,n\},\{r-k+1,\ldots ,n\}}(x) \\
&=\sum_{I,J} \Delta_{\{1,\ldots ,k\}\cup \{r+1,\ldots ,n\},I}(y_1)\Delta_{I,J}(w)
\Delta_{J,\{r-k+1,\ldots ,n\}}(y_2),
\end{align*}
where the sum is over all $(k+n-r)$-subsets $I,J$ of $\{1,\ldots ,n\}$.
Since $y_2\in B_-$, $\Delta_{J,\{r-k+1,\ldots ,n\}}(y_2)$ is only
non-zero if $J=\{r-k+1,\ldots ,n\}$. Furthermore, $\Delta_{I,J}(w)$
is non-zero if and only if $I=w(J)=\{1,\ldots ,k\}\cup \{r+1,\ldots ,n\}$.
We also have
$\Delta_{\{1,\ldots ,k\}\cup \{r+1,\ldots ,n\},\{1,\ldots ,k\}\cup \{r+1,\ldots ,n\}}(y_1)\not=0$, since $y_1\in B_-$.
It follows that
$\Delta_{\{1,\ldots ,k\},\{r-k+1,\ldots ,r\}}(x)\not=0$, as
required. We are done.
\end{proof}

Let $\varphi:\Gr_{k,n}^*\rightarrow N$ be the map defined by
$$\varphi(\mat)_{ij} \ = \
\begin{dcases}
\frac{\big[\coeff{i} - \{i\} \cup \{j\} \big](\mat)}{\vphantom{2^{2^{2}}} \mcoeff{i}(\mat) }, & i\leq j+k-1; \\
0, & i\geq j+k.
\end{dcases}
$$
We will show that $\varphi$ is the inverse
of $q$ restricted to $N^w$. We first need the
following.

\begin{lemma} \label{l:phiminor}
Let $J=\{j_1<j_2<\cdots <j_k\}$ be a $k$-subset of $\{1,\ldots ,n\}$ and $0\leq r\leq j_1-1$. Let $p\in \Gr_{k,n}^*$. Then, we have:
\begin{enumerate}[(a)]
\item
$$\Delta_{\{r+1,\ldots ,r+k\},J}\left(\vphantom{{2^2}^2}\varphi(p)\right)=\frac{[J](p)}{\mcoeff{s}(p)},$$
where $s=r+k$, and
\item
$$\Delta_{\{r,\ldots ,r+k\},J}\left(\vphantom{{2^2}^2}\varphi(p)\right)=0.$$
\end{enumerate}
\end{lemma}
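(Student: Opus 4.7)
The plan is to rewrite each entry of $\varphi(p)$ in terms of the generalized cross product vectors from Section~\ref{s:twist}. I would set
$$\xi_i \;:=\; p_{\sigma^{k-1}(i)} \times p_{\sigma^{k-2}(i)} \times \cdots \times p_{\sigma(i)} \;\in\; \mathbb{C}^k,$$
so that $\xi_i = \varepsilon_i^{-1}\tmats{i}$. The defining property of the cross product gives
$$\bilinear{\xi_i}{p_j} \;=\; \det\bigl(p_{\sigma^{k-1}(i)},\ldots,p_{\sigma(i)},\, p_j\bigr),$$
which equals the Pl\"ucker coordinate $[\coeff{i} - \{i\} \cup \{j\}](p)$ up to a sign dictated by the reordering into increasing column order (for $j$ in the ranges relevant to the lemma; in particular, the orders already match when $i \ge k$). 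Since the same reordering sign affects $\bilinear{\xi_i}{p_i}$ and $\bilinear{\xi_i}{p_j}$ in tandem through each row, after cancellation
$$\varphi(p)_{ij} \;=\; \frac{\bilinear{\xi_i}{p_j}}{\bilinear{\xi_i}{p_i}}.$$

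For part (a), the $k\times k$ submatrix of $\varphi(p)$ with rows $[r+1,r+k]$ and columns $J$ then factors as $D \cdot \Xi^{T} \cdot P_J$, where $D = \mathrm{diag}\bigl(\bilinear{\xi_{r+\ell}}{p_{r+\ell}}^{-1}\bigr)_{\ell}$, $\Xi$ is the $k \times k$ matrix whose columns are $\xi_{r+1},\ldots,\xi_{r+k}$, and $P_J$ has columns $p_{j_1},\ldots,p_{j_k}$. Taking determinants,
$$\Delta_{[r+1,r+k], J}(\varphi(p)) \;=\; \frac{\det(\Xi)\cdot [J](p)}{\prod_{\ell=1}^{k} \mcoeff{r+\ell}(p)}.$$

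The key sub-identity is $\det(\Xi) = \prod_{\ell=1}^{k-1}\mcoeff{r+\ell}(p)$. I would prove it by the specialization $J = \coeff{r+k} = \{r+1,\ldots,r+k\}$: orthogonality of $\xi_{r+\ell}$ to each of $p_{r+\ell-k+1},\ldots,p_{r+\ell-1}$ forces $\bilinear{\xi_{r+\ell}}{p_{r+m}} = 0$ for $1 \le m < \ell$, so $\Xi^{T} P_{[r+1,r+k]}$ is upper triangular with diagonal entries $\mcoeff{r+1}(p),\ldots,\mcoeff{r+k}(p)$. Since $\det(P_{[r+1,r+k]}) = \mcoeff{r+k}(p)$, this gives the claimed value of $\det(\Xi)$, and substituting into the previous display finishes (a).

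Part (b) will follow from a rank argument on the augmented matrix $\Xi'$ obtained by adjoining $\xi_r$ as an extra column: the resulting $k+1$ vectors in $\mathbb{C}^k$ are necessarily linearly dependent, so $\Xi'$ has rank at most $k$, and the $(k+1)$-minor in question factors through $\Xi'^{T}$ and therefore vanishes. The main obstacle throughout is the uniform tracking of the signs $\varepsilon_i$ in the wrap-around regime $i \le k-1$, where the cross-product column order differs from the increasing order used in the Pl\"ucker coordinate $[\coeff{i} - \{i\} \cup \{j\}]$; once one verifies that for the index ranges enforced by the hypothesis $r \le j_1 - 1$ these signs agree across the entries of a given row, both (a) and (b) reduce to the elementary linear algebra outlined above.
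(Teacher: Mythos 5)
Your argument is essentially correct, and it takes a genuinely different route from the paper's. The paper proves (a) by pulling out $\prod_{s=r+1}^{r+k}\mcoeff{s}(p)^{-1}$, writing the resulting determinant of Pl\"ucker coordinates as a boxed tableau, and applying Turnbull's identity in the style of Leclerc, after which only the identity arrangement of the boxed entries survives; part (b) is then the vanishing of a $(k+1)\times(k+1)$ tableau with $k+1$ boxed entries. You instead express the entries of $\varphi(p)$ through the cross-product vectors $\xi_i=\varepsilon_i\tmats{i}$ of Section~\ref{s:twist}, factor the relevant submatrix as $D\cdot\Xi^T\cdot P_J$, evaluate $\det(\Xi)$ via the triangularity forced by the orthogonality of $\xi_i$ to $\mat_{\sigma(i)},\ldots,\mat_{\sigma^{k-1}(i)}$, and obtain (b) from the rank bound on $k+1$ vectors in $\mathbb{C}^k$. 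This is much closer in spirit to the orthogonality/contraction computation in the proof of Lemma~\ref{l:doubletwistlemma} than to the tableau calculus of Proposition~\ref{p:twistcomputation}; it avoids Turnbull's identity entirely and is quite transparent, at the price of being a computation on $\Gr^*_{k,n}$ rather than a polynomial identity --- which is all the lemma asserts.

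Two points should be made explicit in a write-up. First, the identity $\varphi(p)_{ij}=\bilinear{\xi_i}{\mat_j}/\bilinear{\xi_i}{\mat_i}$ fails below the band, i.e.\ for $j\le i-k$, where $\varphi(p)_{ij}=0$ by definition while the pairing need not vanish; the hypothesis $0\le r\le j_1-1$ gives $j\ge r+1\ge i-k+1$ for every entry of the submatrices occurring in (a) and (b), and this is exactly what legitimises the factorization, so it deserves a sentence (your parenthetical ``for $j$ in the ranges relevant to the lemma'' is where this belongs). Second, in the wrap-around rows $i\le k-1$ one has $\bilinear{\xi_i}{\mat_i}=\varepsilon_i\,\mcoeff{i}(p)$, so your displayed claims $\det(\Xi)=\prod_{\ell=1}^{k-1}\mcoeff{r+\ell}(p)$ and the replacement of $D$ by $\mathrm{diag}\bigl(\mcoeff{r+\ell}(p)^{-1}\bigr)$ are each off by the same factor $\prod_{\ell}\varepsilon_{r+\ell}$; it cancels in the quotient, and if you simply keep $\prod_{\ell}\bilinear{\xi_{r+\ell}}{\mat_{r+\ell}}$ unevaluated (your triangularity argument shows it equals $\det(\Xi)\cdot\mcoeff{r+k}(p)$) you never need to compute these signs at all. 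Finally, (b) as used in Lemma~\ref{l:phitarget} concerns a $(k+1)$-subset of columns with minimum at least $r+1$; your rank argument covers precisely that reading, which is also the one the paper's own proof adopts.
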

\begin{proof}
Note that, by the assumptions, $0\leq r\leq n-k$.
We first consider the proof of (a).
Since $r\leq j_1-1$, we have
$r+a\leq r+k\leq j_1+k-1<j_b+k-1$
for $1\leq a\leq k$ and $1\leq b\leq k$. It follows that for all $(i,j)\in
\{r+1,\ldots ,r+k\}\times J$,
we have
$$\varphi(p)_{ij}=\frac{\big[\coeff{i} - \{i\} \cup \{j\} \big](\mat)}{ \mcoeff{i}(\mat) }.$$
Hence, $\Delta_{\{r+1,\ldots ,r+k\},J}\left(\vphantom{{2^2}^2}\varphi(p)\right)$
is equal to the following (dropping the notation
$(p)$ to save space):
\begingroup
\def\arraystretch{4}
\begin{equation*}
\begin{array}{cc}
\displaystyle\frac{1}{\prod_{s=r+1}^{r+k}\mcoeff{s}}
\left|
\begin{smallmatrix}
{[1,\ldots ,r,j_1,r-k+2+n,\ldots ,n]} & {[1,\ldots ,r,j_2,r-k+2+n,\ldots ,n]} & \cdots & {[1,\ldots ,r,j_k,r-k+2+n,\ldots ,n]} \\
{[1,\ldots ,r+1,j_1,r-k+3+n,\ldots ,n]} & {[1,\ldots ,r+1,j_2,r-k+3+n,\ldots ,n]} & \cdots & {[1,\ldots ,r+1,j_k,r-k+3+n,\ldots ,n]} \\
\vdots & \vdots & & \vdots \\
{[1,\ldots ,k-1,j_1]} & {[1,\ldots ,k-1,j_2]} & \cdots & {[1,\ldots ,k-1,j_k]} \\
\vdots & \vdots & & \vdots \\
{[r+1,\ldots ,r+k-1,j_1]} & {[r+1,\ldots ,r+k-1,j_2]} & \cdots & {[r+1,\ldots ,r+k-1,j_k]}
\end{smallmatrix}
\right|, & r<k-1 \\
\displaystyle\frac{1}{\prod_{s=r+1}^{r+k}\mcoeff{s}}
\left|
\begin{smallmatrix}
{[r-k+2,\ldots ,r,j_1]} & {[r-k+2,\ldots ,r,j_2]} & \cdots & {[r-k+2,\ldots ,r,j_k]} \\
{[r-k+3,\ldots ,r+1,j_1]} & {[r-k+3,\ldots ,r+1,j_2]} & \cdots & {[r-k+3,\ldots ,r+1,j_k]} \\
\vdots & \vdots & & \vdots \\
{[r+1,\ldots ,r+k-1,j_1]} & {[r+1,\ldots ,r+k-1,j_2]} & \cdots & {[r+1,\ldots ,r+k-1,j_k]}
\end{smallmatrix}
\right|, & r\geq k-1.
\end{array}
\end{equation*}
\endgroup
We can rewrite this (by Proposition~\ref{p:compound}) as:
\begingroup
\def\arraystretch{6}
\begin{equation*}
\Delta_{\{r+1,\ldots ,r+k\},J}\left(\vphantom{{2^2}^2}\varphi(p)\right)=
\left\{
\begin{array}{cc}
\displaystyle\frac{1}{\prod_{s=r+1}^{r+k}\mcoeff{s}}\,\,

\boxed{
\begin{smallmatrix}
1 & \cdots & r & \boxed{\scriptstyle j_1} & r-k+2+n                  & r-k+3+n & \cdots & n\\
1 & \cdots & r & r+1                      & \boxed{\scriptstyle j_2} & r-k+3+n & \cdots & n\\
\vdots &   &   &                          &                          &         &        & \vdots \\
1 &        &   &                          & \cdots                   &         & k-1    & \boxed{\scriptstyle j_{k-r}} \\
\vdots &   &   &                          &                          &         &        & \vdots \\
r+1 &        & &                          & \cdots                 &         & r+k-1   & \boxed{\scriptstyle j_k}
\end{smallmatrix}} \, ,
& r<k-1; \\
\displaystyle\frac{1}{\prod_{s=r+1}^{r+k}\mcoeff{s}}\,\,
\boxed{
\begin{smallmatrix}
r-k+2 & r-k+3 & \cdots & r & \boxed{\scriptstyle 
j_1} \\
r-k+3 & r-k+4 & \cdots & r+1 & 
\boxed{\scriptstyle j_2} \\
\vdots & \vdots & & \vdots \\
r+1 & r+2 & \cdots & r+k-1 & \boxed{\scriptstyle 
j_k}
\end{smallmatrix}}\, ,
& r \geq k-1.
\end{array}
\right.
\end{equation*}
\endgroup

We apply Turnbull's identity for the first row with $B$
given by the set of all unboxed entries in the first
row, obtaining:
\begin{equation*}
\Delta_{\{r+1,\ldots ,r+k\},J}\left(\vphantom{{2^2}^2}\varphi(p)\right)=\left\{
\def\arraystretch{6}
\begin{array}{cc}
\displaystyle\frac{1}{\prod_{s=r+1}^{r+k}\mcoeff{s}}\,\,
\boxed{
\begin{smallmatrix}
j_{k+1-r} & \cdots & j_k & j_1 & j_2 & \cdots & & j_{k-r} \\
1 & \cdots & & r+1 & \boxed{\scriptstyle r-k+2+n} & r-k+3+n & \cdots & n \\
1 & \cdots & & & r+2 & \boxed{\scriptstyle r-k+3+n} & \cdots & n \\
\vdots &&&&&&& \vdots \\
1 &&&& \cdots && k-1 & \boxed{\scriptstyle n} \\
2 &&&& \cdots && k & \boxed{\scriptstyle 1} \\
\vdots &&&&&&& \vdots \\
r+1 &&&& \cdots && r+k-1 & \boxed{\scriptstyle r}
\end{smallmatrix}}\, , & r<k-1; \\
\displaystyle\frac{1}{\prod_{s=r+1}^{r+k}\mcoeff{s}}\,\,
\boxed{
\begin{smallmatrix}
\scriptstyle j_2 & \scriptstyle j_3 & \cdots & 
\scriptstyle j_k & \scriptstyle j_1 \\
r-k+3 & r-k+4 & \cdots & r+1 & 
\boxed{\scriptstyle r-k+2} \\
\vdots & \vdots & & \vdots & \vdots \\
r+1 & r+2 & \cdots & r+k-1 & \boxed{\scriptstyle 
r}
\end{smallmatrix}}\, , & r\geq k-1.
\end{array}
\right.
\end{equation*}
We can remove the boxes in the tableau (in
either case), since every other ordering of the
boxed elements evaluates to zero. Hence,
\begingroup
\addtolength{\jot}{1em}
\begin{align*}
\begin{split}
\Delta_{\{r+1,\ldots ,r+k\},J}\left(\vphantom{{2^2}^2}\varphi(p)\right)
&=
\left\{
\def\arraystretch{3}
\begin{array}{cc}
\displaystyle\frac{(-1)^{(k-1)(k-r)+(k-1)r} \minor{J} \prod_{s=r+1}^{r+k-1}\mcoeff{s}}{\prod_{s=r+1}^{r+k}\mcoeff{s}}
, & r<k-1; \\
\displaystyle \frac{(-1)^{k(k-1)} \minor{J} \prod_{s=r+1}^{r+k-1}\mcoeff{s}}{\prod_{s=r+1}^{r+k}\mcoeff{s}}
, & r\geq k-1;
\end{array}
\right.
\\
&= \frac{\minor{J} \prod_{s=r+1}^{r+k-1}\mcoeff{s}}
{\prod_{s=r+1}^{r+k}\mcoeff{s}},
\end{split}
\end{align*}
\endgroup
and the proof of (a) is complete.

The proof of (b) is similar: $\Delta_{\{r,\ldots ,r+k\},J}(\varphi(p))$ can be written in terms of a
$(k+1)\times (k+1)$ tableau in which $k+1$ elements are boxed. Since this is
greater than $k$, it follows that this is zero by~\cite[Prop.\ 1.2.2(i)]{leclerc93} (see Proposition~\ref{p:turnbull}).
\end{proof}

By definition, $\varphi(\Gr_{k,n}^*)\subseteq N$.
In order to check that the image of $\varphi$ is
contained in $N^w$, we use the following result,
which can easily be deduced
from~\cite[4.1]{fominzelevinsky99}.

\begin{prop} \label{p:criterionminus}
Let $x\in SL_n(\mathbb{C})$ and $w\in W$. Then $x\in B_-wB_-$ if and only if the
following hold:
\begin{itemize}
\item[(a)] $\Delta_{\{1,\ldots ,i\},w^{-1}(\{1,\ldots ,i\})}(x)\not=0$ for $i=1,2,\ldots ,n-1$.
\item[(b)] $\Delta_{\{1,\ldots ,i\},w^{-1}(\{1,\ldots ,i-1\}\cup \{j\})}(x)=0$ for all $1\leq i<j\leq n$ satisfying $w^{-1}(i)<w^{-1}(j)$.
\end{itemize}
\end{prop}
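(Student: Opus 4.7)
The plan is two-step: a direct Cauchy--Binet computation on the Bruhat factorization $x = b_1 w b_2$ handles the forward direction, and disjointness of the Bruhat decomposition (together with a combinatorial pin-down, or equivalently citing~\cite[Thm.~4.1]{fominzelevinsky99}) handles the converse.

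For the forward direction, write $x = b_1 w b_2$ with $b_1, b_2 \in B_-$ and expand $\Delta_{[1,i], K}(x)$ by Cauchy--Binet. The lower-triangularity of $b_1$ forces $\Delta_{[1,i], J_1}(b_1) = 0$ unless $J_1 = [1,i]$ (where it equals the nonzero product of the first $i$ diagonal entries of $b_1$), and the permutation-matrix structure of $w$ forces $\Delta_{[1,i], J_2}(w) = 0$ unless $J_2 = w^{-1}([1,i])$ (where it equals $\pm 1$). The sum collapses to
$$\Delta_{[1,i], K}(x) \;=\; \pm\,\Delta_{[1,i],[1,i]}(b_1)\cdot \Delta_{w^{-1}([1,i]), K}(b_2).$$
For an invertible lower triangular $b_2$, $\Delta_{L, M}(b_2) \neq 0$ iff the sorted index sets satisfy $l_s \geq m_s$ for all $s$. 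Taking $K = w^{-1}([1,i])$ yields a principal minor of $b_2$, giving (a). Taking $K = w^{-1}([1,i-1] \cup \{j\})$ with $i < j$ and $w^{-1}(i) < w^{-1}(j)$ produces two index sets that agree in $i-1$ elements and differ only by $w^{-1}(i)$ (in the row set) versus $w^{-1}(j)$ (in the column set); the sorted dominance then fails at exactly one position, forcing the minor to vanish and giving (b).

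For the converse, use the disjoint decomposition $SL_n(\mathbb{C}) = \bigsqcup_v B_- v B_-$, so that any $x$ satisfying (a) and (b) lies in a unique $B_- v B_-$; the task is to show $v = w$. From the forward direction applied to $v$, condition (a) forces $w^{-1}([1,i]) \leq v^{-1}([1,i])$ componentwise for each $i$, and any strict discrepancy surfaces at some flag level as a pair $(i, j)$ with $w^{-1}(i) < w^{-1}(j)$ for which $\Delta_{[1,i], w^{-1}([1,i-1] \cup \{j\})}(x)$ is forced nonzero, contradicting (b). A cleaner alternative, which is what the phrase ``can easily be deduced'' alludes to, is to invoke the transpose anti-automorphism: $x \mapsto x^T$ swaps $B_\pm$ and sends $w$ to $w^{-1}$, so that $x \in B_- w B_-$ iff $x^T \in B_+ w^{-1} B_+$, and~\cite[Thm.~4.1]{fominzelevinsky99} applied to $x^T$ with $w^{-1}$ translates, via $\Delta_{I,J}(x^T) = \Delta_{J,I}(x)$, directly into conditions (a) and (b).

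The main obstacle is the combinatorial pin-down in the converse: ensuring that the vanishing conditions in (b), restricted to precisely those pairs $(i,j)$ with $w^{-1}(i) < w^{-1}(j)$, are tight enough to force $v = w$ among all $v$ compatible with (a). This is governed by the fact that covering relations in the Bruhat order are realized by single transpositions, and is exactly the combinatorial content packaged into~\cite[4.1]{fominzelevinsky99}, which is why the authors cite that result rather than reproving it from scratch.
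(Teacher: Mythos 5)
Your proposal is sound, but it is worth noting that the paper gives no argument at all for this proposition beyond the sentence that it ``can easily be deduced from''~\cite[4.1]{fominzelevinsky99}; so your second route (transpose to swap $B_-$ and $B_+$, replace $w$ by $w^{-1}$, and quote Fomin--Zelevinsky) is essentially the paper's proof, while your first, self-contained route (Cauchy--Binet collapse of $\Delta_{[1,i],K}(b_1wb_2)$ to $\pm\Delta_{[1,i],[1,i]}(b_1)\,\Delta_{w^{-1}([1,i]),K}(b_2)$ for the forward direction, Bruhat disjointness plus a discrepancy argument for the converse) is a genuine addition that makes the deduction explicit. The forward direction as you give it is correct.

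Two points in the direct route need repair, though both are fixable. First, your stated criterion ``for an invertible lower triangular $b_2$, $\Delta_{L,M}(b_2)\neq 0$ iff $l_s\geq m_s$ for all $s$'' is false as an ``iff'': failure of dominance forces the minor to vanish, but dominance does not force it to be nonzero (e.g.\ $\Delta_{\{2\},\{1\}}$ of the identity), and nonvanishing is only guaranteed for principal minors $L=M$. This does not hurt the forward direction (there you use only the vanishing half, plus principal minors), and ``fails at exactly one position'' should just be ``at least one position''. Second, and more seriously, in the converse the phrase ``$\Delta_{[1,i],w^{-1}([1,i-1]\cup\{j\})}(x)$ is forced nonzero'' cannot be justified by dominance alone, precisely because of the failure of that ``iff'': $b_2$ is a specific lower triangular matrix. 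The correct way to complete your argument is to take $i$ \emph{minimal} with $v^{-1}([1,i])\neq w^{-1}([1,i])$, so that $A:=v^{-1}([1,i-1])=w^{-1}([1,i-1])$; then $v^{-1}(i)=w^{-1}(j)$ for a unique $j>i$, the dominance coming from (a) at level $i$ (applied to the two sets $A\cup\{w^{-1}(i)\}$ and $A\cup\{v^{-1}(i)\}$) gives $w^{-1}(i)<w^{-1}(j)$, and the offending minor is
$\Delta_{[1,i],w^{-1}([1,i-1]\cup\{j\})}(x)=\pm\Delta_{[1,i],[1,i]}(b_1)\,\Delta_{v^{-1}([1,i]),v^{-1}([1,i])}(b_2)$,
a nonzero multiple of a \emph{principal} minor of $b_2$, hence nonzero, contradicting (b). With this adjustment (the relevant combinatorial fact is the one about two sets differing in a single element, not covering relations in Bruhat order), your direct proof is complete and arguably preferable to the bare citation in the paper.
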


\begin{lemma} \label{l:phitarget}
Let $p\in \Gr_{k,n}^*$ and $x=\varphi(p)$. Then $x\in N^w$.
\end{lemma}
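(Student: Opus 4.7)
The plan is to verify conditions (a) and (b) of Proposition~\ref{p:criterionminus} for $x = \varphi(p)$ and $w$ the Grassmann permutation, using Lemma~\ref{l:phiminor} together with the block-triangular structure available because $x \in N$. A short calculation gives $w^{-1}(a) = n - k + a$ for $a \leq k$ and $w^{-1}(a) = a - k$ for $a > k$, so $w^{-1}([1,i]) = [n-k+1,\, n-k+i]$ when $i \leq k$, and $w^{-1}([1,i]) = [1,\, i-k] \cup [n-k+1,\, n]$ when $i > k$.

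For condition (a) with $i \geq k$, upper-unipotency of $x$ forces a block decomposition of the submatrix with row set $[1,i]$ and column set $[1,i-k] \cup [n-k+1,n]$: the top-left $(i-k) \times (i-k)$ block is upper unipotent and the bottom-left block vanishes. Hence the minor reduces to $\Delta_{[i-k+1,i],\,[n-k+1,n]}(\varphi(p))$, which by Lemma~\ref{l:phiminor}(a) (with $r = i - k$ and $J = [n-k+1, n]$) equals $\mcoeff{n}(p)/\mcoeff{i}(p)$, nonzero on $\Gr_{k,n}^{*}$.

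The small case $i < k$ in (a) is the main obstacle, since neither the block reduction nor Lemma~\ref{l:phiminor}(a) applies to $i \times i$ minors with $i < k$. My strategy is to show that the $i \times i$ submatrix $[\varphi(p)_{a,b}]_{a \in [1,i],\, b \in [n-k+1,\, n-k+i]}$ is itself lower triangular after reindexing the columns by $c = b - (n-k) \in [1,i]$. Unpacking the definition of $\varphi$, one checks that $b = n-k+c$ lies in $\coeff{a} \setminus \{a\}$ --- forcing the Pl\"ucker coordinate in the numerator to have a repeated index, and hence to vanish --- precisely when $c > a$. The determinant is then the telescoping product $\prod_{a=1}^{i} \varphi(p)_{a,\, a+n-k}$, whose numerators are $\mcoeff{a-1}(p)$ for $a \geq 2$ and $\mcoeff{n}(p)$ for $a=1$; with the convention $\coeff{0} = \coeff{n}$, the product collapses to $\mcoeff{n}(p)/\mcoeff{i}(p)$, which is nonzero.

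For condition (b), the inequality $w^{-1}(i) < w^{-1}(j)$ with $i < j$ forces $i$ and $j$ to lie in the same interval, either $[1,k]$ or $[k+1,n]$. When both exceed $k$, the column set $w^{-1}([1,i-1] \cup \{j\}) = [1,\, i-1-k] \cup \{j-k\} \cup [n-k+1, n]$ contains the initial segment $[1, i-1-k]$, and the same block-triangular reduction cuts the minor down to $\Delta_{[i-k,\,i],\,\{j-k\}\cup[n-k+1,n]}(\varphi(p))$, a $(k+1) \times (k+1)$ minor whose vanishing is exactly Lemma~\ref{l:phiminor}(b). When both are at most $k$, the column indexed by $n-k+j$ satisfies $n-k+j \in \coeff{a} \setminus \{a\}$ for every $a \in [1,i]$ (since $a \leq i < j \leq k$), so the whole column of the submatrix vanishes and the minor is zero.
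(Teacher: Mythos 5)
Your proof is correct and follows essentially the same route as the paper's: you verify conditions (a) and (b) of Proposition~\ref{p:criterionminus} by exploiting the block-triangular structure coming from $x\in N$, the (lower) triangularity of the $k\times k$ block of $\varphi(p)$ on rows $[1,k]$ and columns $[n-k+1,n]$, and Lemma~\ref{l:phiminor}; your observation that the whole $(n-k+j)$-th column vanishes when $i<j\le k$ is a slightly more streamlined packaging of the paper's ``zero in the bottom-right corner'' argument. One small overstatement: the biconditional ``$n-k+c\in\coeff{a}\setminus\{a\}$ precisely when $c>a$'' can fail in the backward direction when $n$ is close to $2k$ --- for instance, with $n=5$, $k=4$, $a=3$, $c=1$ one has $n-k+c=2\in\coeff{3}\setminus\{3\}=\{1,2,5\}$, so $\varphi(p)_{3,2}=0$ even though $c<a$. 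This does not damage the argument, since triangularity requires only the forward implication and you verify separately that the diagonal entries equal $\mcoeff{a-1}(p)/\mcoeff{a}(p)$ (with $\mcoeff{0}:=\mcoeff{n}$), but ``precisely when'' should be weakened to ``whenever.''
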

\begin{proof}
Let $p\in \Gr_{k,n}^*$ and consider $x=\varphi(p)$. As indicated above, $x\in N$. We must
show that (a) and (b) in Proposition~\ref{p:criterionminus} both hold, so that we can conclude
that $x\in B_-wB_-$ also.
We first claim that, for $1\leq i<j\leq k$, the following hold:
\begin{align}
x_{i,i+n-k}\not=0,\text{ if }1\leq i\leq k; \label{e:nonzero}\\
x_{i,j+n-k}=0,\text{ if }1\leq i<j\leq k. \label{e:zero}
\end{align}
Fix $1\leq i\leq k$. Then we have:
$$x_{i,i+n-k}=\frac{ [\coeff{i}\setminus \{i\}\cup \{i+n-k\}](p)}{\mcoeff{i}(p)}=\frac{\mscoeff{}{i}(p)}{\mcoeff{i}(p)}\not=0,$$
since $p\in \Gr_{k,n}^*$. This shows~\eqref{e:nonzero}.
Fix $1\leq i<j\leq k$. Then we have:
$$x_{i,j+n-k}=\frac{[\coeff{i}\setminus \{i\}\cup \{j+n-k\}](p)}{\mcoeff{i}(p)}=0,$$
since $j+n-k\in \coeff{i}\setminus\{i\}$. This shows~\eqref{e:zero}.

It follows that the submatrix of $x$ with rows $\{1,\ldots ,k\}$ and columns $\{n-k+1,\ldots ,n\}$
has non-zero entries along its diagonal and zero entries above its diagonal.
Hence, for $1\leq i\leq k$, the submatrix of $x$ with rows $\{1\ldots ,i\}$ and columns $w^{-1}(\{1,\ldots ,i\})=\{n-k+1,\ldots ,n-k+i\}$ has the same property and thus non-zero determinant,
showing (a) for $1\leq i\leq k$. And, for $1\leq i<j\leq k$, the submatrix of $x$
with rows $\{1,\ldots ,i\}$ and columns $w^{-1}(\{1,\ldots ,i-1\}\cup \{j\})$ has non-zero entries
along its diagonal, except the bottom right entry with is zero, and zero entries
above the diagonal. Hence it has zero determinant, and (b) is shown for $1\leq i<j\leq k$.

It remains to show (a) for $k+1\leq i\leq n$ and (b) for $k+1\leq i<j\leq n$.

We first fix $k+1\leq i\leq n$ and write $i=k+r$ where $1\leq r\leq n-k$.
We have
\begin{align*}
\begin{split}
\Delta_{\{1,\ldots ,i\},w^{-1}(\{1,\ldots ,i\})}(x) &= \Delta_{\{1,\ldots ,k+r\},\{n-k+1,\ldots ,n+r\}}(x) \\
&=\Delta_{\{1,\ldots ,k+r\},\{1,\ldots ,r\}\cup \{n-k+1,\ldots ,n\}}(x) \\
&=\Delta_{\{r+1,\ldots ,r+k\},\{n-k+1,\ldots ,n\}}(x),
\end{split}
\end{align*}
where in the last step we have used the fact that $x\in N$.

Since $r\leq n-k$, we have by Lemma~\ref{l:phiminor}(a) that
$$\Delta_{\{r+1,\ldots ,r+k\},\{n-k+1,\ldots ,n\}}(x)=\frac{\mcoeff{n}(p)}{\mcoeff{s}(p)},$$
where $s=r+k$. This is non-zero, since $p\in \Gr_{k,n}^*$.

Next we suppose that $i=k+r$ and $j=k+t$, where $1\leq r<t\leq n-k$.
We have that
\begin{align*}
\begin{split}
\Delta_{\{1,\ldots ,i\},w^{-1}(\{1,\ldots ,i-1\}\cup \{j\})}(x) &=
\Delta_{\{1,\ldots ,k+r\},\{n-k+1,\ldots ,n\}\cup \{1,\ldots ,r-1\}\cup \{t\}}(x) \\
&= \Delta_{\{r,\ldots k+r\},\{t\}\cup \{n-k+1,\ldots ,n\}}(x),
\end{split}
\end{align*}
using the fact that $x\in N$.
We have assumed that $r\leq t-1$, so this is zero by Lemma~\ref{l:phiminor}(b).
\end{proof}

Let $G_0=N_-HN$, where $N^-$ denotes the set of
lower unitriangular matrices in $SL_n(\mathbb{C})$.
The \emph{Gaussian decomposition} of an element $x\in G_0$
is
$$x=[x]_- [x]_0 [x]_+,$$
where $[x]_-\in N_-$, $[x]_0\in H$ and $[x]_+\in N$.

We set
\begin{align}
N_+(w) &= N\cap wN_-w^{-1}; \\
N_-(w) &= N_-\cap w^{-1}Nw.
\end{align}
Then $N_+(w)$ coincides with the unipotent radical of the maximal parabolic subgroup $P$ associated
to the complement of the root $\boldsymbol{\alpha}_k$.

By~\cite[Props.\ 2.10, 2.17]{fominzelevinsky99},
for $x\in N^w$, $xw^{-1}\in G_0$ and, moreover, the map
$x\mapsto [xw^{-1}]_+$ defines a biregular isomorphism
$$\beta:N^w\rightarrow N_+(w)\cap G_0w.$$

\begin{theorem} \label{t:inverse}
The maps $q:N^w\rightarrow \Gr_{k,n}^*$ and $\varphi:\Gr_{k,n}^*\rightarrow N^w$ are
mutual inverses (and hence biregular).
\end{theorem}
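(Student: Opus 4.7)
The plan is to establish $q \circ \varphi = \mathrm{id}_{\Gr_{k,n}^*}$ and $\varphi \circ q = \mathrm{id}_{N^w}$ separately. The first identity is a direct computation from Lemma~\ref{l:phiminor}, and the second then follows from an abstract dimension and irreducibility argument, bypassing the need to verify entrywise minor identities.

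For the identity $q \circ \varphi = \mathrm{id}_{\Gr_{k,n}^*}$, fix $p \in \Gr_{k,n}^*$ and a $k$-subset $J = \{j_1 < \cdots < j_k\}$ of $\{1,\ldots,n\}$. Since $q$ sends a matrix to the projective point represented by its first $k$ rows, one has $[J](q(\varphi(p))) = \Delta_{[1,k], J}(\varphi(p))$. Applying Lemma~\ref{l:phiminor}(a) with $r = 0$ (the hypothesis $0 \le j_1 - 1$ holds automatically), this equals $[J](p)/\mcoeff{k}(p)$. Since the scalar $1/\mcoeff{k}(p)$ is independent of $J$, the Pl\"ucker coordinates of $q(\varphi(p))$ and of $p$ agree up to a common nonzero factor, so $q(\varphi(p)) = p$ in $\Gr_{k,n}^*$. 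In particular, this exhibits $\varphi$ as a regular section of the morphism $q|_{N^w}:N^w\to\Gr_{k,n}^*$.

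For $\varphi \circ q = \mathrm{id}_{N^w}$, we use the fact that a regular section of a morphism of algebraic varieties (over $\mathbb{C}$) is automatically a closed embedding. Hence $\varphi(\Gr_{k,n}^*)$ is a closed irreducible subvariety of $N^w$ isomorphic, via $\varphi$ and $q$, to $\Gr_{k,n}^*$, and therefore of dimension $k(n-k)$. The unipotent cell $N^w$ is itself irreducible of dimension $\ell(w) = k(n-k)$: the length of the Grassmann permutation $w$, $w(i)=i+k\bmod n$, is computed by counting inversions, and irreducibility is a standard fact about unipotent cells (which are isomorphic to affine spaces). A closed irreducible subvariety of an irreducible variety of the same dimension must coincide with the ambient variety, so $\varphi(\Gr_{k,n}^*) = N^w$. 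Therefore $\varphi$ is bijective with two-sided regular inverse $q|_{N^w}$, giving both identities simultaneously together with the claimed biregularity.

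The main obstacle is the irreducibility and dimension of $N^w$; both are standard but can also be read off from the biregular isomorphism $\gamma:N^w\to N_+(w)\cap G_0w$ mentioned in the preceding discussion, since $N_+(w)$ is an affine space of dimension $k(n-k)$ for the Grassmann $w$. A more hands-on alternative would verify $\varphi(q(x)) = x$ entry by entry by expanding the minors $\Delta_{[1,k],\coeff{i}\setminus\{i\}\cup\{j\}}(x)$ via cofactor expansions along the column indexed by $j$ and invoking the vanishing conditions on $N^w$ from Proposition~\ref{p:criterionminus}, but this is considerably more laborious than the dimension argument above.
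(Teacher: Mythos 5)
Your proposal is correct, and its first half coincides with the paper's: both establish $q\circ\varphi=\mathrm{id}$ on $\Gr_{k,n}^*$ by applying Lemma~\ref{l:phiminor}(a) with $r=0$ and noting that all Pl\"ucker coordinates of $q(\varphi(p))$ equal those of $p$ up to the common nonzero factor $1/\mcoeff{k}(p)$. Where you genuinely diverge is the other composition. The paper proves injectivity of $q$ on $N^w$ directly: writing $q(x)=P_-\gamma(x)w$ with $\gamma(x)=[xw^{-1}]_+$ and invoking injectivity of the projection restricted to $N_+(w)$ (Borel's result, as cited there), it concludes that $q$ is injective, hence bijective by $q\circ\varphi=\mathrm{id}$, with inverse $\varphi$. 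You instead note that $\varphi$ is a regular section of the separated morphism $q|_{N^w}$, so $\varphi(\Gr_{k,n}^*)=\{x\in N^w: \varphi(q(x))=x\}$ is closed in $N^w$, irreducible and of dimension $k(n-k)$; since $N^w$ is irreducible of dimension $\ell(w)=k(n-k)$, a proper closed subvariety would have strictly smaller dimension, so $\varphi$ is surjective, and then $\varphi\circ q=\mathrm{id}$ follows formally. What your route buys is that you never need the injectivity statement for the projection on $N_+(w)$; what it costs is the irreducibility and dimension of $N^w$, which the paper never states explicitly but which, as you observe, follow from the biregular isomorphism $\gamma:N^w\rightarrow N_+(w)\cap G_0w$ of Fomin--Zelevinsky quoted just before the theorem, the target being a nonempty open subset of the affine space $N_+(w)$ of dimension $k(n-k)$. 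One caveat: your parenthetical justification that unipotent cells ``are isomorphic to affine spaces'' is false in general (already $N^{s_i}\cong\mathbb{C}^*$ for a simple reflection), so the irreducibility must indeed be obtained via $\gamma$ as in your alternative justification; with that reading, the argument is complete. Note also that both your argument and the paper's ultimately lean on the same Fomin--Zelevinsky isomorphism $\gamma$, just for different purposes (dimension/irreducibility versus injectivity).
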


\begin{proof}
By Lemmas~\ref{l:qtarget} and~\ref{l:phitarget}, 
the image of $q$ is contained in $\Gr_{k,n}^*$ and 
the image of $\varphi$ is contained in $N^w$.
For $x\in N^w$,
$$q(x)=P_-x=P_-xw^{-1}w=P_-[xw^{-1}]_+w=P_-\beta(x)w,$$
using the definition of the Gaussian decomposition.
So if $q(x)=q(y)$, then $P_-\beta(x)w=P_-\beta(y)w$, so $P_-\beta(x)=P_-\beta(y)$.
Since $\beta(x),\beta(y)\in N_+(w)$, it follows that $\beta(x)=\beta(y)$,
as the projection map $q$ restricted to $N_+(w)$
is injective by~\cite[Prop.\ 14.21]{borel91}
(as reformulated in~\cite[\S2.2]{GLS08}),
so $x=y$, since $\beta$ is a bijection.
Therefore $q$ is injective.

By Lemma~\ref{l:phiminor}(a) (in the case $r=0$), the composition $q\varphi$ is equal to the identity on
$\Gr_{k,n}^*$. Hence $q$ is surjective and therefore a bijection. It follows that
$\varphi$ is its inverse and we are done.
\end{proof}

\begin{remark}
It is easy to see that the image of $N^w$ under
$q$ is the same as the image of
$B_-B_+\cap B_-wB_-$ and is thus an open
Richardson variety, part of the stratification
of $P_-\backslash G$ introduced by Lusztig~\cite{lusztig98}.
The natural map $\pi:G\rightarrow B_-\backslash G$ is injective
on restriction to $N$, and hence on restriction
to $N^w$, and the
projection $B_-\backslash G\rightarrow P_-\backslash G$ is injective on $\pi(N^w)$
(see~\cite[7.1]{rietschwilliams08}). This gives
an alternative way to see the injectivity of
$q$ on $N^w$.
\end{remark}

The BFZ-twist for the open positroid variety $\Gr^*_{k,n}$ is defined by 
transporting the BFZ-twist $\eta$ for $N^w$ to
$\Gr^*_{k,n}$ by setting:
$\BFZcheck{p} := \varphi^{-1} \circ \eta \circ \varphi (p) $
for $ p \in \Gr^*_{k,n}$, so that we have the following
commutative diagram:
$$
\xymatrix{
N^w \ar[r]^q \ar[d]_{\eta} & \Gr_{k,n}^* \ar[d]^{p\mapsto \BFZcheck{p}} \\
N^w \ar[r]^q & \Gr_{k,n}^*
}
$$
For a $k$-subset $I$, we denote by $\BFZcheck{\minor{I}}$ the BFZ-twisted Pl\"ucker coordinate defined by the composition $p \mapsto \minor{I}(\BFZcheck{p})$. We recall that
the domain of the
twist $p\mapsto \twist{p}$ contains
$\Gr^*_{k,n}$ (see Remark~\ref{r:coefficientcase}).

\begin{prop} \label{p:relationship}
For any $k$-subset $I$ the twisted Pl\"{u}cker coordinate
and the BFZ-twisted Pl\"{u}cker coordinate are related by the following formula:
$$ \frac{\BFZcheck{\minor{I}}}{\vphantom{2^{2^{2^e}}} \BFZcheck{\mcoeff{k}}}
= \frac{ \twist{\minor{I}} \cdot \mcoeff{k}} {\vphantom{2^{2^{2}}}\prod_{i \in I} \mcoeff{i}}$$
as elements of the coordinate ring $\mathbb{C}[\Gr^*_{k,n}]$.
\end{prop}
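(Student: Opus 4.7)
Plan. I would first unpack $\BFZcheck{\minor{I}}$ in concrete matrix terms via the biregular isomorphism $\varphi\colon\Gr^*_{k,n}\to N^w$, reduce the proposition to a single Pl\"ucker determinant identity, and then verify that identity by reading it off directly from the cross-product definition of $\tmat$.

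Since $\varphi^{-1}=q$ takes the first $k$ rows of a matrix in $N^w$, we have $\BFZcheck{\minor{I}}(p)=\Delta_{[1,k],I}(\eta(\varphi(p)))$. The defining property $\eta(g)\in N\cap B_- w g^T$ gives $\eta(g)=y\cdot g^T$ with $y\in B_- w$; writing $y=bw$ with $b\in B_-$, the entry $y_{ij}=b_{i,w(j)}$ vanishes unless $w(j)\le i$. For $i\in[1,k]$ and $w$ the Grassmann permutation this forces $j\in[n-k+1,n]=\coeff{n}$, so $\Delta_{[1,k],K}(y)=0$ unless $K=\coeff{n}$ and Cauchy--Binet collapses:
\begin{equation*}
\Delta_{[1,k],I}(\eta(g)) \;=\; \Delta_{[1,k],\coeff{n}}(y)\cdot\Delta_{I,\coeff{n}}(g).
\end{equation*}
Setting $I=\coeff{k}$, where $\Delta_{[1,k],\coeff{k}}(\eta(g))=1$ because $\eta(g)\in N$ is unitriangular, determines $\Delta_{[1,k],\coeff{n}}(y)=\Delta_{\coeff{k},\coeff{n}}(g)^{-1}$. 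Specialising $g=\varphi(p)$ and invoking Lemma~\ref{l:phiminor}(a) (with $r=0$, $J=\coeff{n}$) to evaluate $\Delta_{\coeff{k},\coeff{n}}(\varphi(p))=\mcoeff{n}(p)/\mcoeff{k}(p)$ yields
\begin{equation*}
\frac{\BFZcheck{\minor{I}}(p)}{\BFZcheck{\mcoeff{k}}(p)} \;=\; \frac{\mcoeff{k}(p)\cdot\Delta_{I,\coeff{n}}(\varphi(p))}{\mcoeff{n}(p)},
\end{equation*}
and the proposition is equivalent to the Pl\"ucker identity
\begin{equation*}
\Delta_{I,\coeff{n}}(\varphi(p))\cdot\prod_{i\in I}\mcoeff{i}(p) \;=\; \twist{\minor{I}}(p)\cdot\mcoeff{n}(p).\qquad(\ast)
\end{equation*}

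To prove $(\ast)$, I factor $1/\mcoeff{i}(p)$ out of each row $i\in I$ of $\varphi(p)$, rewriting the left-hand side as $\det M$, where $M$ is the $k\times k$ matrix with entries $M_{ij}=[(\coeff{i}-\{i\})\cup\{j\}](p)$ for $i\in I$, $j\in\coeff{n}$. The crux of the argument is the identity
\begin{equation*}
M_{ij} \;=\; \mat_j^{\,T}\,\tmats{i},
\end{equation*}
which one verifies from the definitions as follows. By the contraction formula, $\mat_j^{\,T}\tmats{i}=\varepsilon_i\cdot\det(\mat_{\sigma^{k-1}(i)},\ldots,\mat_{\sigma(i)},\mat_j)$. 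When $j\in\coeff{i}-\{i\}$ this determinant has a repeated column and vanishes, matching the degenerate Pl\"ucker coordinate $M_{ij}=0$; otherwise, the column set is exactly $(\coeff{i}-\{i\})\cup\{j\}$, and the sign $\varepsilon_i$ is precisely calibrated (splitting into the cases $i\ge k$ and $i<k$ where the cyclic order on $\{1,\ldots,n\}$ respectively does not and does wrap around) so that the permutation taking the listed order to increasing order combines with $\varepsilon_i$ to give $+1$. Hence $M=\tmat_I^{\,T}\cdot\mat_{\coeff{n}}$ as $k\times k$ matrices, where $\tmat_I$ and $\mat_{\coeff{n}}$ denote the indicated column submatrices, and multiplicativity of determinants gives
\begin{equation*}
\det M \;=\; \det(\tmat_I)\cdot\det(\mat_{\coeff{n}}) \;=\; \twist{\minor{I}}(p)\cdot\mcoeff{n}(p),
\end{equation*}
which is $(\ast)$.

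The main obstacle is the case analysis needed to verify $M_{ij}=\mat_j^{\,T}\tmats{i}$: for $i\ge k$ the columns $\sigma^{k-1}(i),\ldots,\sigma(i)$ are already in increasing order so the non-degenerate contribution forces $j\ge i$ and no sign arises, but for $i<k$ the list $(\sigma^{k-1}(i),\ldots,\sigma(i))$ wraps around past $n$ and must first be sorted before inserting $j$, producing a permutation sign that must be matched against $\varepsilon_i=(-1)^{i(k-i)}$. Once this matching is carried out, no further sign issues remain and the proof concludes with a single application of the Cauchy--Binet-style identity $\det(\tmat_I^{\,T}\mat_{\coeff{n}})=\det\tmat_I\cdot\det\mat_{\coeff{n}}$.
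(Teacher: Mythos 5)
Your proof is correct, but it takes a genuinely different route from the paper's. The paper never works with $\eta(\varphi(p))$ directly: it replaces $q(\eta(g))$ by $q(wg^T)$, computes the entries of $w\varphi(p)^T$ and of $\varphi\!\left(\tmat\right)$, and invokes Proposition~\ref{p:twistcomputation} (the Turnbull-based formula for twists of Pl\"{u}cker coordinates of two-cyclic-interval type) to see that these two matrices differ only by row and column rescalings by coefficients; combining the resulting minor identity with Lemma~\ref{l:phiminor}(a) and Cauchy--Binet for the normalizing minor $\Delta_{[1,k],[1,k]}\left(w\varphi(p)^T\right)$ then gives the formula. You instead keep the decomposition $\eta(g)=yg^T$ with $y\in B_-w$, note that the rows $[1,k]$ of $y$ are supported on the columns $\coeff{n}$ so that Cauchy--Binet collapses to the single term $\Delta_{[1,k],\coeff{n}}(y)\,\Delta_{I,\coeff{n}}(g)$, and pin down $\Delta_{[1,k],\coeff{n}}(y)$ via the case $I=\coeff{k}$ using unitriangularity of $\eta(g)$ together with Lemma~\ref{l:phiminor}(a); this reduces the proposition to your identity $(\ast)$, which you prove through the factorization $M=(\tmat_I)^T\mat_{\coeff{n}}$, i.e.\ by recognizing the numerators of $\varphi(p)_{ij}$ for $j\in\coeff{n}$ as the scalar products $\bilineara{\tmats{i}}{\mat_j}$. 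Your route avoids Proposition~\ref{p:twistcomputation} and hence the Turnbull machinery of Section~\ref{s:twistPluecker} altogether, and the normalization via unitriangularity sidesteps any bookkeeping of minors of the permutation matrix; the paper's route, in exchange, produces the intermediate fact that $\varphi\!\left(\tmat\right)$ and $w\varphi(p)^T$ agree entrywise up to coefficient rescalings, which has independent interest. Two small points to tighten: the evaluation $\bilineara{\tmats{i}}{\mat_j}=\varepsilon_i\det(\mat_{\sigma^{k-1}(i)},\ldots,\mat_{\sigma(i)},\mat_j)$ follows from the defining property of the generalized cross product rather than the contraction formula; and the sign matching you sketch (sorting the wrapped list when $i<k$ against $\varepsilon_i=(-1)^{i(k-i)}$, and checking that for nondegenerate $j\in\coeff{n}$ the sorted position of $j$ is exactly the slot vacated by $i$) does work out and is precisely the content of the remark following the definition of the twist in Section~\ref{s:twist}, which you could simply cite.
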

\begin{proof}
Firstly, let $\overline{w}$ be the matrix whose entries are $\overline{w}_{i+k,i}=(-1)^k$ for $1\leq i\leq n-k$, $\overline{w}_{i+k-n,i}=1$ for $n-k+1\leq i\leq n$ and zero otherwise. Then $\overline{w}$ is a particular choice of representative for $w$, and we can write our arbitrary choice of representative in the form $w=t\overline{w}$, where $t=\text{diag}(t_1,\ldots ,t_n)\in T$.

Notice that $q(bg) = q(g)$ for any $g \in \text{SL}_n\big( \mathbb{C} \big)$ and any
$b \in B_{-}$.
By definition the twist $\eta(g)$ for $g \in N^w$ can be written in the form $bwg^T$ for some (unique) element $b \in B_{-}$ and therefore $q(\eta(g))$ coincides with
$q(wg^T)$. Hence
$$q\left(\vphantom{{2^2}^2}w\varphi(p)^T\right)=q\left(\vphantom{{2^2}^2}\eta\left(\vphantom{{2^2}^2}\varphi(p)\right)\right)=q\varphi(\BFZcheck{p})=\BFZcheck{p}$$
for any point $p \in \Gr^*_{k,n}$.
It is easy to see that, for
$1\leq i\leq k$, the $(i,j)$-matrix entry of
$w\varphi(p)^T=t\overline{w}\varphi(p)^T$ is
$$t_i\cdot \frac{\minor{ \coeff{j} - \{j\} \cup \sigma^k(i)}(p)}{\mcoeff{j}(p)}. $$
On the other hand, for $1\leq i\leq k$, the $(i,j)$-entry of $\varphi\!\left( \vphantom{A^b}
\raisebox{-1pt}{$\tmat$} \right)$ is
$$ \frac{\twist{ \minor{ \coeff{i} - \{i \} \cup \{j\}}}(p)}{\vphantom{2^{2^{2^2}}}\twist{\mcoeff{i}}(p)}, $$
which after employing Proposition~\ref{p:twistcomputation} and performing
cancellations may be re-expressed as
$$ \frac{ \minor{ \coeff{j} - \{j \} \cup \{\sigma^k(i)\}}(p)} {\vphantom{{{2^2}^2}^a}\minor{\sigma(\coeff{i})}(p)}. $$
So we may deduce that
\begin{equation} \label{e:minoreqn1}
 \Delta_{\{1,\ldots,k\},I} \left( \vphantom{{2^2}^2}\varphi\!\left( \vphantom{A^b} \raisebox{-1pt}{$\tmat$} \right)
\right) \ = \ 
t_1^{-1}\cdots t_k^{-1}\cdot 
\frac{\prod_{i \in I} \mcoeff{i}(p)}{\prod_{j=1}^k \minor{\sigma(\coeff{j})}(p)}
\cdot \Delta_{\{1,\ldots k\},I} \left( \vphantom{{2^2}^2} w\varphi(p)^T \right).
\end{equation}
By the definition of $q$, we have that
\begin{equation}
\label{e:minoreqn2}
\frac{\Delta_{\{1,\ldots k\},I} \left( \vphantom{{2^2}^2} w\varphi(p)^T \right)}
{\vphantom{{{2^2}^2}^2}\Delta_{\{1,\ldots k\},\{1,\ldots k\}}\left( \vphantom{{2^2}^2} w\varphi(p)^T \right)}
\ =\ 
\frac{ \minor{I}\left( \vphantom{{2^2}^2} q\left( \vphantom{{2^2}^2} w\varphi(p)^T\right) \right)}
{\vphantom{{{2^2}^2}^2}\mcoeff{k}\left( \vphantom{{2^2}^2} q \left( \vphantom{{2^2}^2} w\varphi(p)^T\right)\right)}
\ =\ 
\frac{ \BFZcheck{\minor{I}}(p) }{\vphantom{2^{2^{2^2}}}\BFZcheck{\mcoeff{k}}(p)}.
\end{equation}
Using the Cauchy-Binet formula, we have (summing
over all $k$-subsets $J$ of $\{1,\ldots ,n\}$):
\begin{align*}
\begin{split}
\Delta_{\{1,\ldots ,k\},\{1,\ldots k\}}\left(\vphantom{{2^2}^2} w\varphi(p)^T\right) &=
\sum_{J} \Delta_{\{1,\ldots ,k\},J}(w) \Delta_{J,\{1,\ldots ,k\}}\left( \vphantom{{2^2}^2} \varphi(p)^T\right) \\
&= \Delta_{\{1,\ldots ,k\},\{n-k+1,\ldots ,n\}}(w) \Delta_{\{n-k+1,\ldots n\},\{1,\ldots ,k\}}\left( \vphantom{{2^2}^2} \varphi(p)^T \right) \\
&= \Delta_{\{1,\ldots ,k\},\{n-k+1,\ldots ,n\}}(t\overline{w}) \Delta_{\{n-k+1,\ldots n\},\{1,\ldots ,k\}}\left( \vphantom{{2^2}^2} \varphi(p)^T \right) \\
&= t_1\cdots t_k\cdot \Delta_{\{1,\ldots k\},\{n-k+1,\ldots ,n\}}(\varphi(p)).
\end{split}
\end{align*}
Hence, by Lemma~\ref{l:phiminor}(a),
\begin{equation}
\label{e:minoreqn3}
\Delta_{\{1,\ldots k\},\{1,\ldots k\}}\left(\vphantom{{2^2}^2} w\varphi(p)^T\right)\ =\ t_1\cdots t_k \cdot
\frac{\mcoeff{n}(p)}{\vphantom{{2^2}^2}\mcoeff{k}(p)}.
\end{equation}

Combining equations~\eqref{e:minoreqn1},~\eqref{e:minoreqn2} and~\eqref{e:minoreqn3},
we may conclude that:
$$ \Delta_{\{1,\ldots ,k\},I} \left( \vphantom{{2^2}^2} \varphi\!\left( \vphantom{A^b} \raisebox{-1pt}{$\tmat$} \right)
 \right) \ = \
\frac{\prod_{i \in I} \mcoeff{i}(p)}{\vphantom{{{2^2}^2}^2} \prod_{j=1}^k \minor{\sigma(\coeff{j})}(p)}
\cdot \frac{ \BFZcheck{\minor{I}}(p) }{\vphantom{2^{2^{2^2}}}\BFZcheck{\mcoeff{k}}(p)}\cdot \frac{\mcoeff{n}(p)}{\vphantom{{{2^2}^2}^2}\mcoeff{k}(p)}.$$
Since we have:
$$\Delta_{\{1,\ldots ,k\},I} \left( \vphantom{{2^2}^2} \varphi\!\left( \vphantom{A^b} \raisebox{-1pt}{$\tmat$} \right)
 \right) = \frac{\twist{\minor{I}}(p)}{\vphantom{2^{2^{2^2}}}\twist{\mcoeff{k}}(p)}\quad \text{and} \quad 
\frac{\prod_{j=1}^{k-1} \mcoeff{j}(p)}{\vphantom{{{2^2}^2}^2} \twist{\mcoeff{k}}(p)} = 1,$$
we may perform cancellations and obtain the asserted formula.
\end{proof}

\section{Cluster structure of the Grassmannian}
\label{s:clusterstructure}
In this section we recall the description of the
cluster structure on the Grassmannian from~\cite{scott06}.
We firstly recall the definition of a skew-symmetric cluster algebra of geometric type.
Fix $l,m\in \mathbb{N}$ and let $\mathbb{F}$ denote the field of rational functions in indeterminates $u_1,\ldots ,u_l,\ldots ,u_{l+m}$ over $\mathbb{Q}$.
We consider \emph{seeds} $(\xx,\Q)$ consisting of a free generating set $\xx=\{x_1,x_2,\ldots ,x_{l+m}\}$ (known as an
\emph{extended cluster}) of $\mathbb{F}$ over $\mathbb{Q}$ and a quiver $\Q$ on
vertices $1,2,\ldots ,l+m$ (known as the exchange quiver), with no arrows between vertices labelled $l+1,l+2,\ldots ,l+m$ (the coefficients) and no two-cycles or loops. The tuple $\mathbf{x}=(x_1,x_2,\ldots ,x_l)$ is called a \emph{cluster},
with the remaining elements of $\xx$ known as \emph{coefficients}. The subquiver $Q$ of $\Q$ on vertices $1,2,\ldots ,l$ is called the \emph{principal part} of $\Q$. Vertices in $\widetilde{Q}$ but not
$Q$ are referred to as \emph{frozen vertices}.

Fix $r\in \{1,\ldots ,l\}$. The \emph{mutation} of $(\xx,\Q)$ at $r$ is the pair $(\xx',\Q')$, where $$\xx'=(x_1,\ldots ,x_{r-1},x'_r,x_{r+1},\ldots ,x_{l+m}),$$
with $x'_r$ defined by the
\emph{exchange relation}:
$$x_rx'_r=\prod_{j\rightarrow r} x_j+\prod_{r\rightarrow j} x_j,$$
with the first product taken over all arrows in $\Q$ ending at $r$, and the
second over all arrows in $\Q$ starting at $r$.
The quiver $\Q'$ is obtained from $\Q$ via quiver mutation. Firstly, an arrow $i\rightarrow j$ is added to $\Q$ for every path of length two from $i$ to $j$ passing through $r$, then the
arrows incident with $r$ are reversed, and finally a maximal collection of two-cycles is removed.

The \emph{cluster algebra} associated to $(\xx,\Q)$ is the $\mathbb{C}$-subalgebra of $\mathbb{F}$ generated by the free generating sets occurring in the seeds
obtained from $(\xx,\Q)$ by iterated mutation.

In~\cite{scott06} (see also~\cite{GSV03,GSV10}) it is shown that $\mathbb{C}[\Gr_{k,n}]$
is a cluster algebra, using certain diagrams in a disk known as Postnikov
diagrams~\cite{postnikov} (in~\cite{postnikov} they are referred to as alternating strand diagrams), which we now recall.

\begin{defn} \label{d:arrangement}
Label the vertices of a convex $2n$-sided polygon $1',1,2',2,\ldots ,n',n$
clockwise around its boundary. A \emph{Postnikov diagram} consists of $n$ oriented strands inside the polygon, satisfying the conditions:
\begin{enumerate}[(a)]
\item No strand intersects itself.
\item All intersections of strands are transversal.
\item There are finitely many such intersections.
\item Strand $i$ starts at $i$ and ends at $(i+k)'$.
\item Travelling along strand $i$, the directions of the strands crossing it alternate, starting with left to right and ending with right to left.
\item No two strands, regardless of the rest of diagram, are allowed to form an unoriented lens, as depicted in Figure~\ref{fig:lens}.
\end{enumerate}

\begin{figure}
\includegraphics[width=4cm]{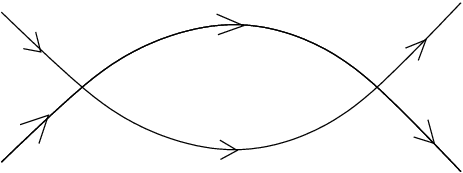}
\caption{Forbidden configuration}
\label{fig:lens}
\end{figure}

Postnikov diagrams are considered up to isotopy and the creation/annihilation
of a local oriented lens, as in Figure~\ref{fig:locallens} (or with the orientation in this figure reversed). Note that we specify the conditions on the boundary precisely.

\begin{figure}
\includegraphics[width=10cm]{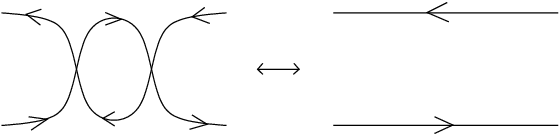}
\caption{Creation or annihilation of a local oriented lens}
\label{fig:locallens}
\end{figure}
\end{defn}

For an example of a Postnikov diagram, see Figure~\ref{fig:postnikovexample}.

\begin{figure}
\begin{center}
\includegraphics[width=8cm]{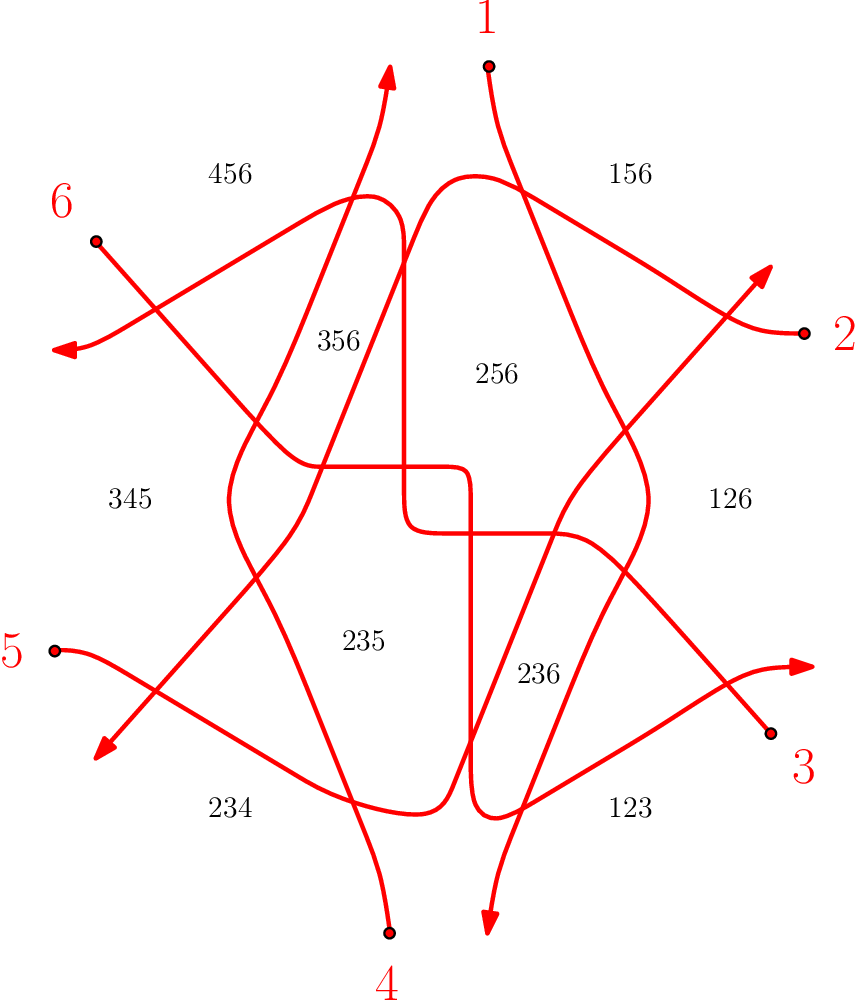} 
\end{center}
\caption{A Postnikov diagram (in the case $k=3,n=6$).}
\label{fig:postnikovexample}
\end{figure}

We orient the boundary of the polygon clockwise. Then
each face (i.e.\ connected component of the complement of the strands inside the polygon) of a Postnikov diagram either has an alternating boundary
or an oriented boundary. We refer to the former as \emph{alternating faces} and the latter as \emph{oriented faces}. Each alternating face $F$ is labelled with the subset $I_F$ of $\{1,\ldots ,n\}$ consisting of the strands which have $F$ on their left (as they are traversed).

\begin{prop} (Postnikov; see~\cite[Prop.\ 5]{scott06})
Let ${\postdiag}$ be a Postnikov diagram. Then we have:
\begin{enumerate}
\item For any alternating face $F$ of ${\postdiag}$, the cardinality of its labelling
set $I_F$ is exactly $k$.
\item There are exactly $k(n-k)+1$ alternating faces in ${\postdiag}$, with $n$ of them situated along the boundary. The remaining $(k-1)(n-k-1)$ alternating faces are
internal.
\item The labels of the alternating boundary faces are the cyclic intervals $\mcoeff{1},\ldots ,\mcoeff{n}$.
\end{enumerate}
\end{prop}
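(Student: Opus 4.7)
The plan is to analyze the face-labelling function $F\mapsto I_F$ via a local transition rule, pin down its values on boundary faces, and propagate globally. The key local principle is the \emph{transition rule}: if two faces $F,F'$ share an edge lying on strand $\alpha$, then $I_{F'}=I_F\,\triangle\,\{\alpha\}$, since $F$ lies on the left of $\alpha$ if and only if $F'$ lies on its right. In particular $\bigl||I_F|-|I_{F'}|\bigr|=1$ across every strand edge, which gives parity control on how $|I_F|$ propagates from face to face.

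My first task would be to prove (3) by identifying each of the $n$ alternating boundary faces as carrying a cyclic $k$-interval label. Each such face $B$ borders a boundary arc between two adjacent marked vertices and is locally met by exactly two strands (for instance, the arc between $(i+1)'$ and $i+1$ is met by strand $i+1-k$ terminating at $(i+1)'$ and by strand $i+1$ starting at $i+1$). Using the no-unoriented-lens condition~(f) together with the fact that every strand $\alpha$ runs from $\alpha$ to $(\alpha+k)'$, a careful topological argument identifies the set of strands having $B$ on their left as the $k$ consecutive indices $\coeff{i+1}=\{i-k+2,\ldots,i+1\}$: only these strands have their start and end boundary-points straddling~$B$ in the right rotational sense, and condition~(f) forbids any extra ``twisting'' that would contribute other strands. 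The cleanest route is probably to reduce, via isotopy and the oriented-lens moves, to a fixed reference diagram such as $\R_{k,n}$ of Section~\ref{s:regular}, where the cyclic boundary labelling is manifest, and verify that face labels are preserved under both kinds of allowed equivalences.

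For (1), I would propagate $|I_F|=k$ from the boundary to all alternating faces. Traversing $\partial F$ in the face-orientation direction, the transition rule alternately adds and removes elements from $I_F$ as we cross consecutive strand edges, because ``alternating'' means precisely that consecutive strand-edges of $\partial F$ run in opposite directions relative to this traversal. Combined with the fact that any alternating face is reachable from an alternating boundary face via a chain of adjacent faces (using the base value $|I_B|=k$ from (3)), an induction on the distance to the boundary establishes $|I_F|=k$ throughout. For (2), I would apply Euler's formula to the planar decomposition of the disk: with $c$ crossings, $V=2n+c$ vertices and $E=3n+2c$ edges ($2n$ boundary arcs plus $n+2c$ strand segments) give $F=n+c+1$ interior faces. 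A local analysis classifying the four corners at each crossing into two alternating and two oriented corners, together with a double-count of corners over faces, yields the relation between $c$ and the number of oriented faces; combined with the $n$ boundary alternating faces identified in (3), this isolates the count $k(n-k)+1$ of alternating faces, $n$ boundary and $(k-1)(n-k-1)$ internal.

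The main obstacle is expected to be Step~(3): the rigorous identification of the boundary face labels. While geometrically plausible, a clean argument must invoke condition~(f) to rule out ``twisted'' configurations in which the strands surrounding~$B$ fail to form a contiguous cyclic interval. Once (3) is secured, the propagation in (1) and the Euler-based counting in (2) reduce to routine local-to-global reasoning.
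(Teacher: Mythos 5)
The paper does not actually prove this statement---it attributes it to Postnikov and cites~\cite[Prop.\ 5]{scott06}---so there is no internal proof to compare against. Judged on its own terms, your plan is a reasonable blueprint, but each of the three steps currently has a genuine gap.

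For~(1), the ``alternately adds and removes'' observation you make while walking around $\partial F$ compares $I_F$ to the labels of its \emph{oriented} neighbours; it shows only that $|I_F|\equiv |I_B|\pmod 2$ along any path of face adjacencies, not that $|I_F|=k$. The missing ingredient is the diagonal move at a crossing: if $F_1,F_2$ are the two alternating corners at a crossing of strands $\alpha,\beta$, the alternation condition~(e) forces $I_{F_2}=(I_{F_1}\setminus\{\alpha\})\cup\{\beta\}$ (or vice versa), so $|I_{F_1}|=|I_{F_2}|$ exactly. You then need the alternating faces to be connected under this ``shared crossing'' adjacency (equivalently, connectedness of the dual bipartite graph $G_P$); with the base value from~(3) this gives the sharp count. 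Without the crossing-diagonal step the argument only constrains the parity.

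For~(2), your Euler count $V=2n+c$, $E=3n+2c$, interior faces $=n+c+1$ is correct, but the sentence ``a double-count of corners\ldots yields the relation between $c$ and the number of oriented faces'' is precisely where the content lies and is not carried out. Note also that $c$ and the number of oriented faces are \emph{not} invariants of the equivalence class (a lens annihilation lowers both by one), so you cannot read $c$ off from $k$ alone without first reducing to a diagram with no oriented lenses; you would then need to establish $c=k(n-k)$ for such a reduced diagram. This is a real lemma, not bookkeeping.

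For~(3), the ``cleanest route'' you propose---reduce to $\R_{k,n}$ by isotopy and oriented-lens moves---does not suffice: those two operations alone do not connect an arbitrary Postnikov diagram to $\R_{k,n}$. You need Proposition~\ref{p:connected} (quadrilateral moves), and then the separate observation that a quadrilateral move is supported in the interior and therefore does not change any boundary-face label. With that supplement the reduction is legitimate. Also, as a small point, the alternating boundary face immediately clockwise of $i$ sits on the arc from $i$ to $(i+1)'$ (cf.\ Remark~\ref{r:coefficientlocation}); the arc from $(i+1)'$ to $i+1$, which you describe, borders an oriented face.
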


\begin{remark} \label{r:coefficientlocation}
The label on the alternating boundary face immediately clockwise of $i$ is $\coeff{i}$.
\end{remark}

\begin{remark}
Note that the set of labels on a Postnikov diagram is not changed by the creation
or annihilation of oriented lenses.
\end{remark}

\begin{defn} \label{d:quadrilateralmove}
The \emph{quadrilateral move} on a Postnikov diagram (called geometric exchange in~\cite{scott06}) is the local move depicted in Figure~\ref{fig:quadrilateralmove}.
\end{defn}

\begin{figure}
\includegraphics[width=8cm]{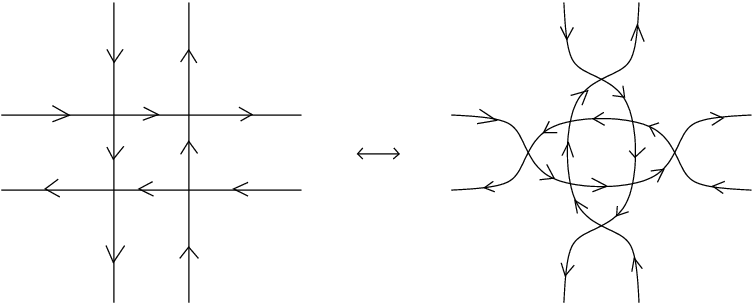}
\caption{The quadrilateral move}
\label{fig:quadrilateralmove}
\end{figure}

We recall the following result of Postnikov, referred to
in~\cite[Prop. 6]{scott06}.

\begin{prop} \label{p:connected}
Any two Postnikov diagrams are connected by a sequence of quadrilateral moves.
\end{prop}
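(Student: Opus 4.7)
The plan is to reduce to a normal form and then compare normal forms using a combinatorial invariant. I would first call a Postnikov diagram \emph{reduced} when no combination of isotopy and oriented lens annihilations strictly decreases its crossing number. Since the equivalence relation on Postnikov diagrams already absorbs isotopy and oriented lens moves, every class contains a reduced representative; a standard minimality argument shows that, for diagrams with the fixed Grassmann boundary $i \mapsto (i+k)'$, all reduced representatives have the same crossing count $k(n-k)$, and a quadrilateral move preserves reducedness.

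Next, I would track the $(k-1)(n-k-1)$ interior face labels (the boundary labels being fixed as $\coeff{1},\dots,\coeff{n}$ by Remark~\ref{r:coefficientlocation}) as the principal combinatorial invariant. The key observation is that a single quadrilateral move replaces exactly one interior label $S \cup \{a,c\}$ by $S \cup \{b,d\}$, where $\{a,c\}$ and $\{b,d\}$ are the two diagonals of the central square; all other face labels are unchanged.

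The main step is to show that any two reduced Postnikov diagrams $P, P'$ with the same boundary are connected by quadrilateral moves. I would argue by induction, say on the size of the symmetric difference of their interior face-label sets, or on $n$. For the inductive step, I would locate a strand that can be arranged to occupy the same position in both diagrams, cut the disk along it, and apply the inductive hypothesis to the two resulting subpictures; quadrilateral moves would then be used to align the strand arrangements on each side. Equivalently, one can pass to the dual plabic graph $G_P$ of Section~\ref{s:clusterstructure}: the quadrilateral move becomes a square (urban-renewal) move, and Postnikov's theorem~\cite{postnikov} asserts that reduced plabic graphs with a common trip permutation are connected by square moves together with contractions of degree-two vertices---which, on the strand-diagram side, are part of isotopy and hence already absorbed into our equivalence.

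The hard part will be this last connectedness step: pinpointing, whenever $P \neq P'$, a quadrilateral face in $P$ where a local move strictly reduces the distance to $P'$. Establishing this without running afoul of a global obstruction is the essential combinatorial content of Postnikov's theorem, and bypassing it seems to require a genuine analysis of how the strand arrangement of a reduced diagram determines, and is determined by, its alternating-face labels.
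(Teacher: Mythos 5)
The paper does not actually prove this proposition: it is recalled as a result of Postnikov, cited via \cite[Prop.\ 6]{scott06}, and no argument is given. So there is no ``paper's proof'' to compare against; the relevant question is whether your attempt constitutes a proof on its own.

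It does not, and you in fact say so yourself: the final paragraph concedes that ``the hard part'' --- finding, whenever $P\ne P'$, a quadrilateral face of $P$ at which a move strictly decreases the distance to $P'$ --- is exactly the combinatorial content of Postnikov's theorem, and that you do not see how to establish it. That concession is accurate, and it is precisely where the proof is missing. The surrounding framework is sensible (passing to reduced representatives, noting that a quadrilateral move swaps one interior face label $S\cup\{a,c\}$ for $S\cup\{b,d\}$ as in Remark~\ref{r:shortPluecker}, translating to the plabic-graph picture where the move becomes urban renewal), but none of it touches the real difficulty. Several intermediate assertions are also left unjustified and are themselves nontrivial: that all reduced representatives of a Grassmann-type diagram have exactly $k(n-k)$ crossings, that quadrilateral moves preserve reducedness, and that the set of interior face labels is a faithful enough invariant to drive an induction. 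The proposed inductive step (``locate a strand that can be arranged to occupy the same position in both diagrams, cut the disk along it, and apply the inductive hypothesis'') is not an argument: it is not shown that such a strand exists, that cutting yields two valid Postnikov (sub)diagrams of smaller complexity, or that moves performed on the pieces can be lifted back to moves on the original disk without disturbing the cut strand. In short, you have correctly identified the statement as Postnikov's connectivity theorem and sketched a plausible scaffolding around it, but the core of the theorem remains unproved.
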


Scott~\cite[Sect.\ 5]{scott06}, associates a quiver $\Q({\postdiag})$ to a Postnikov diagram ${\postdiag}$. The vertices correspond to the alternating faces and the arrows are as shown in Figure~\ref{fig:quiver} (with the thick arrow indicating the arrow in the quiver), cancelling any two-cycles. Note that cancelling two-cycles
corresponds to applying the maximal annihilation of oriented lenses as in Figure~\ref{fig:locallens} (from left to right).

\begin{figure}
\includegraphics[width=3cm]{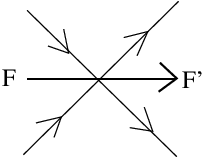}
\caption{Arrow in the quiver of a Postnikov diagram}
\label{fig:quiver}
\end{figure}

Figure~\ref{fig:postnikovquiver} shows an example of the quiver of a Postnikov
diagram.

\begin{figure}
\begin{center}
\includegraphics[width=15cm]{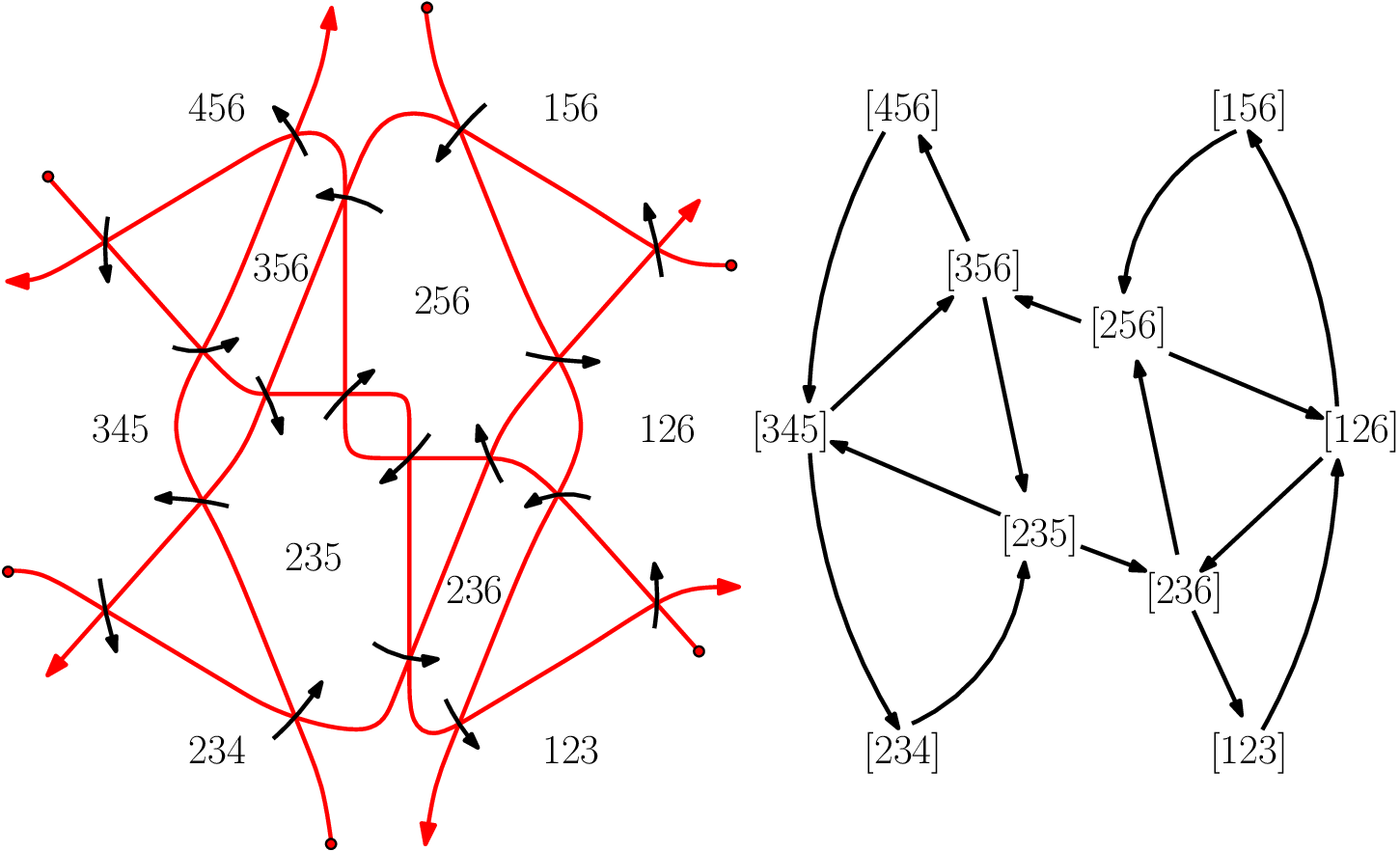}
\end{center}
\caption{The quiver associated to the Postnikov diagram in Figure~\ref{fig:postnikovexample}.}
\label{fig:postnikovquiver}
\end{figure}

\begin{theorem} \cite{scott06} \label{t:scott}
Let ${\postdiag}$ be a Postnikov diagram.
Let
$$\xx({\postdiag})=\{\minor{I_F}\,:\,F\text{ an alternating face of ${\postdiag}$}\},$$
$$\mathbf{x}({\postdiag})=\{\minor{I_F}\,:\,F \text{ an internal alternating face of ${\postdiag}$}\}.$$
Then:
\begin{enumerate}[(a)]
\item The pair $(\xx({\postdiag}),\Q(\postdiag))$ is a seed in the rational function field
$\mathbb{C}(\Gr_{k,n})$, with coefficients $\mcoeff{1},\ldots ,\mcoeff{n}$.
\item If ${\postdiag},{\postdiag}'$ are related by a single quadrilateral move, then $(\xx({\postdiag}'),\Q(\postdiag'))$ can be obtained from $(\xx({\postdiag}),\Q(\postdiag))$ by a single mutation. The exchange relation in this case is a short Pl\"{u}cker relation (see Remark~\ref{r:shortPluecker}, below).
\item The cluster algebra determined by $(\xx({\postdiag}),\Q(\postdiag))$ (for any ${\postdiag}$) is
$\mathbb{C}[\Gr_{k,n}]\subseteq \mathbb{C}(\Gr_{k,n})$.
\end{enumerate}
\end{theorem}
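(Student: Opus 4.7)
The plan is to follow Scott's strategy from~\cite{scott06}, verifying (a), (b), (c) in turn. For (a), I would first fix a specific reference Postnikov diagram, such as the rectangles diagram $\R_{k,n}$, and directly verify that $\xx(\R_{k,n})$ is algebraically independent in $\mathbb{C}(\Gr_{k,n})$. Since $|\xx(\R_{k,n})| = k(n-k)+1$ matches $\dim \Gr_{k,n} + 1$, which is the transcendence degree of the field of fractions of the affine cone, it suffices to exhibit one point where the differentials of the corresponding minors are linearly independent; this is the standard fact that the ``rectangular'' Plücker coordinates (indexed by $k$-subsets arising from Young diagrams fitting in a $k \times (n-k)$ rectangle) form a coordinate system on the big open Schubert cell. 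By Remark~\ref{r:coefficientlocation} the coefficients $\mcoeff{1},\ldots,\mcoeff{n}$ are precisely the boundary face labels.

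For (b), I would perform a purely local check around a quadrilateral move. Reading off the diagram, the internal face $F$ being mutated carries a label of the form $A \cup \{a,b\}$ for some common $(k-2)$-subset $A$ and four distinct indices $a,b,c,d$, with the four surrounding alternating faces labelled $A \cup \{a,c\}$, $A \cup \{b,c\}$, $A \cup \{a,d\}$, $A \cup \{b,d\}$. After the move, $F$ is relabelled $A \cup \{c,d\}$, and the short Plücker relation
$$\minor{A \cup \{a,b\}}\,\minor{A \cup \{c,d\}} = \minor{A \cup \{a,c\}}\,\minor{A \cup \{b,d\}} + \minor{A \cup \{a,d\}}\,\minor{A \cup \{b,c\}}$$
is precisely the mutation exchange relation at the vertex corresponding to $F$. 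One then checks using the arrow convention of Figure~\ref{f:quiver} that the local quiver modification coincides with Fomin--Zelevinsky quiver mutation at $F$ (all updates occur within a disk containing the four adjacent faces). Combined with Proposition~\ref{p:connected}, this shows every $(\xx(P), Q_P)$ is a seed of a common cluster algebra $\mathcal{A}$ and propagates (a) from $\R_{k,n}$ to arbitrary $P$.

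For (c) I would establish both inclusions separately. For $\mathbb{C}[\Gr_{k,n}] \subseteq \mathcal{A}$, I would prove the combinatorial fact that the labels occurring across all Postnikov diagrams exhaust the set of $k$-subsets of $\{1,\ldots,n\}$ (this can be done inductively by showing that for any $k$-subset $I$ one can build a Postnikov diagram with $I$ appearing as a face label, or by a direct quadrilateral-move argument starting from $\R_{k,n}$). Since the Plücker coordinates generate $\mathbb{C}[\Gr_{k,n}]$, this yields the inclusion. For the reverse $\mathcal{A} \subseteq \mathbb{C}[\Gr_{k,n}]$, I would use that $\mathbb{C}[\Gr_{k,n}]$ is a UFD in which the Plücker coordinates are prime: by the Laurent phenomenon every cluster variable is a Laurent polynomial in $\xx(\R_{k,n})$, so its only possible pole divisors are cut out by Plücker coordinates in $\xx(\R_{k,n})$, and such a pole along $\{\minor{I} = 0\}$ can be ruled out by mutating into a seed $(\xx(P),Q_P)$ with $\minor{I} \in \xx(P)$ and invoking the Laurent expansion there.

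The main obstacle is the second inclusion in part (c): converting the Laurent phenomenon in each individual seed into a global regularity statement requires careful combinatorial control over how each Plücker coordinate appears across different seeds, or equivalently a concrete realisation of the ``upper cluster algebra equals cluster algebra'' phenomenon in this setting, and in the general $\Gr_{k,n}$ case (of wild type for most $k,n$) one must work with only the finite family of ``Postnikov seeds'' rather than the full mutation class, which requires an independent argument such as that based on the comparison with $\mathbb{C}[N^w]$ sketched in the introduction (see also~\cite{GSV03,GSV10}).
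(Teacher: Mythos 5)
A preliminary remark: the paper you were given does not prove this statement at all --- it is quoted as background from Scott \cite{scott06} --- so there is no internal proof to compare against; your sketch has to be judged against Scott's actual argument. Parts (a) and (b) of your outline are essentially that argument: the labels of $\R_{k,n}$ are the minors $M_{k,n}(i,j)=\{1,\ldots,i\}\cup\{i+j+1,\ldots,j+k\}$ together with the cyclic intervals, and these do form a transcendence basis of the (homogeneous) function field, while the quadrilateral move is a purely local verification (Scott's ``geometric exchange''). One point to fix in (b): with the convention $a<b<c<d$ of Remark~\ref{r:shortPluecker}, the exchangeable (centre) pair is the \emph{crossing} pair $\minor{Jac},\minor{Jbd}$, and the four surrounding faces carry the non-crossing labels $Jab,Jbc,Jcd,Jad$; your version, with centre $A\cup\{a,b\}$ replaced by $A\cup\{c,d\}$, is a valid short Pl\"ucker relation only if $a,c,b,d$ is the cyclic order of the four strands, so you must pin down the cyclic order read off from the diagram before writing the exchange relation. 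Also, in (a) you should say explicitly why $Q_{\postdiag}$ has no loops or $2$-cycles and no arrows between boundary faces, since that is part of being a seed.

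The genuine gap is part (c), and chiefly the inclusion of the cluster algebra in $\mathbb{C}[\Gr_{k,n}]$. First, your step for the other inclusion --- that \emph{every} $k$-subset occurs as a face label of some Postnikov diagram --- is a nontrivial constructive statement (Scott devotes a real argument to it); ``this can be done inductively'' is a placeholder, not a proof. Second, the pole-exclusion argument is stated backwards: if a cluster variable $x$ threatened to have a pole along $\{\minor{I}=0\}$ with $\minor{I}\in\xx(\R_{k,n})$, passing to a seed \emph{containing} $\minor{I}$ gives nothing new; you need the seed in which $\minor{I}$ has been mutated away, and you must additionally know that the variables of that adjacent seed do not all vanish along $\{\minor{I}=0\}$ --- i.e.\ a coprimality statement for $\minor{I}$ and its exchange partner in the UFD $\mathbb{C}[\Gr_{k,n}]$ --- before you can conclude regularity in codimension one and invoke normality. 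Even then, this codimension-two patching must be carried out for \emph{all} cluster variables of the (wild) mutation class, not only those visible in Postnikov seeds, and one must also control the frozen variables, which are never mutated. Scott's resolution of exactly this difficulty is the comparison with the Berenstein--Fomin--Zelevinsky cluster structure on the cell $N^w$ together with the upper-bound results of \cite{CAIII}; you correctly identify this as the missing ingredient in your final paragraph, but you do not supply it, so as it stands part (c) is not proved by your proposal.
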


\begin{remark} \label{r:shortPluecker}
If $F$ is an alternating internal face about which a quadrilateral move can be
performed, then there are four indices $a<b<c<d$ in $\{1,\ldots ,n\}$ and a
$(k-2)$-subset $J$ with $J\cap \{a,b,c,d\}=\phi$ such that the exchange relation
corresponding to the quadrilateral move is the short Pl\"{u}cker relation:
$$\minor{Jac}\minor{Jbd}=\minor{Jab}\minor{Jcd}+\minor{Jad}\minor{Jbc},$$
where $Jac=J\cup \{a,c\}$, etc.
\end{remark}

\section{Dimer partition functions}
\label{s:dimer}
In this section, we consider the dual bipartite graph
of a Postnikov diagram. We give a weighting to the edges
of this graph and define a dimer partition function,
for each $k$-subset $I$ of $\{1,\ldots ,n\}$, in
terms of perfect matchings of a subgraph.
We show that this function is invariant under blow-up or
blow-down moves. We then show that a scaled version 
of each dimer partition function is, in addition,
independent of the choice of Postnikov diagram ${\postdiag}$. We then use this to associate a polynomial to each $k$-subset $I$.

Given a Postnikov diagram, ${\postdiag}$, a dual graph $G=G_{\postdiag}$ is defined as follows. The vertices of $G_{\postdiag}$ are in bijection with the oriented faces of ${\postdiag}$, and the edges correspond to points of intersection of the boundaries of the corresponding faces. The internal alternating faces of ${\postdiag}$ correspond to
internal faces of $G_{\postdiag}$, and the latter inherits a
$k$-subset label from $\postdiag$, which we replace with the associated minor.
For each $i\in \{1,\ldots ,n\}$, the boundary vertices $i$ and $i'$
of $\postdiag$ lie on the boundary of an oriented face of ${\postdiag}$
that corresponds to a boundary vertex of $G_{\postdiag}$, which
we label with $i$. Thus the graph $G_{\postdiag}$ lies inside a polygon with vertices $1,2,\ldots ,n$.
Furthermore, for each $i$, the boundary vertices $i$ and $(i+1)'$
of $\postdiag$ lie on the boundary of an alternating face of
$\postdiag$ which corresponds to part of the boundary of
$G_{\postdiag}$. We regard the part of the plane bounded by this
and the side of the above polygon between vertices $i$ and $i+1$ as a boundary face of $G_{\postdiag}$. Thus $G_{\postdiag}$ has
$n$ boundary faces in total.

The graph $G_{\postdiag}$ is bipartite: a vertex corresponding to a clockwise face (respectively, anticlockwise face) of ${\postdiag}$ is coloured black (respectively, white). The graph $G_{\postdiag}$ is considered up to local moves, shown in Figure~\ref{fig:blowdown}, corresponding to the creation or annihilation of
oriented lenses as in Figure~\ref{fig:locallens} (we also allow blow-ups and blow-downs for black vertices).
A blow-up at a vertex $v$ involves a partition of the edges incident with the vertex into two subsets; we restrict to the case where both subsets are nonempty, unless
$v$ is a boundary black vertex (see Figure~\ref{fig:blowdown} for an example).

We refer to the move corresponding to
the annihilation of an oriented lens as a \emph{blow-down}, and the move corresponding
to the creation of an oriented lens as a \emph{blow-up}.

\begin{figure}
\includegraphics[width=8cm]{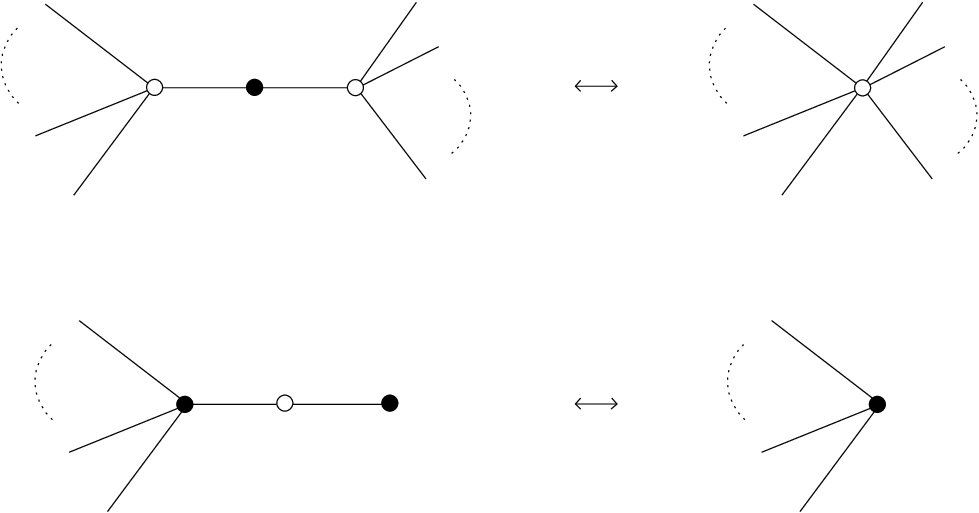}
\caption{The blow-down/blow-up move on the bipartite graph. In the lower figure, the black vertices on the right hand
side of the diagrams must be boundary
vertices.}
\label{fig:blowdown}
\end{figure}

For an example of the dual bipartite graph of the Postnikov diagram in Figure~\ref{fig:postnikovexample}, see Figure~\ref{fig:postnikovdual}. The
labels of the faces are shown in Figure~\ref{fig:postnikovdual2}.

\begin{figure}
\begin{center}
\includegraphics[width=10cm]{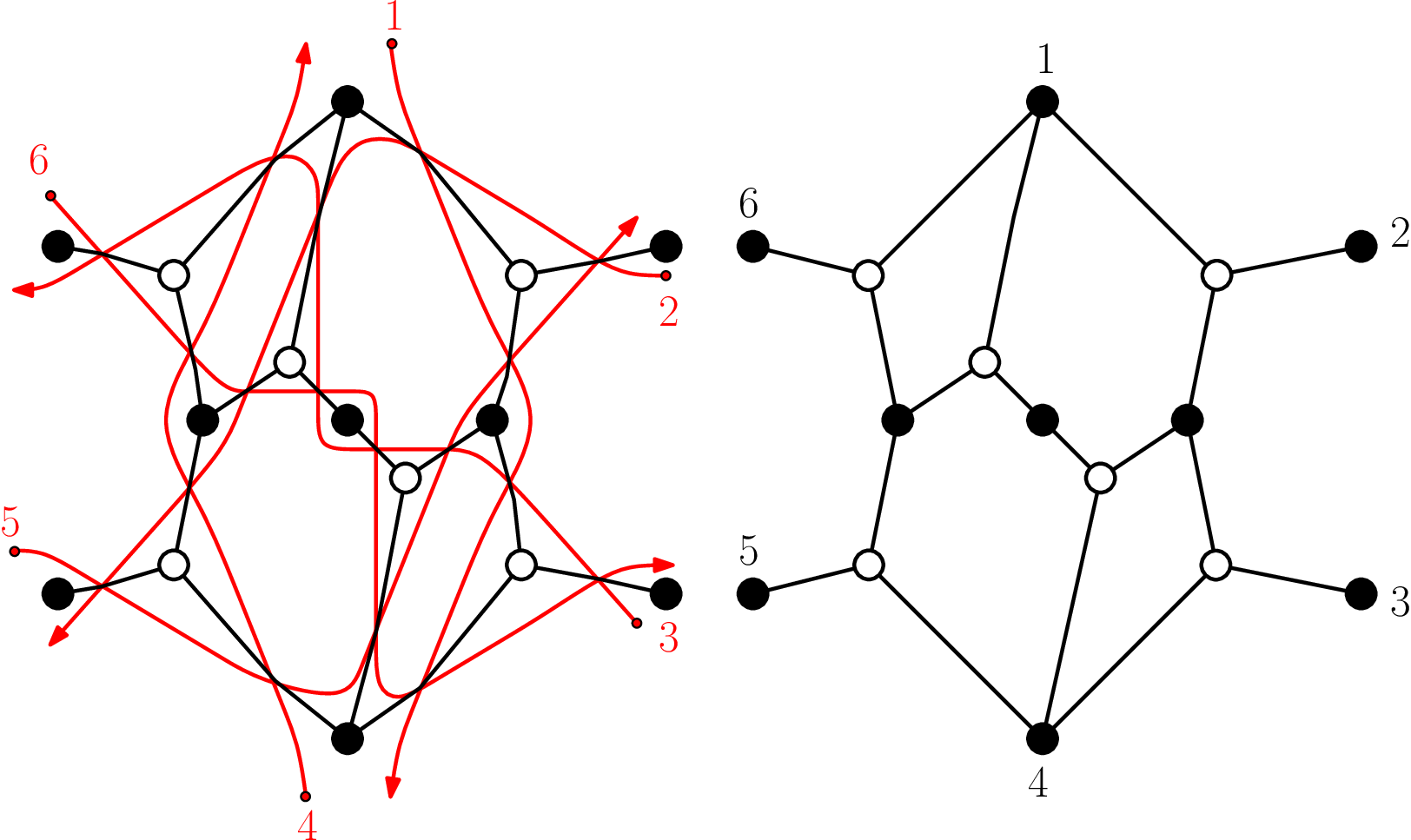}
\end{center}
\caption{The dual bipartite graph associated to the Postnikov diagram in Figure~\ref{fig:postnikovexample}.}
\label{fig:postnikovdual}
\end{figure}

\begin{figure}
\begin{center}
\includegraphics[width=6cm]{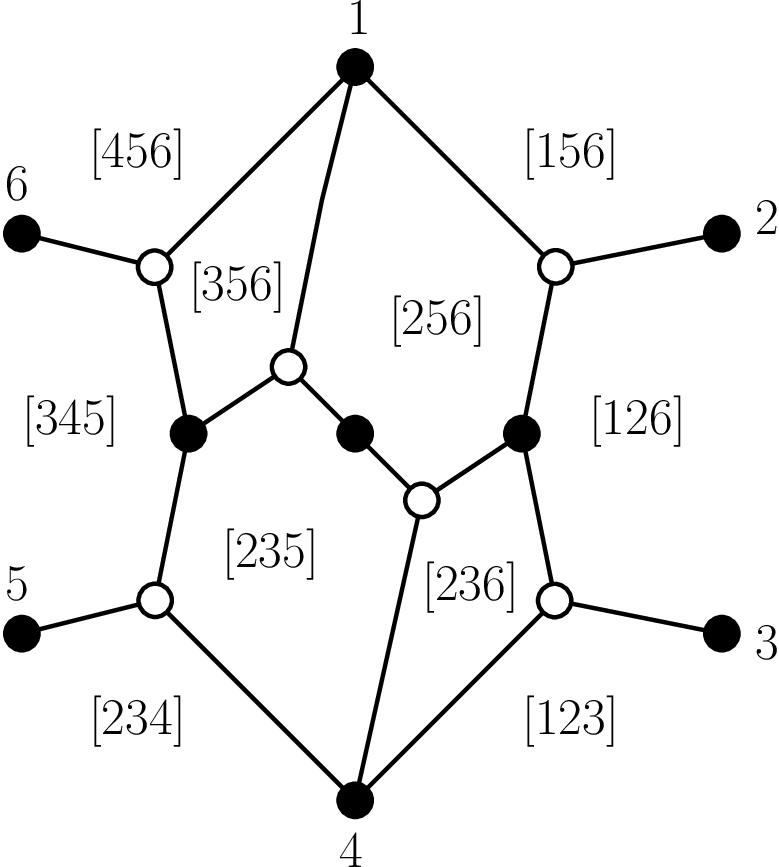}
\end{center}
\caption{The face labels on dual bipartite graph associated to the Postnikov diagram in Figure~\ref{fig:postnikovexample}.}
\label{fig:postnikovdual2}
\end{figure}

\begin{defn} \label{d:weights}
We give weights to the edges of $G_{\postdiag}$ as follows.
Let $e$ be an edge of $G_P$. Then we label
$e$ with the product $\weight_e$ of the Pl\"{u}cker coordinates labelling the faces of $G_P$ which are incident with the white vertex incident with $e$ but not with $e$ itself. See Figure~\ref{fig:weighting}.
\end{defn}

\begin{figure}
\psfragscanon
\psfrag{I1}{$\minor{I_1}$}
\psfrag{I2}{$\minor{I_2}$}
\psfrag{Id}{$\minor{I_d}$}
\psfrag{we}{$\weight_e$}
\includegraphics[width=4cm]{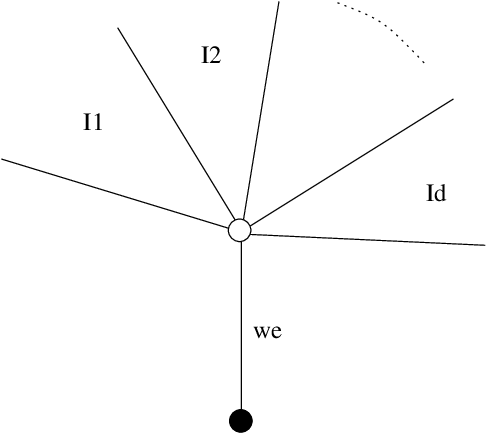}
\caption{Weighting of an edge in the bipartite graph: $\weight_e=\minor{I_1}\cdot \minor{I_2}\cdots \minor{I_d}$.}
\label{fig:weighting}
\end{figure}

\begin{remark}
Given an arbitary edge weighting $(\kappa_e)$ of
$G_P$ define the shear weight $y_F$ of a face $F$ of $G_P$ to be the following ratio of edge weights:
$$y_F=\prod_{e\in \partial F} m_e^{\varepsilon_F(e)},$$
where $\varepsilon_F(e)$ is $+1$ if the
orientation of $e$ induced by the
counter-clockwise orientation of $\partial F$
points from black to white, and is $-1$
otherwise. In the case where $\kappa_e=w_e$ for all edges $e$, it is
easy to check that the shear weights coincide
with the $y$-variables (as defined in~\cite{fominzelevinsky07})
$$y_F=\prod_{\stackrel{\scriptstyle F'\rightarrow F}{\vphantom{A^{2^b}} \text{ in }\widetilde{Q}(P)}} [I_F] \ \cdot
\prod_{\stackrel{\scriptstyle F\rightarrow F'}{\vphantom{A^{2^b}} \text{ in }\widetilde{Q}(P)}} [I_F]^{-1}.
$$
\end{remark}

Let ${\postdiag}$ be a Postnikov diagram with dual graph $G_{\postdiag}$.
We write $G_{\postdiag}(I)$ for the graph $G$ with the vertices labelled
by elements of $I$ removed from the boundary. We recall the following (as
discussed in the introduction).

The \emph{dimer partition function} of $G_{\postdiag}(I)$ is the sum:
\begin{equation*}
\dimerpart_{G_{\postdiag}(I)}=
\sum_{\dimerconfig} \weight_{\dimerconfig}=\sum_{{\dimerconfig}} \prod_{e\in
{\dimerconfig}} \weight_e,
\label{e:dimerpartitionfunction}
\end{equation*}
where ${\dimerconfig}$ varies over all dimer configurations of $G_{\postdiag}(I)$.
Since $G_{\postdiag}(I)$ is only defined up to blow-up and blow-down moves,
we need to check that the dimer partition function is invariant under these moves.

\begin{lemma} \label{l:blowdown}
Let $P$ be a Postnikov diagram and $I$ a $k$-subset of $\{1,\ldots ,n\}$.
Suppose that $G'$ is a graph obtained from $G=G_P(I)$ by applying the blow-down at the top Figure~\ref{fig:blowdown}.
Then a dimer configuration $\dimerconfig$ on $G$ induces a dimer
configuration $\dimerconfig'$ on $G'$ by removing the edge $e$ incident with the
black vertex in the middle of the diagram. Then the weights of $\dimerconfig$
and $\dimerconfig'$ coincide. A similar result holds if the vertex in
the middle is white.
\end{lemma}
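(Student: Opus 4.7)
My plan is a direct local analysis of how the weighted dimer data transforms under the blow-down. The first step is to unpack the top move of Figure~\ref{f:blowdown}: it contains a degree-two interior black vertex $b$ of $G$ with edges $e=(b,w_1)$ and $e''=(b,w_2)$ to two white vertices $w_1,w_2$, and I would denote by $A,B$ the two faces of $G$ flanking the pair $\{e,e''\}$; these are the only faces incident to $b$, and each is incident to both $w_1$ and $w_2$. The blow-down deletes $b$ together with $e$ and $e''$ and merges $w_1,w_2$ into a single white vertex $w$ of $G'$, while $A,B$ persist as faces of $G'$ carrying the same Pl\"ucker labels, and all other faces formerly incident to $w_1$ or $w_2$ become faces incident to $w$.

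For the induced configuration, note that $b$ is matched in $\dimerconfig$ by exactly one of $e,e''$; by symmetry I can take this matched edge to be $e$. Then $w_2$ is covered by some edge $e'\ne e''$, and $e'$ survives in $G'$ as an edge incident to the merged vertex $w$. Setting $\dimerconfig'=\dimerconfig\setminus\{e\}$, I would check that $w$ is covered in $\dimerconfig'$ precisely by $e'$ (since the only $\dimerconfig$-edge at $w_1$ was $e$) and that every other vertex retains its $\dimerconfig$-match, so $\dimerconfig'$ is a dimer configuration on $G'$.

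The heart of the proof is a single weight identity. By Definition~\ref{d:weights}, the faces at $w_1$ that flank $e$ are precisely $A,B$, so
$$\weight_e\;=\;\prod_{F\text{ at }w_1,\,F\ne A,B}[I_F].$$
After the blow-down the edge $e'$ is incident to $w$, and the faces at $w$ are the union of the faces at $w_1$ and at $w_2$ with $A,B$ identified, so $\weight_{e'}$ acquires precisely the factor $\weight_e$ in passing from $G$ to $G'$. Since weights depend only on the white endpoint and no other white vertex is affected by the move, every other edge of $\dimerconfig'$ keeps its weight; the factor gained at $e'$ then exactly compensates for the deletion of $e$, yielding $\weight_{\dimerconfig'}=\weight_{\dimerconfig}$. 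The white-middle case is easier still: at a degree-two white vertex $w$ both faces border the removed edge, forcing $\weight_e=1$, and merging the two black endpoints leaves every surviving edge's white endpoint unchanged. The main obstacle throughout is simply the bookkeeping that correctly identifies the sandwich faces $A,B$, so that the product expression for $\weight_e$ cancels exactly against the multiplicative correction at $e'$.
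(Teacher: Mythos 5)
Your proposal is correct and follows essentially the same route as the paper: the paper also reduces everything to the single weight identity $\weight_e\weight_f=\weight_{f'}$ (your $\weight'_{e'}=\weight_e\weight_{e'}$), where $f$ is the $\dimerconfig$-edge at the other white vertex, together with the observation that all other edge weights are unchanged; you simply spell out the face-counting behind the identity that the paper delegates to Figure~\ref{f:blowdownweights}.
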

\begin{proof}
Let $f$ denote the unique edge incident with one of the white vertices in the part of $G$ shown, and let $f'$ be the corresponding edge in $G'$. Then it is easy to see that
$\weight_e\weight_f=\weight_{f'}$ (see Figure~\ref{fig:blowdownweights}), and it follows that
$\weight_{\dimerconfig}=\weight_{\dimerconfig'}$. A similar argument applies in the case where the
vertex in the middle is white.
\end{proof}

\begin{figure}
\psfragscanon
\psfrag{[I1]}{$[I_1]$}
\psfrag{[I1]e}{$\scriptstyle \mathbf{[I_1]}$}
\psfrag{[I2]}{$[I_2]$}
\psfrag{[I3]}{$[I_3]$}
\psfrag{[I1][I2][I3]e}{$\scriptstyle \mathbf{[I_1][I_2][I_3]}$}
\psfrag{[I2][I3]e}{$\scriptstyle \mathbf{[I_2][I_3]}$}
\begin{center}
\includegraphics[width=15cm]{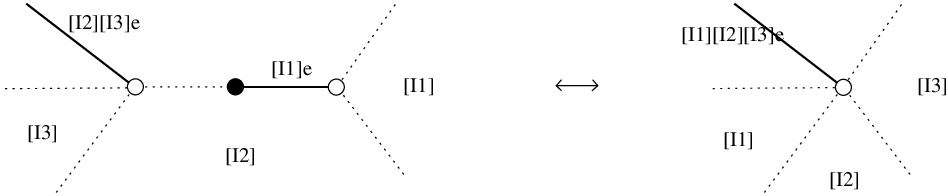}
\end{center}
\caption{Proof of Lemma~\ref{l:blowdown}.}
\label{fig:blowdownweights}
\end{figure}

As a corollary, we see that the dimer partition function $\dimerpart_{G_{\postdiag}}(I)$ is invariant under blow-ups and blow-downs:

\begin{corollary} \label{c:blowdown}
Let ${\postdiag}$ be a Postnikov diagram and $I$ a $k$-subset of $\{1,\ldots ,n\}$.
Then $\dimerpart_{G_{\postdiag}}(I)$ is invariant under a blow-up or blow-down move
applied to $G_P$ (see Figure~\ref{fig:blowdown}).
\end{corollary}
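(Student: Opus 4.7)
The plan is to upgrade the weight-preserving construction of Lemma~\ref{l:blowdown} to a statement about partition functions by observing that the construction is in fact a bijection on dimer configurations, and then summing over the two sides.

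First I would argue that, in the blow-down configurations depicted in Figure~\ref{f:blowdown}, the edge $e$ is forced to lie in every dimer configuration of $G = G_P(I)$. The central black vertex in the top picture (or central white vertex in the colour-swapped case) has $e$ as its only incident edge in $G$, so the matching constraint at this vertex forces $e\in\dimerconfig$. Consequently, removing $e$ together with the two vertices identified/contracted by the local move leaves a collection of edges that constitutes a valid dimer configuration $\dimerconfig'$ of $G'$: every remaining vertex was already matched by $\dimerconfig$, and, as is evident from the figure, the matching at each such vertex transports verbatim to $G'$ (the non-$e$ edges incident to the affected white vertex $w$ correspond bijectively to edges of $G'$ at the image of $w$).

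Next I would verify that the map $\dimerconfig \mapsto \dimerconfig'$ is bijective. Injectivity is immediate since $\dimerconfig$ is recovered by re-adjoining $e$. For surjectivity, given any dimer configuration $\dimerconfig'$ of $G'$, reinstating the edge $e$ and the collapsed vertices produces a dimer configuration of $G$: the central vertex and its neighbour are matched to each other via $e$, while the remaining vertices retain their existing partners, which by the local nature of the move suffices for a global matching of $G$. With this bijection in hand, Lemma~\ref{l:blowdown} supplies $\weight_{\dimerconfig}=\weight_{\dimerconfig'}$, and summing yields
$$\dimerpart_{G_P(I)} \;=\; \sum_{\dimerconfig}\weight_{\dimerconfig} \;=\; \sum_{\dimerconfig'}\weight_{\dimerconfig'} \;=\; \dimerpart_{G'},$$
which is invariance under blow-down; the blow-up case is the same identity read in reverse, and the case in which the central vertex is white is handled by exchanging the roles of black and white throughout (already indicated in the lemma).

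There is no real obstacle here — the corollary is essentially a bookkeeping consequence of Lemma~\ref{l:blowdown}, with the only substantive point being the verification that $e$ is a forced edge in every dimer configuration of $G$. That in turn is immediate from inspection of the local pictures in Figure~\ref{f:blowdown}, in which the middle vertex being collapsed has valence one in $G_P(I)$.
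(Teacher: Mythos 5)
Your overall strategy is the one the paper uses: take the weight-preserving map $\dimerconfig\mapsto\dimerconfig'$ of Lemma~\ref{l:blowdown}, observe that it is a bijection between dimer configurations of $G$ and of $G'$, and sum weights. The summation step is fine, but your justification of bijectivity rests on a false reading of the local move. In the generic (top) blow-down of Figure~\ref{f:blowdown}, the central black vertex is dual to the oriented lens and has valence \emph{two}, not one: a blow-up at an interior vertex is only permitted for a partition of the incident edges into two nonempty parts, so each of the two white neighbours of the central vertex retains at least one further edge, and the central vertex is joined to both of them. Consequently $e$ is not a forced edge lying in every dimer configuration; it is, by definition, whichever of the two edges at the central black vertex happens to belong to the given configuration $\dimerconfig$, and both possibilities genuinely occur --- indeed the two candidate edges generally carry different weights (in Figure~\ref{f:blowdownweights} they are $\minor{I_1}$ and $\minor{I_2}\minor{I_3}$), which is exactly why Lemma~\ref{l:blowdown} has to prove the identity $\weight_e\weight_f=\weight_{f'}$ rather than just transport a fixed edge. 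Your valence-one picture is correct only for the boundary-stalk move in the lower part of Figure~\ref{f:blowdown} (and, after deleting a boundary vertex in $I$, for its stalk edge), not for the interior move that is the main case.

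Because $e$ depends on $\dimerconfig$, the inverse map cannot be ``re-adjoin $e$'' for a fixed $e$, and both your injectivity and surjectivity arguments inherit the gap. The correct statement is easy but different: given a dimer configuration $\dimerconfig'$ of $G'$, the merged white vertex is covered by a unique edge $f'$, which corresponds to an edge of $G$ incident with exactly one of the two white vertices $w_1,w_2$; one must then adjoin the edge joining the central black vertex to the \emph{other} white vertex, and this choice is forced, giving a two-sided inverse. With that repair your argument coincides with the paper's proof; as written, the key claim on which your bijection rests is incorrect.
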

\begin{proof}
This follows from Lemma~\ref{l:blowdown}, noting that the map
$\dimerconfig\mapsto \dimerconfig'$ defined there is a bijection between the set of dimer configurations on $G$ and the set of dimer configurations on $G'$.
\end{proof}

\begin{defn}
Given a Postnikov diagram $P$ and a $k$-subset $I$ of $\{1,\ldots ,n\}$,
we set $$\uu_{G_{\postdiag}}(I):=\frac{\dimerpart_{G_{\postdiag}}(I)}{\prod_{x\in \mathbf{x} ({\postdiag})} x}.$$
Recall that $\mathbf{x}({\postdiag})$, the set of variables occuring in the denominator, is the set of cluster
variables corresponding to the internal alternating
faces of $\postdiag$ (see Theorem~\ref{t:scott}).
\end{defn}

By Corollary~\ref{c:blowdown}, $\uu_{G_{\postdiag}}(I)$ is also invariant under
blow-up and blow-down moves.
We shall next show that the scaled dimer partition
function $\uu_{G_{\postdiag}}(I)$ is invariant under the 
quadrilateral move (see Definition~\ref{d:quadrilateralmove}) 
as well.

Note that the effect of the quadrilateral move on the associated bipartite graph is sometimes known as \emph{urban renewal}; see Figure~\ref{fig:flipbipartite}
(see~\cite{ciucu98,KPW00}; the latter reference also mentioning G. Kuperberg). The proof of Proposition~\ref{prop:flipmoveinvariance} below uses the condensation principle discussed in~\cite{ciucu98,kuo04}.

\begin{prop} \label{prop:flipmoveinvariance}
Let ${\postdiag}$ be a Postnikov diagram, and $I$ a $k$-subset of $\{1,\ldots ,n\}$.
Suppose that ${\postdiag}'$ is obtained from ${\postdiag}$ by applying a quadrilateral move to ${\postdiag}$. Then $\uu_{G_{\postdiag}}(I)=\uu_{G_{\postdiag'}}(I)$.
\end{prop}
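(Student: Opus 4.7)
The plan is to reduce the invariance to a single identity between dimer partition functions, then prove it by a local urban renewal analysis. First, the quadrilateral move replaces exactly one entry of $\mathbf{x}({\postdiag})$: the label $\minor{I_F}$ of the mutated internal face $F$ becomes a new cluster variable $\minor{I_F'}$, and by Remark~\ref{r:shortPluecker} the corresponding exchange relation is the short Pl\"{u}cker relation $\minor{I_F}\minor{I_F'}=\minor{J_1}\minor{J_3}+\minor{J_2}\minor{J_4}$, where $\minor{J_1},\minor{J_2},\minor{J_3},\minor{J_4}$ are the labels of the four faces cyclically adjacent to $F$. Every other entry of $\mathbf{x}({\postdiag})$ is preserved, so
$$\prod_{x\in\mathbf{x}({\postdiag}')}x=\frac{\minor{I_F'}}{\minor{I_F}}\prod_{x\in\mathbf{x}({\postdiag})}x,$$
and the desired equality $\uu_{G_{\postdiag}}(I)=\uu_{G_{{\postdiag}'}}(I)$ is equivalent to the single identity
\begin{equation}\label{e:keyid}
\minor{I_F'}\cdot\dimerpart_{G_{\postdiag}(I)}=\minor{I_F}\cdot\dimerpart_{G_{{\postdiag}'}(I)}.
\end{equation}

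Next I would analyse the local picture. The face $F$ corresponds in $G_{\postdiag}$ to a $4$-cycle with vertices $b_1,w_1,b_2,w_2$ (alternating black--white); the quadrilateral move on ${\postdiag}$ realises \emph{urban renewal} on $G_{\postdiag}$, which swaps the colours of these four vertices and relabels the interior face $\minor{I_F}\mapsto\minor{I_F'}$. By Corollary~\ref{c:blowdown}, the scaled dimer partition function is unchanged under blow-ups and blow-downs, so after performing these I may assume that $G_{\postdiag}$ and $G_{{\postdiag}'}$ differ only inside a small disk containing this $4$-cycle and that each corner carries exactly one pendant edge leaving the disk. Using Definition~\ref{d:weights}, the weights of the four square edges and the four pendants can then be written explicitly in terms of $\minor{I_F}$ and $\minor{J_1},\ldots,\minor{J_4}$ (in $G_{\postdiag}$) or in terms of $\minor{I_F'}$ and $\minor{J_1},\ldots,\minor{J_4}$ (in $G_{{\postdiag}'}$), along with external data from the residual part of the graph, common to both. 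I would then partition dimer configurations of $G_{\postdiag}(I)$ and of $G_{{\postdiag}'}(I)$ according to their restrictions to this disk: seven restriction types arise, namely two ``fully internal'' types (each using one of the two parallel pairs of square edges), four ``mixed'' types (each using exactly one square edge plus the two opposite pendants), and one ``fully external'' type (using all four pendants). For each type the contribution factors as the product of the local edge weights used times the dimer partition function of the residual graph obtained by removing the four square vertices and any matched external neighbours, and the residual factor is common to both sides of~\eqref{e:keyid} for matched types.

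Identity~\eqref{e:keyid} is then established by a case-by-case comparison of these seven local contributions, which reduces to a purely local identity between the two sides. This local identity is, in essence, an instance of Kuo condensation (the ``spider lemma'') for weighted bipartite graphs; the short Pl\"{u}cker relation supplies precisely the algebraic input needed to match the contributions of the two ``fully internal'' types between $G_{\postdiag}$ and $G_{{\postdiag}'}$, while the remaining five types balance via a direct monomial comparison. The principal obstacle is the bookkeeping: $\minor{J_1},\ldots,\minor{J_4}$ enter many edges around the square in both $G_{\postdiag}$ and (rearranged by the colour swap) in $G_{{\postdiag}'}$, while $\minor{I_F}$ enters only the pendant weights at $w_1,w_2$ (the white square vertices of $G_{\postdiag}$) and dually $\minor{I_F'}$ enters only the pendant weights at $b_1,b_2$ (the white square vertices of $G_{{\postdiag}'}$); verifying that the seven local monomial comparisons consolidate cleanly into exactly one application of the short Pl\"{u}cker relation requires a careful and explicit tabulation of all these contributions.
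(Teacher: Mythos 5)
Your overall strategy is the one the paper itself uses: reduce the invariance to a single identity (your identity $\minor{I_F'}\cdot\dimerpart_{G_{\postdiag}(I)}=\minor{I_F}\cdot\dimerpart_{G_{{\postdiag}'}(I)}$ is a correct reformulation, since the two clusters differ only in $\minor{I_F}$ versus $\minor{I_F'}$), then compare the two dimer partition functions through the urban-renewal picture by grouping configurations according to their restriction to the local square, with the short Pl\"{u}cker relation as the algebraic input; the blow-up normalisation is legitimate by Corollary~\ref{c:blowdown}. This is essentially the paper's proof in slightly different clothing.

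However, the matching of local types you describe is wrong, and as stated the final verification would fail. Urban renewal exchanges ``matched inside the square'' and ``matched towards the exterior'' at each corner: a configuration of $G_{\postdiag}(I)$ using a pair of opposite square edges corresponds to the \emph{fully external} configuration of $G_{{\postdiag}'}(I)$ (all four corners matched away from the square), and, symmetrically, the fully external configuration of $G_{\postdiag}(I)$ corresponds to the \emph{two} configurations of $G_{{\postdiag}'}(I)$ that use a pair of square edges. (This is forced by bipartiteness: after the move the square's corners change colour, so the external attachment points effectively become the new corners, not pendant partners of same-coloured vertices.) Consequently the exchange relation $\minor{I_F}\minor{I_F'}=\minor{J_1}\minor{J_3}+\minor{J_2}\minor{J_4}$ must be applied \emph{twice} --- once to combine the two fully internal types of $G_{\postdiag}(I)$ against the single fully external type of $G_{{\postdiag}'}(I)$, and once to combine the two fully internal types of $G_{{\postdiag}'}(I)$ against the fully external type of $G_{\postdiag}(I)$ --- while only the four mixed types cancel by a direct monomial comparison. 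Your claim that the two fully internal types match ``between $G_{\postdiag}$ and $G_{{\postdiag}'}$'' via exactly one application of the relation, with the remaining five types (including the fully external one) balancing monomially, cannot work: the fully external local weight in $G_{\postdiag}(I)$ contains no factor that could absorb the discrepancy $\minor{I_F}$ versus $\minor{I_F'}$ against any single local type of $G_{{\postdiag}'}(I)$. The paper's proof makes exactly this two-sided use of the relation (its cases (a) and (c), with case (b) the monomial one), so once you correct the pairing your tabulation goes through.
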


\begin{proof}
Suppose that ${\postdiag}'$ is obtained from ${\postdiag}$ by applying a quadrilateral move. Then $G'=G_{\postdiag'}(I)$ is obtained from $G=G_{\postdiag}(I)$ by applying
the urban renewal move dual to a quadrilateral move, as illustrated in Figure~\ref{fig:flipbipartite}.

\begin{figure}
\psfragscanon
\psfrag{AnA}{$\scriptstyle A_{a+1}$}
\psfrag{BnB}{$\scriptstyle B_{b+1}$}
\psfrag{CnC}{$\scriptstyle C_{c+1}$}
\psfrag{DnD}{$\scriptstyle D_{d+1}$}
\psfrag{P1}{$\scriptstyle [P_1]$}
\psfrag{Q1}{$\scriptstyle [Q_1]$}
\psfrag{R1}{$\scriptstyle [R_1]$}
\psfrag{S1}{$\scriptstyle [S_1]$}
\psfrag{Pn}{$\scriptstyle [P_{a}]$}
\psfrag{Qn}{$\scriptstyle [Q_{b}]$}
\psfrag{Rn}{$\scriptstyle [R_{c}]$}
\psfrag{Sn}{$\scriptstyle [S_{d}]$}
\psfrag{I1}{$\scriptstyle [I_1]$}
\psfrag{I2}{$\scriptstyle [I_2]$}
\psfrag{I3}{$\scriptstyle [I_3]$}
\psfrag{I4}{$\scriptstyle [I_4]$}
\psfrag{J}{$\scriptstyle [J]$}
\psfrag{A}{$\scriptstyle A$}
\psfrag{B}{$\scriptstyle B$}
\psfrag{C}{$\scriptstyle C$}
\psfrag{D}{$\scriptstyle D$}
\psfrag{J'}{$\scriptstyle [J']$}
\psfrag{A'}{$\scriptstyle A'$}
\psfrag{B'}{$\scriptstyle B'$}
\psfrag{C'}{$\scriptstyle C'$}
\psfrag{D'}{$\scriptstyle D'$}
\psfrag{A1}{$\scriptstyle A_1$}
\psfrag{A2}{$\scriptstyle A_2$}
\psfrag{B1}{$\scriptstyle B_1$}
\psfrag{B2}{$\scriptstyle B_2$}
\psfrag{C1}{$\scriptstyle C_1$}
\psfrag{C2}{$\scriptstyle C_2$}
\psfrag{D1}{$\scriptstyle D_1$}
\psfrag{D2}{$\scriptstyle D_2$}
\includegraphics[width=15cm]{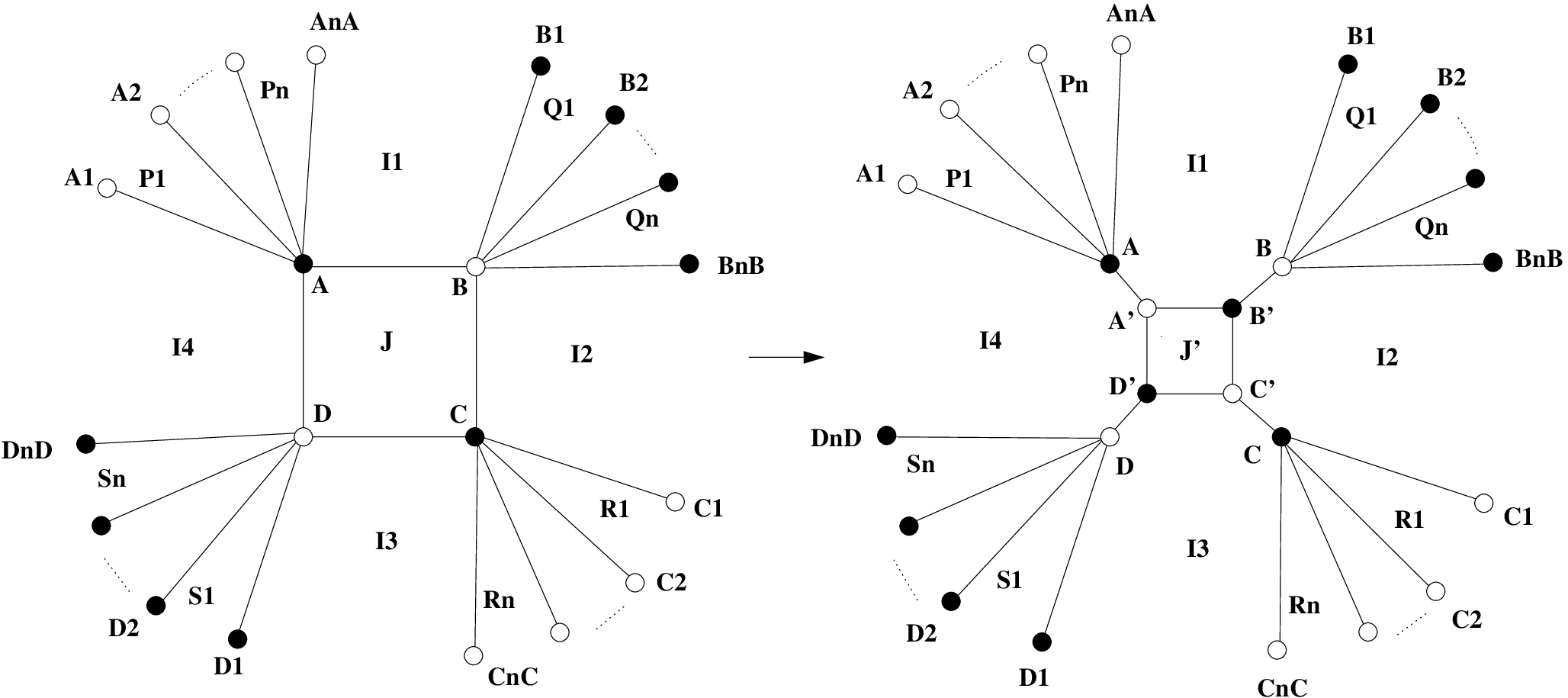}
\caption{The effect of a quadrilateral move on the associated bipartite graph.}
\label{fig:flipbipartite}
\end{figure}

For a dimer configuration ${\dimerconfig}$ of $G$, we shall write, as above, $\weight_{\dimerconfig}$ for the weight of $\dimerconfig$, and similarly $\weight'_{{\dimerconfig}'}$ for a dimer configuration ${\dimerconfig}'$ of $G'$.

Let $G_0$ be the full subgraph of $G$ on vertices $A,B,C,D$
and let $G'_0$ be the full subgraph of $G'$ on vertices $A,B,C,D,A',B',C',D'$.
For a dimer configuration ${\dimerconfig}$ of $G$, let ${\dimerconfig}_0$ be its restriction to $G_0$, and
for a dimer configuration ${\dimerconfig}'$ of $G'$, let ${\dimerconfig}'_0$ be its restriction to $G'_0$.
Since the edges in $G\setminus G_0$ can be identified with those of $G'\setminus G'_0$, a dimer configuration on $G\setminus G_0$ can be regarded as a dimer
configuration on $G'\setminus G'_0$.

In order to carry out the proof, we temporarily define an equivalence relation on dimer configurations of $G$ by stating that two dimer configurations in which the edges $AB,CD$ are replaced with $AD,BC$ are equivalent (i.e. the difference
between the dimer configurations, when regarded as elements in the $\mathbb{Z}_2$-vector space on the edges of $G$,
is the cycle ABCD in the graph $G$).

Similarly, on ${\dimerconfig}'$, we stipulate that two dimer configurations are equivalent when the edges $A'B',C'D'$
are replaced by $A'D',B'C'$. The equivalence class of a dimer configuration
${\dimerconfig}$ on $G$ (respectively, ${\dimerconfig}'$ on $G'$) will be denoted $\overline{{\dimerconfig}}$
(respectively, $\overline{{\dimerconfig}'}$).

We define a map $\varphi$ from equivalence classes of dimer configurations on $G$ to equivalence classes of dimer configurations
on $G'$ as follows. Let $\overline{{\dimerconfig}}$ be an equivalence class, where ${\dimerconfig}$ is a
dimer configuration on $G$.

\begin{enumerate}[(a)]
\item
If ${\dimerconfig}_0=\{AB,CD\}$ or $\{AD,BC\}$, let ${\dimerconfig}'$ be ${\dimerconfig}$ with ${\dimerconfig}_0$ replaced by
${\dimerconfig}'_0=\{AA',BB',CC',DD'\}$ and set $\varphi(\overline{\dimerconfig})=\overline{\dimerconfig}'$.
\item
If ${\dimerconfig}_0$ consists of a single edge $XY$ where $X,Y\in \{A,B,C,D\}$, then
let ${\dimerconfig}'$ be ${\dimerconfig}$ with ${\dimerconfig}_0$ replaced by the edge between the vertices in
$\{A',B',C',D'\}\setminus \{X',Y'\}$, together with the two edges $XX'$ and $YY'$. Set $\varphi(\overline{\dimerconfig})=\overline{\dimerconfig}'$.
\item
If ${\dimerconfig}_0$ is empty, then set $\varphi(\overline{\dimerconfig})=\overline{\dimerconfig}'$ where ${\dimerconfig}'_0=\{A'B',C'D'\}$.
Note that the other element of $\overline{\dimerconfig}'$ contains $\{A'D',B'C'\}$ instead.
\end{enumerate}

Then the following holds:

\textbf{Claim}:
Let $C$ be an equivalence class of dimer configurations on $G$.
Then
$$\frac{\sum_{{\dimerconfig}\in C} \weight_{\dimerconfig}}
{\prod_{x\in \mathbf{x}({\postdiag})}x}
=\frac{\sum_{{\dimerconfig}'\in \varphi(C)} \weight'_{{\dimerconfig}'}}{\prod_{x\in \mathbf{x}({\postdiag}')}x}.$$

\textbf{Proof of claim}:
By the definition of $\varphi$, it is enough to consider the contribution
to $\weight_{\dimerconfig}$ from edges incident with vertices in $G_0$ (respectively, $G'_0$).
Also, since $\mathbf{x}({\postdiag})$ and $\mathbf{x}({\postdiag}')$ coincide
apart from $[J]$ and $[J']$, it is enough to replace the denominator on the left (respectively, right)
hand side with $[J]$ (respectively, $[J']$).

\noindent \textbf{Case (a)}: On the left hand side we have:
\begin{align*}
\begin{split}
&\frac{\minor{Q_1}\cdots \minor{Q_b}\minor{I_2}\cdot \minor{S_1}\cdots \minor{S_d}\minor{I_4}+\minor{Q_1}\cdots \minor{Q_b}\minor{I_1}\cdot \minor{S_1}\cdots \minor{S_d}\minor{I_3}}{\minor{J}} \\
&=\frac{\minor{Q_1}\cdots \minor{Q_b}\minor{S_1}\cdots \minor{S_d}(\minor{I_2}\minor{I_4}+\minor{I_1}\minor{I_3})}{\minor{J}} \\
&= \frac{\minor{Q_1}\cdots \minor{Q_b}\minor{S_1}\cdots \minor{S_d}(\minor{J}\minor{J'})}{\minor{J}} \\
&= \minor{Q_1}\cdots \minor{Q_b}\minor{S_1}\cdots \minor{S_d}\minor{J'}.
\end{split}
\end{align*}
On the right hand side we have:
$$\frac{\minor{J'}\minor{Q_1}\cdots \minor{Q_b}\minor{J'}\minor{S_1}\cdots \minor{S_d}}{\minor{J'}} = \minor{Q_1}\cdots \minor{Q_b}\minor{S_1}\cdots \minor{S_d}\minor{J'},$$
as required.

\noindent \textbf{Case (b)}: Suppose that ${\dimerconfig}_0=\{CD\}$ so that ${\dimerconfig}'_0=\{A'B',CC',DD'\}$.
Assume that the edge in ${\dimerconfig}$ (and thus also in ${\dimerconfig}'$) incident with $A$ is
$e_A$ and the edge in ${\dimerconfig}$ incident with $B$ is $e_B$.
On the left hand side we have:
$$\frac{\weight_{e_A}\weight_{e_B}\weight_{CD}}{\minor{J}} = \frac{\weight_{e_A}\weight_{e_B}\minor{S_1}\cdots \minor{S_d}\minor{I_4}}{\minor{J}}.$$
Noting that $\weight'_{e_A}=\weight_{e_A}$ and $\weight'_{e_B}=\weight_{e_B}/\minor{J}$, on the right hand
side we obtain:
\begin{align*}
\frac{\weight'_{e_A}\weight'_{e_B}\weight'_{A'B'}\weight'_{CC'}\weight'_{DD'}}{\minor{J'}}
&= \frac{\weight_{e_A}\weight_{e_B}\minor{I_4}\minor{J'}\minor{S_1}\cdots \minor{S_d}}{\minor{J}\minor{J'}}.
\end{align*}
The other possibilities are similar.

\noindent \textbf{Case (c)}:
We suppose that the edges incident with $A,B,C,D$ in ${\dimerconfig}$ (and thus in either
element of $\varphi(\overline{\dimerconfig})$) are $e_A,e_B,e_C$ and $e_D$ respectively.
On the left hand side we have
$$\frac{\weight_{e_A}\weight_{e_B}\weight_{e_C}\weight_{e_D}}{\minor{J}}.$$
Note that for either dimer configuration ${\dimerconfig}'$ in $\varphi(\overline{\dimerconfig})$
we have $\weight'_{e_A}=\weight_{e_A}$, $\weight'_{e_B}=\weight_{e_B}/\minor{J}$, $\weight'_{e_C}=\weight_{e_C}$ and
$\weight'_{e_D}=\weight_{e_D}/\minor{J}$, so on the right hand side we have:
\begin{align*}
\frac{\weight_{e_A}\weight_{e_B}\weight_{e_C}\weight_{e_D}
(\minor{I_4}\minor{I_2}+\minor{I_3}\minor{I_1})}{\minor{J}^2\minor{J'}} &=
\frac{\weight_{e_A}\weight_{e_B}\weight_{e_C}\weight_{e_D}\minor{J}\minor{J'}}{\minor{J}^2\minor{J'}} \\
&=\frac{\weight_{e_A}\weight_{e_B}\weight_{e_C}\weight_{e_D}}{\minor{J}}.
\end{align*}
The claim is proved and the proposition follows.
\end{proof}

We note that a similar correspondence between perfect matchings under mutation is used independently in~\cite[Thm. 4.7]{GK13}.

\begin{corollary}
Let $I$ be a $k$-subset of $\{1,\ldots ,n\}$.
Then $\uu_{G_{\postdiag}}(I)$ does not depend on the choice of Postnikov diagram ${\postdiag}$.
\end{corollary}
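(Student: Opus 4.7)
The statement is an immediate corollary of two earlier results in the paper, and the proof plan is essentially a one-line induction. First I would invoke Proposition~\ref{p:connected}, which asserts that any two Postnikov diagrams are connected by a finite sequence of quadrilateral moves. Thus given two Postnikov diagrams ${\postdiag}$ and ${\postdiag}'$, there exists a chain
$$
{\postdiag} = {\postdiag}_0, {\postdiag}_1, \ldots, {\postdiag}_m = {\postdiag}',
$$
in which each ${\postdiag}_{i+1}$ is obtained from ${\postdiag}_i$ by a single quadrilateral move.

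Next I would apply Proposition~\ref{prop:flipmoveinvariance} to each link in this chain: at each step we have the equality $\uu_{G_{{\postdiag}_i}}(I) = \uu_{G_{{\postdiag}_{i+1}}}(I)$. Chaining these equalities (a trivial induction on $m$) yields $\uu_{G_{{\postdiag}}}(I) = \uu_{G_{{\postdiag}'}}(I)$, as required. Since ${\postdiag}$ and ${\postdiag}'$ were arbitrary, this shows $\uu_{G_{{\postdiag}}}(I)$ is independent of the choice of Postnikov diagram.

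There is no real obstacle in this argument, since all the substantial combinatorial work has been done in Proposition~\ref{prop:flipmoveinvariance} (invariance under the urban renewal move) and in Corollary~\ref{c:blowdown} (invariance under blow-up/blow-down, which is needed implicitly because $G_{\postdiag}$ is only defined up to these moves, ensuring that $\uu_{G_{\postdiag}}(I)$ is well-defined in the first place). The only subtle point worth noting explicitly is that for $\uu_{G_{{\postdiag}_{i+1}}}(I)$ to be meaningful, the quadrilateral move must be applicable (which is precisely the hypothesis of Proposition~\ref{prop:flipmoveinvariance}); Postnikov's result guarantees we may restrict to such moves throughout the connecting chain.
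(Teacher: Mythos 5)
Your proposal is correct and follows the paper's own proof exactly: the paper likewise deduces the corollary by combining Proposition~\ref{prop:flipmoveinvariance} (invariance under a single quadrilateral move) with Proposition~\ref{p:connected} (connectivity under quadrilateral moves), the chaining being implicit. Your additional remark about Corollary~\ref{c:blowdown} ensuring well-definedness is consistent with the paper's setup and adds nothing that conflicts with it.
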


\begin{proof}
This follows from Proposition~\ref{prop:flipmoveinvariance} and Proposition~\ref{p:connected}.
\end{proof}

We may therefore write $\uu(I)$ for the polynomial $\uu_{G_{\postdiag}}(I)$ for any choice of Postnikov diagram ${\postdiag}$.

\section{Some regular Postnikov diagrams}
\label{s:regular}

In this section, we introduce a regular Postnikov diagram, $\R_{k,n}$, for which the dual bipartite graph is,
apart from a few extra edges, part of a hexagonal tiling of the plane.
We assume in this section that $k\not=1,n-1$.
Reversing the strands in the corresponding diagram for $\Gr(n-k,n)$ gives
rise to another regular Postnikov diagram for $\Gr_{k,n}$, which we
denote by $\R^*_{n-k,n}$. The $k$-subsets labelling $\R_{k,n}$ are disjoint
unions of two cyclic intervals (or coefficients, which consist of just one
cyclic interval),
and we show that the $k$-subsets labelling $\R^*_{n-k,n}$ also have this
form, and in fact that the corresponding Pl\"{u}cker coordinates are exactly the twists
of the Pl\"{u}cker coordinates corresponding to the labels of $\R_{k,n}$, up to a product
of coefficient Pl\"{u}cker coordinates.

The regular form of $\R^*_{n-k,n}$ means that, given any $k$-subset
$I$ labelling $\R_{k,n}$, there is a unique dimer configuration on the
bipartite graph $G_{\R^*_{n-k,n}}(I)$.
This allows us to compute the corresponding scaled dimer partition function $\uu_{\R^*_{n-k,n}}(I)$ explicitly and thus
to show that for $k$-subsets $I$ labelling $\R_{k,n}$, the twist of $\minor{I}$ coincides with $\uu_{\R^*_{n-k,n}}(I)$.
The main result will then be shown in Section~\ref{s:mainresult},
using the fact that the twists of Pl\"{u}cker coordinates and the scaled dimer partition functions both satisfy the Pl\"{u}cker relations.

Let $\R_{k,n}$ be the diagram defined as follows. We take a tiling
of the plane by regular hexagons and equilateral triangles in which each
hexagon has 6 triangles adjacent to it and each triangle has 3 adjacent
hexagons. The edges in the tiling are assumed to be horizontal or at
an angle of $\pm \pi/3$ to the horizontal.

We consider the subset of the tiling obtained by taking $n-k-1$ rows of
$k-1$ hexagons, with each row above and to the right of the previous row.
We also include all of the triangles in the tiling adjacent to the
hexagons.

The boundary triangles on the left of the diagram are labelled
$T_{k+1},T_{k+2},\ldots ,T_n$ from bottom to top, and the boundary triangles
at the top are labelled $T_1,T_2,\ldots ,T_k$ from left to right.
We label the hexagon which is $i$ hexagons across in the $j$th row from
the bottom of the diagram by $H_{k,n}(i,j)$, for $1\leq i\leq k-1$ and
$1\leq j\leq n-k-1$.
We label the incomplete hexagons along the left of the diagram by
$H_{k,n}(0,j)$, with $H_{k,n}(0,j)$ to the left of
$H_{k,n}(1,j)$.
We label the incomplete hexagons along the top of the diagram by
$H_{k,n}(i,n-k)$, with $H_{k,n}(i,n-k)$ above $H_{k,n}(i,n-k-1)$.

For an example ($k=4,n=9$), see Figure~\ref{fig:R49hexagons}.

\begin{figure}
\psfragscanon
\psfrag{H(0,1)}{$\scriptstyle H_{4,9}(0,1)$}
\psfrag{H(0,2)}{$\scriptstyle H_{4,9}(0,2)$}
\psfrag{H(0,3)}{$\scriptstyle H_{4,9}(0,3)$}
\psfrag{H(0,4)}{$\scriptstyle H_{4,9}(0,4)$}
\psfrag{H(0,5)}{$\scriptstyle H_{4,9}(0,5)$}
\psfrag{H(1,5)}{$\scriptstyle H_{4,9}(1,5)$}
\psfrag{H(2,5)}{$\scriptstyle H_{4,9}(2,5)$}
\psfrag{H(3,5)}{$\scriptstyle H_{4,9}(3,5)$}
\psfrag{H(1,1)}{$\scriptstyle H_{4,9}(1,1)$}
\psfrag{H(2,1)}{$\scriptstyle H_{4,9}(2,1)$}
\psfrag{H(3,1)}{$\scriptstyle H_{4,9}(3,1)$}
\psfrag{H(1,2)}{$\scriptstyle H_{4,9}(1,2)$}
\psfrag{H(2,2)}{$\scriptstyle H_{4,9}(2,2)$}
\psfrag{H(3,2)}{$\scriptstyle H_{4,9}(3,2)$}
\psfrag{H(1,3)}{$\scriptstyle H_{4,9}(1,3)$}
\psfrag{H(2,3)}{$\scriptstyle H_{4,9}(2,3)$}
\psfrag{H(3,3)}{$\scriptstyle H_{4,9}(3,3)$}
\psfrag{H(1,4)}{$\scriptstyle H_{4,9}(1,4)$}
\psfrag{H(2,4)}{$\scriptstyle H_{4,9}(2,4)$}
\psfrag{H(3,4)}{$\scriptstyle H_{4,9}(3,4)$}
\psfrag{T1}{$\scriptstyle T_1$}
\psfrag{T2}{$\scriptstyle T_2$}
\psfrag{T3}{$\scriptstyle T_3$}
\psfrag{T4}{$\scriptstyle T_4$}
\psfrag{T5}{$\scriptstyle T_5$}
\psfrag{T6}{$\scriptstyle T_6$}
\psfrag{T7}{$\scriptstyle T_7$}
\psfrag{T8}{$\scriptstyle T_8$}
\psfrag{T9}{$\scriptstyle T_9$}
\includegraphics[width=9cm]{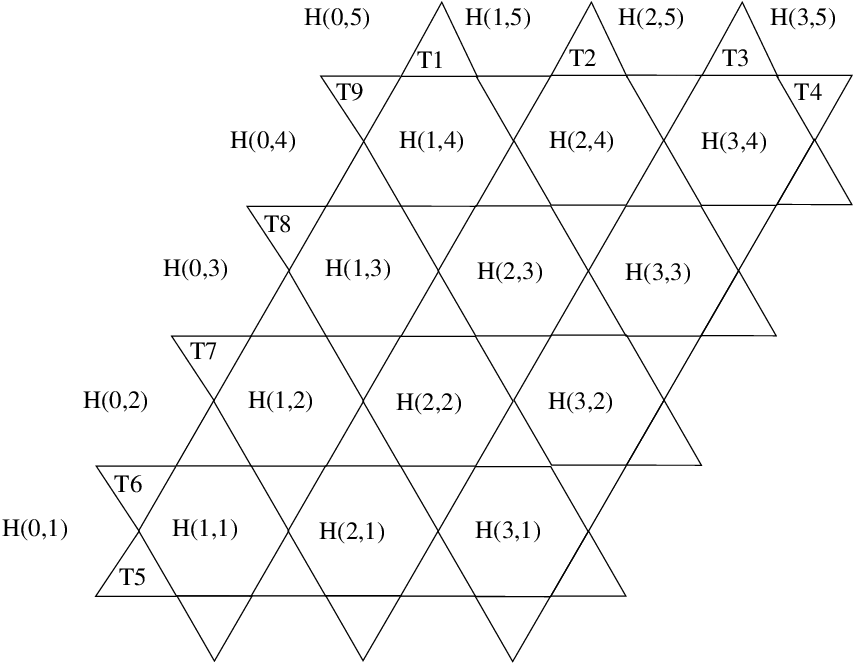}
\caption{Subset of the planar tiling for $k=4$, $n=9$, used in construction of
$\R_{4,9}$.}
\label{fig:R49hexagons}
\end{figure}

Next, the edges in the tiling are oriented by stipulating that
each horizontal edge is oriented left to right, that each triangle
is oriented (clockwise or anticlockwise), and that each hexagon has an
alternating orientation.

Boundary triangles which are oriented clockwise
(i.e.\ $T_k,T_{k+2},T_{k+3},\ldots ,T_n$) are split at the boundary vertex which
is not incident with any hexagon, while for the remaining boundary triangles the
two edges meeting at the boundary vertex are extended beyond it. The strand
starting at triangle $T_i$ is labelled $i$, for $i\in \{1,\ldots ,n\}$.
See Figure~\ref{fig:R49} for $\R_{4,9}$.
The following is easy to check.

\begin{lemma} \label{l:rknpostnikov}
The diagram $\R_{k,n}$ is a Postnikov diagram.
\end{lemma}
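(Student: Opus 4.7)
The plan is to directly verify each of the six conditions (a)--(f) in Definition~\ref{d:arrangement} for the diagram $\R_{k,n}$ as constructed, exploiting the local structure of the hexagon/triangle tiling.

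Conditions (b) and (c) are essentially immediate: the diagram consists of finitely many edges from the tiling, each strand is a piecewise-linear path along these edges, and any two strands meet only at interior vertices of the tiling. At such a vertex the two crossing edges make an angle of $\pi/3$ or $2\pi/3$, so each crossing is transversal and there are only finitely many of them.

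The heart of the proof is to describe strand $i$ explicitly as a path in the tiling. The propagation rule is local: the alternating orientations on each hexagon's boundary, combined with the single orientation on each triangle, force a strand to enter a hexagon through one edge and leave through the diametrically opposite edge, while turning by $\pi/3$ inside each adjacent triangle. This deterministic rule lets us track strand $i$ from its starting triangle $T_i$. For $i\in\{1,\ldots,k\}$, strand $i$ runs from $T_i$ at the top boundary, crosses the hexagons $H_{k,n}(i,j)$ (and the triangles between them) in a monotonically downward-right direction, and terminates at the boundary position $(i+k)'$ on the bottom-right side of the region; for $i\in\{k+1,\ldots,n\}$, strand $i$ starts at $T_i$ on the left boundary and traverses the region monotonically rightward, again terminating at $(i+k)'$ (indices mod $n$). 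This verifies (d). The monotonicity of each strand along a fixed half-plane direction then immediately rules out self-intersections, giving (a), and likewise prevents any two strands from meeting twice within the region, giving (f).

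For condition (e), one inspects a crossing between two strands at a tiling vertex $v$. The orientations of the three tiles meeting at $v$ determine, in a fixed and symmetric way, which strand passes from left to right and which from right to left at $v$. Because the edge orientations around any hexagon alternate, consecutive crossings encountered while travelling along a single strand $i$ alternate in type; the boundary conditions at $T_i$ and at $(i+k)'$ are exactly the ones that make the first crossing left-to-right and the last crossing right-to-left (this is where the distinction between clockwise-oriented boundary triangles being ``split'' and the other boundary triangles being ``extended'' enters).

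The main obstacle is the bookkeeping in the endpoint computation for condition (d): one must set up coordinates on the tiling so as to count precisely how many positions strand $i$ advances, and handle the two cases $i\le k$ and $i>k$ separately, including the wrap-around in the second case. Once the coordinate system is fixed, each of conditions (a), (e), (f) reduces to a short local check at a vertex or along a single strand, so the overall argument is a careful but routine verification with no essentially new idea beyond the strand-propagation rule.
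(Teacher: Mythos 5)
The paper itself leaves this as a routine check, and your overall plan (verify conditions (a)--(f) of Definition~\ref{d:arrangement} by describing each strand explicitly) is the natural one; the problem is that your central technical claim about the strands is not what the construction says, and one of your deductions does not follow. In $\R_{k,n}$ the strands are by construction unions of oriented \emph{edges} of the tiling: they run along the boundaries of the hexagons and triangles and never pass through the interior of a tile. Indeed the open hexagons and triangles are precisely the faces of the diagram -- the hexagons are the alternating faces that receive the $k$-subset labels in Lemma~\ref{l:rknsubsets}, and the triangles are the oriented faces dual to the vertices of $G_{k,n}$ -- so a strand that ``enters a hexagon through one edge and leaves through the diametrically opposite edge, turning by $\pi/3$ inside each adjacent triangle'' would cut straight through the faces and destroy exactly the face structure the construction is built to produce. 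The correct local rule is different: at an interior vertex of the tiling two hexagons and two triangles meet, four edges are incident, the two edges of one hexagon point in and the two edges of the other hexagon point out (this is forced by ``triangles are oriented cycles, hexagons alternate''), and each strand continues into the edge opposite to the one it arrived on, i.e.\ it goes geometrically straight and the two strands cross transversally there; strands change direction only at the boundary of the region, where the split/extended boundary triangles enter. Note also the internal inconsistency in your write-up: your first paragraph places all crossings at tiling vertices, while under your propagation rule crossings would occur inside hexagon interiors. Since your endpoint bookkeeping for condition (d), and your verification of (e), are built on the wrong trajectory, they need to be redone with the correct rule (with the correct rule, (e) does come from the alternating hexagon orientations along the straight segments, and (d) from tracking which boundary triangle each straight segment, possibly after a turn along the boundary, runs into).

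The second gap is the inference ``monotone in a fixed half-plane direction, hence no two strands meet twice, hence (f)''. Monotonicity of each individual strand rules out self-intersections, i.e.\ gives (a), but two curves that are both monotone in the same direction can cross each other arbitrarily many times, so (f) does not follow from monotonicity alone. What actually makes (f) work is the stronger structure identified above: in the interior every strand is a concatenation of straight segments taken from the three edge-directions of the tiling, so two strands cross at most once per pair of straight pieces, and one must separately check the finitely many turns forced at the boundary (where the split and extended boundary triangles are) to see that no unoriented lens is created there. Some argument at this level of precision is needed; as written, the step from monotonicity to condition (f) is a non sequitur.
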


We use $H_{k,n}(0,0)$ to denote the bottom right boundary face.

\begin{figure}
\psfragscanon
\psfrag{1234}{$\scriptstyle 1234$}
\psfrag{2345}{$\scriptstyle 2345$}
\psfrag{3456}{$\scriptstyle 3456$}
\psfrag{4567}{$\scriptstyle 4567$}
\psfrag{5678}{$\scriptstyle 5678$}
\psfrag{6789}{$\scriptstyle 6789$}
\psfrag{1789}{$\scriptstyle 1789$}
\psfrag{1289}{$\scriptstyle 1289$}
\psfrag{1239}{$\scriptstyle 1239$}
\psfrag{1789}{$\scriptstyle 1789$}
\psfrag{1345}{$\scriptstyle 1345$}
\psfrag{1456}{$\scriptstyle 1456$}
\psfrag{1567}{$\scriptstyle 1567$}
\psfrag{1678}{$\scriptstyle 1678$}
\psfrag{1245}{$\scriptstyle 1245$}
\psfrag{1256}{$\scriptstyle 1256$}
\psfrag{1267}{$\scriptstyle 1267$}
\psfrag{1235}{$\scriptstyle 1235$}
\psfrag{1236}{$\scriptstyle 1236$}
\psfrag{1237}{$\scriptstyle 1237$}
\psfrag{1238}{$\scriptstyle 1238$}
\begin{center}
\includegraphics[width=10cm]{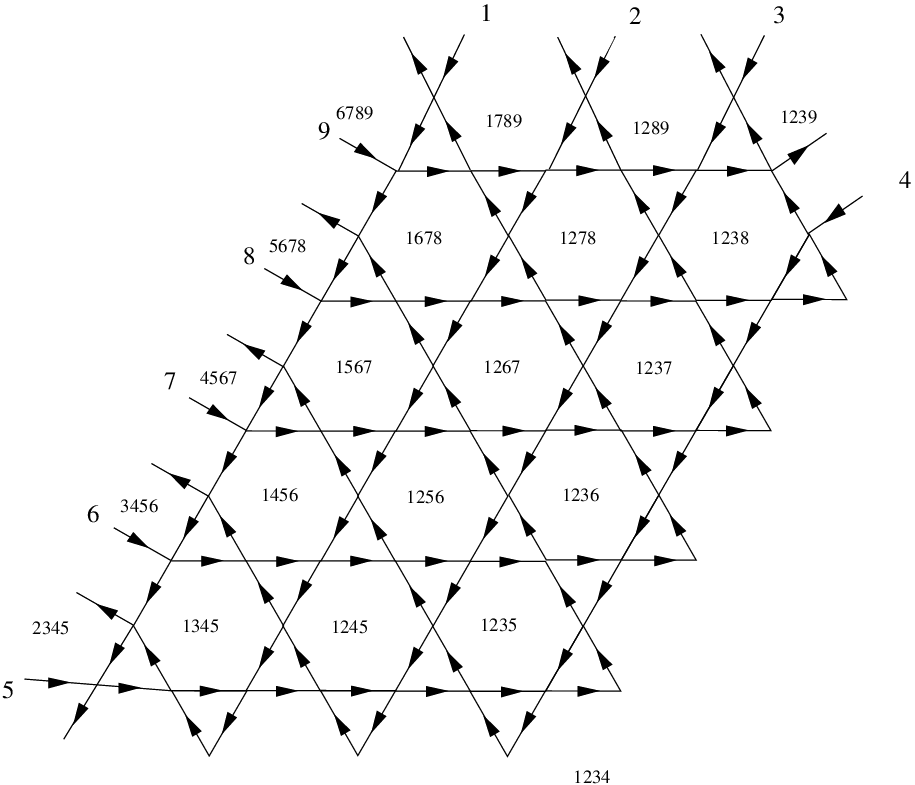}
\end{center}
\caption{The Postnikov diagram $\R_{4,9}$.}
\label{fig:R49}
\end{figure}

The bipartite dual $G_{k,n}$ of $\R_{k,n}$ consists of a subset of a
hexagonal tiling of the plane in which the edges are either vertical
or at $\pm \pi/3$ to the horizontal. There are $n-k-1$ adjacent horizontal
rows of $k-1$ hexagons, together with extra vertical edges attached
to the topmost $k-1$ vertices and a single extra edge at an angle of
$\pi/3$ to the horizontal attached to the bottom left hexagon.
See Figure~\ref{fig:G49} for $G_{4,9}$.

\begin{figure}
\begin{center}
\includegraphics[width=10cm]{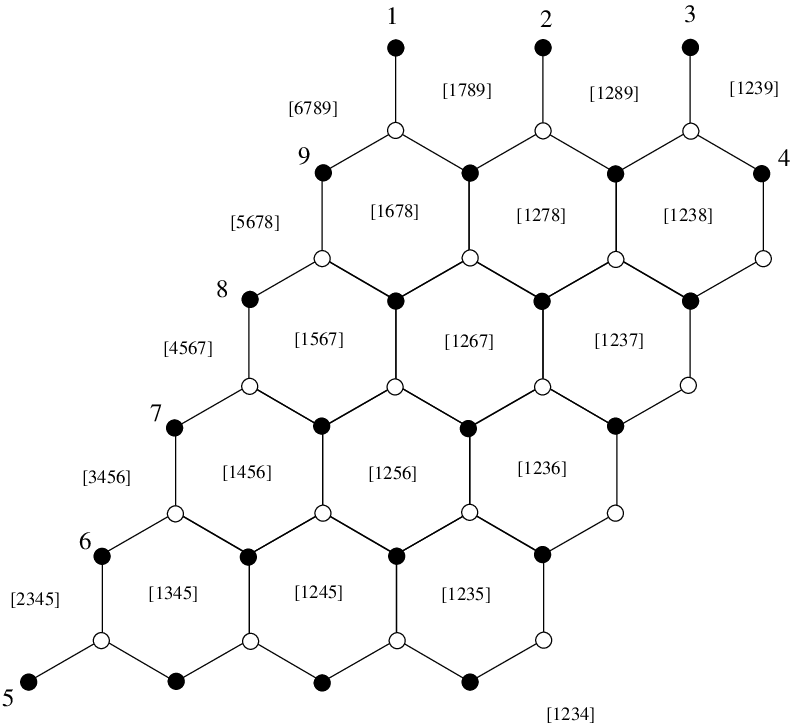}
\caption{The bipartite graph $G_{4,9}$.}
\label{fig:G49}
\end{center}
\end{figure}

\begin{lemma} \label{l:Hij}
Fix $0\leq i\leq k-1$ and $1\leq j\leq n-k$.
Then the strands (which exist) on the boundary of hexagon $H_{k,n}(i,j)$ in $\R_{k,n}$ are labelled as in Figure~\ref{fig:Hij}.
\end{lemma}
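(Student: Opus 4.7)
The plan is to prove the lemma by tracing each strand of $\R_{k,n}$ from its starting triangle through the tiling, exploiting the regularity of the construction.

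First, I would establish a local straight-line principle: at every internal vertex of the tiling, the orientation conventions (each horizontal edge pointing left to right, and alternating orientations around each hexagon and each triangle) force a strand passing through the vertex to continue on the unique outgoing edge parallel to the incoming one. Consequently, each strand crosses a hexagon in a straight line, entering on one edge and exiting on the parallel opposite edge. In particular, the strands of $\R_{k,n}$ fall into three families, one for each of the three directions (horizontal and $\pm\pi/3$) present in the tiling.

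Second, I would identify the three strands that cross $H_{k,n}(i,j)$, one for each direction, by following the straight line in that direction until it leaves the tiling through one of the boundary triangles $T_1,\ldots,T_n$. Because the starting triangles are indexed $T_{k+1},\ldots,T_n$ from bottom to top on the left and $T_1,\ldots,T_k$ from left to right on the top, the exit point in each direction is determined by an explicit affine function of $i$ and $j$, which yields the three strand labels. The remaining three strand segments on the boundary of $H_{k,n}(i,j)$ belong to the same three strands continuing past the hexagon, so they carry the same labels; this accounts for all six boundary segments (or fewer for the incomplete hexagons).

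The main obstacle will be bookkeeping rather than conceptual difficulty. One must check the straight-line principle case-by-case at the various types of vertex (purely internal, on the boundary of the tiling, near a split clockwise boundary triangle, or near an extended anticlockwise one), and then carefully verify that the three boundary triangles reached from $H_{k,n}(i,j)$ in the three directions do agree with the labels shown in Figure~\ref{fig:Hij}. For the incomplete hexagons $H_{k,n}(0,j)$ and $H_{k,n}(i,n-k)$ along the left and top of the diagram, some of the six would-be boundary edges are replaced by the extended edges of split boundary triangles, so one must match which strand labels survive. Once these routine verifications are carried out, the stated labelling follows directly from the straight-line tracing.
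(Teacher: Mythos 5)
The paper does not give a proof of Lemma~\ref{l:Hij}; the labelling is treated as something one checks directly from the construction of $\R_{k,n}$. Your plan to make this rigorous by tracing strands is reasonable, and the local ``straight-line principle'' you state is correct: at an interior (degree-four) vertex of the tiling, the collinear pairs of edges have opposite orientations, and for the crossing to be transversal each strand must exit along the edge collinear with the one it came in on.

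The gap is in how you picture the strands meeting a hexagon. In $\R_{k,n}$ the strands run \emph{along} the edges of the hexagon--triangle tiling (the tiles are the faces of the Postnikov diagram, and each tiling edge is a segment of a strand). A straight strand therefore coincides with exactly one side of $H_{k,n}(i,j)$, and at the next vertex it continues collinearly \emph{away} from that hexagon; the opposite side is parallel but not collinear, so it lies on a \emph{different} strand. Hence the six sides of the hexagon carry six distinct strands, two per direction, in agreement with the six labels $i$, $i+1$, $i+j$, $i+j+1$, $j+k$, $j+k+1$ (paired by direction, differing by one) in Figure~\ref{fig:Hij}. Your assertion that ``each strand crosses a hexagon in a straight line, entering on one edge and exiting on the parallel opposite edge'' and the resulting claim that only three strands touch $H_{k,n}(i,j)$ are therefore wrong: the proposed tracing would produce only three labels and misassign each to a pair of opposite sides. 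To fix the argument you must trace six separate lines, one per side, back to their boundary triangles, noting that the two parallel sides in each direction sit on strands whose starting indices differ by one; the special handling you flag for incomplete hexagons and boundary triangles then applies on top of this.
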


\begin{figure}
\begin{center}
\psfragscanon
\psfrag{i}{$\scriptstyle i$}
\psfrag{i+j}{$\scriptstyle i+j$}
\psfrag{j+k}{$\scriptstyle j+k$}
\psfrag{i+1}{$\scriptstyle i+1$}
\psfrag{i+j+1}{$\scriptstyle i+j+1$}
\psfrag{j+k+1}{$\scriptstyle j+k+1$}
\psfrag{X}{$\scriptstyle X$}
\psfrag{Y}{$\scriptstyle Y$}
\includegraphics[width=4cm]{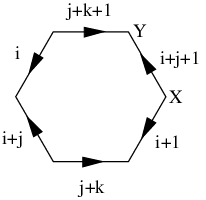}
\caption{A hexagon $H_{k,n}(i,j)$ in $\R_{k,n}$.}
\label{fig:Hij}
\end{center}
\end{figure}

\begin{lemma} \label{l:rknsubsets}
Fix $0\leq i\leq k-1$ and $1\leq j\leq n-k$.
Then the $k$-subset labelling $H_{k,n}(i,j)$ in $\R_{k,n}$ is
$$M_{k,n}(i,j)=\{1,2,\ldots, i\} \cup \{i+j+1, i+j+2, ... ,j+k\}.$$
The $k$-subset labelling $H_{k,n}(0,0)$ is $\{1,2,\ldots ,k\}$.
\end{lemma}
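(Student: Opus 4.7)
My plan is to verify the formula in two stages: first pin down the labels of the $n$ boundary hexagons, then propagate inward across strand crossings.

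\textbf{Boundary cases.} The hexagons $H_{k,n}(0,0)$, $H_{k,n}(0,j)$ for $1 \leq j \leq n-k$, and $H_{k,n}(i,n-k)$ for $1 \leq i \leq k-1$ exhaust the $1 + (n-k) + (k-1) = n$ alternating boundary faces of $\R_{k,n}$, which by Postnikov's proposition receive the labels $\coeff{1},\ldots,\coeff{n}$. Remark~\ref{r:coefficientlocation} pins down the correspondence: the face immediately clockwise of boundary vertex $r$ carries the label $\coeff{r}$. Given that $T_1,\ldots,T_k$ lie along the top of the diagram (read left to right) and $T_{k+1},\ldots,T_n$ lie along the left side (read bottom to top), one reads off the labels as $\coeff{k} = \{1,\ldots,k\}$ for $H_{k,n}(0,0)$, as $\coeff{j+k} = \{j+1,\ldots,j+k\}$ for $H_{k,n}(0,j)$, and as $\coeff{i} = \{1,\ldots,i\} \cup \{n-k+i+1,\ldots,n\}$ for $H_{k,n}(i,n-k)$. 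In each case this agrees with the formula for $M_{k,n}(i,j)$, and in particular the assertion about $H_{k,n}(0,0)$ is settled.

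\textbf{Interior cases.} For an interior hexagon $H_{k,n}(i,j)$ with $1 \leq i \leq k-1$ and $1 \leq j \leq n-k-1$, I would argue by a double induction moving inward from the boundary. The key observation is that two alternating faces of a Postnikov diagram that are separated by a single triangular oriented face differ in their labels by the symmetric difference of the two strands bounding the passage between them (one is removed because its orientation flips from ``face on left'' to ``face on right'', the other added for the reverse reason). Applied to $H_{k,n}(i,j+1)$ and $H_{k,n}(i,j)$, Lemma~\ref{l:Hij} lists the strands around each; the alternating-orientation convention of the tiling pins down that the two strands crossed are $j+k+1$ and $i+j+1$, giving the swap $\{j+k+1\} \leftrightarrow \{i+j+1\}$, exactly the change $M_{k,n}(i,j+1) \to M_{k,n}(i,j)$. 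The analogous comparison of $H_{k,n}(i-1,j)$ with $H_{k,n}(i,j)$ produces the swap $\{i+j\} \leftrightarrow \{i\}$, matching $M_{k,n}(i-1,j) \to M_{k,n}(i,j)$. Starting from the verified boundary labels and iterating these two swaps completes the induction.

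\textbf{Main obstacle.} The real work is in the orientation bookkeeping for the inductive step: identifying, for each triangular oriented face separating two adjacent hexagons, which two of the strands listed by Lemma~\ref{l:Hij} actually form the passage, and determining for each crossed strand whether the motion goes from its left side to its right side or vice versa. This requires a careful local analysis of the regular tiling conventions of $\R_{k,n}$ (horizontal edges oriented left to right, hexagons carrying alternating orientations, boundary triangles with the prescribed orientation). Once these local sign checks are made, the inductive propagation from the boundary labels to $M_{k,n}(i,j)$ is immediate.
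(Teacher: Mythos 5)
Your proposal is correct and takes essentially the same route as the paper: identify the boundary hexagons with the cyclic intervals via Remark~\ref{r:coefficientlocation}, then propagate inward by the swap of the two strands meeting between adjacent hexagons, read off from Lemma~\ref{l:Hij} (the paper in fact only needs the horizontal step, inducting on $i$ from the base case $i=0$ or $j=n-k$, via the crossing at the point $X$ where strands $i+1$ and $i+j+1$ meet). The ``orientation bookkeeping'' you defer as the main obstacle is exactly the directional data already recorded in Figure~\ref{fig:Hij} --- strand $i+1$ going down and $i+j+1$ going up at $X$ --- so no further local analysis is needed (and note that adjacent hexagons of $\R_{k,n}$ meet at such a crossing point rather than being separated by a triangle, though passing through either adjacent triangle crosses the same two strands, so your bookkeeping comes out the same).
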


\begin{proof}
The statement for $i=0$ or $j=n-k$ follows from Remark~\ref{r:coefficientlocation}.
The result for all $i,j$ then follows by induction on $i$ using
Lemma~\ref{l:Hij}. Assume that the result is true for some $H_{k,n}(i,j)$.
Then, since strands $i+j+1$ and $i+1$ cross between $H_{k,n}(i,j)$ and $H_{k,n}(i+1,j)$
to its right (point $X$ in Figure~\ref{fig:Hij}),
with $i+1$ going down and $i+j+1$ going up, it follows that
$$M_{k,n}(i+1,j)=M_{k,n}(i,j)\setminus \{i+j+1\}) \cup \{i+1\},$$
as required.
\end{proof}

For a Postnikov diagram ${\postdiag}$ for $\Gr_{k,n}$ let ${\postdiag}^*$ denote the diagram obtained from ${\postdiag}$ by reversing the orientation of each of the strands in ${\postdiag}$. Each strand retains the same label. We also add an extra
crossing at the boundary between the strand starting at $i$ and the strand ending at $i'$, for each $i$. We then annihilate any local oriented lenses produced by this procedure.
The following can be seen by checking that the appropriate conditions in Definition~\ref{d:arrangement} are satisfied.

\begin{lemma}
For any Postnikov diagram ${\postdiag}$ for $\Gr_{k,n}$, the diagram ${\postdiag}^*$ is a Postnikov diagram for $\Gr_{n-k,n}$.
\end{lemma}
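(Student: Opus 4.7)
The plan is to verify, one by one, the six defining conditions of Definition~\ref{d:arrangement} for ${\postdiag}^*$, using that they hold for ${\postdiag}$. Conditions (a), (b) and (c) are preserved under each of the three operations involved, namely reversing the orientations of the strands (which does not alter the underlying unoriented diagram or its set of self-intersections), adding finitely many transverse boundary crossings, and annihilating local oriented lenses; hence they hold for ${\postdiag}^*$ immediately.

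For condition (d) I would trace endpoints. In the reversed-but-not-yet-modified diagram, the strand labelled $j$ runs from $(j+k)'$ to $j$. The boundary crossing added between \emph{the strand starting at} $i$ (i.e.\ old strand $i$, which in the reversal terminates at $i$) and \emph{the strand ending at} $i'$ (i.e.\ old strand $i-k$, which in the reversal originates at $i'$) sits on the short boundary arc from $i'$ to $i$, and its local effect is to interchange those two boundary endpoints. Performing such a swap at every boundary vertex and then relabelling each strand by its new (unprimed) starting vertex, the strand labelled $i$ in ${\postdiag}^*$ is the reverse of old strand $i-k$; its endpoints are $i$ (unprimed) and $(i-k)' = (i+(n-k))'$ modulo $n$ (primed), matching the boundary data required of a Postnikov diagram for $\Gr_{n-k,n}$.

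For condition (e), the key observation is that simultaneously reversing two transversely crossing strands preserves the handedness of the crossing: if $B$ crosses $A$ from $A$'s left to $A$'s right, then after both orientations are reversed, $B$ still crosses (the new) $A$ from (the new) left to (the new) right. Consequently the sequence of $L$'s and $R$'s along a reversed strand is the same multiset read in the reverse order; the original pattern $L, R, L, R, \dots, L, R$ becomes $R, L, R, L, \dots, R, L$. A small local picture check shows that the added boundary crossing at the new starting endpoint is of type $L$ and the added boundary crossing at the new ending endpoint is of type $R$, so prepending the former and appending the latter recovers a pattern of the required alternating form $L, R, L, R, \dots, L, R$. Condition (f) is immediate, since reversing both arcs of a lens preserves its type: any unoriented lens in ${\postdiag}^*$ would descend to an unoriented lens in ${\postdiag}$, contradicting (f) for ${\postdiag}$; and the subsequent annihilation of oriented lenses is a purely local cancellation that cannot produce new unoriented ones.

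The only non-routine point in this plan is the local handedness check for the two added boundary crossings in step (e); the remainder is bookkeeping about how endpoints and crossing types transform under the reversal operation.
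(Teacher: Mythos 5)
Your overall route is the one the paper intends (the paper simply asserts that the conditions of Definition~\ref{d:arrangement} can be checked and gives no details), and the two substantive checks you carry out are correct: the endpoint bookkeeping for (d) (the strand of $P^*$ starting at $i$ is the reversed old strand $i-k$, ending at $(i-k)'=(i+(n-k))'$), and the handedness analysis for (e) (reversing both strands of a transversal crossing preserves its left/right type, and the added boundary crossing is the last crossing of the incoming strand, of type right-to-left, and the first crossing of the outgoing strand, of type left-to-right, which restores the required alternating pattern). The discrepancy with the paper's phrase ``each strand retains the same label'' is only a labelling convention and is harmless.

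There is, however, a genuine gap in your treatment of condition (f). Your claim that ``any unoriented lens in $P^*$ would descend to an unoriented lens in $P$'' is false as stated, because $P^*$ contains $n$ crossings that do not exist in $P$, namely the added boundary crossings; a lens one of whose corners is such a crossing does not descend to anything in $P$, so this case is simply not covered by your argument. It does need an argument: the two strands meeting at the new crossing near the vertex pair $(i',i)$ (the reversed strands $i$ and $i-k$) may well cross again in the interior, and when $n=2k$ the same pair of strands even acquires a second new boundary crossing at the pair $((i+k)',i+k)$. The fix is short but should be said: the new crossing $X$ is by construction the \emph{last} crossing along one of the two strands and the \emph{first} crossing along the other, so for any other crossing $Y$ of the same pair of strands the arc between $X$ and $Y$ on the first strand is directed towards $X$ while the arc on the second strand is directed away from $X$; hence every lens through a new crossing is oriented (and is then removed by the final annihilation step), never unoriented. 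With this case added, your verification of (f) -- and hence the whole proof -- is complete.
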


It follows that $\R_{n-k,n}^*$ is a Postnikov diagram for $\Gr_{k,n}$.
The labels of its alternating faces are the complements (in $\{1,2,\ldots ,n\}$) of the
labels of the corresponding faces in $\R_{n-k,n}$. For example, see $\R^*_{5,9}$
in Figure~\ref{fig:R59dual}. The bipartite graph $G_{\R^*_{5,9}}$ is shown in Figure~\ref{fig:G59dual}.

\begin{figure}
\psfragscanon
\psfrag{1234}{$\scriptstyle 1234$}
\psfrag{2345}{$\scriptstyle 2345$}
\psfrag{3456}{$\scriptstyle 3456$}
\psfrag{4567}{$\scriptstyle 4567$}
\psfrag{5678}{$\scriptstyle 5678$}
\psfrag{6789}{$\scriptstyle 6789$}
\psfrag{1789}{$\scriptstyle 1789$}
\psfrag{1289}{$\scriptstyle 1289$}
\psfrag{1239}{$\scriptstyle 1239$}
\psfrag{2349}{$\scriptstyle 2349$}
\psfrag{3459}{$\scriptstyle 3459$}
\psfrag{4569}{$\scriptstyle 4569$}
\psfrag{5679}{$\scriptstyle 5679$}
\psfrag{2389}{$\scriptstyle 2389$}
\psfrag{3489}{$\scriptstyle 3489$}
\psfrag{4589}{$\scriptstyle 4589$}
\psfrag{5689}{$\scriptstyle 5689$}
\psfrag{2789}{$\scriptstyle 2789$}
\psfrag{3789}{$\scriptstyle 3789$}
\psfrag{4789}{$\scriptstyle 4789$}
\psfrag{5789}{$\scriptstyle 5789$}
\begin{center}
\includegraphics[width=10cm]{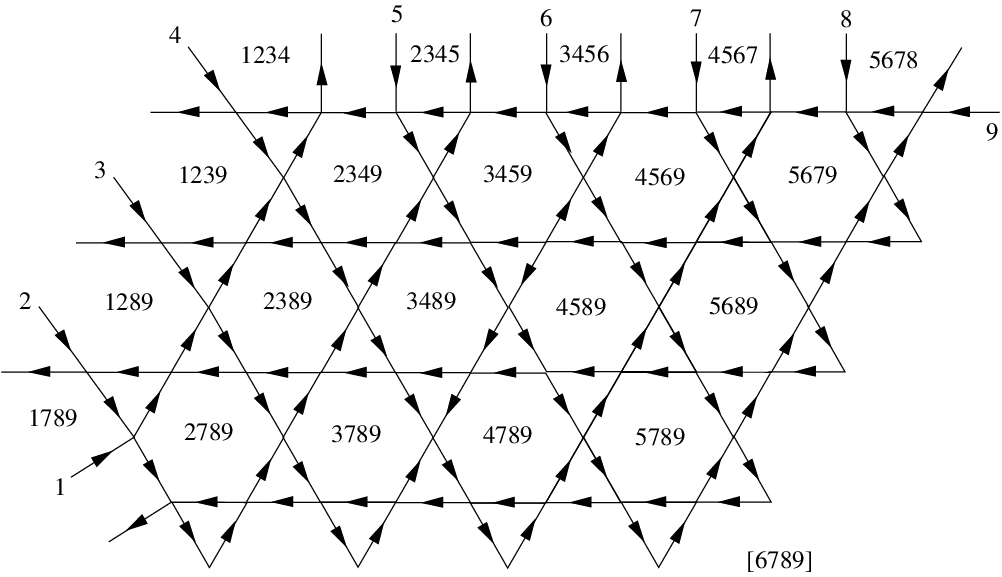}
\end{center}
\caption{The Postnikov diagram $\R^*_{5,9}$.}
\label{fig:R59dual}
\end{figure}

\begin{figure}
\begin{center}
\includegraphics[width=10cm]{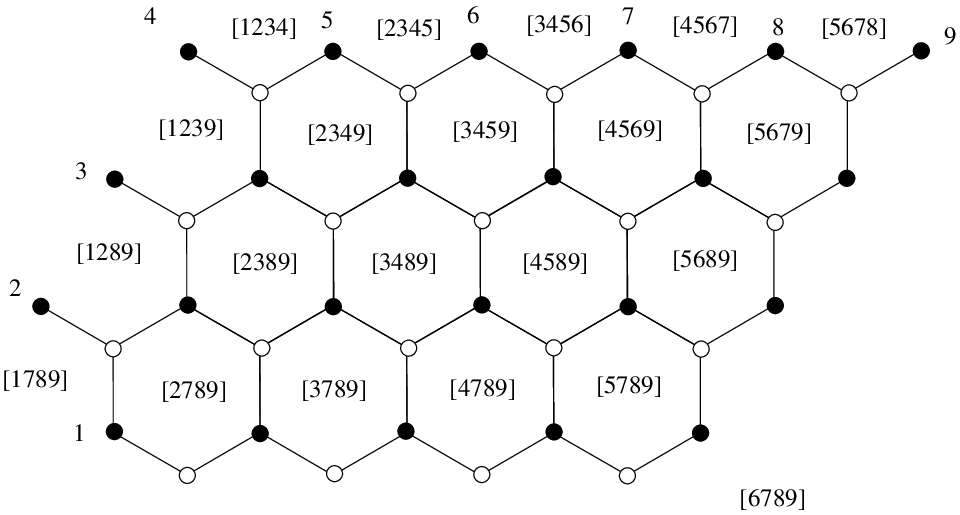}
\end{center}
\caption{The bipartite graph $G_{\R^*_{5,9}}$.}
\label{fig:G59dual}
\end{figure}

The diagram $\R_{n-k,n}$ has faces $H_{n-k,n}(j,i)$, with
$1\leq i\leq k$ and $0\leq j\leq n-k-1$ (and $i=j=0$).
The label of $H_{n-k,n}(j,i)$ is:
$$M_{n-k,n}(j,i)=\{1,2,\ldots ,j\}\cup \{i+j+1,i+j+2,\ldots ,i+n-k\}.$$
We denote the corresponding faces in $\R^*_{n-k,n}$ by $H^*_{n-k,n}(j,i)$.
The label of $H^*_{n-k,n}(j,i)$ is the complement of $M_{n-k,n}(j,i)$ in
$\{1,\ldots ,n\}$, which is:
$$M^*_{n-k,n}(j,i)=\{j+1,j+2,\ldots ,i+j\}\cup \{i+n-k+1,i+n-k+2,\ldots ,n\}.$$

\begin{lemma} \label{l:rkntwist}
Fix $0\leq i\leq k-1$ and $1\leq j\leq n-k$, or $(i,j)=(0,0)$.
Then we have
$$\twist{\minor{M_{k,n}(i,j)}}=\minor{M^*_{n-k,n}(j,i)}\cdot
\prod_{r=1}^{i-1} \mcoeff{r} \cdot \prod_{r=i+j+1}^{j+k-1} \mcoeff{r}.$$
Hence the Pl\"{u}cker coordinates whose $k$-subsets label the alternating faces of $\R_{n-k,n}^*$ are (up to multiplication by coefficients) exactly the twists of the Pl\"{u}cker coordinates whose $k$-subsets label the alternating faces of $\R_{k,n}$.
\end{lemma}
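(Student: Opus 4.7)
The strategy is to apply Proposition~\ref{p:twistcomputation} directly to $\minor{M_{k,n}(i,j)}$, after recognising $M_{k,n}(i,j)$ as a disjoint union of two cyclic intervals. For $1\leq j\leq n-k$ and $0\leq i\leq k-1$, the set $M_{k,n}(i,j) = \{1,\ldots,i\}\cup\{i+j+1,\ldots,j+k\}$ decomposes as $I_1\cup I_2$ with $I_1 = \{\sigma^{i-1}(i),\ldots,\sigma(i),i\}$ and $I_2 = \{\sigma^{k-i-1}(j+k),\ldots,\sigma(j+k),j+k\}$, and the two are disjoint since $i+j+1 > i$ when $j\geq 1$. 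In the notation of the proposition, this gives $i_0=i$, $p=i-1$, $j_0=j+k$, $q=k-i-1$, with $p+q+2=k$ as required.

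The main work is to verify that the set
$$J = \{\sigma^{p+q+1}(i),\ldots,\sigma^{p+1}(i)\}\cup\{\sigma^{p+q+1}(j+k),\ldots,\sigma^{q+1}(j+k)\}$$
produced by the proposition coincides with $M^*_{n-k,n}(j,i)$. This is a direct index computation using $\sigma^{a}(x) \equiv x-a \pmod{n}$. For the first piece, since $i \leq k-1$, the iterated shifts wrap around: $\sigma^{a}(i) = n+i-a$ for $a=i,i+1,\ldots,k-1$, which gives the subset $\{n+i-k+1,\, n+i-k+2,\,\ldots,\, n\}$. For the second piece, $j+k-a$ remains positive throughout the range $a = k-i,\ldots,k-1$, so it does not wrap and gives $\{j+1, j+2, \ldots, j+i\}$. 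The union is precisely $\{j+1,\ldots,j+i\}\cup\{n-k+i+1,\ldots,n\} = M^*_{n-k,n}(j,i)$.

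Next I would compute the two coefficient products supplied by the proposition. Since $\sigma^{r}(i) = i-r \in \{1,\ldots,i-1\}$ for $1\leq r\leq p$, one reindexes $\prod_{r=1}^{i-1}\mscoeff{r}{i} = \prod_{s=1}^{i-1}\mcoeff{s}$. Likewise, $\sigma^{r}(j+k) = j+k-r \in \{j+i+1,\ldots,j+k-1\}$ for $1\leq r\leq q$, giving $\prod_{r=1}^{k-i-1}\mscoeff{r}{j+k} = \prod_{s=i+j+1}^{j+k-1}\mcoeff{s}$. Substituting into Proposition~\ref{p:twistcomputation} yields the claimed identity for $(i,j)$ in the principal range.

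The degenerate case $(i,j)=(0,0)$, in which $M_{k,n}(0,0)=\mcoeff{k}$ is a single cyclic interval and so falls outside the hypothesis $p,q\geq 0$ of Proposition~\ref{p:twistcomputation}, is handled separately via Remark~\ref{r:coefficientcase}, which gives $\twist{\mcoeff{k}}=\mscoeff{k-1}{k}\cdots\mscoeff{}{k} = \prod_{r=1}^{k-1}\mcoeff{r}$. The concluding sentence of the lemma — that the Pl\"{u}cker coordinates labelling the alternating faces of $\R^*_{n-k,n}$ agree with the twists of those labelling $\R_{k,n}$ up to multiplication by coefficients — follows immediately by inspection, since both $\prod_{r=1}^{i-1}\mcoeff{r}$ and $\prod_{r=i+j+1}^{j+k-1}\mcoeff{r}$ are products of coefficient Pl\"{u}cker coordinates. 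The only genuinely delicate step is the modular index bookkeeping for the first component of $J$, where the wrap-around past $n$ occurs; once this is tracked carefully, the remainder of the argument is formal.
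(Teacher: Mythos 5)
Your main computation is exactly the paper's proof: the paper likewise decomposes $M_{k,n}(i,j)=\{1,\ldots,i\}\cup\{i+j+1,\ldots,j+k\}$ into the two cyclic intervals ending at $i$ and at $j+k$, applies Proposition~\ref{p:twistcomputation} with $p=i-1$, $q=k-i-1$, and identifies $J=\{n+1+i-k,\ldots,n\}\cup\{j+1,\ldots,i+j\}$ with $M^*_{n-k,n}(j,i)$ and the coefficient factors with $\prod_{r=1}^{i-1}\mcoeff{r}\cdot\prod_{r=i+j+1}^{j+k-1}\mcoeff{r}$. Your wrap-around bookkeeping and the reindexings $\mscoeff{r}{i}=\mcoeff{i-r}$, $\mscoeff{r}{j+k}=\mcoeff{j+k-r}$ are correct in the range $1\leq i\leq k-1$, $1\leq j\leq n-k$.

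The gap is in the $i=0$ cases. First, your assertion that the decomposition applies for all $0\leq i\leq k-1$ with $j\geq 1$ fails at $i=0$: there $I_1=\{1,\ldots,i\}$ is empty, so $p=-1$ and Proposition~\ref{p:twistcomputation} (which requires $p,q\geq 0$) does not apply; $M_{k,n}(0,j)=\{j+1,\ldots,j+k\}$ is a single cyclic interval, exactly like the case $(0,0)$ that you do flag, and you give no argument for it. Second, your separate treatment of $(0,0)$ does not in fact verify the displayed identity: Remark~\ref{r:coefficientcase} gives $\twist{\mcoeff{k}}=\prod_{r=1}^{k-1}\mcoeff{r}$, whereas the right-hand side of the lemma at $(0,0)$ is $\minor{M^*_{n-k,n}(0,0)}\cdot\prod_{r=1}^{k-1}\mcoeff{r}=\mcoeff{n}\cdot\prod_{r=1}^{k-1}\mcoeff{r}$; the same discrepancy by the factor $\minor{M^*_{n-k,n}(j,0)}=\mcoeff{n}$ occurs for every $i=0$, $j\geq 1$, since there $\twist{\mcoeff{j+k}}=\prod_{s=j+1}^{j+k-1}\mcoeff{s}$. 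So as written you claim these cases are "handled" while your own (correct) computation contradicts the stated formula: the displayed identity holds verbatim only for $i\geq 1$, and in the $i=0$ coefficient cases only up to the coefficient $\mcoeff{n}$ --- which is all that the lemma's concluding "up to coefficients" assertion actually needs. In fairness, the paper's own three-line proof applies Proposition~\ref{p:twistcomputation} uniformly and is silent on these degenerate cases too; but a complete argument should either restrict the verbatim identity to $i\geq 1$ or explicitly record and address the extra coefficient factor at $i=0$, rather than cite Remark~\ref{r:coefficientcase} as if it produced the stated right-hand side.
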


\begin{proof}
Fix $0\leq i\leq k-1$ and $1\leq j\leq n-k$, or $(i,j)=(0,0)$.
Recall that, by Lemma~\ref{l:rknsubsets},
$$M_{k,n}(i,j):=\{1,2,\ldots, i\} \cup \{i+j+1, i+j+2, ... ,j+k\}.$$
Hence, by Proposition~\ref{p:twistcomputation},
$$\twist{\minor{M_{k,n}(i,j)}}=\minor{J}\cdot
\prod_{r=1}^{i-1} \mcoeff{r} \cdot \prod_{r=i+j+1}^{j+k-1} \mcoeff{r},$$
where
$$J=\{n+1+i-k,n+2+i-k,\ldots ,n\} \cup \{j+1,\ldots ,i+j\}=M^*_{n-k,n}(j,i),$$
and the result follows.
\end{proof}

By Lemmas~\ref{l:rknpostnikov} and~\ref{l:rknsubsets}, the
pair $(\xx(\R_{k,n}),\Q(\R_{k,n}))$ is a seed of
$\mathbb{C}[\Gr_{k,n}]$, where
$$\xx(\R_{k,n})=\{\minor{M_{k,n}(i,j)}\,:\,0\leq i\leq k-1,1\leq j\leq n-k\text{ or }i=j=0\}.$$

Recall that, for a quiver $\Q$ in a seed of $\mathbb{C}[\Gr_{k,n}]$, $Q$ denotes the principal part of $\Q$. In particular,
$Q(\R_{k,n})$ is the principal part of $\Q(\R_{k,n})$.

\begin{defn} \label{d:specializedcoefficients}
We define $\mathbb{C}[\Gr_{k,n}]_1$ to be the cluster algebra in $\mathbb{C}(\mathbf{x}_{\R_{k,n}})$ with initial seed \\
$(\mathbf{x}_{\R_{k,n}},Q(\R_{k,n}))$.
\end{defn}

Note that using any other seed of $\mathbb{C}[\Gr_{k,n}]$  in 
Definition~\ref{d:specializedcoefficients} would give the same result.

\begin{remark} \label{r:specialize}
By~\cite[\S12]{fominzelevinsky07}
(see also~\cite[\S6]{ADS14}) there is a surjective ring
homomorphism $\pi:\mathbb{C}[\Gr_{k,n}]\rightarrow \mathbb{C}[\Gr_{k,n}]_1$ mapping the coefficients in $\mathbb{C}[\Gr_{k,n}]$ to $1$
and cluster variables to cluster variables (surjectively) such
that whenever $S=(\xx,\Q)$ is a seed in $\mathbb{C}[\Gr_{k,n}]$, $\pi(S)=(\pi(\mathbf{x}),Q)$ is a seed in $\mathbb{C}[\Gr_{k,n}]_1$. This map respects mutation in the following sense. If $S=(\xx,\Q)$ and
$S'=(\xx',\Q')$ are seeds in $\mathbb{C}[\Gr_{k,n}]$ and $S'$ is obtained from $S$ by mutation at $x\in\mathbf{x}$, then
$\pi(S')$ can be obtained from $\pi(S)$ by mutation at $\pi(x)$.
\end{remark}

\begin{defn} \label{d:uptocoefficients}
We shall say that two elements $f,g\in \mathbb{C}[\Gr_{k,n}]$
are \emph{equal up to coefficients} if $\pi(f)=\pi(g)$, and that
two homomorphisms $\varphi_1,\varphi_2:\mathbb{C}[Gr_{k,n}]\rightarrow \mathbb{C}[\Gr_{k,n}]$ are \emph{equal
up to coefficients} if $\pi\circ \varphi_1=\pi\circ \varphi_2$.
\end{defn}

We have the following:

\begin{prop} \label{p:periodicitycoefficients}
Up to coefficients, the $(2n)$th power of the twist on $\mathbb{C}[\Gr_{k,n}]$ is equal to the identity.
\end{prop}
\begin{proof}
Suppose that $k>1$ and $I$ is a $k$-subset of $\{1,\ldots ,n\}$.
By Proposition~\ref{p:periodicitymonomial}, applying the twist $2n$ times to $[I]\in \mathbb{C}[\Gr_{k,n}]$ gives $[I]$ multiplied by a
monomial in the Pl\"{u}cker coordinates $\mcoeff{i}$,
$i\in \{1,\ldots ,n\}$. The images of this element and $[I]$ under
$\pi$ are equal. The result then follows from the fact that the
Pl\"{u}cker coordinates generate $\mathbb{C}[\Gr_{k,n}]$.
If $k=1$, then the twist of any Pl\"{u}cker coordinate is $1$ by
Remark~\ref{r:coefficientcase}, giving the result in this case.
\end{proof}

We can also show that the twist of a cluster variable is again
a cluster variable, up to coefficients. We thank David Speyer
for communicating this proof to us.

\begin{prop} \label{p:twistofclustervariable}
Let $(\xx,\Q)$ be a seed in $\mathbb{C}[\Gr_{k,n}]$. Then
$\left(\pi\twistbracket{\mathbf{x}},Q\right)$ is a seed in $\mathbb{C}[\Gr_{k,n}]_1$.
In particular, if $x$ is a cluster variable in $\mathbb{C}[\Gr_{k,n}]$ then $\pi\twistbracket{x}$ is a cluster variable
in $\mathbb{C}[\Gr_{k,n}]_1$, and hence of the form $\pi(y)$ for
some cluster variable $y$ in $\mathbb{C}[\Gr_{k,n}]$.
\end{prop}
\begin{proof}
Since the principal parts of $\R_{k,n}$ and $\R^*_{n-k,n}$
are isomorphic, the statement holds for the seed
$(\xx(\R_{k,n}),\Q(\R_{k,n}))$ by Lemma~\ref{l:rkntwist}.
Let $(\xx,\Q)$ be a seed of $\mathbb{C}[\Gr_{k,n}]$.
We shall prove the result by induction on the length of the shortest path in the exchange graph from
$(\xx_{\R_{k,n}},\Q(\R_{k,n}))$ to $(\xx,\Q)$.

Suppose that the result holds for $(\xx,\Q)$ and that the seed $S'=(\xx',\Q')$ is obtained from $S=(\xx,\Q)$ by mutating at a cluster variable $x_r$, replacing it with $x_r'$. The corresponding exchange relation is:
\begin{equation}
\label{e:xrexchange}
x_rx'_r=\prod_{j\rightarrow r} x_j+\prod_{r\rightarrow j} x_j,
\end{equation}
with the first product taken over all arrows in $\Q$ ending at $r$, and the second over all arrows in $\Q$ starting at $r$.
We apply the twist and then $\pi$ to~\ref{e:xrexchange}.
Note that both of these maps are ring homomorphisms and the 
twist of a coefficient is a product of coefficients by
Remark~\ref{r:coefficientcase}. We therefore obtain:
\begin{equation}
\label{e:xrexchange2}
\pi \twistbracket{x_r} \pi\twistbracket{x'_r}=\prod_{j\rightarrow r}
\pi\twistbracket{x_j}+\prod_{r\rightarrow j} \pi\twistbracket{x_j},
\end{equation}
with the first product taken over all arrows in $Q$ ending at $r$, and the second over all arrows in $Q$ starting at $r$.

By our assumption, $\left(\pi\twistbracket{\mathbf{x}},Q\right)$ is a
seed in $\mathbb{C}[\Gr_{k,n}]_1$. Hence,~\eqref{e:xrexchange2} is the exchange relation for $\pi\twistbracket{x_r}$ in the seed
$\left(\pi\twistbracket{\mathbf{x}},Q\right)$ of $\mathbb{C}[\Gr_{k,n}]_1$. It follows that the seed in $\mathbb{C}[\Gr_{k,n}]_1$ obtained from 
$\pi\twistbracket{\mathbf{x}}$ via~\eqref{e:xrexchange2} is
$\left(\pi\twistbracket{\mathbf{x}'},Q'\right)$, since the mutation of
$Q$ at $r$ is $Q'$, the principal part of $\Q'$.
In particular, this pair is a seed in $\mathbb{C}[\Gr_{k,n}]_1$ as required. The result follows by induction.
\end{proof}

Next, we consider seven different choices of edges of a hexagon, $O,A,B,C,X,Y,Z$,
displayed in Figure~\ref{fig:hexagontypes}. Dashed lines are not in the dimer
configuration, while full lines are in the dimer configuration.

\begin{figure}
\begin{center}
\psfragscanon
\psfrag{O}{$O$}
\psfrag{A}{$A$}
\psfrag{B}{$B$}
\psfrag{C}{$C$}
\psfrag{X}{$X$}
\psfrag{Y}{$Y$}
\psfrag{Z}{$Z$}
\includegraphics[width=10cm]{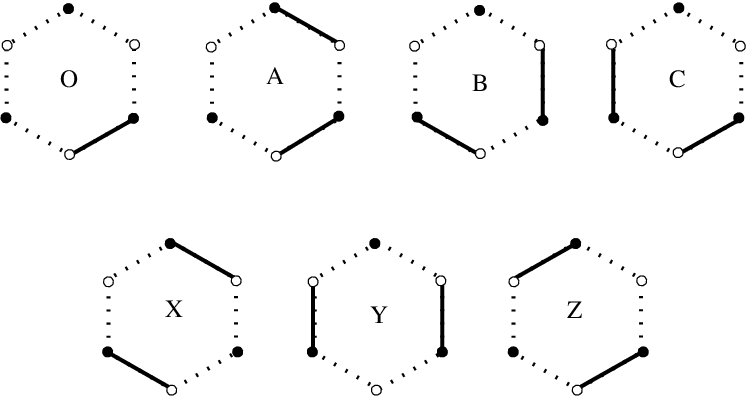}
\end{center}
\caption{Edge choices in a hexagon}
\label{fig:hexagontypes}
\end{figure}

If $i$ is a boundary vertex which is
incident to a unique edge in the dual
bipartite graph $G_P$ of a Postnikov diagram
$P$, we say that $i$ is on a \emph{stalk}.
For any $k$-subset $I$ of $\{1,\ldots ,n\}$,
we modify $G_{\R^*_{n-k,n}}$ by applying blow-up moves to all boundary vertices in
$I$ which are not on stalks.
This ensures that each boundary vertex
in $G_{\R^*_{n-k,n}}(I)$ is on a stalk.
We draw all edges on the boundary of the diagram in such a way as to continue
the hexagonal tiling.

\begin{prop} \label{p:uniquematching}
Fix $0\leq i\leq k-1$ and $1\leq j\leq n-k$ or $(i,j)=(0,0)$. Then
the bipartite graph $G_{\R^*_{n-k,n}}(M_{k,n}(i,j))$ has a unique dimer configuration,
in which a hexagon $H^*_{n-k,n}(a,b)$ for $0\leq a\leq n-k-1$ and $1\leq b\leq k$
has type given by the following table:
\vskip 0.2cm
\begin{center}
\begin{tabular}{c|c}
Type of hexagon & Restriction on $a,b$ \\
\hline
$O$ & $(a,b)=(j,i)$ \\
$A$ & $a<j$, $b=i$ \\
$B$ & $a+b=i+j$, $a<j$ (equivalent to $b>i$) \\
$C$ & $a=j$, $b>i$ \\
$X$ & $a+b<i+j$, $b>i$ \\
$Y$ & $a+b>i+j$, $a<j$ \\
$Z$ & otherwise.
\end{tabular}
\end{center}
\vskip 0.2cm
Here we regard an incomplete hexagon as having the appropriate type if,
for the edges that do appear, the edges in the dimer configuration correspond
to the choice of edges in the type (note that this may not be unique).
\end{prop}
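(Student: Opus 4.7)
My plan is to prove Proposition~\ref{p:uniquematching} in two parts: first verifying that the configuration described in the table is a valid dimer configuration of $G_{\R^*_{n-k,n}}(M_{k,n}(i,j))$, and then proving uniqueness by a forcing/propagation argument from the boundary.

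\textbf{Setting up and existence.} I would first describe the graph carefully. By construction, $G_{\R^*_{n-k,n}}$ is (up to boundary stalks) a rectangular patch of the hexagonal tiling whose internal faces are the hexagons $H^*_{n-k,n}(a,b)$ for $0\leq a\leq n-k-1$, $1\leq b\leq k$. By Lemma~\ref{l:rkntwist}, its face labels have a clean formula, and since $M_{k,n}(i,j)=\{1,\ldots,i\}\cup\{i+j+1,\ldots,j+k\}$ is a disjoint union of two cyclic intervals, the removed boundary vertices form two arcs on the boundary; after the blow-ups to place each remaining boundary vertex on a stalk, the graph has a clean combinatorial shape. I would then check that the configuration specified by the seven hexagon types $O,A,B,C,X,Y,Z$ is consistent: for every shared edge between two adjacent hexagons, the two types agree on whether that edge is in the matching, and every internal vertex is covered exactly once. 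This reduces to a finite list of allowed adjacencies (the $A$--$O$, $A$--$B$, $B$--$C$, $C$--$O$ interfaces along the ``L-shape'' through $(j,i)$, and $X$--$B$, $Y$--$B$, $X$--$X$, $Y$--$Y$, $Z$--$Z$, $X$--$Z$, $Y$--$Z$ interfaces in the interior) plus boundary checks against the stalks at each retained vertex $1,\ldots,n$ not in $M_{k,n}(i,j)$.

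\textbf{Uniqueness by propagation.} Each remaining boundary vertex sits on a stalk, so its unique incident edge is forced into every dimer configuration. Starting from those forced edges, the matching propagates inward: covering one vertex of an adjacent hexagon determines (by the perfect-matching constraint at its remaining vertices) which edges at that hexagon lie in the configuration, which then covers vertices of the next hexagon, and so on. The stalks along the bottom row of the diagram force every hexagon in the region $a+b<i+j,\ b\leq i$ (type $Z$), the stalks along the top force the $Y$ region, and symmetrically the left-hand side forces the $X$ region. The forced configurations on $X$ and $Z$ meet along the anti-diagonal $a+b=i+j$ ($a<j$) at type $B$, while those on $Y$ and $Z$ meet along $b=i,\ a<j$ at type $A$ and along $a=j,\ b>i$ at type $C$. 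The three fronts converge at $(j,i)$, where the unique type consistent with all three constraints is exactly $O$. Because every vertex is reached by a forcing chain, the dimer configuration is unique.

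\textbf{Main obstacle.} The principal difficulty is the boundary and special-case bookkeeping. Incomplete hexagons along the top and left, the additional stalks at the corners of $G_{k,n}$, the boundary label $H_{k,n}(0,0)$, and the degenerate cases ($i=0$, $j=n-k$, or $(i,j)=(0,0)$) all need attention; in these cases some regions are empty and some types coincide with boundary behaviour. A second, subtler point is that the forcing argument must really show that each $X$, $Y$, or $Z$ region admits a unique extension of the boundary-forced edges into its interior; this follows from the tree-like forcing pattern in the hexagonal lattice, but one should check that no cycle of unforced choices is present between the forcing fronts meeting at the $A$, $B$, $C$, $O$ faces. Once this is in place, comparison with the table identifies the unique matching with the claimed one.
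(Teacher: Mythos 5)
Your overall strategy is the paper's: the unique edge at each stalk is forced into every dimer configuration, this determines the types of the faces along the boundary, and an induction inward from the boundary forces the hexagons in the $X$, $Y$, $Z$ regions, then the strips $A$, $B$, $C$, and finally the single remaining hexagon of type $O$, with a separate case analysis for the degenerate situations $i=0$, $j=n-k$, $(i,j)=(0,0)$ (the paper's Cases I(a), I(b), II, III, IV, organised by writing $\{1,\ldots,n\}=I_1\cup J_1\cup I_2\cup J_2$ and reading off the type of each boundary face from the interval containing its label). Your explicit consistency check that the tabulated configuration is in fact a dimer configuration is a reasonable supplement; the paper leaves existence implicit in the forcing argument and the figures.

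However, the region bookkeeping in your sketch — which is essentially the whole content of this proposition — contains concrete errors that would derail the ``three fronts converge'' argument as you state it. From the table itself: the strip $B$ ($a+b=i+j$, $a<j$) separates $X$ from $Y$, not $X$ from $Z$; the strip $A$ ($b=i$, $a<j$) separates $Z$ (below) from $X$ and $B$ (above), not $Y$ from $Z$ (only $C$ lies between $Y$ and $Z$). Likewise the set $\{a+b<i+j,\ b\leq i\}$ that you assign to the bottom forcing front is not the $Z$ region: it omits all type-$Z$ hexagons with $a\geq j$ and it wrongly contains the $A$ strip (for $b=i$, $a+b<i+j$ forces $a<j$). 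Your list of interior interfaces is correspondingly off (for instance $X$ and $Z$ are never adjacent: the $A$ strip always intervenes, since $b>i$ and $a+b<i+j$ already force $a<j$). Finally, each of $X$, $Y$, $Z$ is not forced from a single side of the diagram; in the paper boundary faces of all the relevant types occur, distributed around the whole boundary according to whether the coefficient label lies in $I_1$, $J_1$, $I_2$ or $J_2$, and the inward induction starts from all of them at once. None of this changes the method, which is sound and is exactly the paper's, but the stated interfaces and region assignments are wrong and must be corrected before the propagation argument (and your existence check against shared edges) can be carried out.
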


\begin{proof}
Let ${\dimerconfig}$ be a dimer configuration on $G=G_{\R^*_{n-k,n}}(M_{k,n}(i,j))$.
Denote the set of hexagons appearing in the table next to letter $A$ by
$\HH(A)$, and similarly for $A,B,C,X,Y$ and $Z$. We may write $M_{k,n}(i,j)$ in the
form $I_1\cup I_2$ where $I_1$ and $I_2$ are disjoint integer intervals (i.e.\
in the usual sense of intervals), where $I_2$ may be empty.

Note that any edge in a stalk in
$G_{\R^*_{n-k,n}}(M_{k,n}(i,j))$ must be in a perfect matching on this graph.
We use this fact repeatedly in the following.

\noindent \textbf{Case I}:
Firstly, assume that $1\leq i\leq k-1$, $1\leq j\leq n-k-1$.
Then $I_1=\{1,\ldots, i\}$ and $I_2=\{i+j+1,\ldots ,j+k\}$, and both are
nonempty. Write $\{1,\ldots ,n\}=I_1\cup J_1\cup I_2\cup J_2$ as disjoint
union of intervals (not cyclic). Let $a_1$ (respectively, $a_2,b_1,b_2$)
be the largest element of $I_1$ (respectively, $I_2,J_1,J_2$). Note that
$b_2=n$.

\noindent \textbf{Case I(a)}:
Suppose first that $i+j\leq k$.
In this case, the boundary face labelled
$\mcoeff{a}$ in $G$ must be of type $Z$ if $a\in I_1\setminus \{a_1\}$, of type $A$ if $a=a_1$, of type $X$ if $a\in J_1\setminus \{b_1\}$, of type $B$ if $a=b_1$, of type $Y$ if $a\in I_2\setminus \{a_2\}$, of type $C$ if $a=a_2$ and of type
$Z$ if $a\in J_2\setminus \{b_2\}$. See Figure~\ref{fig:large1} for an example.

\noindent \textbf{Case I(b)}:
Secondly, assume that $i+j>k$.
In this case, the boundary face labelled $\mcoeff{a}$ in $G$ must be of type $Z$ if $a\in I_1\setminus \{a_1\}$, of type $A$ if $a=a_1$, of
type $X$ if $a\in J_1\setminus \{b_1\}$, of type $B$ if $a=b_1$, of type
$Y$ if $a\in I_2\setminus \{a_2\}$, of type $C$ if $a=a_2$, and of type $Z$
if $a\in J_2\setminus \{b_2\}$. See Figure~\ref{fig:large2} for an example.

\noindent \textbf{Case II}:
Next, we assume that $i=0$ and $1\leq j\leq n-k$. Then
$I_1=\{j+1,\ldots ,j+k\}$ and $I_2$ is empty.
Write $\{1,\ldots ,n\}$ as
a disjoint union $J_1\cup I_1\cup J_2$ of intervals, defining $a_1,b_1,b_2$
as above. Note that $b_2=n$.
Then a boundary face labelled $\mcoeff{a}$ in $G$
must be of type $X$ if $a\in J_1\setminus \{b_1\}$, of type $B$ if
$a=b_1$, of type $Y$ if $a\in I_1\setminus \{a_1\}$, of type $C$ if $a=a_1$,
and of type $Z$ if $a\in J_2\setminus \{b_2\}$.
See Figure~\ref{fig:large3} for an example.

\noindent \textbf{Case III}:
Next, we assume that $1\leq i\leq k-1$ and $j=n-k$. Then
$I_1=\{1,\ldots ,i\}$ and $I_2=\{i+n-k+1,\ldots ,n\}$, and both are non-empty.
Write $\{1,\ldots ,n\}=I_1\cup J_1\cup I_2$ as a disjoint union of
intervals, defining $a_1,a_2,b_1$ as before.
Note that $a_2=n$. Then
a boundary face labelled $\mcoeff{a}$ in $G$ 
must be of type $Z$ if $a\in I_1\setminus
\{a_1\}$, of type $A$ if $a=a_1$, of type
$X$ if $a\in J_1\setminus \{b_1\}$,
of type $B$ if $a=b_1$, of type $Y$ if
$a\in I_2\setminus \{a_2\}$.
See Figure~\ref{fig:large4} for an example.

\noindent \textbf{Case IV}:
The final case is $i=j=0$. Then
$I_1=\{1,\ldots ,k\}$ and $I_2$ is empty.
Then a boundary face labelled $\mcoeff{a}$ in $G$ for any $a\geq \{1,\ldots ,n\}$ must be of type $Z$.

In any of these cases, this forces the hexagons in $\HH(X)$, $\HH(Y)$ and $\HH(Z)$ to be of type $X$, $Y$ or $Z$, respectively, using an easy induction argument starting from the boundary. This also forces the hexagons in the boundary strips
$\HH(A)$, $\HH(B)$ and $\HH(C)$ inbetween to be of the appropriate types.
If $1\leq i\leq k-1$ and $1\leq j\leq n-k-1$, i.e.\ if $M_{k,n}(i,j)$ is not
a coefficient, a single hexagon remains, i.e.\ the unique element of $\HH(O)$,
which is forced to be of type $O$. Otherwise, there are no hexagons remaining
and type $O$ does not occur.

Note that the regions $\HH(X)$ and $\HH(Y)$ are roughly triangular
(or truncated triangular), with $\HH(Z)$ forming the remainder
(apart from $\HH(O)$ and the boundary strips $\HH(A),\HH(B)$ and $\HH(C)$).
\end{proof}

\begin{remark}
If $i=0$ or $j=n-k$, then the restrictions $a<j$ for $A$ and $B$ and $b>i$ for $C$ are automatically satisfied and thus can be omitted. Furthermore, if
if $i=0$ and $0\leq j\leq n-k-1$ there are no hexagons of type $A$, and if
if $1\leq i\leq k-1$ and $j=n-k$, there are no hexagons of type $C$.
If $i=j=0$ then every hexagon is of type $Z$.
\end{remark}

The example in Figure~\ref{fig:large1} shows a case where $\HH(Y)$ forms a truncated
triangle, while in Figure~\ref{fig:large2}, $\HH(X)$ forms a truncated triangle.
Figure~\ref{fig:large3} is a case where $I_2$ is empty (and $\HH(Y)$ is a truncated
triangle), while Figure~\ref{fig:large4} illustrates a case where $\HH(X)$ is
a truncated triangle. In the examples in Figures~\ref{fig:large3} and~\ref{fig:large4}, $\minor{M_{k,n}(i,j)}$ is a coefficient.

In each case, the edges in the dimer configuration are drawn as full lines, and the other edges as dotted lines). The type of each hexagon is also indicated.

\begin{figure}
\begin{center}
\includegraphics[width=14cm]{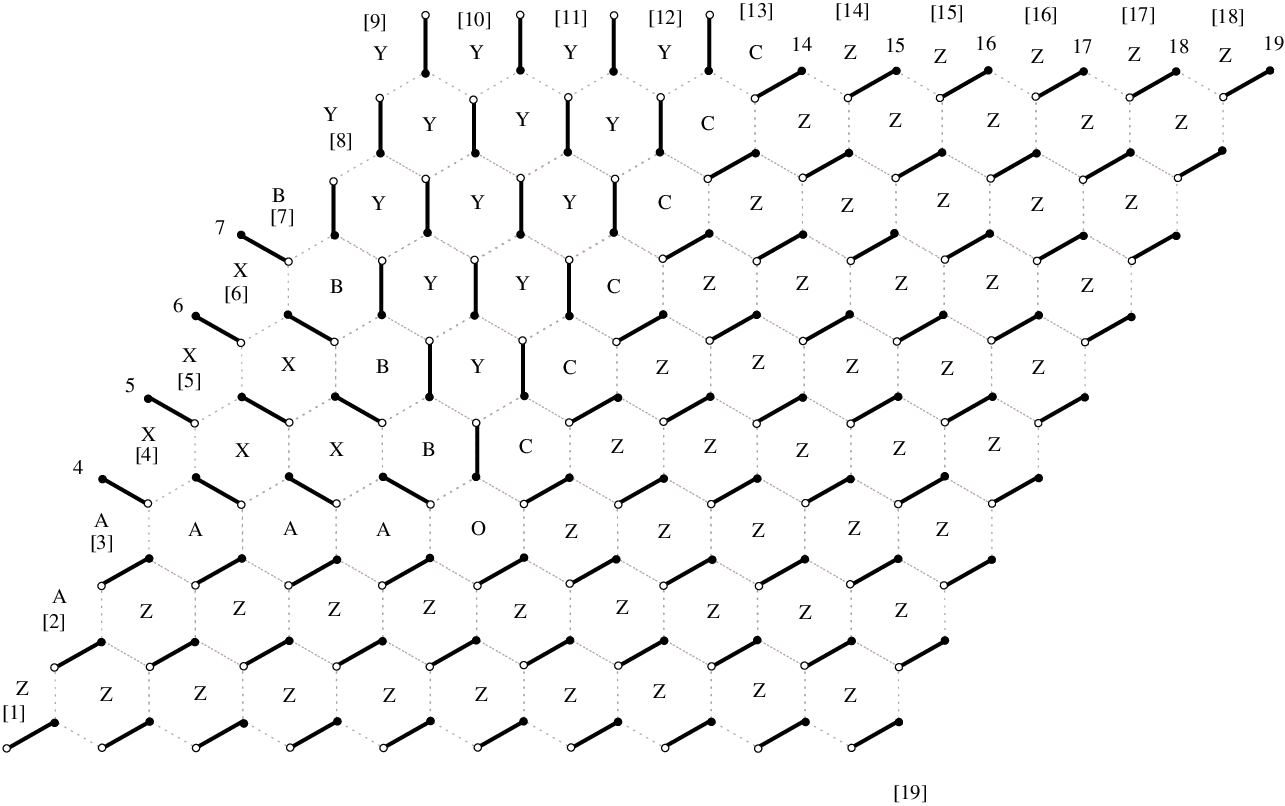}
\end{center}
\caption{The unique dimer configuration in $G_{\R^*_{10,19}}(M_{9,19}(3,4))=G_{\R^*_{10,19}}(\{1,2,3\}\cup \{8,9,10,11,12,13\})$. As per
definition,
$G_{\R^*_{10,19}}(M_{9,19}(3,4))$ is
constructed from $G_{\R^*_{10,19}}$ by
deleting the boundary vertices
in $M_{9,19}(3,4)=\{1,2,3\}\cup \{8,9,10,11,12,13\}$ (and their labels).}
\label{fig:large1}
\end{figure}

\begin{figure}
\begin{center}
\includegraphics[width=14cm]{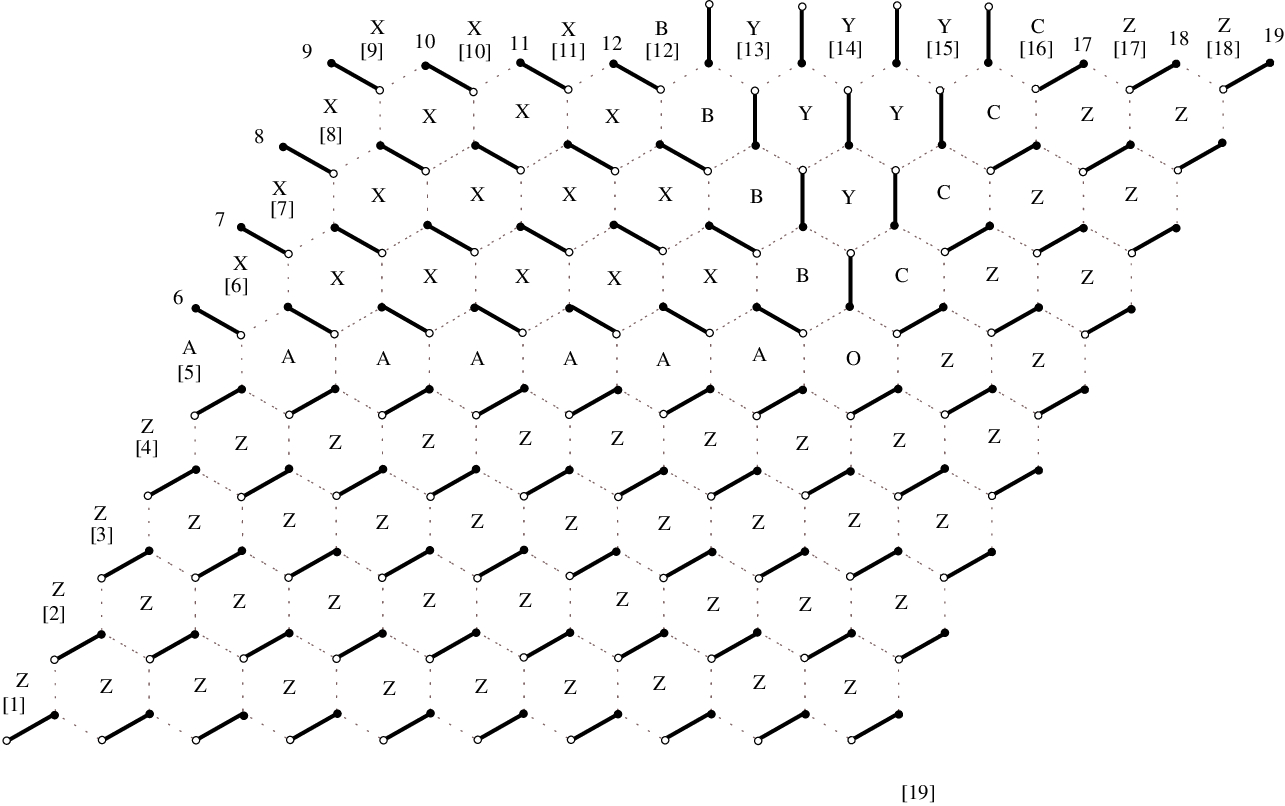}
\end{center}
\caption{The unique dimer configuration in $G_{\R^*_{10,19}}(M_{9,19}(5,7))=G_{\R^*_{10,19}}(\{1,2,3,4,5\}\cup \{13,14,15,16\})$.}
\label{fig:large2}
\end{figure}

\begin{figure}
\begin{center}
\includegraphics[width=14cm]{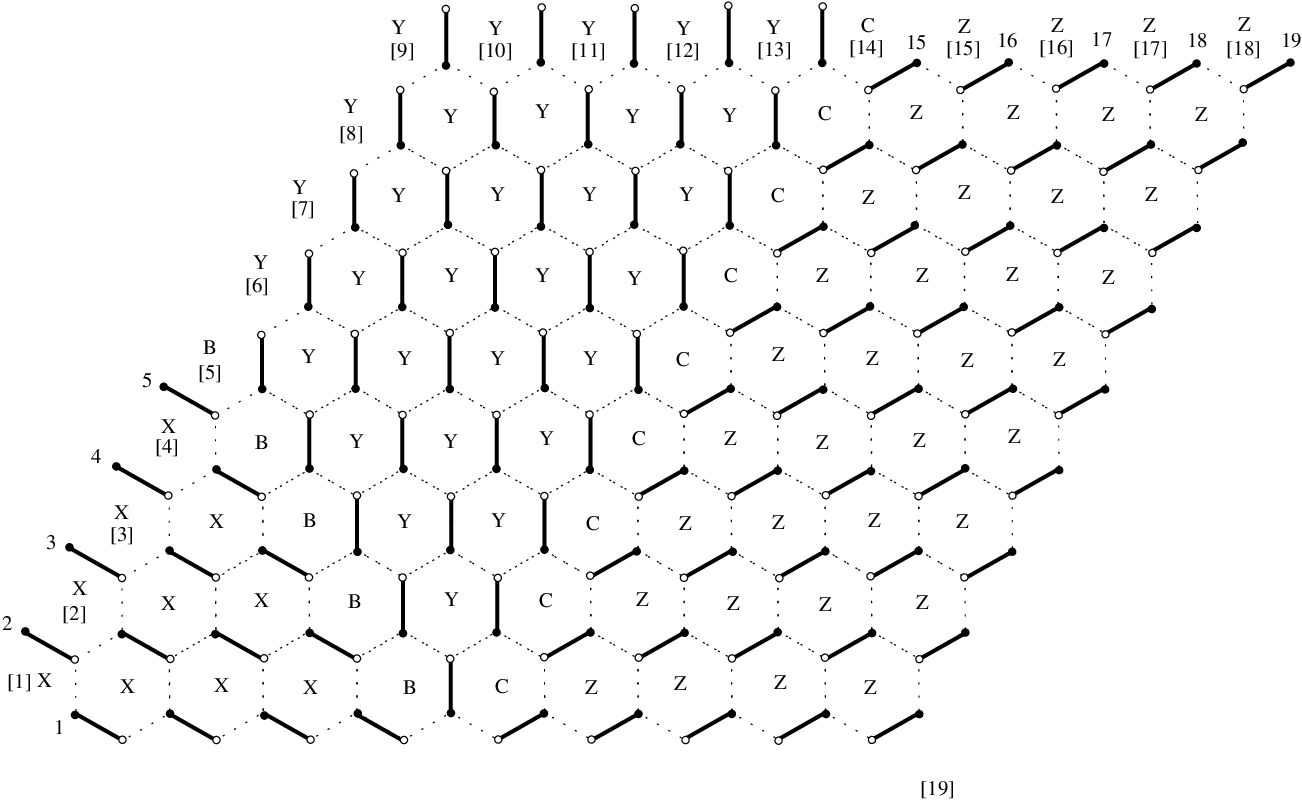}
\end{center}
\caption{The unique dimer configuration in $G_{\R^*_{10,19}}(M_{9,19}(0,5))=G_{\R^*_{10,19}}(\{6,7,8,9,10,11,12,13,14\})$.}
\label{fig:large3}
\end{figure}

\begin{figure}
\begin{center}
\includegraphics[width=14cm]{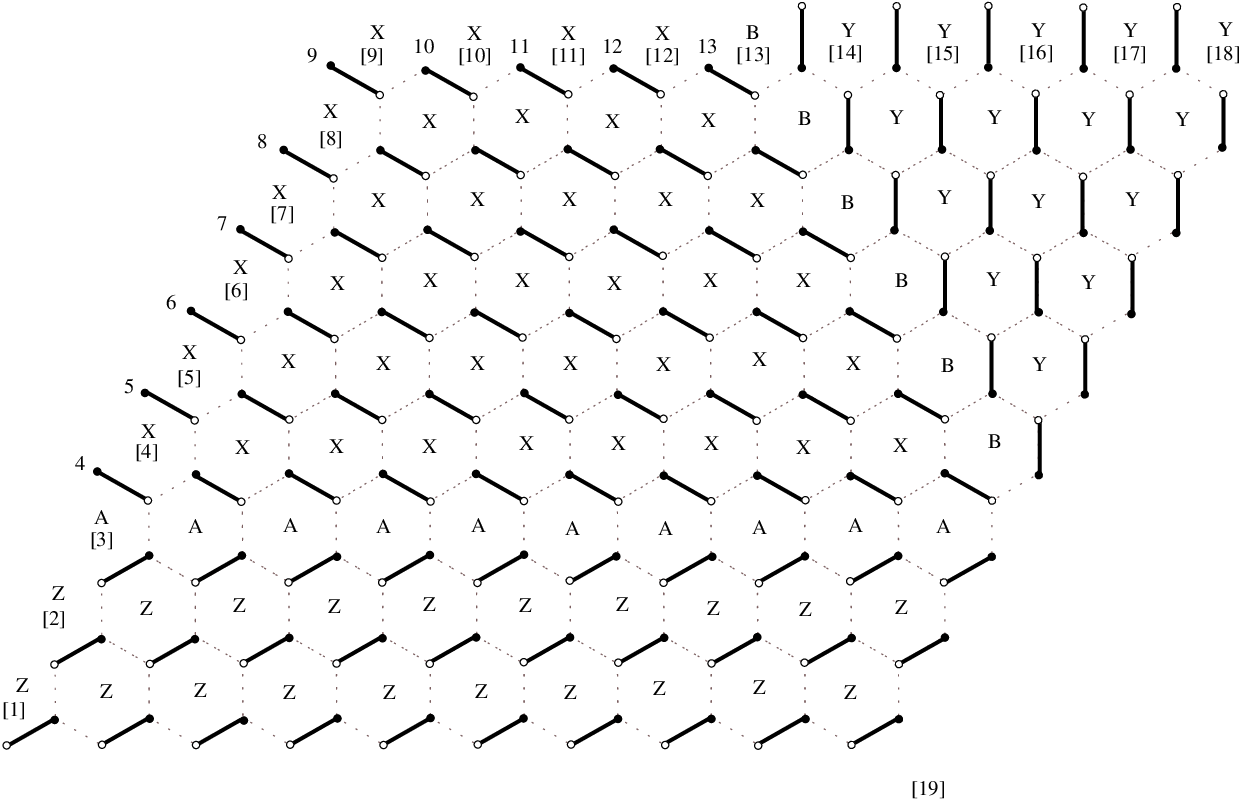}
\end{center}
\caption{The unique dimer configuration in $G_{\R^*_{10,19}}(M_{9,19}(3,10))=G_{\R^*_{10,19}}(\{1,2,3\}\cup \{14,15,16,17,18,19\})$.}
\label{fig:large4}
\end{figure}

\begin{corollary}
For any Postnikov diagram $P$ and $k$-subset $I$, the dual bipartite graph
$G_P(I)$ is balanced.
\end{corollary}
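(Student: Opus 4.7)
The plan is to combine invariance of a vertex count under Postnikov moves with the explicit dimer configurations coming from Proposition~\ref{p:uniquematching}. First I would note that the quantity $|B_G| - |W_G|$ is preserved under both the blow-up/blow-down move (Figure~\ref{f:blowdown}) and the urban renewal move that is dual to a quadrilateral move (Figure~\ref{fig:flipbipartite}), because each move adjoins an equal number of black and white vertices. These moves also do not touch the boundary vertex labels or their colours. By Proposition~\ref{p:connected}, any two Postnikov diagrams are linked by a sequence of quadrilateral moves, so it suffices to establish balance of $G_P(I)$ for one specific Postnikov diagram together with every $k$-subset $I$; the convenient choice is $P = \R^*_{n-k,n}$.

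For this $P$, Proposition~\ref{p:uniquematching} exhibits a (unique) dimer configuration on $G_P(M_{k,n}(i,j))$ for every admissible $(i,j)$, forcing each such graph to be balanced. Setting $\epsilon_a = +1$ if the boundary vertex labelled $a$ of $G_P$ is white and $\epsilon_a = -1$ if it is black, balance becomes the identity
$$
|W_{G_P}| - |B_{G_P}| \;=\; \sum_{a \in M_{k,n}(i,j)} \epsilon_a.
$$
The left side is independent of $(i,j)$, so comparing subsets $M_{k,n}(i,j)$ that differ in a single element gives pairs of equalities $\epsilon_a = \epsilon_b$: for example, $M_{k,n}(i,j)$ and $M_{k,n}(i+1,j)$ differ by swapping $i+j+1$ for $i+1$, while $M_{k,n}(i,j)$ and $M_{k,n}(i,j+1)$ differ by swapping $i+j+1$ for $j+k+1$. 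Letting $(i,j)$ range over all admissible values, these equalities chain together to force $\epsilon_1 = \epsilon_2 = \cdots = \epsilon_n$.

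Once all boundary vertices share a common colour, $\sum_{a \in I}\epsilon_a = k\epsilon_1$ is independent of $I$, whence
$$
|W_{G_P(I)}| - |B_{G_P(I)}| \;=\; \bigl(|W_{G_P}| - |B_{G_P}|\bigr) \;-\; k\epsilon_1 \;=\; 0
$$
for every $k$-subset $I$. The invariance noted in the first paragraph then transfers this balance from $\R^*_{n-k,n}$ to every Postnikov diagram. The main point requiring care is the claim that the chain of equalities $\epsilon_a = \epsilon_b$ indeed connects every pair of boundary indices; covering extreme labels such as $n$ or $1$ may require using the extremal cases $i=0$ or $j=n-k$ (for which $M_{k,n}(i,j)$ labels a boundary coefficient face). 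Alternatively, one may bypass this combinatorial step by arguing directly from Definition~\ref{d:arrangement}: the oriented face between each pair of consecutive boundary positions $i$ and $(i+1)'$ has the same orientation type (its two strand boundaries are a starting strand and an ending strand oriented consistently with the polygon boundary), so all boundary vertices of $G_P$ receive a common colour a priori, sidestepping the dependence on $\R^*_{n-k,n}$ altogether.
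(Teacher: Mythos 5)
Your proof is correct and follows the same backbone as the paper's: specialize to $P = \R^*_{n-k,n}$, use Proposition~\ref{p:uniquematching} to get balance for the subsets $M_{k,n}(i,j)$, argue that all boundary vertices of $G_{\R^*_{n-k,n}}$ share a colour so that balance extends to arbitrary $k$-subsets, and then carry this over to every Postnikov diagram via move-invariance and Proposition~\ref{p:connected}. Where you part ways with the paper is in justifying the colour-uniformity of the boundary vertices. The paper simply asserts that all boundary vertices are black --- a general structural fact about the bipartite dual of any Postnikov diagram, since the boundary vertices of $G_P$ come from the oriented faces pinched between consecutive polygon labels $i'$ and $i$, all of which are clockwise. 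You instead deduce the weaker-but-sufficient statement $\epsilon_1 = \cdots = \epsilon_n$ by comparing the balance identities for adjacent $M_{k,n}(i,j)$'s. Your chaining concern is in fact resolvable: the relations from $(i,1)\to(i+1,1)$ yield $\epsilon_1 = \cdots = \epsilon_k$, and the relations from $(0,j)\to(0,j+1)$ (including $j=0$, which uses the extra coefficient label $M_{k,n}(0,0)$) give $\epsilon_{j+1}=\epsilon_{j+k+1}$ for $0 \leq j \leq n-k-1$; together these link all of $\{1,\ldots,n\}$. One small slip in your closing alternative: the oriented boundary face of the Postnikov diagram (the one dual to a boundary vertex of $G_P$) lies between $i'$ and $i$, not between $i$ and $(i+1)'$ --- the latter arc bounds the alternating face carrying $\coeff{i}$ by Remark~\ref{r:coefficientlocation}. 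The paper's bare assertion is the leaner route, but your combinatorial derivation of colour-uniformity from the label structure of $\R_{k,n}$ is a valid self-contained substitute, buying a proof that does not presuppose the known plabic-graph fact.
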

\begin{proof}
By Proposition~\ref{p:uniquematching}, $G_{\R^*_{n-k,n}}(I)$ has a dimer configuration
for any $k$-subset $I$ labelling $\R_{k,n}$, so it is balanced. It follows that $G_{\R^*_{n-k,n}}(I)$ is balanced for any $k$-subset $I$, since all the boundary
vertices are black. Since the blow-up, blow-down and quadrilateral moves preserve
the difference between the number of white vertices and the number of black vertices,
the result follows, using Proposition~\ref{p:connected}.
\end{proof}

Fix $0\leq i\leq k-1$ and $1\leq j\leq n-k$ or $(i,j)=(0,0)$. Then we have the scaled dimer partition function $\uu_{\R^*_{n-k,n}}({M_{k,n}(i,j)})$, which is the
dimer partition function of $G_{\R^*_{n-k,n}}(M_{k,n}(i,j))$ divided by the product of Pl\"{u}cker coordinates labelling the interior faces of $G_{\R^*_{n-k,n}}(M_{k,n}(i,j))$ (i.e.\ the Pl\"{u}cker coordinates lying in the corresponding cluster which are not coefficients).

\begin{prop} \label{p:rkncase}
Fix $0\leq i\leq k-1$ and $1\leq j\leq n-k$ or $(i,j)=(0,0)$.
Then $$\twist{\minor{M_{k,n}(i,j)}}=\uu_{\R_{n-k,n}^*}({M_{k,n}(i,j)}).$$
\end{prop}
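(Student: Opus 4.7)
By Proposition~\ref{p:uniquematching}, $G:=G_{\R^*_{n-k,n}}(M_{k,n}(i,j))$ admits a unique dimer configuration $\delta$, so
$$\dimerpart_G=\weight_\delta,\qquad \uu_{\R^*_{n-k,n}}(M_{k,n}(i,j))=\frac{\weight_\delta}{\prod_{x\in\mathbf{x}(\R^*_{n-k,n})} x}.$$
The plan is to compute $\weight_\delta$ by reorganising the product over edges as a product over faces, and to then match the outcome against the twist formula of Lemma~\ref{l:rkntwist}.

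By Definition~\ref{d:weights}, $\weight_e$ is the product of the face labels at the white endpoint of $e$ other than those of the two faces incident to $e$. Swapping the order of multiplication in $\weight_\delta=\prod_{e\in\delta}\weight_e$ yields
$$\weight_\delta=\prod_{F}[I_F]^{m_F},$$
where $m_F$ counts the white vertices on $\partial F$ whose $\delta$-matching edge does \emph{not} lie on $\partial F$. For a complete hexagonal face, $m_F=3-c_F$ where $c_F$ is the number of $\delta$-edges on $\partial F$, and this number is read off directly from the type of the hexagon recorded in Proposition~\ref{p:uniquematching}.

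The core of the proof is then three claims. \emph{(a)} For the unique type-$O$ face $F_O=H^*_{n-k,n}(j,i)$, which carries the label $[M^*_{n-k,n}(j,i)]$, one has $c_{F_O}=1$ and hence $m_{F_O}=2$. \emph{(b)} For every other internal face $F$ (type $A$, $B$, $C$, $X$, $Y$, or $Z$), $c_F=2$ and hence $m_F=1$: the regions $\HH(X),\HH(Y),\HH(Z)$ carry the three brick-wall ground-state matchings (two parallel matching edges on each hexagon), while the strips $\HH(A),\HH(B),\HH(C)$ carry transition matchings that still have two matching edges per hexagon, only in non-parallel directions. \emph{(c)} For a boundary face labelled $\mcoeff{r}$, the exponent $m_F$ is $1$ exactly when both neighbouring boundary vertices $r$ and $r+1$ belong to $I:=M_{k,n}(i,j)=\{1,\dots,i\}\cup\{i+j+1,\dots,j+k\}$, i.e.\ when $r\in\{1,\dots,i-1\}\cup\{i+j+1,\dots,j+k-1\}$, and is $0$ otherwise. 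Dividing $\weight_\delta$ by $\prod_{x\in\mathbf{x}(\R^*_{n-k,n})}x$ decreases each internal multiplicity by $1$, so claims \emph{(a)}--\emph{(c)} give
$$\uu_{\R^*_{n-k,n}}(M_{k,n}(i,j))=[M^*_{n-k,n}(j,i)]\cdot\prod_{r=1}^{i-1}\mcoeff{r}\cdot\prod_{r=i+j+1}^{j+k-1}\mcoeff{r},$$
which by Lemma~\ref{l:rkntwist} is exactly $\twist{\minor{M_{k,n}(i,j)}}$.

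The main obstacle is the boundary bookkeeping underlying claim \emph{(c)} (and, to a lesser extent, the verification of claim \emph{(b)} in the transition strips $\HH(A),\HH(B),\HH(C)$). The hexagons of $G$ are incomplete near the boundary, stalks have been attached to every remaining boundary vertex by blow-ups, and the removal of the vertices in $I$ merges or alters the boundary faces of $G_{\R^*_{n-k,n}}$; one must inspect the neighbourhood of each boundary face type-by-type to confirm that $m_F$ takes the value claimed. The degenerate cases $(i,j)=(0,0)$, $i=0$, and $j=n-k$, in which $\HH(O)$ is empty and several of the strips collapse, need to be handled as separate but simpler sub-cases, consistent with how Lemma~\ref{l:rkntwist} is interpreted in those regimes.
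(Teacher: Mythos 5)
Your proposal is correct and follows essentially the same route as the paper's proof: invoke Proposition~\ref{p:uniquematching} to reduce the partition function to a single dimer configuration, reorganize the product of edge weights as a product over faces so that the exponent of a face label counts the white boundary vertices matched by an edge leaving that face, check that the $O$-hexagon receives exponent $2$, every other internal hexagon exponent $1$, and each boundary face $\mcoeff{a}$ exponent $1$ or $0$ according to position, and then divide out by the internal-face Pl\"ucker coordinates to match Lemma~\ref{l:rkntwist}. Your explicit ``$m_F=3-c_F$'' observation and your clean reformulation of claim \emph{(c)} --- that $\mcoeff{a}$ has exponent $1$ precisely when both of its neighbouring boundary vertices $a$ and $a+1$ lie in $M_{k,n}(i,j)$, i.e.\ $a\in\{1,\dots,i-1\}\cup\{i+j+1,\dots,j+k-1\}$ --- are nicer than the paper's case-by-case inspection, and in fact expose a minor slip in the published text, which at one point writes ``$a\in I_2$'' where it should read ``$a\in I_2\setminus\{a_2\}$'' to agree with the twist formula; your formulation gives the correct index set directly.
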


\begin{proof}
Recall that, by Lemma~\ref{l:rkntwist}, we have:
$$\twist{\minor{M_{k,n}(i,j)}}=\minor{M^*_{n-k,n}(j,i)}\cdot
\prod_{r=1}^{i-1} \mcoeff{r} \cdot \prod_{r=i+j+1}^{j+k-1} \mcoeff{r}.$$

By Proposition~\ref{p:uniquematching}, $\uu_{\R_{n-k,n}^*}(M_{k,n}(i,j))$ is the weight
$\weight_{\dimerconfig}$ associated to the unique dimer configuration ${\dimerconfig}$ on $G_{\R^*_{n-k,n}}(M_{k,n}(i,j))$ given in the lemma, divided by the product of the Pl\"{u}cker coordinates labelling non-boundary faces in ${\R}_{n-k,n}^*(M_{k,n}(i,j))$.
By the definition of the weighting on $G_{\R^*_{n-k,n}}(M_{k,n}(i,j))$,
the exponent of a Pl\"{u}cker coordinate in $\weight_{\dimerconfig}$ associated to a face of
$G_{\R^*_{n-k,n}}(M_{k,n}(i,j))$ is equal to the number of edges
in ${\dimerconfig}$ for which only the white vertex of the edge is incident with the face.
Therefore (since ${\dimerconfig}$ is a dimer configuration), the exponent in $\weight_{\dimerconfig}$ of a Pl\"{u}cker coordinate
corresponding to a face of $G_{\R^*_{n-k,n}}(M_{k,n}(i,j))$ coincides with the number of white vertices
on the boundary of the face which are not incident with an edge incident with two
vertices on the boundary.

It follows (see the description of the hexagon types in Figure~\ref{fig:hexagontypes})
that the exponent of a Pl\"{u}cker coordinate corresponding to a non-boundary face is
exactly one for all cases except the Pl\"{u}cker coordinate corresponding to the hexagon labelled $O$ (if one exists)
in which case the exponent is $2$. Note that this hexagon, if it exists, is labelled with the Pl\"{u}cker coordinate $\minor{M^*_{n-k,n}(j,i)}$.

We can now check that, in each of the cases of the proof of Proposition~\ref{p:uniquematching}, and using the notation there, the exponent of a coefficient $\mcoeff{a}$ in $\uu_{\R_{n-k,n}^*}(M_{k,n}(i,j))$ is $1$ if $a\in I_1\setminus \{a_1\}$ or $a\in I_2$, and zero otherwise. For example, in Case I(a), if $a\in I_1\setminus \{a_1\}$, then the part of $G_{\R^*_{n-k,n}}(M_{k,n}(i,j))$ adjacent to the boundary face
labelled $\mcoeff{a}$ must be as in Figure~\ref{fig:HZ}, so the exponent of $\mcoeff{a}$ must be $1$. The other cases are similar.
The result follows.
\end{proof}

\begin{figure}
\psfragscanon
\psfrag{a}{$a$}
\psfrag{a+1}{$a+1$}
\psfrag{[a]}{$\mcoeff{a}$}
\includegraphics[width=2.5cm]{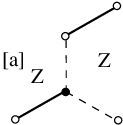}
\caption{Proof of Proposition~\ref{p:rkncase}.}
\label{fig:HZ}
\end{figure}

\section{Proof of the main result}
\label{s:mainresult}
In this section we prove our main result. Recall that the short Pl\"{u}cker
relations are the relations of the form
$$\minor{Jac}\minor{Jbd}=\minor{Jab}\minor{Jcd}+\minor{Jad}\minor{Jbc},$$
where $J$ is a $(k-2)$-subset with $J\cap \{a,b,c,d\}=\phi$.

\begin{prop} \label{p:uIshort}
Let $P$ be a Postnikov diagram. Then the elements $\uu_{G_P}(I)$, for $I$ a $k$-subset of $\{1,\ldots ,n\}$, satisfy the short Pl\"{u}cker relations.
\end{prop}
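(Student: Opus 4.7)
My plan is to reduce the claim to a single application of the weighted form of Kuo's graphical condensation theorem to the graph $G_P(J)$.

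Fix a $(k-2)$-subset $J$ of $\{1,\ldots ,n\}$ together with $a<b<c<d$ in $\{1,\ldots ,n\}\setminus J$. The six $k$-subsets $Jac,Jbd,Jab,Jcd,Jad,Jbc$ all differ from $J$ by a two-element subset of $\{a,b,c,d\}$, so
$$G_P\bigl(J\cup \{x,y\}\bigr)=G_P(J)\setminus \{x,y\}$$
for each pair $\{x,y\}\subseteq \{a,b,c,d\}$. Since each $\uu_{G_P}(I)$ is obtained by dividing $\dimerpart_{G_P(I)}$ by the same normalising factor $\prod_{x\in \mathbf{x}(P)} x$, the claim is equivalent, after multiplying through by $\bigl(\prod_{x\in \mathbf{x}(P)} x\bigr)^2$, to the purely graph-theoretic identity
$$\dimerpart_{G_P(Jac)}\cdot \dimerpart_{G_P(Jbd)}=\dimerpart_{G_P(Jab)}\cdot \dimerpart_{G_P(Jcd)}+\dimerpart_{G_P(Jad)}\cdot \dimerpart_{G_P(Jbc)}.$$

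I would then verify the hypotheses of Kuo's theorem. The graph $G_P(J)$ is planar and bipartite, all of its boundary vertices are black, and among them $a,b,c,d$ appear on the outer face in the cyclic order inherited from $\{1,\ldots ,n\}$. The balance corollary at the end of Section~\ref{s:regular} shows that $G_P(I)$ is balanced for every $k$-subset $I$, so $G_P(J)$ carries exactly two more black vertices than white. These are precisely the hypotheses of the four-same-colour variant of Kuo's graphical condensation theorem, applied to $G_P(J)$ with marked vertices $a,b,c,d$; the conclusion of the theorem is exactly the displayed identity above. Dividing both sides by $\bigl(\prod_{x\in \mathbf{x}(P)} x\bigr)^2$ then yields the desired short Pl\"{u}cker relation for $\uu_{G_P}(\cdot)$.

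The main obstacle is that Kuo's theorem is usually stated for unweighted perfect matchings. Its proof, however, proceeds via a sign-reversing involution on superpositions of pairs of dimer configurations that only rearranges edges between the two matchings, preserving the multiset of edges used and hence any multiplicative product of edge weights. The weighted identity therefore follows from the same argument \emph{verbatim}; this lies behind the \emph{condensation principle} already invoked in Section~\ref{s:invariance}. Once the weighted form of Kuo's theorem is in hand, the proof is reduced to the verification of planarity, colour excess, and cyclic order carried out above, each of which is immediate from the structure of $G_P$ as the bipartite dual of a disk diagram.
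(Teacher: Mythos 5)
Your proposal is correct and follows essentially the same route as the paper: both reduce the relation for the scaled functions $\uu_{G_P}(\cdot)$ to the corresponding identity for the $\dimerpart_{G_P}(\cdot)$ by clearing the common factor $\prod_{x\in\mathbf{x}(P)}x$, and both obtain that identity from Kuo's graphical condensation applied to $G_P(J)$, observing that the sign-reversing/superposition argument preserves edge multisets and hence works for weighted graphs. If anything, you are slightly more precise than the paper, which cites \cite[Thm.\ 2.1]{kuo04} even though the relevant statement is the variant for four marked vertices of the same colour on a common face of a graph with colour excess two, exactly as you verify for $G_P(J)$ with the black boundary vertices $a,b,c,d$.
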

\begin{proof}
The proposition for the $\dimerpart_{G_P}(I)$
follows from~\cite[Thm.\ 2.1]{kuo04}, noting that the proof given there
carries over to weighted graphs as well as the enumeration result stated.
But then the result follows, as the $\uu_{G_P}(I)$ are obtained from the $\dimerpart_{G_P}(I)$ by scaling by a constant monomial.
\end{proof}

\begin{lemma} \label{l:tIshort}
The elements $\twist{\minor{I}}$, for $I$ a $k$-subset of $\{1,\ldots ,n\}$, satisfy the short Pl\"{u}cker relations.
\end{lemma}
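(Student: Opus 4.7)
The plan is to observe that the short Plücker relations are identities that hold in the homogeneous coordinate ring $\mathbb{C}[\Gr_{k,n}]$, and that the twist, as established in Section~\ref{s:twist}, induces a ring endomorphism of $\mathbb{C}[\Gr_{k,n}]$. So the strategy is simply to apply this ring endomorphism to a short Plücker relation and read off the result.

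More concretely, first I would recall that for a $(k-2)$-subset $J$ disjoint from $\{a,b,c,d\}$ with $a<b<c<d$, the short Plücker relation
$$\minor{Jac}\minor{Jbd}=\minor{Jab}\minor{Jcd}+\minor{Jad}\minor{Jbc}$$
holds in $\mathbb{C}[\Gr_{k,n}]$. Then I would invoke the remark made just after the proof of the lemma in Section~\ref{s:twist} which states that the twist induces a homomorphism $f\mapsto \twist{f}$ from $\mathbb{C}[\Gr_{k,n}]$ to itself. Applying this homomorphism to both sides of the above identity yields
$$\twist{\minor{Jac}}\cdot\twist{\minor{Jbd}}=\twist{\minor{Jab}}\cdot\twist{\minor{Jcd}}+\twist{\minor{Jad}}\cdot\twist{\minor{Jbc}},$$
as required.

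There is no significant obstacle here; the only thing to double-check is that the twist genuinely respects addition and multiplication in $\mathbb{C}[\Gr_{k,n}]$, which follows from the fact that it is induced by pullback along a rational self-map of $\Gr_{k,n}$ (equivalently, a polynomial self-map of the affine cone given by homogeneous polynomials of a common degree, as was verified in the proof of the lemma in Section~\ref{s:twist}). Thus the proof is essentially one line, reducing the claim for twisted Plücker coordinates to the corresponding classical identity among ordinary Plücker coordinates.
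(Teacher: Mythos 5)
Your argument is exactly the paper's: the paper also proves this lemma by noting that the ordinary Pl\"{u}cker coordinates satisfy the short Pl\"{u}cker relations and that the twist is an algebra homomorphism on $\mathbb{C}[\Gr_{k,n}]$ (as established at the end of Section~\ref{s:twist}), so applying it to the classical identity gives the twisted one. Your proposal is correct and matches the paper's proof.
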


\begin{proof}
This follows from the facts that the Pl\"{u}cker coordinates $\minor{I}$ satisfy the short Pl\"{u}cker relations and that the twist is a homomorphism of algebras.
\end{proof}

\begin{proof}[Proof of Theorem~\ref{t:mainresult}]
If $k=1$ or $n-1$ there is, up to blow-up and blow-down moves, a unique Postnikov
diagram, and it is easy to check that the
result holds directly. Otherwise,
if $I$ is the label of an alternating face of $\R_{k,n}$, then the theorem holds
for $I$ by Proposition~\ref{p:rkncase}.
For the general case, we prove the result by induction on the number $e$ of quadrilateral moves required to get from $\R_{k,n}$ to a Postnikov diagram ${\postdiag}$ which has $I$ labelling one of its alternating faces. The above deals with the case $e=0$.
Suppose the result is known for all $k$-subsets labelling Postnikov diagrams which can be obtained by a sequence of fewer than $e$ quadrilateral moves starting from $\R_{k,n}$. Then there is a Postnikov diagram ${\postdiag}'$, related to ${\postdiag}$ by a quadrilateral move, for which the result is known for all $k$-subsets labelling ${\postdiag}'$. So if $I$ labels an alternating face of ${\postdiag}'$, we are done. But if not, $\minor{I}$ is related to the Pl\"{u}cker coordinates of ${\postdiag}'$ by a short
Pl\"{u}cker relation. That the result then holds for $I$ follows from
Proposition~\ref{p:uIshort} and Lemma~\ref{l:tIshort}. The result then follows by induction,
using Proposition~\ref{p:connected}.
\end{proof}

\begin{remark} \label{r:twistk2}
If $k=2$ then, by Proposition~\ref{p:twistcomputation}, the twist of an arbitrary Pl\"{u}cker coordinate $\minor{ab}$ is
$\twist{\minor{ab}}=\minor{\sigma(a)\sigma(b)}$ and we see that every Pl\"{u}cker coordinate is the twist of a Pl\"{u}cker coordinate, so Theorem~\ref{t:mainresult} gives a formula for any Pl\"{u}cker coordinate as a positive Laurent polynomial in terms of any initial cluster. We note that in this case, such a formula has already been given in~\cite{schiffler08}.
\end{remark}

\begin{remark} \label{r:factorizable}
Note also that it follows from Theorem~\ref{t:mainresult} that for every Postnikov diagram and $k$-subset $I$, the bipartite graph $G_P(I)$ admits at least one dimer
configuration, i.e.\ it is \emph{factorizable}.
\end{remark}

Recall that the \emph{totally positive} part
of the real Grassmannian $Gr_{k,n}(\mathbb{R})$ is the subset
$$(\Gr_{k,n})_{>0}=\{p\in Gr_{k,n}(\mathbb{R})\,:\,\minor{I}(p)>0\text{ for all $k$-subsets $I$ of $\{1,\ldots ,k\}$}\},$$
while the totally nonnegative part is
$$(\Gr_{k,n})_{\geq 0}=\{p\in Gr_{k,n}(\mathbb{R})\,:\,\minor{I}(p)\geq 0\text{ for all $k$-subsets $I$ of $\{1,\ldots ,k\}$}\}.$$
 
\begin{corollary} \label{c:preservespositive}
The twist preserves the totally positive part of the Grassmannian and the totally nonnegative part of the Grassmannian.
\end{corollary}
\begin{proof}
This follows from Theorem~\ref{t:mainresult}
and Remark~\ref{r:factorizable}.
\end{proof}

\section{An example}
\label{s:example}
We give an example of the main result, taking $k=3$ and $n=6$. Consider the
Postnikov diagram ${\postdiag}$ for $\Gr_{3,6}$ shown in Figure~\ref{fig:ex36}. The weighted bipartite graph $G_{\postdiag}$ is shown in Figure~\ref{fig:plabic36}, and
$G_{\postdiag}(\{2,5,6\})$ can be obtained from
$G_{\postdiag}$
by removing the black boundary vertices
labelled $2,5$ and $6$.
There are six dimer configurations
$\dimerconfig_1,\ldots ,\dimerconfig_6$
on $G_{\postdiag}(\{2,5,6\})$, and 
the corresponding monomials $
\weight_{\dimerconfig_i}$ are shown in Figure~\ref{fig:matchings36}.

\begin{figure}
\psfragscanon
\psfrag{123}{$\scriptstyle 123$}
\psfrag{234}{$\scriptstyle 234$}
\psfrag{345}{$\scriptstyle 345$}
\psfrag{456}{$\scriptstyle 456$}
\psfrag{156}{$\scriptstyle 156$}
\psfrag{126}{$\scriptstyle 126$}
\psfrag{235}{$\scriptstyle 235$}
\psfrag{135}{$\scriptstyle 135$}
\psfrag{356}{$\scriptstyle 356$}
\psfrag{125}{$\scriptstyle 125$}
\psfrag{1'}{$\scriptstyle 1'$}
\psfrag{1}{$\scriptstyle 1$}
\psfrag{2'}{$\scriptstyle 2'$}
\psfrag{2}{$\scriptstyle 2$}
\psfrag{3'}{$\scriptstyle 3'$}
\psfrag{3}{$\scriptstyle 3$}
\psfrag{4'}{$\scriptstyle 4'$}
\psfrag{4}{$\scriptstyle 4$}
\psfrag{5'}{$\scriptstyle 5'$}
\psfrag{5}{$\scriptstyle 5$}
\psfrag{6'}{$\scriptstyle 6'$}
\psfrag{6}{$\scriptstyle 6$}
\includegraphics[width=5cm]{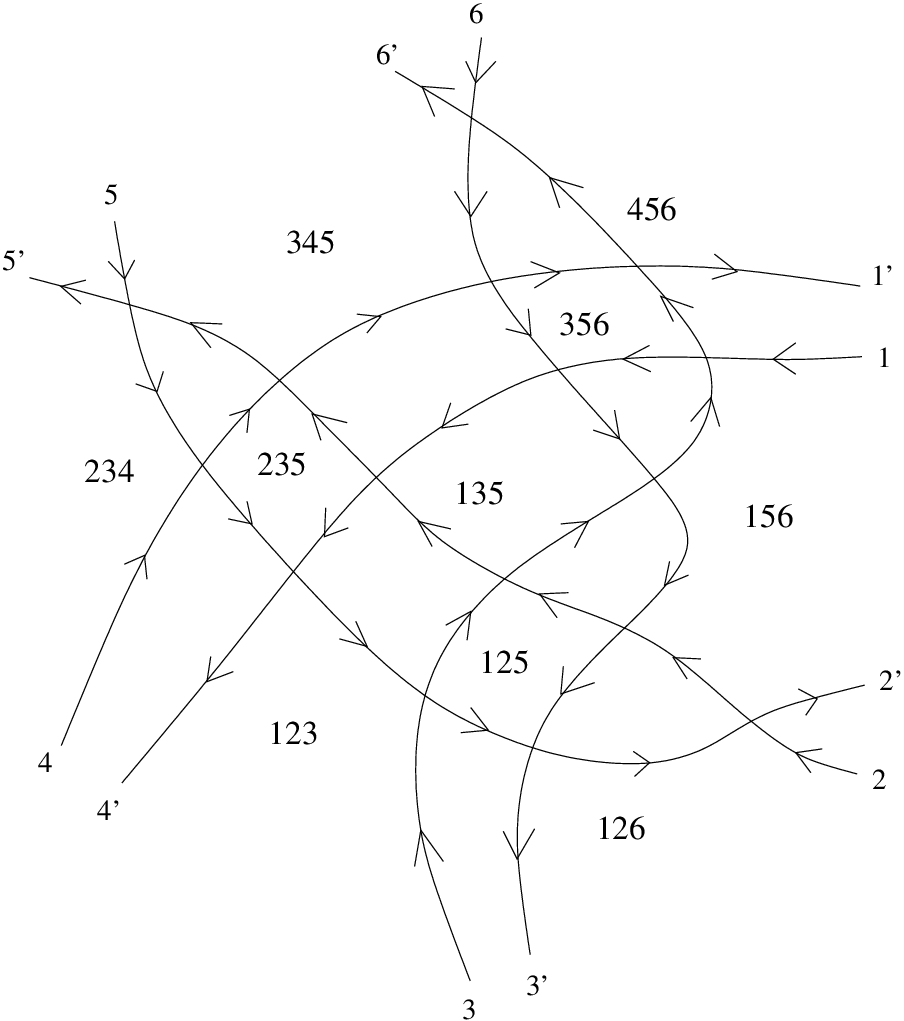}
\caption{A Postnikov diagram for $\Gr_{3,6}$.}
\label{fig:ex36}
\end{figure}

\begin{figure}
\includegraphics[width=8cm]{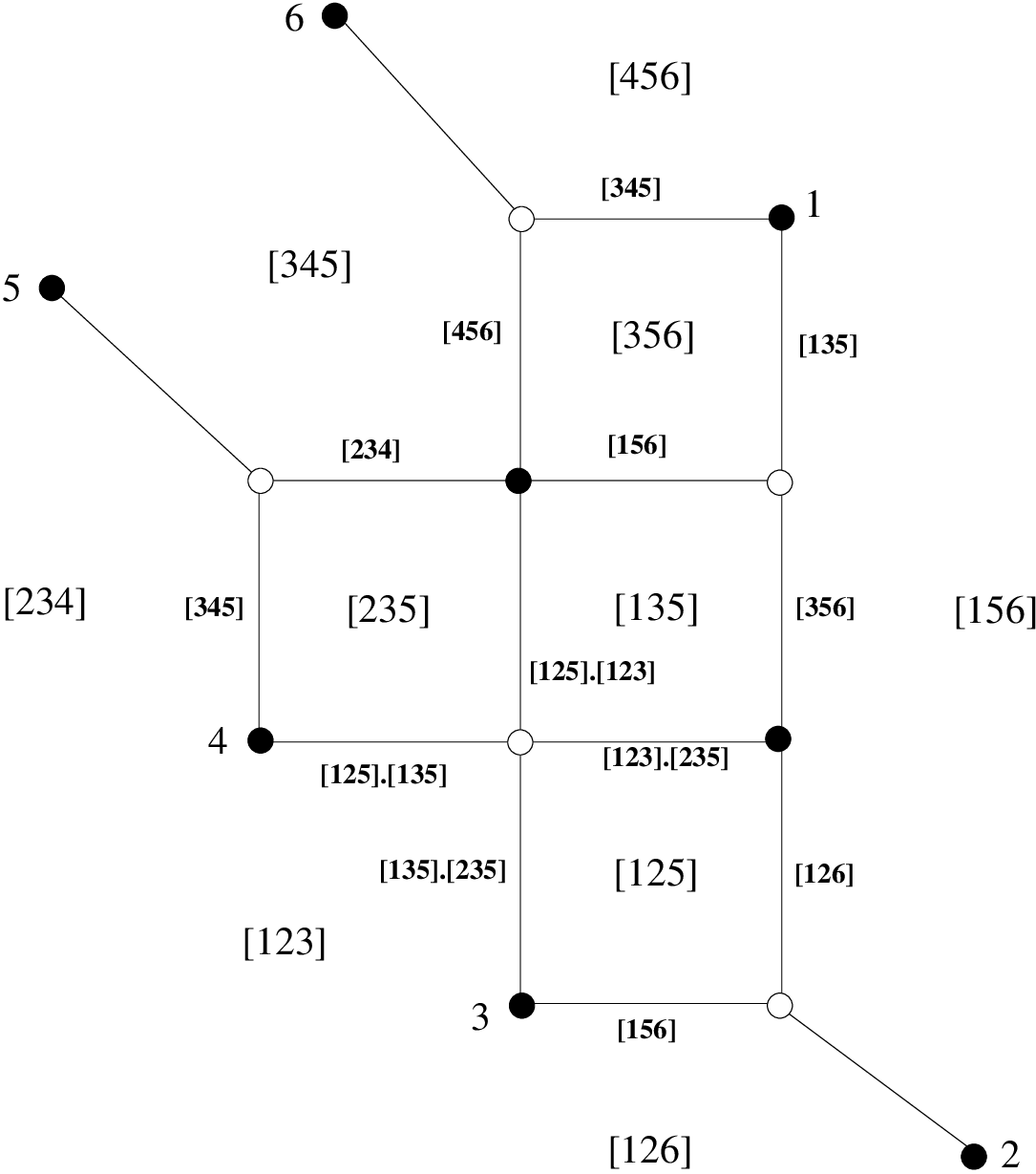}
\caption{The weighted bipartite graph of the arrangement in Figure~\ref{fig:ex36}.}
\label{fig:plabic36}
\end{figure}

\begin{figure}
\psfragscanon
\psfrag{m1}{$\scriptstyle \weight_{\dimerconfig_1}=\minor{123}\minor{156}\minor{156}\minor{235}\minor{345}\minor{345}$}
\psfrag{m2}{$\scriptstyle \weight_{\dimerconfig_2}=\minor{123}\minor{135}\minor{156}\minor{235}\minor{345}\minor{456}$}
\psfrag{m3}{$\scriptstyle \weight_{\dimerconfig_3}=\minor{123}\minor{125}\minor{156}\minor{345}\minor{345}\minor{356}$}
\psfrag{m4}{$\scriptstyle \weight_{\dimerconfig_4}=\minor{125}\minor{135}\minor{156}\minor{234}\minor{345}\minor{356}$}
\psfrag{m5}{$\scriptstyle \weight_{\dimerconfig_5}=\minor{126}\minor{135}\minor{156}\minor{235}\minor{345}\minor{345}$}
\psfrag{m6}{$\scriptstyle \weight_{\dimerconfig_6}=\minor{126}\minor{135}\minor{135}\minor{235}\minor{345}\minor{456}$}
\includegraphics[width=15.5cm]{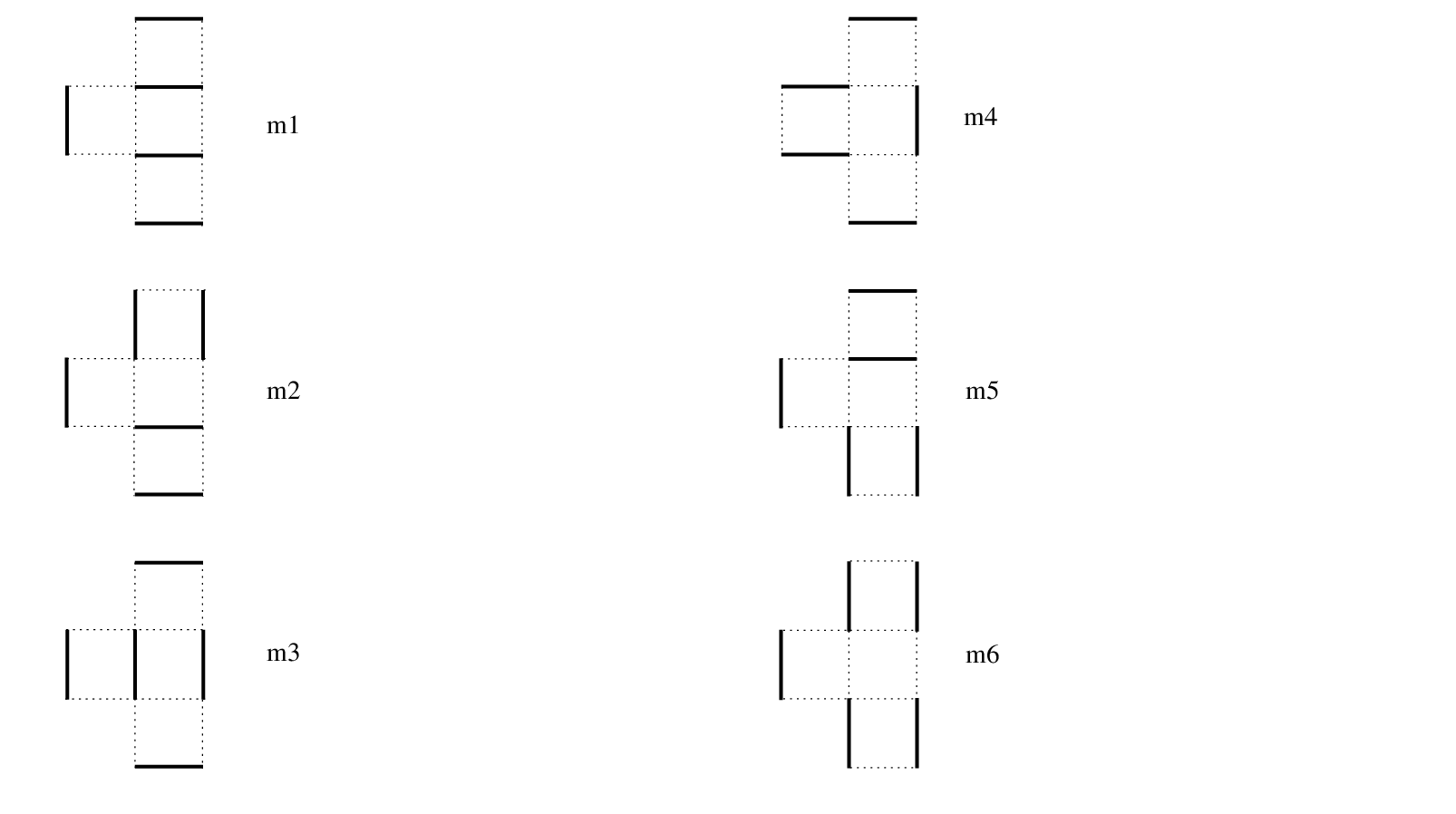}
\caption{The dimer configurations and corresponding monomials.}
\label{fig:matchings36}
\end{figure}

The corresponding dimer partition function, $\dimerpart_{G_{\postdiag}}(\{2,5,6\})$, is the sum of these.
Applying short Pl\"{u}cker relations we obtain:
\begin{align*}
\weight_{\dimerconfig_1}+\weight_{\dimerconfig_2} &= \minor{123}\minor{145}\minor{156}\minor{235}\minor{345}\minor{356}; \\
\weight_{\dimerconfig_3}+\weight_{\dimerconfig_4} &= \minor{125}\minor{134}\minor{156}\minor{235}\minor{345}\minor{356}; \\
\weight_{\dimerconfig_5}+\weight_{\dimerconfig_6} &= \minor{126}\minor{135}\minor{145}\minor{235}\minor{345}\minor{356}.
\end{align*}
Applying a short Pl\"{u}cker relation to $(\weight_{\dimerconfig_1}+\weight_{\dimerconfig_2})+(\weight_{\dimerconfig_3}+\weight_{\dimerconfig_4})$ and
to the sum of this and $\weight_{\dimerconfig_5}+\weight_{\dimerconfig_6}$, we obtain:
\begin{align*}
\weight_{\dimerconfig_1}+\weight_{\dimerconfig_2}+\weight_{\dimerconfig_3}+\weight_{\dimerconfig_4}=\minor{124}\minor{135}\minor{156}\minor{235}\minor{345}\minor{356}; \\
\dimerpart_{G_{\postdiag}}(\{2,5,6\})=\weight_{\dimerconfig_1}+\weight_{\dimerconfig_2}+\weight_{\dimerconfig_3}+
\weight_{\dimerconfig_4}+\weight_{\dimerconfig_5}+\weight_{\dimerconfig_6}=\minor{125}\minor{135}\minor{146}\minor{235}\minor{345}\minor{356}.
\end{align*}
and hence
$$\uu_{G_{\postdiag}}(\{2,5,6\})=\frac{\dimerpart_{G_{\postdiag}}(\{2,5,6\})}{\minor{125}\minor{135}\minor{235}\minor{356}}=\minor{146}\minor{345},$$
which coincides with $\overleftarrow{[256]}$ as stated by
Proposition~\ref{p:twistcomputation}. We have thus demonstrated
Theorem~\ref{t:mainresult} in this case.

\section{Maximal green sequences}
\label{s:green}
We have seen (Lemma~\ref{l:rkntwist}) that the twist of the cluster in the seed
corresponding to $\R_{k,n}$ coincides with the cluster in the seed corresponding to
$\R^*_{n-k,n}$, up to coefficients.
In this section we will show that up to coefficients and
a power of $\sigma$, this twisting can be realised via a maximal green
sequence. Such sequences arose independently in~\cite{keller11}, in the context of cluster algebras and
in~\cite{ACCERV14,CCV} (see also~\cite[\S3]{BDP14}),
in the context of the spectrum of Bogomol\'{n}yi--Prasad--Sommerfield (BPS) states.
We shall assume first that $k\not=1,2,n-1,n-2$;
see the proof of Theorem~\ref{t:greensummary} below
for a discussion of these cases.
For convenience, we shall denote the inverse of the map
$\sigma$ (see Section~\ref{s:twist}) by $\varrho$.

Our aim will be to define a maximal green sequence which starts at the seed corresponding to $\R_{k,n}$ and ends at a seed whose quiver is isomorphic to $\R^*_{n-k,n}$ and has the property that
applying $\sigma^k$ to its cluster gives the cluster
associated to $\R_{k,n}$. To do this, we will first 
introduce a compact notation for the quivers appearing in 
the maximal green sequence.

The lattice $\mathbb{Z}^2$ induces a tiling of
$\mathbb{R}^2$ by unit square tiles. We denote the tile with
$(x,y)\in \mathbb{Z}^2$ in its lower left corner by $U_{x,y}$.
Let $R$ be the rectangle in $\mathbb{R}^2$ with corners $(1,1)$, $(k-1,1)$, $(1,n-k-1)$ and $(k-1,n-k-1)$. Then $R$ contains the tiles $U_{x,y}$ for $1\leq x\leq k-2$
and $1\leq y\leq n-k-2$.
Let $\widetilde{R}$ be the larger rectangle whose vertices are
$(0,0)$, $(k,0)$, $(0,n-k)$ and $(k,n-k)$.
We call the tiles in $\widetilde{R}\setminus R$ \emph{boundary tiles}.
We define $R_{\mathbb{Z}}=R\cap \mathbb{Z}^2$
and $\widetilde{R}_{\mathbb{Z}}=\widetilde{R}\cap \mathbb{Z}^2$.

Next, we consider the four-vertex quivers shown in
Figure~\ref{fig:fourvertex}.

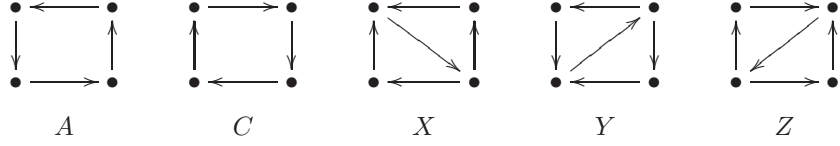
\begin{figure}
$$
\xymatrix@=6pt{
\bullet \ar[dd] && \bullet \ar[ll] \\ \\
\bullet \ar[rr] && \bullet \ar[uu] \\
& A
}
\quad\quad
\xymatrix@=6pt{
\bullet \ar[rr] && \bullet \ar[dd] \\ \\
\bullet \ar[uu] && \bullet \ar[ll] \\
& C
}
\quad\quad
\xymatrix@=6pt{
\bullet \ar[ddrr] && \bullet \ar[ll] \\ \\
\bullet \ar[uu] && \bullet \ar[ll] \ar[uu] \\
& X
}
\quad\quad
\xymatrix@=6pt{
\bullet \ar[dd] && \bullet \ar[ll] \ar[dd] \\ \\
\bullet \ar[uurr] && \bullet \ar[ll] \\
& Y
}
\quad\quad
\xymatrix@=6pt{
\bullet \ar[rr] && \bullet \ar[ddll] \\ \\
\bullet \ar[uu] \ar[rr] && \bullet \ar[uu] \\
& Z
}$$
\caption{Four-vertex quivers}
\label{fig:fourvertex}
\end{figure}

\begin{defn}
We will consider labellings $L$ of the unit square tiles
in $\widetilde{R}$ satisfying the following conditions:
\begin{enumerate}
\item[(a)] Each tile in $R$ is labelled by one $A,C,X,Y,Z$.
\item[(b)] For any pair of tiles in $R$ which
share an edge, the corresponding four-vertex quivers (with vertices at the corners of the tile) have arrows in the same direction along the common boundary.
\item[(c)] $n$ of the boundary tiles are labelled
$1,2,\ldots ,n$, in order clockwise around the boundary.
\item[(d)] The four corner boundary tiles are always labelled.
\end{enumerate}
\end{defn}

See Figure~\ref{fig:examplelabelling} for an example of
a labelling.

We will use the notation $\mathcal{T}_i$ for the boundary
tile labelled $i$.
For each $i\in \{1,\ldots ,n\}$ let $v_i$ be the external corner
of $\widetilde{R}$ on the boundary of $\mathcal{T}_i$ if
$\mathcal{T}_i$ is a corner tile of $\widetilde{R}$; otherwise we take $v_i$  to be the midpoint of the boundary edge of
$\mathcal{T}_i$ which is part of the boundary of $\widetilde{R}$.

\begin{defn}
If $L$ is a labelling as above, we set $Q(L)$ to be
the quiver with vertices $R_{\mathbb{Z}}\cup \{v_1,\ldots ,v_n\}$ and arrows given by the following:
\begin{enumerate}
\item[(a)] The full subquiver of $Q(L)$ on
the vertices of any tile in $R$ is given by its label
as in Figure~\ref{fig:fourvertex}.
\item[(b)] For each corner tile $\mathcal{T}_i$,
there is an arrow between $v_i$ and the unique
vertex in $\mathcal{T}_i\cap R$, oriented towards $v_i$ if
$v_i=(0,0)$ or $(k,n-k)$, and away from $v_i$ otherwise.
\item[(c)] For each non-corner boundary tile
$\mathcal{T}_i$, there are arrows between $v_i$ and the two vertices in $\mathcal{T}_i\cap R_{\mathbb{Z}}$, oriented in such a way as to create an oriented $3$-cycle on $v_i$ and these two vertices.
\end{enumerate}
\end{defn}

See Figure~\ref{fig:examplelabelling} for an example of
the quiver $Q(L)$ associated to a labelling.

\begin{figure}
$$
\vcenter{
\xymatrix@=12pt{
\ar@{}[dr] |{{\displaystyle 9}} & \ar@{}[dr] |{{\displaystyle 1}} & \ar@{}[dr] |{{\displaystyle 2}} & \ar@{}[dr] |{{\displaystyle 3}}\\
& \ar@{}[dr] |{{\displaystyle A}} \ar@{-}+0;[d]+0 \ar@{-}+0;[r]+0 &  \ar@{}[dr] |{{\displaystyle X}} \ar@{-}+0;[d]+0 \ar@{-}+0;[r]+0  & \ar@{-}+0;[d]+0 & \\
\ar@{}[dr] |{{\displaystyle 8}} & \ar@{}[dr] |{{\displaystyle C}} \ar@{-}+0;[d]+0 \ar@{-}+0;[r]+0 &  \ar@{}[dr] |{{\displaystyle Y}} \ar@{-}+0;[d]+0 \ar@{-}+0;[r]+0  & \ar@{-}+0;[d]+0 & \\
\ar@{}[dr] |{{\displaystyle 7}} & \ar@{}[dr] |{{\displaystyle X}} \ar@{-}+0;[d]+0 \ar@{-}+0;[r]+0 &  \ar@{}[dr] |{{\displaystyle X}} \ar@{-}+0;[d]+0 \ar@{-}+0;[r]+0  & \ar@{-}+0;[d]+0 \ar@{}[dr] |{{\displaystyle 4}} \\
\ar@{}[dr] |{{\displaystyle 6}} & \ar@{-}+0;[r]+0  & \ar@{-}+0;[r]+0 & \ar@{}[dr] |{{\displaystyle 5}} & & \\
& & & & }}
\quad\quad\quad
\vcenter{\xymatrix@=1pt{
v_9 \ar[ddrr] && & v_1 \ar[ddr] && v_2 \ar[ddr] &&& v_3 \\ \\
&& \bullet \ar[dd] \ar[uur] && \bullet \ar[uur] \ar[ll] \ar[ddrr] && \bullet \ar[ll] \ar[uurr] \\
& \\
&& \bullet \ar[lld] \ar[rr] && \bullet \ar[dd] \ar[uu] && \bullet \ar[ll] \ar[uu] \ar[dd] \\
v_8 \ar[rrd] \\
&& \bullet \ar[lld] \ar[ddrr] \ar[uu] && \bullet \ar[ll]\ar[uurr] \ar[ddrr] && \bullet \ar[ll] \ar[drr] \\
v_7 \ar[rrd] &&&&&&&& v_4 \ar[dll] \\
&& \bullet \ar[ddll] \ar[uu] && \bullet \ar[ll] \ar[uu] && \bullet \ar[ll] \ar[uu] \\ \\
v_6 && && && && v_5 \ar[uull]
}}$$
\caption{Example of a labelling, $L$ (left) and
the corresponding quiver $Q(L)$ (right), in the case $k=4$, $n=9$.}
\label{fig:examplelabelling}
\end{figure}
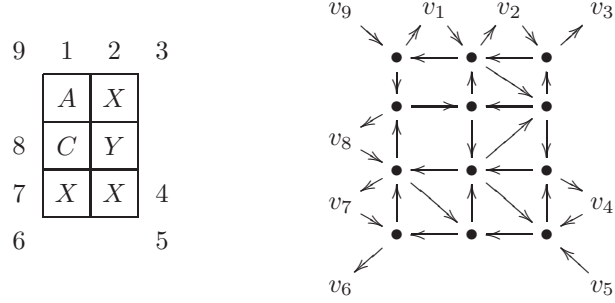

Next, we give a collection of local mutation rules which will allow us to
mutate quivers given by labellings directly, without decoding. 

\begin{lemma} \label{l:localrules}
Let $L$ be a labelling and $v\in R_{\mathbb{Z}}$ a vertex with the property
that the labels of the tiles around $v$ are as in one of the initial
diagrams in a rule in Figure~\ref{fig:localrules}. Then $\mu_v(Q(L))=Q(L')$ where $L'$ is the
labelling $L$ in which the tiles around $v$ have been changed according to the rule in Figure~\ref{fig:localrules}. Furthermore, the vertices $v_1,\ldots ,v_n$ in $Q(L)$ correspond to the vertices $v_1,\ldots ,v_n$ in $Q(L')$.
\end{lemma}
\begin{proof}
This is a simple case-by-case check, noting that a path
of length $2$ through $v$ in $Q(L)$ must start and end at vertices of tiles incident with $v$.
\end{proof}

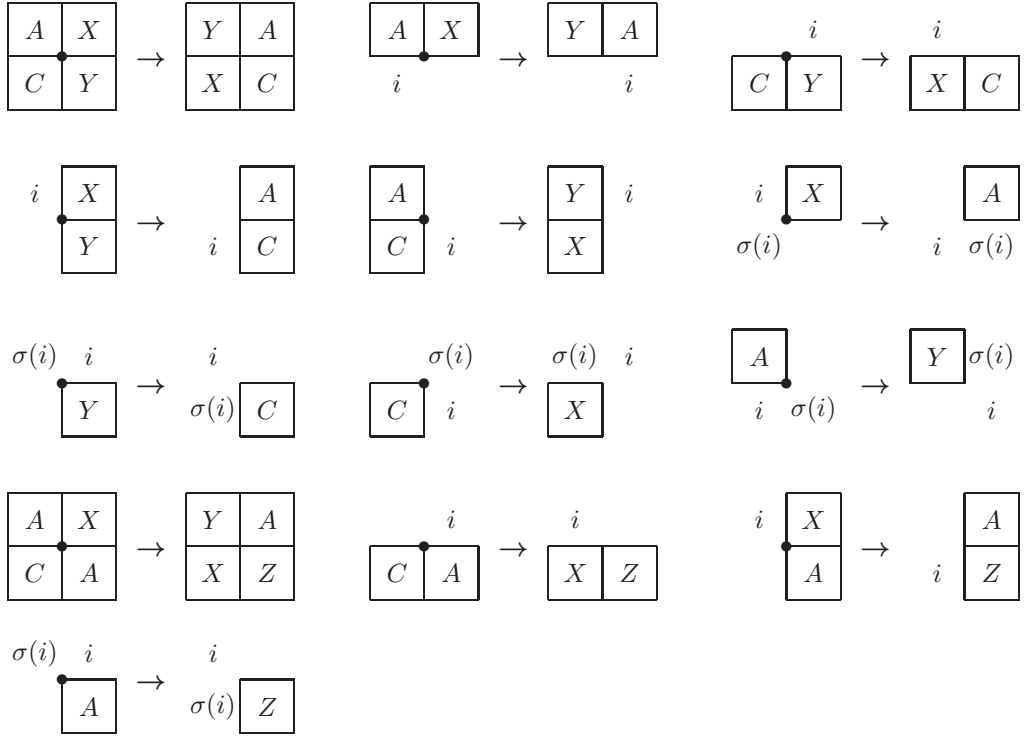
\begin{figure}
\begin{align*}
\vcenter{
\xymatrix@=12pt{
\ar@{}[dr]|{{\displaystyle A}} \ar@{-}+0;[r]+0 \ar@{-}+0;[d]+0
& \ar@{}[dr] |{{\displaystyle X}} \ar@{-}+0;[r]+0 \ar@{-}+0;[d]+0
& \ar@{-}+0;[d]+0 \\
\ar@{}[dr]|{{\displaystyle C}} \ar@{-}+0;[r]+0 \ar@{-}+0;[d]+0
& \bullet \ar@{}[dr]|{{\displaystyle Y}} \ar@{-}+0;[r]+0 \ar@{-}+0;[d]+0
& \ar@{-}+0;[d]+0
\\
\ar@{-}+0;[r]+0
& \ar@{-}+0;[r]+0
&
}}
\, &\boldsymbol{\rightarrow} \,
\vcenter{
\xymatrix@=12pt{
\ar@{}[dr]|{{\displaystyle Y}} \ar@{-}+0;[r]+0 \ar@{-}+0;[d]+0
& \ar@{}[dr] |{{\displaystyle A}} \ar@{-}+0;[r]+0 \ar@{-}+0;[d]+0
& \ar@{-}+0;[d]+0 \\
\ar@{}[dr]|{{\displaystyle X}} \ar@{-}+0;[r]+0 \ar@{-}+0;[d]+0
& {\phantom{\bullet}} \ar@{}[dr]|{{\displaystyle C}} \ar@{-}+0;[r]+0 \ar@{-}+0;[d]+0
& \ar@{-}+0;[d]+0
\\
\ar@{-}+0;[r]+0
& \ar@{-}+0;[r]+0
&
}}
&
\vcenter{
\xymatrix@=12pt{
\ar@{}[dr]|{{\displaystyle A}} \ar@{-}+0;[r]+0 \ar@{-}+0;[d]+0
& \ar@{}[dr] |{{\displaystyle X}} \ar@{-}+0;[r]+0 \ar@{-}+0;[d]+0
& \ar@{-}+0;[d]+0 \\
\ar@{}[dr]|{{\displaystyle i}} \ar@{-}+0;[r]+0 
& \bullet \ar@{-}+0;[r]+0 
& 
\\
& 
&
}}
\, &\boldsymbol{\rightarrow} \,
\vcenter{
\xymatrix@=12pt{
\ar@{}[dr]|{{\displaystyle Y}} \ar@{-}+0;[r]+0 \ar@{-}+0;[d]+0
& \ar@{}[dr] |{{\displaystyle A}} \ar@{-}+0;[r]+0 \ar@{-}+0;[d]+0
& \ar@{-}+0;[d]+0 \\
\ar@{-}+0;[r]+0 
& {\phantom{\bullet}} \ar@{}[dr]|{{\displaystyle i}} \ar@{-}+0;[r]+0 
&
\\
&
&
}}
&
\vcenter{
\xymatrix@=12pt{
& \ar@{}[dr] |{{\displaystyle i}} 
& \\
\ar@{}[dr]|{{\displaystyle C}} \ar@{-}+0;[r]+0 \ar@{-}+0;[d]+0
& \bullet \ar@{}[dr]|{{\displaystyle Y}} \ar@{-}+0;[r]+0 \ar@{-}+0;[d]+0
& \ar@{-}+0;[d]+0
\\
\ar@{-}+0;[r]+0
& \ar@{-}+0;[r]+0
&
}}
\, &\boldsymbol{\rightarrow} \,
\vcenter{
\xymatrix@=12pt{
\ar@{}[dr]|{{\displaystyle i}} 
& 
& \\
\ar@{}[dr]|{{\displaystyle X}} \ar@{-}+0;[r]+0 \ar@{-}+0;[d]+0
& {\phantom{\bullet}} \ar@{}[dr]|{{\displaystyle C}} \ar@{-}+0;[r]+0 \ar@{-}+0;[d]+0
& \ar@{-}+0;[d]+0
\\
\ar@{-}+0;[r]+0
& \ar@{-}+0;[r]+0
&
}}
\\[0.4cm]
\vcenter{
\xymatrix@=12pt{
\ar@{}[dr]|{{\displaystyle i}} 
& \ar@{}[dr] |{{\displaystyle X}} \ar@{-}+0;[r]+0 \ar@{-}+0;[d]+0
& \ar@{-}+0;[d]+0 \\
& \bullet \ar@{}[dr]|{{\displaystyle Y}} \ar@{-}+0;[r]+0 \ar@{-}+0;[d]+0
& \ar@{-}+0;[d]+0
\\
& \ar@{-}+0;[r]+0
&
}}
\, &\boldsymbol{\rightarrow} \,
\vcenter{
\xymatrix@=12pt{
& \ar@{}[dr] |{{\displaystyle A}} \ar@{-}+0;[r]+0 \ar@{-}+0;[d]+0
& \ar@{-}+0;[d]+0 \\
\ar@{}[dr]|{{\displaystyle i}} 
& {\phantom{\bullet}} \ar@{}[dr]|{{\displaystyle C}} \ar@{-}+0;[r]+0 \ar@{-}+0;[d]+0
& \ar@{-}+0;[d]+0
\\
& \ar@{-}+0;[r]+0
&
}}
&
\vcenter{
\xymatrix@=12pt{
\ar@{}[dr]|{{\displaystyle A}} \ar@{-}+0;[r]+0 \ar@{-}+0;[d]+0
& \ar@{-}+0;[d]+0
& \\
\ar@{}[dr]|{{\displaystyle C}} \ar@{-}+0;[r]+0 \ar@{-}+0;[d]+0
& \bullet \ar@{}[dr]|{{\displaystyle i}} \ar@{-}+0;[d]+0
& 
\\
\ar@{-}+0;[r]+0
& 
&
}}
\, &\boldsymbol{\rightarrow} \,
\vcenter{
\xymatrix@=12pt{
\ar@{}[dr]|{{\displaystyle Y}} \ar@{-}+0;[r]+0 \ar@{-}+0;[d]+0
& \ar@{}[dr] |{{\displaystyle i}} \ar@{-}+0;[d]+0
& \\
\ar@{}[dr]|{{\displaystyle X}} \ar@{-}+0;[r]+0 \ar@{-}+0;[d]+0
& {\phantom{\bullet}} \ar@{-}+0;[d]+0
& 
\\
\ar@{-}+0;[r]+0
& 
&
}}
&
\vcenter{
\xymatrix@=12pt{
\ar@{}[dr]|{{\displaystyle i}} 
& \ar@{}[dr] |{{\displaystyle X}} \ar@{-}+0;[r]+0 \ar@{-}+0;[d]+0
& \ar@{-}+0;[d]+0 \\
\ar@{}[dr]|{{\displaystyle \sigma(i)}} 
& \bullet \ar@{-}+0;[r]+0 
& 
\\
& 
&
}}
\, &\boldsymbol{\rightarrow} \,
\vcenter{
\xymatrix@=12pt{
& \ar@{}[dr] |{{\displaystyle A}} \ar@{-}+0;[r]+0 \ar@{-}+0;[d]+0
& \ar@{-}+0;[d]+0 \\
\ar@{}[dr]|{{\displaystyle i}} 
& {\phantom{\bullet}} \ar@{}[dr]|{{\displaystyle \sigma(i)}} \ar@{-}+0;[r]+0 
& 
\\
& 
&
}}
\\[0.4cm]
\vcenter{
\xymatrix@=12pt{
\ar@{}[dr]|{{\displaystyle \sigma(i)}}
& \ar@{}[dr] |{{\displaystyle i}}
& \\
& \bullet \ar@{}[dr]|{{\displaystyle Y}} \ar@{-}+0;[r]+0 \ar@{-}+0;[d]+0
& \ar@{-}+0;[d]+0
\\
& \ar@{-}+0;[r]+0
& 
}}
\, &\boldsymbol{\rightarrow} \,
\vcenter{
\xymatrix@=12pt{
\ar@{}[dr]|{{\displaystyle i}} 
& 
& \\
\ar@{}[dr]|{{\displaystyle \sigma(i)}}
& {\phantom{\bullet}} \ar@{}[dr]|{{\displaystyle C}} \ar@{-}+0;[r]+0 \ar@{-}+0;[d]+0
& \ar@{-}+0;[d]+0
\\
& \ar@{-}+0;[r]+0
&
}}
&
\vcenter{
\xymatrix@=12pt{
& \ar@{}[dr] |{{\displaystyle \sigma(i)}}
& \\
\ar@{}[dr]|{{\displaystyle C}} \ar@{-}+0;[r]+0 \ar@{-}+0;[d]+0
& \bullet \ar@{}[dr]|{{\displaystyle i}} \ar@{-}+0;[d]+0
&
\\
\ar@{-}+0;[r]+0
&
&
}}
\, &\boldsymbol{\rightarrow} \,
\vcenter{
\xymatrix@=12pt{
\ar@{}[dr]|{{\displaystyle \sigma(i)}}
& \ar@{}[dr] |{{\displaystyle i}}
& \\
\ar@{}[dr]|{{\displaystyle X}} \ar@{-}+0;[r]+0 \ar@{-}+0;[d]+0
& {\phantom{\bullet}} \ar@{-}+0;[d]+0
&
\\
\ar@{-}+0;[r]+0
& 
&
}}
&
\vcenter{
\xymatrix@=12pt{
\ar@{}[dr]|{{\displaystyle A}} \ar@{-}+0;[r]+0 \ar@{-}+0;[d]+0
& \ar@{-}+0;[d]+0
& \\
\ar@{}[dr]|{{\displaystyle i}} \ar@{-}+0;[r]+0
& \bullet \ar@{}[dr]|{{\displaystyle \sigma(i)}}
&
\\
&
&
}}
\, &\boldsymbol{\rightarrow} \,
\vcenter{
\xymatrix@=12pt{
\ar@{}[dr]|{{\displaystyle Y}} \ar@{-}+0;[r]+0 \ar@{-}+0;[d]+0
& \ar@{}[dr] |{{\displaystyle \sigma(i)}} \ar@{-}+0;[d]+0
& \\
\ar@{-}+0;[r]+0
& {\phantom{\bullet}} \ar@{}[dr]|{{\displaystyle i}}
&
\\
&
&
}}
\\[0.4cm]
\vcenter{
\xymatrix@=12pt{
\ar@{}[dr]|{{\displaystyle A}} \ar@{-}+0;[r]+0 \ar@{-}+0;[d]+0
& \ar@{}[dr] |{{\displaystyle X}} \ar@{-}+0;[r]+0 \ar@{-}+0;[d]+0
& \ar@{-}+0;[d]+0 \\
\ar@{}[dr]|{{\displaystyle C}} \ar@{-}+0;[r]+0 \ar@{-}+0;[d]+0
& \bullet \ar@{}[dr]|{{\displaystyle A}} \ar@{-}+0;[r]+0 \ar@{-}+0;[d]+0
& \ar@{-}+0;[d]+0
\\
\ar@{-}+0;[r]+0
& \ar@{-}+0;[r]+0
&
}}
\, &\boldsymbol{\rightarrow} \,
\vcenter{
\xymatrix@=12pt{
\ar@{}[dr]|{{\displaystyle Y}} \ar@{-}+0;[r]+0 \ar@{-}+0;[d]+0
& \ar@{}[dr] |{{\displaystyle A}} \ar@{-}+0;[r]+0 \ar@{-}+0;[d]+0
& \ar@{-}+0;[d]+0 \\
\ar@{}[dr]|{{\displaystyle X}} \ar@{-}+0;[r]+0 \ar@{-}+0;[d]+0
& {\phantom{\bullet}} \ar@{}[dr]|{{\displaystyle Z}} \ar@{-}+0;[r]+0 \ar@{-}+0;[d]+0
& \ar@{-}+0;[d]+0
\\
\ar@{-}+0;[r]+0
& \ar@{-}+0;[r]+0
&
}}
&
\vcenter{
\xymatrix@=12pt{
& \ar@{}[dr] |{{\displaystyle i}}
& \\
\ar@{}[dr]|{{\displaystyle C}} \ar@{-}+0;[r]+0 \ar@{-}+0;[d]+0
& \bullet \ar@{}[dr]|{{\displaystyle A}} \ar@{-}+0;[r]+0 \ar@{-}+0;[d]+0
& \ar@{-}+0;[d]+0
\\
\ar@{-}+0;[r]+0
& \ar@{-}+0;[r]+0
&
}}
\, &\boldsymbol{\rightarrow} \,
\vcenter{
\xymatrix@=12pt{
\ar@{}[dr]|{{\displaystyle i}}
&
& \\
\ar@{}[dr]|{{\displaystyle X}} \ar@{-}+0;[r]+0 \ar@{-}+0;[d]+0
& {\phantom{\bullet}} \ar@{}[dr]|{{\displaystyle Z}} \ar@{-}+0;[r]+0 \ar@{-}+0;[d]+0
& \ar@{-}+0;[d]+0
\\
\ar@{-}+0;[r]+0
& \ar@{-}+0;[r]+0
&
}}
&
\vcenter{
\xymatrix@=12pt{
\ar@{}[dr]|{{\displaystyle i}}
& \ar@{}[dr] |{{\displaystyle X}} \ar@{-}+0;[r]+0 \ar@{-}+0;[d]+0
& \ar@{-}+0;[d]+0 \\
& \bullet \ar@{}[dr]|{{\displaystyle A}} \ar@{-}+0;[r]+0 \ar@{-}+0;[d]+0
& \ar@{-}+0;[d]+0
\\
& \ar@{-}+0;[r]+0
&
}}
\, &\boldsymbol{\rightarrow} \,
\vcenter{
\xymatrix@=12pt{
& \ar@{}[dr] |{{\displaystyle A}} \ar@{-}+0;[r]+0 \ar@{-}+0;[d]+0
& \ar@{-}+0;[d]+0 \\
\ar@{}[dr]|{{\displaystyle i}}
& {\phantom{\bullet}} \ar@{}[dr]|{{\displaystyle Z}} \ar@{-}+0;[r]+0 \ar@{-}+0;[d]+0
& \ar@{-}+0;[d]+0
\\
& \ar@{-}+0;[r]+0
&
}}
\\
\vcenter{
\xymatrix@=12pt{
\ar@{}[dr]|{{\displaystyle \sigma(i)}}
& \ar@{}[dr] |{{\displaystyle i}}
& \\
& \bullet \ar@{}[dr]|{{\displaystyle A}} \ar@{-}+0;[r]+0 \ar@{-}+0;[d]+0
& \ar@{-}+0;[d]+0
\\
& \ar@{-}+0;[r]+0
&
}}
\, &\boldsymbol{\rightarrow} \,
\vcenter{
\xymatrix@=12pt{
\ar@{}[dr]|{{\displaystyle i}}
&
& \\
\ar@{}[dr]|{{\displaystyle \sigma(i)}}
& {\phantom{\bullet}} \ar@{}[dr]|{{\displaystyle Z}} \ar@{-}+0;[r]+0 \ar@{-}+0;[d]+0
& \ar@{-}+0;[d]+0
\\
& \ar@{-}+0;[r]+0
&
}}
\end{align*}
\caption{Local mutation rules for labellings}
\label{fig:localrules}
\end{figure}

Let $<$ denote the reverse lexicographic ordering
on $R_{\mathbb{Z}}$, i.e. we set $(x,y)<(x',y')$ if 
$y<y'$ or if $y=y'$ and $x<x'$.
Note that $(1,1)$ is the unique minimal element in this ordering and $(k-1,n-k-1)$ is the unique maximal element.
If $(x,y)\not=(k-1,n-k-1)$, we write
$(x,y)^+$ for the successor of $(x,y)$ in this ordering. We
have:
$$(x,y)^+=\begin{cases}
(x+1,y), & \text{if }1\leq x\leq k-2; \\
(1,y+1), & \text{if }x=k-1.
\end{cases}$$

Let
$$\Omega=\{(x,y;l)\in\mathbb{Z}^2\times \mathbb{Z}\,:\,
1\leq l\leq k-1,\ 1\leq x\leq k-l,\ 1\leq y\leq n-k-1\}\sqcup \{\top\}.
$$
We shall use this set to parametrize the seeds appearing in the maximal green
sequence. Note that the pair $(x,y)$ appearing in an element
$(x,y;l)$ of $\Omega$ always lies in $R_{\mathbb{Z}}$.

We denote the reverse lexicographic ordering on $\Omega$ by $<$, extended
by setting $\top$ to be a maximum element. We shall
denote the successor of $\omega\in \Omega\setminus \{\top\}$ in this
ordering by $\omega^+$. We have, for $\omega=(x,y;l)\in \Omega\setminus \{\top\}$:
$$\omega^+=\begin{cases}
((x,y)^+;l) & \text{if } (x,y)\not=(k-l,n-k-1); \\
(1,1;l+1) & \text{if } (x,y)=(k-l,n-k-1),\ l\not=k-1; \\
\top & \text{if } (x,y;l)=(1,n-k-1;k-1).
\end{cases}
$$
We denote the smallest element, $(1,1;1)$, of $\Omega$, by
$\perp$.
We will associate a seed to each element of $\Omega$ with the
property that the mutation at $(x,y)$ of the seed associated to $(x,y;l)$ is the seed associated to $(x,y;l)^+$.

\begin{defn} \label{d:alpha}
Let $\text{pr}$ denote the map from $\Omega\setminus \{\top\}$ to $R_{\mathbb{Z}}$ taking $(x,y;l)$ to $(x,y)$.
We define $\boldsymbol{\alpha}$ to be the sequence of elements of $R_{\mathbb{Z}}$ obtained by applying $\text{pr}$ to each of
the elements of $\Omega\setminus \{\top\}$, written in the total ordering $<$.
\end{defn}

The sequence $\boldsymbol{\alpha}$ can be regarded as scanning the
rows of vertices in $R_{\mathbb{Z}}$ from left to right,
starting at the bottom left and ending at the top right
(giving a first `page' of mutations),
then a second page repeating the process but omitting the last  
vertex in each row, then repeating this but omitting the last 
two vertices in each row, and so on, until a final $(k-1)$st 
page in which only the first vertex of each row is mutated. 
The index $l$ indicates the page number.

For example, if $k=4$ and $n=9$, then the set $\Omega\setminus \{\top\}$ written in order is:
\begin{align*}
&(1,1;1),(2,1;1),(3,1;1),(1,2;1),(2,2;1),(3,2;1),(1,3;1),(2,3;1),(3,3;1), \\
&(1,4;1),(2,4;1),(3,4;1),(1,1;2),(2,1;2),(1,2;2),(2,2;2),(1,3;2),(2,3;2),\\
&(1,4;2),(2,4;2),(1,1;3),(1,2;3),(1,3;3),(1,4;3).\end{align*}
Hence
\begin{align*}
\boldsymbol{\alpha} &=(1,1),(2,1),(3,1),(1,2),(2,2),(3,2),(1,3),(2,3),(3,3),(1,4),(2,4),(3,4),(1,1),(2,1),(1,2),\\
&(2,2),(1,3),(2,3),
(1,4),(2,4),(1,1),(1,2),(1,3),(1,4).
\end{align*}

\begin{defn} \label{d:mutations}
Fix $\omega\in \Omega$. We will associate to
$\omega$ a seed $(\xx_{\omega},\widetilde{Q}_{\omega})$,
Firstly, we define a labelling $L_{\omega}$ as follows.
If $\omega=(x,y;l)$, then
for $1\leq i\leq k-2$ and $1\leq j\leq n-k-2$, we decorate the
unit square $U_{ij}$ in $R$ according to the following rules:
\begin{enumerate}
\item[(a)] $A$ if $(i,j)=(x-1,y)$ or $(i,j)=(k-l,y-1)$;
\item[(b)] $C$ if $(i,j)=(x-1,y-1)$;
\item[(c)] $Y$ if either:
\begin{itemize}
\item[(i)] $1\leq i\leq x-2$ and $j=y$, or
\item[(i)] $x\leq i\leq k-l-1$ and $j=y-1$;
\end{itemize}
\item[(d)] $Z$ if either:
\begin{itemize}
\item[(i)] $i=k-l$ and $1\leq j\leq y-2$, or
\item[(ii)] $i>k-l$;
\end{itemize}
\item[(e)] $X$, otherwise.
\end{enumerate}
If $\omega=\top$, we decorate each unit square $U_{ij}$ in $R$
with a $Z$.

The following table determines a collection of non-corner boundary unit
squares according to their location on the boundary of $\widetilde{R}$
(i.e. left hand side, right hand side, top or bottom) and
according to the label of their unique adjacent square in $R$
(a check mark indicates membership in the collection):
\vskip 0.2cm
\begin{center}
\begin{tabular}{|c|c|c|c|c|}
\hline
 & left hand side & right hand side & top & bottom \\
\hline
$A$ & $\times$ & $\times$ & $\checkmark$ & $\checkmark$ \\
\hline
$C$ & $\checkmark$ & $\checkmark$ & $\times$ & $\times$ \\
\hline
$X$ & $\checkmark$ & $\times$ & $\checkmark$ & $\times$ \\
\hline
$Y$ & $\times$ & $\checkmark$ & $\checkmark$ & $\times$ \\
\hline
$Z$ & $\checkmark$ & $\times$ & $\times$ & $\checkmark$ \\
\hline
\end{tabular}
\end{center}
\vskip 0.2cm
It is easy to check that there are $n-4$ squares in the above
collection. We
label the four corner squares and the boundary squares in this
collection $1,2,\ldots, n$ clockwise around
the boundary in such a way that the top right corner square of
$\widetilde{R}$ is labelled
$k-1$ if $\omega=(x,y;l)$ with $l=1$,
or is labelled $k$ if $\omega=(x,y;l)$ with $l>1$ or if $\omega=\top$.
This completes the definition of $L_{\omega}$.

Then, for each $\omega\in\Omega$, we set
$\widetilde{Q}_{\omega}=Q(L_{\omega})$
and we set $Q_{\omega}$ to be the full subquiver of $\widetilde{Q}_{\omega}$
on the vertices $R_{\mathbb{Z}}$.

Next, suppose that $\omega=(x,y;l)$ where
$1\leq l\leq k-1$, $1\leq x\leq k-l$, and $1\leq y\leq n-k-1$.
For $(i,j)\in R_{\mathbb{Z}}$ we define:
\begin{equation}
\label{e:rij}
r_{\omega}(i,j)=\begin{cases}
l, & i\leq k-l,\ (i,j)<(x,y); \\
l-1, & i\leq k-l,\ (i,j)\nless (x,y); \\
k-i, & i>k-l.
\end{cases}
\end{equation}
We also define $r_{\top}(i,j)=k-i$ for all $(i,j)\in R_{\mathbb{Z}}$.

We associate Pl\"{u}cker coordinates to the
vertices of $\widetilde{Q}_{\omega}$
by associating $\mcoeff{i}$ to the vertex $v_i$ for $1\leq i\leq n$ and
$[\varrho^{r_{\omega}(i,j)}(M_{k,n}(i,j))]$ to each vertex
$(i,j)\in R_{\mathbb{Z}}$.
We then define $\xx_{\omega}$
(respectively, $\mathbf{x}_{\omega}$) to be the set of Pl\"{u}cker coordinates
associated to the vertices of $\widetilde{Q}_{\omega}$ (respectively,
the vertices of $Q_{\omega}$).
\end{defn}

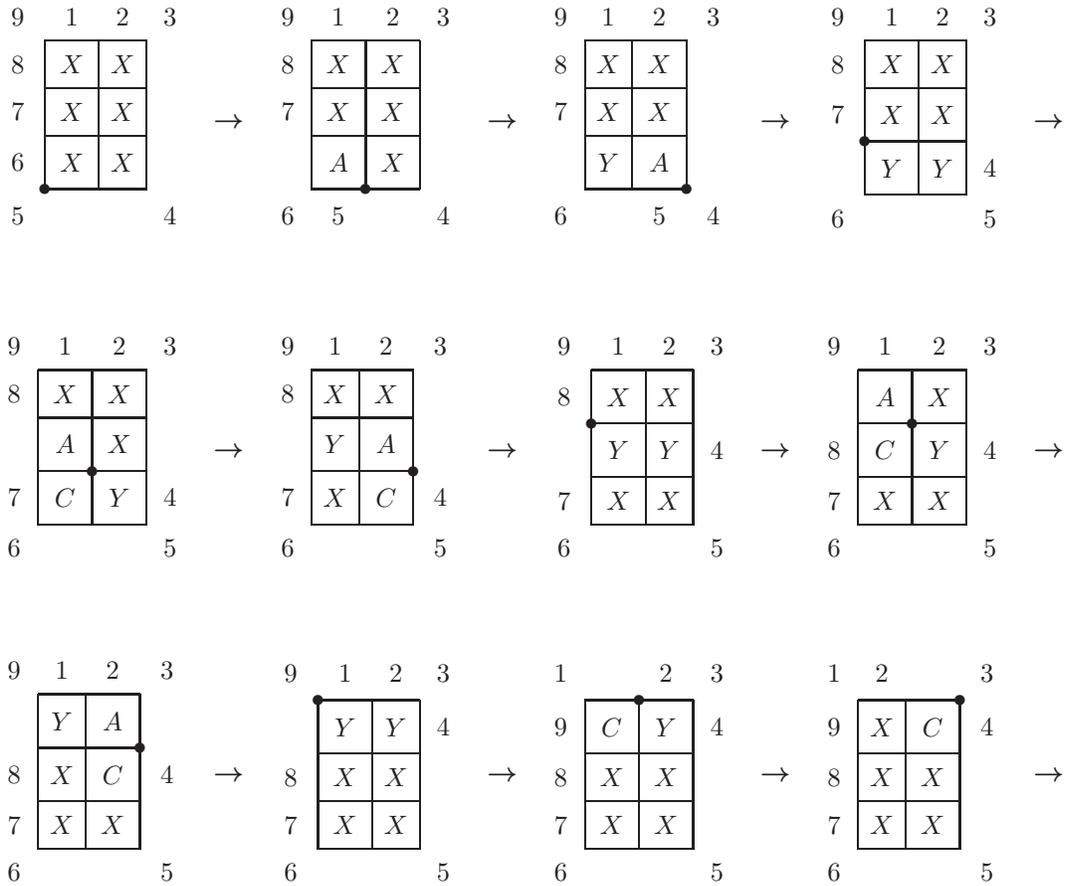
\begin{figure}
\[
\xymatrix@=12pt{
\ar@{}[dr] |{{\displaystyle 15}} &\ar@{}[dr] |{{\displaystyle 16}}  &\ar@{}[dr] |{{\displaystyle 17}}  &\ar@{}[dr] |{{\displaystyle 1}} 
&\ar@{}[dr] |{{\displaystyle 2}}  &\ar@{}[dr] |{{\displaystyle 3}} &\ar@{}[dr] |{{\displaystyle 4}} &\ar@{}[dr] |{{\displaystyle 5}} & 
&\ar@{}[dr] |{{\displaystyle 6}} & \\ 
\ar@{}[dr] |{{\displaystyle 14}}
&\ar@{}[dr] |{{\displaystyle X}}  \ar@{-}+0;[r]+0  \ar@{-}+0;[d]+0 &\ar@{}[dr] |{{\displaystyle X}}  \ar@{-}+0;[r]+0  \ar@{-}+0;[d]+0 &\ar@{}[dr] |{{\displaystyle X}}  \ar@{-}+0;[r]+0  \ar@{-}+0;[d]+0 &\ar@{}[dr] |{{\displaystyle X}}  \ar@{-}+0;[r]+0  \ar@{-}+0;[d]+0 &\ar@{}[dr] |{{\displaystyle X}}  \ar@{-}+0;[r]+0  \ar@{-}+0;[d]+0 &\ar@{}[dr] |{{\displaystyle X}}  \ar@{-}+0;[r]+0  \ar@{-}+0;[d]+0 
&\ar@{}[dr] |{{\displaystyle X}}  \ar@{-}+0;[r]+0  \ar@{-}+0;[d]+0 &\ar@{}[dr] |{{\displaystyle Z}}  \ar@{-}+0;[r]+0  \ar@{-}+0;[d]+0 
&\ar@{-}+0;[d]+0 & \\
&\ar@{}[dr] |{{\displaystyle Y}}  \ar@{-}+0;[r]+0  \ar@{-}+0;[d]+0 &\ar@{}[dr] |{{\displaystyle Y}}  \ar@{-}+0;[r]+0  \ar@{-}+0;[d]+0 &\ar@{}[dr] |{{\displaystyle A}}  \ar@{-}+0;[r]+0  \ar@{-}+0;[d]+0 &\ar@{}[dr] |{{\displaystyle X}}  \ar@{-}+0;[r]+0  \ar@{-}+0;[d]+0 &\ar@{}[dr] 
|{{\displaystyle X}}  \ar@{-}+0;[r]+0  \ar@{-}+0;[d]+0 &\ar@{}[dr] |{{\displaystyle X}}  \ar@{-}+0;[r]+0  \ar@{-}+0;[d]+0 
&\ar@{}[dr] |{{\displaystyle X}}  \ar@{-}+0;[r]+0  \ar@{-}+0;[d]+0 &\ar@{}[dr] |{{\displaystyle Z}}  \ar@{-}+0;[r]+0  \ar@{-}+0;[d]+0 
&\ar@{-}+0;[d]+0 & \\
\ar@{}[dr] |{{\displaystyle 13}}
&\ar@{}[dr] |{{\displaystyle X}}  \ar@{-}+0;[r]+0  \ar@{-}+0;[d]+0 &\ar@{}[dr] |{{\displaystyle X}}  \ar@{-}+0;[r]+0  \ar@{-}+0;[d]+0 &\ar@{}[dr] |{{\displaystyle C}}  \ar@{-}+0;[r]+0  \ar@{-}+0;[d]+0 &\bullet \ar@{}[dr] |{{\displaystyle Y}}  \ar@{-}+0;[r]+0  \ar@{-}+0;[d]+0 &\ar@{}[dr] |{{\displaystyle Y}}  \ar@{-}+0;[r]+0  \ar@{-}+0;[d]+0 &\ar@{}[dr] |{{\displaystyle Y}}  \ar@{-}+0;[r]+0  \ar@{-}+0;[d]+0 
&\ar@{}[dr] |{{\displaystyle A}}  \ar@{-}+0;[r]+0  \ar@{-}+0;[d]+0 &\ar@{}[dr] |{{\displaystyle Z}}  \ar@{-}+0;[r]+0  \ar@{-}+0;[d]+0 
&\ar@{-}+0;[d]+0 & \\ 
\ar@{}[dr] |{{\displaystyle 12}}
&\ar@{}[dr] |{{\displaystyle X}}  \ar@{-}+0;[r]+0  \ar@{-}+0;[d]+0 &\ar@{}[dr] |{{\displaystyle X}}  \ar@{-}+0;[r]+0  \ar@{-}+0;[d]+0 &\ar@{}[dr] |{{\displaystyle X}}  \ar@{-}+0;[r]+0  \ar@{-}+0;[d]+0 &\ar@{}[dr] |{{\displaystyle X}}  \ar@{-}+0;[r]+0  \ar@{-}+0;[d]+0 &\ar@{}[dr] |{{\displaystyle X}}  \ar@{-}+0;[r]+0  \ar@{-}+0;[d]+0 &\ar@{}[dr] |{{\displaystyle X}}  \ar@{-}+0;[r]+0  \ar@{-}+0;[d]+0 
&\ar@{}[dr] |{{\displaystyle Z}}  \ar@{-}+0;[r]+0  \ar@{-}+0;[d]+0 &\ar@{}[dr] |{{\displaystyle Z}}  \ar@{-}+0;[r]+0  \ar@{-}+0;[d]+0 
&\ar@{-}+0;[d]+0 & \\
\ar@{}[dr] |{{\displaystyle 11}}
&\ar@{}[dr] |{{\displaystyle X}}  \ar@{-}+0;[r]+0  \ar@{-}+0;[d]+0 &\ar@{}[dr] |{{\displaystyle X}}  \ar@{-}+0;[r]+0  \ar@{-}+0;[d]+0 &\ar@{}[dr] |{{\displaystyle X}}  \ar@{-}+0;[r]+0  \ar@{-}+0;[d]+0 &\ar@{}[dr] |{{\displaystyle X}}  \ar@{-}+0;[r]+0  \ar@{-}+0;[d]+0 &\ar@{}[dr] |{{\displaystyle X}}  \ar@{-}+0;[r]+0  \ar@{-}+0;[d]+0 &\ar@{}[dr] |{{\displaystyle X}}  \ar@{-}+0;[r]+0  \ar@{-}+0;[d]+0 
&\ar@{}[dr] |{{\displaystyle Z}}  \ar@{-}+0;[r]+0  \ar@{-}+0;[d]+0 &\ar@{}[dr] |{{\displaystyle Z}}  \ar@{-}+0;[r]+0  \ar@{-}+0;[d]+0 
&\ar@{-}+0;[d]+0 & \\
\ar@{}[dr] |{{\displaystyle 10}}
&\ar@{-}+0;[r]+0  &\ar@{-}+0;[r]+0  &\ar@{-}+0;[r]+0  &\ar@{-}+0;[r]+0  &\ar@{-}+0;[r]+0  
&\ar@{-}+0;[r]+0  &\ar@{}[dr] |{{\displaystyle 9}} \ar@{-}+0;[r]+0  &\ar@{}[dr] |{{\displaystyle 8}}  \ar@{-}+0;[r]+0  &\ar@{}[dr] |{{\displaystyle 7}}  & \\
& & & & & & & & & &}
\]
\caption{The labelling $L_{(4,4;3)}$ for $\Gr_{10,17}$.}
\label{f:examplelabelling}
\end{figure}

For an example of a labelling $L_{\omega}$, see Figure~\ref{f:examplelabelling}.
We also show the labelling $L_{\omega}$ for each $\omega\in \Omega$ in the case $\Gr_{4,9}$ below, with the vertex $(x,y)$ indicated by a dot when $\omega=(x,y;l)$.
See Figure~\ref{fig:perplabelling} for the quiver $Q_{\perp}$, and
Figure~\ref{fig:toplabelling} for the quiver $Q_{\top}$, in the case
$k=4$, $n=9$.

\[
\vcenter{
\xymatrix@=12pt{
\ar@{}[dr] |{{\displaystyle 9}} 
&\ar@{}[dr] |{{\displaystyle 1}} 
& \ar@{}[dr] |{{\displaystyle 2}} 
&\ar@{}[dr] |{{\displaystyle 3}}
& \\
\ar@{}[dr] |{{\displaystyle 8}} 
&\ar@{}[dr] |{{\displaystyle X}} \ar@{-}+0;[d]+0 \ar@{-}+0;[r]+0 
&\ar@{}[dr] |{{\displaystyle X}} \ar@{-}+0;[d]+0 \ar@{-}+0;[r]+0  
&\ar@{-}+0;[d]+0  
& \\
\ar@{}[dr] |{{\displaystyle 7}}
&\ar@{}[dr] |{{\displaystyle X}} \ar@{-}+0;[d]+0 \ar@{-}+0;[r]+0 
&\ar@{}[dr] |{{\displaystyle X}} \ar@{-}+0;[d]+0 \ar@{-}+0;[r]+0  
&\ar@{-}+0;[d]+0 
& \\
\ar@{}[dr] |{{\displaystyle 6}} 
&\ar@{}[dr] |{{\displaystyle X}} \ar@{-}+0;[d]+0 \ar@{-}+0;[r]+0 
&\ar@{}[dr] |{{\displaystyle X}} \ar@{-}+0;[d]+0 \ar@{-}+0;[r]+0  
&\ar@{-}+0;[d]+0  
& \\
\ar@{}[dr] |{{\displaystyle 5}} 
&\bullet  \ar@{-}+0;[r]+0  
&\ar@{-}+0;[r]+0  
&\ar@{}[dr] |{{\displaystyle 4}}  
& \\
& & & &}
}\, \boldsymbol{\rightarrow} \, \vcenter{
\xymatrix@=12pt{
\ar@{}[dr] |{{\displaystyle 9}} 
& \ar@{}[dr] |{{\displaystyle 1}} 
& \ar@{}[dr] |{{\displaystyle 2}} 
& \ar@{}[dr] |{{\displaystyle 3}}
& \\
\ar@{}[dr] |{{\displaystyle 8}} 
&\ar@{}[dr] |{{\displaystyle X}} \ar@{-}+0;[d]+0 \ar@{-}+0;[r]+0 
&\ar@{}[dr] |{{\displaystyle X}} \ar@{-}+0;[d]+0 \ar@{-}+0;[r]+0  
&\ar@{-}+0;[d]+0  
& \\
\ar@{}[dr] |{{\displaystyle 7}}
&\ar@{}[dr] |{{\displaystyle X}} \ar@{-}+0;[d]+0 \ar@{-}+0;[r]+0 
&\ar@{}[dr] |{{\displaystyle X}} \ar@{-}+0;[d]+0 \ar@{-}+0;[r]+0  
&\ar@{-}+0;[d]+0 
& \\
&\ar@{}[dr] |{{\displaystyle A}} \ar@{-}+0;[d]+0 \ar@{-}+0;[r]+0 
&\ar@{}[dr] |{{\displaystyle X}} \ar@{-}+0;[d]+0 \ar@{-}+0;[r]+0  
&\ar@{-}+0;[d]+0  
& \\
\ar@{}[dr] |{{\displaystyle 6}} 
&\ar@{-}+0;[r]+0   \ar@{}[dr] |{{\displaystyle 5}}
& \bullet \ar@{-}+0;[r]+0  
& \ar@{}[dr] |{{\displaystyle 4}}  
& \\
& & & &}
}\, \boldsymbol{\rightarrow} \, \vcenter{
\xymatrix@=12pt{
\ar@{}[dr] |{{\displaystyle 9}} 
& \ar@{}[dr] |{{\displaystyle 1}} 
& \ar@{}[dr] |{{\displaystyle 2}} 
& \ar@{}[dr] |{{\displaystyle 3}}
& \\
\ar@{}[dr] |{{\displaystyle 8}} 
&\ar@{}[dr] |{{\displaystyle X}} \ar@{-}+0;[d]+0 \ar@{-}+0;[r]+0 
&\ar@{}[dr] |{{\displaystyle X}} \ar@{-}+0;[d]+0 \ar@{-}+0;[r]+0  
&\ar@{-}+0;[d]+0  
& \\
\ar@{}[dr] |{{\displaystyle 7}}
&\ar@{}[dr] |{{\displaystyle X}} \ar@{-}+0;[d]+0 \ar@{-}+0;[r]+0 
&\ar@{}[dr] |{{\displaystyle X}} \ar@{-}+0;[d]+0 \ar@{-}+0;[r]+0  
&\ar@{-}+0;[d]+0 
& \\
&\ar@{}[dr] |{{\displaystyle Y}} \ar@{-}+0;[d]+0 \ar@{-}+0;[r]+0 
&\ar@{}[dr] |{{\displaystyle A}} \ar@{-}+0;[d]+0 \ar@{-}+0;[r]+0  
&\ar@{-}+0;[d]+0  
& \\
\ar@{}[dr] |{{\displaystyle 6}} 
&\ar@{-}+0;[r]+0  
&\ar@{-}+0;[r]+0  \ar@{}[dr] |{{\displaystyle 5}} 
&\bullet \ar@{}[dr] |{{\displaystyle 4}}  
& \\
& & & &}
}\, \boldsymbol{\rightarrow} \, \vcenter{
\xymatrix@=12pt{
\ar@{}[dr] |{{\displaystyle 9}} 
& \ar@{}[dr] |{{\displaystyle 1}} 
& \ar@{}[dr] |{{\displaystyle 2}} 
& \ar@{}[dr] |{{\displaystyle 3}}
& \\
\ar@{}[dr] |{{\displaystyle 8}} 
&\ar@{}[dr] |{{\displaystyle X}} \ar@{-}+0;[d]+0 \ar@{-}+0;[r]+0 
&\ar@{}[dr] |{{\displaystyle X}} \ar@{-}+0;[d]+0 \ar@{-}+0;[r]+0  
&\ar@{-}+0;[d]+0  
& \\
\ar@{}[dr] |{{\displaystyle 7}}
&\ar@{}[dr] |{{\displaystyle X}} \ar@{-}+0;[d]+0 \ar@{-}+0;[r]+0 
&\ar@{}[dr] |{{\displaystyle X}} \ar@{-}+0;[d]+0 \ar@{-}+0;[r]+0  
&\ar@{-}+0;[d]+0 
& \\
&\bullet \ar@{}[dr] |{{\displaystyle Y}} \ar@{-}+0;[d]+0 \ar@{-}+0;[r]+0 
&\ar@{}[dr] |{{\displaystyle Y}} \ar@{-}+0;[d]+0 \ar@{-}+0;[r]+0  
&\ar@{-}+0;[d]+0  \ar@{}[dr] |{{\displaystyle 4}} 
& \\
\ar@{}[dr] |{{\displaystyle 6}} 
&\ar@{-}+0;[r]+0  
&\ar@{-}+0;[r]+0  
& \ar@{}[dr] |{{\displaystyle 5}}  
& \\
& & & &}
}\, \boldsymbol{\rightarrow}
\]
\[
\vcenter{
\xymatrix@=12pt{
\ar@{}[dr] |{{\displaystyle 9}} 
& \ar@{}[dr] |{{\displaystyle 1}} 
& \ar@{}[dr] |{{\displaystyle 2}} 
& \ar@{}[dr] |{{\displaystyle 3}}
& \\
\ar@{}[dr] |{{\displaystyle 8}} 
&\ar@{}[dr] |{{\displaystyle X}} \ar@{-}+0;[d]+0 \ar@{-}+0;[r]+0 
&\ar@{}[dr] |{{\displaystyle X}} \ar@{-}+0;[d]+0 \ar@{-}+0;[r]+0  
&\ar@{-}+0;[d]+0  
& \\
&\ar@{}[dr] |{{\displaystyle A}} \ar@{-}+0;[d]+0 \ar@{-}+0;[r]+0 
&\ar@{}[dr] |{{\displaystyle X}} \ar@{-}+0;[d]+0 \ar@{-}+0;[r]+0  
&\ar@{-}+0;[d]+0 
& \\
\ar@{}[dr] |{{\displaystyle 7}} 
&\ar@{}[dr] |{{\displaystyle C}} \ar@{-}+0;[d]+0 \ar@{-}+0;[r]+0 
&\bullet \ar@{}[dr] |{{\displaystyle Y}} \ar@{-}+0;[d]+0 \ar@{-}+0;[r]+0  
&\ar@{-}+0;[d]+0  \ar@{}[dr] |{{\displaystyle 4}} 
& \\
\ar@{}[dr] |{{\displaystyle 6}} 
&\ar@{-}+0;[r]+0  
&\ar@{-}+0;[r]+0  
& \ar@{}[dr] |{{\displaystyle 5}}  
& \\
& & & &}
}\, \boldsymbol{\rightarrow} \, \vcenter{ 
\xymatrix@=12pt{
\ar@{}[dr] |{{\displaystyle 9}} 
& \ar@{}[dr] |{{\displaystyle 1}} 
& \ar@{}[dr] |{{\displaystyle 2}} 
& \ar@{}[dr] |{{\displaystyle 3}}
& \\
\ar@{}[dr] |{{\displaystyle 8}} 
&\ar@{}[dr] |{{\displaystyle X}} \ar@{-}+0;[d]+0 \ar@{-}+0;[r]+0 
&\ar@{}[dr] |{{\displaystyle X}} \ar@{-}+0;[d]+0 \ar@{-}+0;[r]+0  
&\ar@{-}+0;[d]+0  
& \\
&\ar@{}[dr] |{{\displaystyle Y}} \ar@{-}+0;[d]+0 \ar@{-}+0;[r]+0 
&\ar@{}[dr] |{{\displaystyle A}} \ar@{-}+0;[d]+0 \ar@{-}+0;[r]+0  
&\ar@{-}+0;[d]+0 
& \\
\ar@{}[dr] |{{\displaystyle 7}} 
&\ar@{}[dr] |{{\displaystyle X}} \ar@{-}+0;[d]+0 \ar@{-}+0;[r]+0 
&\ar@{}[dr] |{{\displaystyle C}} \ar@{-}+0;[d]+0 \ar@{-}+0;[r]+0  
&\bullet \ar@{-}+0;[d]+0  \ar@{}[dr] |{{\displaystyle 4}} 
& \\
\ar@{}[dr] |{{\displaystyle 6}} 
&\ar@{-}+0;[r]+0  
&\ar@{-}+0;[r]+0  
& \ar@{}[dr] |{{\displaystyle 5}}  
& \\
& & & &}
}\, \boldsymbol{\rightarrow} \, \vcenter{
\xymatrix@=12pt{
\ar@{}[dr] |{{\displaystyle 9}} 
& \ar@{}[dr] |{{\displaystyle 1}} 
& \ar@{}[dr] |{{\displaystyle 2}} 
& \ar@{}[dr] |{{\displaystyle 3}}
& \\
\ar@{}[dr] |{{\displaystyle 8}} 
&\ar@{}[dr] |{{\displaystyle X}} \ar@{-}+0;[d]+0 \ar@{-}+0;[r]+0 
&\ar@{}[dr] |{{\displaystyle X}} \ar@{-}+0;[d]+0 \ar@{-}+0;[r]+0  
&\ar@{-}+0;[d]+0  
& \\
&\bullet \ar@{}[dr] |{{\displaystyle Y}} \ar@{-}+0;[d]+0 \ar@{-}+0;[r]+0 
&\ar@{}[dr] |{{\displaystyle Y}} \ar@{-}+0;[d]+0 \ar@{-}+0;[r]+0  
&\ar@{-}+0;[d]+0 \ar@{}[dr] |{{\displaystyle 4}} 
& \\
\ar@{}[dr] |{{\displaystyle 7}} 
&\ar@{}[dr] |{{\displaystyle X}} \ar@{-}+0;[d]+0 \ar@{-}+0;[r]+0 
&\ar@{}[dr] |{{\displaystyle X}} \ar@{-}+0;[d]+0 \ar@{-}+0;[r]+0  
&\ar@{-}+0;[d]+0  
& \\
\ar@{}[dr] |{{\displaystyle 6}} 
&\ar@{-}+0;[r]+0  
&\ar@{-}+0;[r]+0  
& \ar@{}[dr] |{{\displaystyle 5}}  
& \\
& & & &}
}\, \boldsymbol{\rightarrow} \, \vcenter{ 
\xymatrix@=12pt{
\ar@{}[dr] |{{\displaystyle 9}} 
& \ar@{}[dr] |{{\displaystyle 1}} 
& \ar@{}[dr] |{{\displaystyle 2}} 
& \ar@{}[dr] |{{\displaystyle 3}}
& \\
&\ar@{}[dr] |{{\displaystyle A}} \ar@{-}+0;[d]+0 \ar@{-}+0;[r]+0 
&\ar@{}[dr] |{{\displaystyle X}} \ar@{-}+0;[d]+0 \ar@{-}+0;[r]+0  
&\ar@{-}+0;[d]+0  
& \\
\ar@{}[dr] |{{\displaystyle 8}}
&\ar@{}[dr] |{{\displaystyle C}} \ar@{-}+0;[d]+0 \ar@{-}+0;[r]+0 
&\bullet \ar@{}[dr] |{{\displaystyle Y}} \ar@{-}+0;[d]+0 \ar@{-}+0;[r]+0  
&\ar@{-}+0;[d]+0 \ar@{}[dr] |{{\displaystyle 4}} 
& \\
\ar@{}[dr] |{{\displaystyle 7}} 
&\ar@{}[dr] |{{\displaystyle X}} \ar@{-}+0;[d]+0 \ar@{-}+0;[r]+0 
&\ar@{}[dr] |{{\displaystyle X}} \ar@{-}+0;[d]+0 \ar@{-}+0;[r]+0  
&\ar@{-}+0;[d]+0  
& \\
\ar@{}[dr] |{{\displaystyle 6}} 
&\ar@{-}+0;[r]+0  
&\ar@{-}+0;[r]+0  
& \ar@{}[dr] |{{\displaystyle 5}}  
& \\
& & & &}
}\, \boldsymbol{\rightarrow}
\]
\[
\vcenter{
\xymatrix@=12pt{
\ar@{}[dr] |{{\displaystyle 9}} 
& \ar@{}[dr] |{{\displaystyle 1}} 
& \ar@{}[dr] |{{\displaystyle 2}} 
& \ar@{}[dr] |{{\displaystyle 3}}
& \\
&\ar@{}[dr] |{{\displaystyle Y}} \ar@{-}+0;[d]+0 \ar@{-}+0;[r]+0 
&\ar@{}[dr] |{{\displaystyle A}} \ar@{-}+0;[d]+0 \ar@{-}+0;[r]+0  
&\ar@{-}+0;[d]+0  
& \\
\ar@{}[dr] |{{\displaystyle 8}}
&\ar@{}[dr] |{{\displaystyle X}} \ar@{-}+0;[d]+0 \ar@{-}+0;[r]+0 
&\ar@{}[dr] |{{\displaystyle C}} \ar@{-}+0;[d]+0 \ar@{-}+0;[r]+0  
&\bullet \ar@{-}+0;[d]+0 \ar@{}[dr] |{{\displaystyle 4}} 
& \\
\ar@{}[dr] |{{\displaystyle 7}} 
&\ar@{}[dr] |{{\displaystyle X}} \ar@{-}+0;[d]+0 \ar@{-}+0;[r]+0 
&\ar@{}[dr] |{{\displaystyle X}} \ar@{-}+0;[d]+0 \ar@{-}+0;[r]+0  
&\ar@{-}+0;[d]+0  
& \\
\ar@{}[dr] |{{\displaystyle 6}} 
&\ar@{-}+0;[r]+0  
&\ar@{-}+0;[r]+0  
& \ar@{}[dr] |{{\displaystyle 5}}  
& \\
& & & &}
}\, \boldsymbol{\rightarrow} \, \vcenter{
\xymatrix@=12pt{
\ar@{}[dr] |{{\displaystyle 9}} 
& \ar@{}[dr] |{{\displaystyle 1}} 
& \ar@{}[dr] |{{\displaystyle 2}} 
& \ar@{}[dr] |{{\displaystyle 3}}
& \\
&\bullet \ar@{}[dr] |{{\displaystyle Y}} \ar@{-}+0;[d]+0 \ar@{-}+0;[r]+0 
&\ar@{}[dr] |{{\displaystyle Y}} \ar@{-}+0;[d]+0 \ar@{-}+0;[r]+0  
&\ar@{-}+0;[d]+0  \ar@{}[dr] |{{\displaystyle 4}} 
& \\
\ar@{}[dr] |{{\displaystyle 8}}
&\ar@{}[dr] |{{\displaystyle X}} \ar@{-}+0;[d]+0 \ar@{-}+0;[r]+0 
&\ar@{}[dr] |{{\displaystyle X}} \ar@{-}+0;[d]+0 \ar@{-}+0;[r]+0  
&\ar@{-}+0;[d]+0 
& \\
\ar@{}[dr] |{{\displaystyle 7}} 
&\ar@{}[dr] |{{\displaystyle X}} \ar@{-}+0;[d]+0 \ar@{-}+0;[r]+0 
&\ar@{}[dr] |{{\displaystyle X}} \ar@{-}+0;[d]+0 \ar@{-}+0;[r]+0  
&\ar@{-}+0;[d]+0  
& \\
\ar@{}[dr] |{{\displaystyle 6}} 
&\ar@{-}+0;[r]+0  
&\ar@{-}+0;[r]+0  
& \ar@{}[dr] |{{\displaystyle 5}}  
& \\
& & & &}
}\, \boldsymbol{\rightarrow} \, \vcenter{ 
\xymatrix@=12pt{
\ar@{}[dr] |{{\displaystyle 1}} 
& \ar@{}[dr] |{{\displaystyle }} 
& \ar@{}[dr] |{{\displaystyle 2}} 
& \ar@{}[dr] |{{\displaystyle 3}}
& \\
\ar@{}[dr] |{{\displaystyle 9}} 
&\ar@{}[dr] |{{\displaystyle C}} \ar@{-}+0;[d]+0 \ar@{-}+0;[r]+0 
&\bullet \ar@{}[dr] |{{\displaystyle Y}} \ar@{-}+0;[d]+0 \ar@{-}+0;[r]+0  
&\ar@{-}+0;[d]+0  \ar@{}[dr] |{{\displaystyle 4}} 
& \\
\ar@{}[dr] |{{\displaystyle 8}}
&\ar@{}[dr] |{{\displaystyle X}} \ar@{-}+0;[d]+0 \ar@{-}+0;[r]+0 
&\ar@{}[dr] |{{\displaystyle X}} \ar@{-}+0;[d]+0 \ar@{-}+0;[r]+0  
&\ar@{-}+0;[d]+0 
& \\
\ar@{}[dr] |{{\displaystyle 7}} 
&\ar@{}[dr] |{{\displaystyle X}} \ar@{-}+0;[d]+0 \ar@{-}+0;[r]+0 
&\ar@{}[dr] |{{\displaystyle X}} \ar@{-}+0;[d]+0 \ar@{-}+0;[r]+0  
&\ar@{-}+0;[d]+0  
& \\
\ar@{}[dr] |{{\displaystyle 6}} 
&\ar@{-}+0;[r]+0  
&\ar@{-}+0;[r]+0  
& \ar@{}[dr] |{{\displaystyle 5}}  
& \\
& & & &}
}\, \boldsymbol{\rightarrow} \, \vcenter{ 
\xymatrix@=12pt{
\ar@{}[dr] |{{\displaystyle 1}} 
& \ar@{}[dr] |{{\displaystyle 2}} 
& 
& \ar@{}[dr] |{{\displaystyle 3}}
& \\
\ar@{}[dr] |{{\displaystyle 9}} 
&\ar@{}[dr] |{{\displaystyle X}} \ar@{-}+0;[d]+0 \ar@{-}+0;[r]+0 
&\ar@{}[dr] |{{\displaystyle C}} \ar@{-}+0;[d]+0 \ar@{-}+0;[r]+0  
&\bullet \ar@{-}+0;[d]+0  \ar@{}[dr] |{{\displaystyle 4}} 
& \\
\ar@{}[dr] |{{\displaystyle 8}}
&\ar@{}[dr] |{{\displaystyle X}} \ar@{-}+0;[d]+0 \ar@{-}+0;[r]+0 
&\ar@{}[dr] |{{\displaystyle X}} \ar@{-}+0;[d]+0 \ar@{-}+0;[r]+0  
&\ar@{-}+0;[d]+0 
& \\
\ar@{}[dr] |{{\displaystyle 7}} 
&\ar@{}[dr] |{{\displaystyle X}} \ar@{-}+0;[d]+0 \ar@{-}+0;[r]+0 
&\ar@{}[dr] |{{\displaystyle X}} \ar@{-}+0;[d]+0 \ar@{-}+0;[r]+0  
&\ar@{-}+0;[d]+0  
& \\
\ar@{}[dr] |{{\displaystyle 6}} 
&\ar@{-}+0;[r]+0  
&\ar@{-}+0;[r]+0  
& \ar@{}[dr] |{{\displaystyle 5}}  
& \\
& & & &}
}\, \boldsymbol{\rightarrow}
\]
%
%
\[
\vcenter{
\xymatrix@=12pt{
\ar@{}[dr] |{{\displaystyle 1}} 
& \ar@{}[dr] |{{\displaystyle 2}} 
& \ar@{}[dr] |{{\displaystyle 3}} 
& \ar@{}[dr] |{{\displaystyle 4}}
& \\
\ar@{}[dr] |{{\displaystyle 9}} 
&\ar@{}[dr] |{{\displaystyle X}} \ar@{-}+0;[d]+0 \ar@{-}+0;[r]+0 
&\ar@{}[dr] |{{\displaystyle X}} \ar@{-}+0;[d]+0 \ar@{-}+0;[r]+0  
&\ar@{-}+0;[d]+0  
& \\
\ar@{}[dr] |{{\displaystyle 8}}
&\ar@{}[dr] |{{\displaystyle X}} \ar@{-}+0;[d]+0 \ar@{-}+0;[r]+0 
&\ar@{}[dr] |{{\displaystyle X}} \ar@{-}+0;[d]+0 \ar@{-}+0;[r]+0  
&\ar@{-}+0;[d]+0 
& \\
\ar@{}[dr] |{{\displaystyle 7}} 
&\ar@{}[dr] |{{\displaystyle X}} \ar@{-}+0;[d]+0 \ar@{-}+0;[r]+0 
&\ar@{}[dr] |{{\displaystyle X}} \ar@{-}+0;[d]+0 \ar@{-}+0;[r]+0  
&\ar@{-}+0;[d]+0  
& \\
\ar@{}[dr] |{{\displaystyle 6}} 
&\bullet  \ar@{-}+0;[r]+0  
&\ar@{-}+0;[r]+0  
& \ar@{}[dr] |{{\displaystyle 5}}  
& \\
& & & &}
}\, \boldsymbol{\rightarrow} \, \vcenter{ 
\xymatrix@=12pt{
\ar@{}[dr] |{{\displaystyle 1}} 
& \ar@{}[dr] |{{\displaystyle 2}} 
& \ar@{}[dr] |{{\displaystyle 3}} 
& \ar@{}[dr] |{{\displaystyle 4}}
& \\
\ar@{}[dr] |{{\displaystyle 9}} 
&\ar@{}[dr] |{{\displaystyle X}} \ar@{-}+0;[d]+0 \ar@{-}+0;[r]+0 
&\ar@{}[dr] |{{\displaystyle X}} \ar@{-}+0;[d]+0 \ar@{-}+0;[r]+0  
&\ar@{-}+0;[d]+0  
& \\
\ar@{}[dr] |{{\displaystyle 8}}
&\ar@{}[dr] |{{\displaystyle X}} \ar@{-}+0;[d]+0 \ar@{-}+0;[r]+0 
&\ar@{}[dr] |{{\displaystyle X}} \ar@{-}+0;[d]+0 \ar@{-}+0;[r]+0  
&\ar@{-}+0;[d]+0 
& \\
&\ar@{}[dr] |{{\displaystyle A}} \ar@{-}+0;[d]+0 \ar@{-}+0;[r]+0 
&\ar@{}[dr] |{{\displaystyle X}} \ar@{-}+0;[d]+0 \ar@{-}+0;[r]+0  
&\ar@{-}+0;[d]+0  
& \\
\ar@{}[dr] |{{\displaystyle 7}} 
&\ar@{-}+0;[r]+0  \ar@{}[dr] |{{\displaystyle 6}}
&\bullet \ar@{-}+0;[r]+0  
& \ar@{}[dr] |{{\displaystyle 5}}  
& \\
& & & &}
}\, \boldsymbol{\rightarrow} \, \vcenter{
\xymatrix@=12pt{
\ar@{}[dr] |{{\displaystyle 1}} 
& \ar@{}[dr] |{{\displaystyle 2}} 
& \ar@{}[dr] |{{\displaystyle 3}} 
& \ar@{}[dr] |{{\displaystyle 4}}
& \\
\ar@{}[dr] |{{\displaystyle 9}} 
&\ar@{}[dr] |{{\displaystyle X}} \ar@{-}+0;[d]+0 \ar@{-}+0;[r]+0 
&\ar@{}[dr] |{{\displaystyle X}} \ar@{-}+0;[d]+0 \ar@{-}+0;[r]+0  
&\ar@{-}+0;[d]+0  
& \\
\ar@{}[dr] |{{\displaystyle 8}}
&\ar@{}[dr] |{{\displaystyle X}} \ar@{-}+0;[d]+0 \ar@{-}+0;[r]+0 
&\ar@{}[dr] |{{\displaystyle X}} \ar@{-}+0;[d]+0 \ar@{-}+0;[r]+0  
&\ar@{-}+0;[d]+0 
& \\
&\bullet \ar@{}[dr] |{{\displaystyle Y}} \ar@{-}+0;[d]+0 \ar@{-}+0;[r]+0 
&\ar@{}[dr] |{{\displaystyle A}} \ar@{-}+0;[d]+0 \ar@{-}+0;[r]+0  
&\ar@{-}+0;[d]+0  
& \\
\ar@{}[dr] |{{\displaystyle 7}} 
&\ar@{-}+0;[r]+0  
&\ar@{-}+0;[r]+0  \ar@{}[dr] |{{\displaystyle 6}} 
&\ar@{}[dr] |{{\displaystyle 5}}  
& \\
& & & &}
}\, \boldsymbol{\rightarrow} \, \vcenter{
\xymatrix@=12pt{
\ar@{}[dr] |{{\displaystyle 1}} 
& \ar@{}[dr] |{{\displaystyle 2}} 
& \ar@{}[dr] |{{\displaystyle 3}} 
& \ar@{}[dr] |{{\displaystyle 4}}
& \\
\ar@{}[dr] |{{\displaystyle 9}} 
&\ar@{}[dr] |{{\displaystyle X}} \ar@{-}+0;[d]+0 \ar@{-}+0;[r]+0 
&\ar@{}[dr] |{{\displaystyle X}} \ar@{-}+0;[d]+0 \ar@{-}+0;[r]+0  
&\ar@{-}+0;[d]+0  
& \\
&\ar@{}[dr] |{{\displaystyle A}} \ar@{-}+0;[d]+0 \ar@{-}+0;[r]+0 
&\ar@{}[dr] |{{\displaystyle X}} \ar@{-}+0;[d]+0 \ar@{-}+0;[r]+0  
&\ar@{-}+0;[d]+0 
& \\
\ar@{}[dr] |{{\displaystyle 8}} 
&\ar@{}[dr] |{{\displaystyle C}} \ar@{-}+0;[d]+0 \ar@{-}+0;[r]+0 
&\bullet \ar@{}[dr] |{{\displaystyle A}} \ar@{-}+0;[d]+0 \ar@{-}+0;[r]+0  
&\ar@{-}+0;[d]+0  
& \\
\ar@{}[dr] |{{\displaystyle 7}} 
&\ar@{-}+0;[r]+0  
&\ar@{-}+0;[r]+0  \ar@{}[dr] |{{\displaystyle 6}} 
&\ar@{}[dr] |{{\displaystyle 5}}  
& \\
& & & &}
}\, \boldsymbol{\rightarrow}
\]
\[
\vcenter{
\xymatrix@=12pt{
\ar@{}[dr] |{{\displaystyle 1}} 
& \ar@{}[dr] |{{\displaystyle 2}} 
& \ar@{}[dr] |{{\displaystyle 3}} 
& \ar@{}[dr] |{{\displaystyle 4}}
& \\
\ar@{}[dr] |{{\displaystyle 9}} 
&\ar@{}[dr] |{{\displaystyle X}} \ar@{-}+0;[d]+0 \ar@{-}+0;[r]+0 
&\ar@{}[dr] |{{\displaystyle X}} \ar@{-}+0;[d]+0 \ar@{-}+0;[r]+0  
&\ar@{-}+0;[d]+0  
& \\
&\bullet \ar@{}[dr] |{{\displaystyle Y}} \ar@{-}+0;[d]+0 \ar@{-}+0;[r]+0 
&\ar@{}[dr] |{{\displaystyle A}} \ar@{-}+0;[d]+0 \ar@{-}+0;[r]+0  
&\ar@{-}+0;[d]+0 
& \\
\ar@{}[dr] |{{\displaystyle 8}} 
&\ar@{}[dr] |{{\displaystyle X}} \ar@{-}+0;[d]+0 \ar@{-}+0;[r]+0 
&\ar@{}[dr] |{{\displaystyle Z}} \ar@{-}+0;[d]+0 \ar@{-}+0;[r]+0  
&\ar@{-}+0;[d]+0  
& \\
\ar@{}[dr] |{{\displaystyle 7}} 
&\ar@{-}+0;[r]+0  
&\ar@{-}+0;[r]+0  \ar@{}[dr] |{{\displaystyle 6}} 
& \ar@{}[dr] |{{\displaystyle 5}}  
& \\
& & & &}
}\, \boldsymbol{\rightarrow} \, \vcenter{ 
\xymatrix@=12pt{
\ar@{}[dr] |{{\displaystyle 1}} 
& \ar@{}[dr] |{{\displaystyle 2}} 
& \ar@{}[dr] |{{\displaystyle 3}} 
& \ar@{}[dr] |{{\displaystyle 4}}
& \\
&\ar@{}[dr] |{{\displaystyle A}} \ar@{-}+0;[d]+0 \ar@{-}+0;[r]+0 
&\ar@{}[dr] |{{\displaystyle X}} \ar@{-}+0;[d]+0 \ar@{-}+0;[r]+0  
&\ar@{-}+0;[d]+0  
& \\
\ar@{}[dr] |{{\displaystyle 9}}
&\ar@{}[dr] |{{\displaystyle C}} \ar@{-}+0;[d]+0 \ar@{-}+0;[r]+0 
&\bullet \ar@{}[dr] |{{\displaystyle A}} \ar@{-}+0;[d]+0 \ar@{-}+0;[r]+0  
&\ar@{-}+0;[d]+0 
& \\
\ar@{}[dr] |{{\displaystyle 8}} 
&\ar@{}[dr] |{{\displaystyle X}} \ar@{-}+0;[d]+0 \ar@{-}+0;[r]+0 
&\ar@{}[dr] |{{\displaystyle Z}} \ar@{-}+0;[d]+0 \ar@{-}+0;[r]+0  
&\ar@{-}+0;[d]+0  
& \\
\ar@{}[dr] |{{\displaystyle 7}} 
&\ar@{-}+0;[r]+0  
&\ar@{-}+0;[r]+0  \ar@{}[dr] |{{\displaystyle 6}} 
& \ar@{}[dr] |{{\displaystyle 5}}  
& \\
& & & &}
}\, \boldsymbol{\rightarrow} \, \vcenter{
\xymatrix@=12pt{
\ar@{}[dr] |{{\displaystyle 1}} 
& \ar@{}[dr] |{{\displaystyle 2}} 
& \ar@{}[dr] |{{\displaystyle 3}} 
& \ar@{}[dr] |{{\displaystyle 4}}
& \\
&\bullet \ar@{}[dr] |{{\displaystyle Y}} \ar@{-}+0;[d]+0 \ar@{-}+0;[r]+0 
&\ar@{}[dr] |{{\displaystyle A}} \ar@{-}+0;[d]+0 \ar@{-}+0;[r]+0  
&\ar@{-}+0;[d]+0  
& \\
\ar@{}[dr] |{{\displaystyle 9}}
&\ar@{}[dr] |{{\displaystyle X}} \ar@{-}+0;[d]+0 \ar@{-}+0;[r]+0 
&\ar@{}[dr] |{{\displaystyle Z}} \ar@{-}+0;[d]+0 \ar@{-}+0;[r]+0  
&\ar@{-}+0;[d]+0 
& \\
\ar@{}[dr] |{{\displaystyle 8}} 
&\ar@{}[dr] |{{\displaystyle X}} \ar@{-}+0;[d]+0 \ar@{-}+0;[r]+0 
&\ar@{}[dr] |{{\displaystyle Z}} \ar@{-}+0;[d]+0 \ar@{-}+0;[r]+0  
&\ar@{-}+0;[d]+0  
& \\
\ar@{}[dr] |{{\displaystyle 7}} 
&\ar@{-}+0;[r]+0  
&\ar@{-}+0;[r]+0  \ar@{}[dr] |{{\displaystyle 6}} 
& \ar@{}[dr] |{{\displaystyle 5}}  
& \\
& & & &}
}\, \boldsymbol{\rightarrow} \, \vcenter{ 
\xymatrix@=12pt{
\ar@{}[dr] |{{\displaystyle 2}} 
& 
& \ar@{}[dr] |{{\displaystyle 3}} 
& \ar@{}[dr] |{{\displaystyle 4}}
& \\
\ar@{}[dr] |{{\displaystyle 1}} 
&\ar@{}[dr] |{{\displaystyle C}} \ar@{-}+0;[d]+0 \ar@{-}+0;[r]+0 
&\bullet \ar@{}[dr] |{{\displaystyle A}} \ar@{-}+0;[d]+0 \ar@{-}+0;[r]+0  
&\ar@{-}+0;[d]+0  
& \\
\ar@{}[dr] |{{\displaystyle 9}}
&\ar@{}[dr] |{{\displaystyle X}} \ar@{-}+0;[d]+0 \ar@{-}+0;[r]+0 
&\ar@{}[dr] |{{\displaystyle Z}} \ar@{-}+0;[d]+0 \ar@{-}+0;[r]+0  
&\ar@{-}+0;[d]+0 
& \\
\ar@{}[dr] |{{\displaystyle 8}} 
&\ar@{}[dr] |{{\displaystyle X}} \ar@{-}+0;[d]+0 \ar@{-}+0;[r]+0 
&\ar@{}[dr] |{{\displaystyle Z}} \ar@{-}+0;[d]+0 \ar@{-}+0;[r]+0  
&\ar@{-}+0;[d]+0  
& \\
\ar@{}[dr] |{{\displaystyle 7}} 
&\ar@{-}+0;[r]+0  
&\ar@{-}+0;[r]+0  \ar@{}[dr] |{{\displaystyle 6}} 
& \ar@{}[dr] |{{\displaystyle 5}}  
& \\
& & & &}
}\, \boldsymbol{\rightarrow}
\]
%
%
\[
\vcenter{
\xymatrix@=12pt{
\ar@{}[dr] |{{\displaystyle 2}} 
& \ar@{}[dr] |{{\displaystyle 3}} 
& 
& \ar@{}[dr] |{{\displaystyle 4}}
& \\
\ar@{}[dr] |{{\displaystyle 1}} 
&\ar@{}[dr] |{{\displaystyle X}} \ar@{-}+0;[d]+0 \ar@{-}+0;[r]+0 
&\ar@{}[dr] |{{\displaystyle Z}} \ar@{-}+0;[d]+0 \ar@{-}+0;[r]+0  
&\ar@{-}+0;[d]+0  
& \\
\ar@{}[dr] |{{\displaystyle 9}}
&\ar@{}[dr] |{{\displaystyle X}} \ar@{-}+0;[d]+0 \ar@{-}+0;[r]+0 
&\ar@{}[dr] |{{\displaystyle Z}} \ar@{-}+0;[d]+0 \ar@{-}+0;[r]+0  
&\ar@{-}+0;[d]+0 
& \\
\ar@{}[dr] |{{\displaystyle 8}} 
&\ar@{}[dr] |{{\displaystyle X}} \ar@{-}+0;[d]+0 \ar@{-}+0;[r]+0 
&\ar@{}[dr] |{{\displaystyle Z}} \ar@{-}+0;[d]+0 \ar@{-}+0;[r]+0  
&\ar@{-}+0;[d]+0  
& \\
\ar@{}[dr] |{{\displaystyle 7}} 
&\bullet  \ar@{-}+0;[r]+0  
&\ar@{-}+0;[r]+0  \ar@{}[dr] |{{\displaystyle 6}} 
& \ar@{}[dr] |{{\displaystyle 5}}  
& \\
& & & &}
}\, \boldsymbol{\rightarrow} \, \vcenter{
\xymatrix@=12pt{
\ar@{}[dr] |{{\displaystyle 2}} 
& \ar@{}[dr] |{{\displaystyle 3}} 
& 
& \ar@{}[dr] |{{\displaystyle 4}}
& \\
\ar@{}[dr] |{{\displaystyle 1}} 
&\ar@{}[dr] |{{\displaystyle X}} \ar@{-}+0;[d]+0 \ar@{-}+0;[r]+0 
&\ar@{}[dr] |{{\displaystyle Z}} \ar@{-}+0;[d]+0 \ar@{-}+0;[r]+0  
&\ar@{-}+0;[d]+0  
& \\
\ar@{}[dr] |{{\displaystyle 9}}
&\ar@{}[dr] |{{\displaystyle X}} \ar@{-}+0;[d]+0 \ar@{-}+0;[r]+0 
&\ar@{}[dr] |{{\displaystyle Z}} \ar@{-}+0;[d]+0 \ar@{-}+0;[r]+0  
&\ar@{-}+0;[d]+0 
& \\
&\bullet \ar@{}[dr] |{{\displaystyle A}} \ar@{-}+0;[d]+0 \ar@{-}+0;[r]+0 
&\ar@{}[dr] |{{\displaystyle Z}} \ar@{-}+0;[d]+0 \ar@{-}+0;[r]+0  
&\ar@{-}+0;[d]+0  
& \\
\ar@{}[dr] |{{\displaystyle 8}} 
&\ar@{-}+0;[r]+0  \ar@{}[dr] |{{\displaystyle 7}}
&\ar@{-}+0;[r]+0  \ar@{}[dr] |{{\displaystyle 6}} 
& \ar@{}[dr] |{{\displaystyle 5}}  
& \\
& & & &}
}\, \boldsymbol{\rightarrow} \, \vcenter{
\xymatrix@=12pt{
\ar@{}[dr] |{{\displaystyle 2}} 
& \ar@{}[dr] |{{\displaystyle 3}} 
& 
& \ar@{}[dr] |{{\displaystyle 4}}
& \\
\ar@{}[dr] |{{\displaystyle 1}} 
&\ar@{}[dr] |{{\displaystyle X}} \ar@{-}+0;[d]+0 \ar@{-}+0;[r]+0 
&\ar@{}[dr] |{{\displaystyle Z}} \ar@{-}+0;[d]+0 \ar@{-}+0;[r]+0  
&\ar@{-}+0;[d]+0  
& \\
&\bullet \ar@{}[dr] |{{\displaystyle A}} \ar@{-}+0;[d]+0 \ar@{-}+0;[r]+0 
&\ar@{}[dr] |{{\displaystyle Z}} \ar@{-}+0;[d]+0 \ar@{-}+0;[r]+0  
&\ar@{-}+0;[d]+0 
& \\
\ar@{}[dr] |{{\displaystyle 9}} 
&\ar@{}[dr] |{{\displaystyle Z}} \ar@{-}+0;[d]+0 \ar@{-}+0;[r]+0 
&\ar@{}[dr] |{{\displaystyle Z}} \ar@{-}+0;[d]+0 \ar@{-}+0;[r]+0  
&\ar@{-}+0;[d]+0  
& \\
\ar@{}[dr] |{{\displaystyle 8}} 
&\ar@{-}+0;[r]+0  \ar@{}[dr] |{{\displaystyle 7}}
&\ar@{-}+0;[r]+0  \ar@{}[dr] |{{\displaystyle 6}} 
& \ar@{}[dr] |{{\displaystyle 5}}  
& \\
& & & &}
}\, \boldsymbol{\rightarrow} \, \vcenter{
\xymatrix@=12pt{
\ar@{}[dr] |{{\displaystyle 2}} 
& \ar@{}[dr] |{{\displaystyle 3}} 
& 
& \ar@{}[dr] |{{\displaystyle 4}}
& \\
&\bullet \ar@{}[dr] |{{\displaystyle A}} \ar@{-}+0;[d]+0 \ar@{-}+0;[r]+0 
&\ar@{}[dr] |{{\displaystyle Z}} \ar@{-}+0;[d]+0 \ar@{-}+0;[r]+0  
&\ar@{-}+0;[d]+0  
& \\
\ar@{}[dr] |{{\displaystyle 1}}
&\ar@{}[dr] |{{\displaystyle Z}} \ar@{-}+0;[d]+0 \ar@{-}+0;[r]+0 
&\ar@{}[dr] |{{\displaystyle Z}} \ar@{-}+0;[d]+0 \ar@{-}+0;[r]+0  
&\ar@{-}+0;[d]+0 
& \\
\ar@{}[dr] |{{\displaystyle 9}} 
&\ar@{}[dr] |{{\displaystyle Z}} \ar@{-}+0;[d]+0 \ar@{-}+0;[r]+0 
&\ar@{}[dr] |{{\displaystyle Z}} \ar@{-}+0;[d]+0 \ar@{-}+0;[r]+0  
&\ar@{-}+0;[d]+0  
& \\
\ar@{}[dr] |{{\displaystyle 8}} 
&\ar@{-}+0;[r]+0  \ar@{}[dr] |{{\displaystyle 7}}
&\ar@{-}+0;[r]+0  \ar@{}[dr] |{{\displaystyle 6}} 
& \ar@{}[dr] |{{\displaystyle 5}}  
& \\
& & & &}
}\, \boldsymbol{\rightarrow}
\]
%
%
%
\[
\xymatrix@=12pt{
\ar@{}[dr] |{{\displaystyle 3}} 
& 
& 
& \ar@{}[dr] |{{\displaystyle 4}}
& \\
\ar@{}[dr] |{{\displaystyle 2}} 
&\ar@{}[dr] |{{\displaystyle Z}} \ar@{-}+0;[d]+0 \ar@{-}+0;[r]+0 
&\ar@{}[dr] |{{\displaystyle Z}} \ar@{-}+0;[d]+0 \ar@{-}+0;[r]+0  
&\ar@{-}+0;[d]+0  
& \\
\ar@{}[dr] |{{\displaystyle 1}}
&\ar@{}[dr] |{{\displaystyle Z}} \ar@{-}+0;[d]+0 \ar@{-}+0;[r]+0 
&\ar@{}[dr] |{{\displaystyle Z}} \ar@{-}+0;[d]+0 \ar@{-}+0;[r]+0  
&\ar@{-}+0;[d]+0 
& \\
\ar@{}[dr] |{{\displaystyle 9}} 
&\ar@{}[dr] |{{\displaystyle Z}} \ar@{-}+0;[d]+0 \ar@{-}+0;[r]+0 
&\ar@{}[dr] |{{\displaystyle Z}} \ar@{-}+0;[d]+0 \ar@{-}+0;[r]+0  
&\ar@{-}+0;[d]+0  
& \\
\ar@{}[dr] |{{\displaystyle 8}} 
&\ar@{-}+0;[r]+0  \ar@{}[dr] |{{\displaystyle 7}}
&\ar@{-}+0;[r]+0  \ar@{}[dr] |{{\displaystyle 6}} 
& \ar@{}[dr] |{{\displaystyle 5}}  
& \\
& & & &}
\]
If $\omega=(x,y;l)$, recall that the vertex $(x,y)$ is labelled with the Pl\"{u}cker
coordinate $[\varrho^{l-1}(M_{k,n}(x,y))]$ in the
seed $(\xx_{\omega},\widetilde{Q}_{\omega})$.
The following is easy to check.

\begin{lemma} \label{l:local}
Let $\omega=(x,y;l)\in \Omega\setminus \{\top\}$.
Then the arrows incident with the vertex $(x,y)$ in
the seed $(\xx_{\omega},\widetilde{Q}_{\omega})$
are as follows, allowing the cases $i=k$ or $j=0$ in the formula for
$M_{k,n}(x,y)$ in Lemma~\ref{l:rknsubsets}. $\qed$
$$
\xymatrix@R=12pt@C=6pt{
&
{\varrho^{l-1}(M_{k,n}(x,y+1))}
\\
{\varrho^l(M_{k,n}(x-1,y))} \ar[r] 
&
{\varrho^{l-1}(M_{k,n}(x,y))} \ar[u] \ar[d]
&
{\varrho^{l-1}(M_{k,n}(x+1,y))} \ar[l]  \\
&
{\varrho^l(M_{k,n}(x,y-1))}
}
$$
\end{lemma}

\begin{lemma} \label{l:mutationstep}
Let $\omega\in \Omega\setminus \{\top\}$, so $\omega=(x,y;l)$ where
$1\leq l\leq k-1$, $1\leq x\leq k-l$ and $1\leq y\leq n-k-1$.
Then
$$\mu_{(x,y)}(\xx_{\omega},\widetilde{Q}_{\omega})=(\xx_{\omega^+},\widetilde{Q}_{\omega^+}).$$
\end{lemma}
\begin{proof}
Using Lemma~\ref{l:localrules}, it is easy to check in each
case that $\mu_{(x,y)}(Q(L_{\omega}))=Q(L_{\omega^+})$.
The last four rules in Figure~\ref{fig:localrules} occur
on pages $l>1$ only: at the end of a row, at the end of the top row, on the last page, and in the final mutation on the last page, respectively.

Mutating $(\xx_{\omega},\widetilde{Q}_{\omega})$ at $(x,y)$ replaces the
Pl\"{u}cker coordinate
$[\varrho^{l-1}(M_{k,n}(x,y))]$ with a new element of $\mathbb{C}[\Gr_{k,n}]$.
We claim that this new element is
$[\varrho^l(M_{k,n}(x,y))]$.

By Lemma~\ref{l:local}, the cluster variable $[\varrho^{l-1}(M_{k,n}(x,y))]^*$ associated to $(x,y)$ after mutation at $(x,y)$ is
$$\frac{[\varrho^l(M_{k,n}(x-1,y))][\varrho^{l-1}(M_{k,n}(x+1,y))]+
[\varrho^{l-1}(M_{k,n}(x,y+1))][\varrho^l(M_{k,n}(x,y-1))]}{[\varrho^{l-1}(M_{k,n}(x,y))]}.$$
Suppose first that $l=1$, so we have:
$$[M_{k,n}(x,y)]^*=
\frac{[\varrho(M_{k,n}(x-1,y))][M_{k,n}(x+1,y)]+
[M_{k,n}(x,y+1)][\varrho(M_{k,n}(x,y-1))]}{[M_{k,n}(x,y)]}.$$
Recall (Lemma~\ref{l:rknsubsets}) that $M_{k,n}(i,j)=\{1,\ldots ,i\}\cup
\{i+j+1,\ldots ,j+k\}$.
Let $A=\{2,\ldots ,x\}\cup \{x+y+2,\ldots ,y+k\}$.
Then we have, using a Pl\"{u}cker relation (and noting
that $y\leq n-k-1$, so $y+k+1\leq n$ and
$1<x+1<x+y+1<y+k+1$), that $[M_{k,n}(x,y)]^*$ is given by:
\begin{align*}
&\frac{[A\cup \{x+y+1,y+k+1\}][A\cup \{1,x+1\}]+[A\cup \{1,y+k+1\}][A\cup \{x+1,x+y+1\}]}{[M_{k,n}(x,y)]} \\
&=\frac{[A\cup \{1,x+y+1\}][A\cup \{x+1,y+k+1\}]}{[M_{k,n}(x,y)]} \\
&=\frac{[M_{k,n}(x,y)][\varrho(M_{k,n}(x,y))]}{[M_{k,n}(x,y)]}=[\varrho(M_{k,n}(x,y))].
\end{align*}
It follows that, for any $l$, we have
\begin{equation}
\label{e:mutation}
[\varrho^{l-1}(M_{k,n}(x,y))]^*=[\varrho^l(M_{k,n}(x,y))],
\end{equation}
as claimed.

Suppose first that $(x,y)\not=(k-l,n-k-1)$. Then
$(x,y;l)^+=((x,y)^+,l)$. Then the Pl\"{u}cker coordinate
associated to $(i,j)$ in $(\xx_{\omega},\widetilde{Q}_{\omega})$ is $[\varrho^{r_{\omega}(i,j)}(M_{k,n}(i,j))]$,
where $r_{\omega}(i,j)$ is given by~\eqref{e:rij}. The coordinate
associated to $(i,j)$ in $(\xx_{\omega^+},\widetilde{Q}_{\omega})$ is
$[\varrho^{r_{\omega^+}(i,j)}(M_{k,n}(i,j))]$, where
$$r_{\omega^+}(i,j)=\begin{cases}
l, & i\leq k-l,\ (i,j)<(x,y)^+; \\
l-1, & i\leq k-l,\ (i,j)\not<(x,y)^+; \\
k-i, & i>k-l.
\end{cases}$$
We see that $r_{{\omega}^+}(i,j)=r_{\omega}(i,j)$ for all $(i,j)\in R_{\mathbb{Z}}$ except for $(i,j)=(x,y)$: we have $r_{\omega}(x,y)=l-1$
while $r_{\omega^+}(x,y)=l$, so the result holds in this case by~\eqref{e:mutation}.

Suppose that $(x,y)=(k-l,n-k-1)$. Then $(x,y;l)^+=(1,1;l+1)$ (or $\top$ if $l=k-1$).
In either case, the coordinate
associated to $(i,j)$ in $(\xx_{\omega^+},\widetilde{Q}_{\omega^+})$ is
$[\varrho^{r_{\omega^+}(i,j)}(M_{k,n}(i,j))]$, where
$$r_{\omega^+}(i,j)=\begin{cases}
l, & i\leq k-l, \\
k-i, & i>k-l.
\end{cases}
$$
We see that $r_{\omega^+}(i,j)=r_{\omega}(i,j)$ for all $(i,j)$ except that
$r_{\omega}(x,y)=l-1$ while $r_{\omega^+}(x,y)=l$, and we see that the lemma also holds in this case.
\end{proof}

\begin{lemma} \label{l:knownseeds}
\begin{itemize}
\item[(a)] The seed of $\mathbb{C}[\Gr_{k,n}]$ corresponding to the Postnikov diagram $\R_{k,n}$ is $(\xx_{\perp},\widetilde{Q}_{\perp})$.
\item[(b)] For all $\omega\in \Omega$, $(\xx_{\omega},\widetilde{Q}_{\omega})$ is a seed of $\mathbb{C}[\Gr_{k,n}]$.
\item[(c)] The quiver $\widetilde{Q}_{\top}$ is isomorphic to
$\widetilde{Q}(\R^*_{n-k,n})$.
\end{itemize}
\end{lemma}
\begin{proof}
We first prove part (a).
Note that $r_{\perp}(i,j)=0$ for all $(i,j)\in R_{\mathbb{Z}}$,
so the coordinate attached to $(i,j)$ in $(\xx_{\perp},\widetilde{Q}_{\perp})$ is $[M_{k,n}(i,j)]$.
It is easy to check that the quiver
$\widetilde{Q}_{\perp}$ is isomorphic to the quiver of $\R_{k,n}$
(Figure~\ref{fig:perplabelling} gives
an example, in the case $k=4$, $n=9$: compare with Figure~\ref{fig:R49}).

Part (b) follows from part (a) and Lemma~\ref{l:mutationstep}.

Part (c) is an easy check (note that the embedding in the plane of
$\widetilde{Q}(\R^*_{n-k,n})$ is different from that of $\widetilde{Q}_{\top}$).
(Figure~\ref{fig:toplabelling} shows
$L_{\top}$ and the corresponding quiver
in the case $k=4$, $n=9$: compare with
Figure~\ref{fig:R59dual}).
\end{proof}

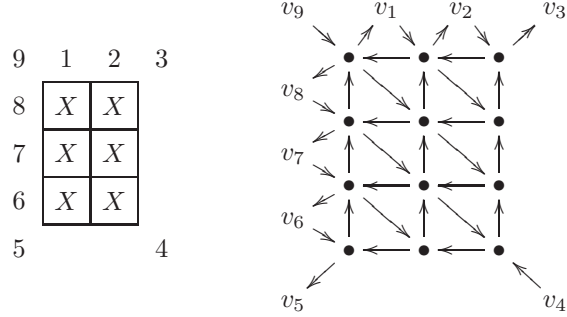
\begin{figure}
$$
\vcenter{
\xymatrix@=12pt{
\ar@{}[dr] |{{\displaystyle 9}} & \ar@{}[dr] |{{\displaystyle 1}} & \ar@{}[dr] |{{\displaystyle 2}} & \ar@{}[dr] |{{\displaystyle 3}}\\
\ar@{}[dr] |{{\displaystyle 8}} & \ar@{}[dr] |{{\displaystyle X}} \ar@{-}+0;[d]+0 \ar@{-}+0;[r]+0 &  \ar@{}[dr] |{{\displaystyle X}} \ar@{-}+0;[d]+0 \ar@{-}+0;[r]+0  & \ar@{-}+0;[d]+0 & \\
\ar@{}[dr] |{{\displaystyle 7}}& \ar@{}[dr] |{{\displaystyle X}} \ar@{-}+0;[d]+0 \ar@{-}+0;[r]+0 &  \ar@{}[dr] |{{\displaystyle X}} \ar@{-}+0;[d]+0 \ar@{-}+0;[r]+0  & \ar@{-}+0;[d]+0 & \\
\ar@{}[dr] |{{\displaystyle 6}} & \ar@{}[dr] |{{\displaystyle X}} \ar@{-}+0;[d]+0 \ar@{-}+0;[r]+0 &  \ar@{}[dr] |{{\displaystyle X}} \ar@{-}+0;[d]+0 \ar@{-}+0;[r]+0  & \ar@{-}+0;[d]+0 & \\
 \ar@{}[dr] |{{\displaystyle 5}} & \ar@{-}+0;[r]+0  & \ar@{-}+0;[r]+0 & \ar@{}[dr] |{{\displaystyle 4}} & \\
& & & & }}
\quad\quad\quad
\vcenter{\xymatrix@=1pt{
v_9 \ar[ddrr] && & v_1 \ar[ddr] && v_2 \ar[ddr] &&& v_3 \\ \\
&& \bullet \ar[lld] \ar[ddrr] \ar[uur] && \bullet \ar[uur] \ar[ll] \ar[ddrr] && \bullet \ar[ll] \ar[uurr] \\
v_8 \ar[rrd] \\
&& \bullet \ar[lld] \ar[ddrr] \ar[uu] && \bullet \ar[ll] \ar[ddrr] \ar[uu] && \bullet \ar[ll] \ar[uu] \\
v_7 \ar[rrd] \\
&& \bullet \ar[lld] \ar[ddrr] \ar[uu] && \bullet \ar[ll] \ar[ddrr] \ar[uu] && \bullet \ar[ll] \ar[uu] \\
v_6 \ar[rrd] \\
&& \bullet \ar[ddll] \ar[uu] && \bullet \ar[ll] \ar[uu] && \bullet \ar[ll] \ar[uu] \\ \\
v_5 && && && && v_4 \ar[uull]
}}$$
\caption{The labelling, $L_{\perp}$ (left) and
the corresponding quiver $Q_{\perp}$, isomorphic
to $\widetilde{Q}(\R_{4,9})$ (right), in the case $k=4$, $n=9$.}
\label{fig:perplabelling}
\end{figure}

\begin{figure}
$$
\vcenter{
\xymatrix@=12pt{
\ar@{}[dr] |{{\displaystyle 3}} & & & \ar@{}[dr] |{{\displaystyle 4}}\\
\ar@{}[dr] |{{\displaystyle 2}} & \ar@{}[dr] |{{\displaystyle Z}} \ar@{-}+0;[d]+0 \ar@{-}+0;[r]+0 &  \ar@{}[dr] |{{\displaystyle Z}} \ar@{-}+0;[d]+0 \ar@{-}+0;[r]+0  & \ar@{-}+0;[d]+0 & \\
\ar@{}[dr] |{{\displaystyle 1}}& \ar@{}[dr] |{{\displaystyle Z}} \ar@{-}+0;[d]+0 \ar@{-}+0;[r]+0 &  \ar@{}[dr] |{{\displaystyle Z}} \ar@{-}+0;[d]+0 \ar@{-}+0;[r]+0  & \ar@{-}+0;[d]+0 & \\
\ar@{}[dr] |{{\displaystyle 9}} & \ar@{}[dr] |{{\displaystyle Z}} \ar@{-}+0;[d]+0 \ar@{-}+0;[r]+0 &  \ar@{}[dr] |{{\displaystyle Z}} \ar@{-}+0;[d]+0 \ar@{-}+0;[r]+0  & \ar@{-}+0;[d]+0 & \\
 \ar@{}[dr] |{{\displaystyle 8}} & \ar@{-}+0;[r]+0  \ar@{}[dr] |{{\displaystyle 7}} & \ar@{-}+0;[r]+0  \ar@{}[dr] |{{\displaystyle 6}} & \ar@{}[dr] |{{\displaystyle 5}} & \\
& & & & }}
\quad\quad\quad
\vcenter{
\xymatrix@=1pt{
v_3 \ar[ddrr] &&& && &&& v_4 \\ \\
&& \bullet \ar[rr] \ar[dll] && \bullet \ar[rr] \ar[ddll] && \bullet \ar[uurr] \ar[ddll] \\
v_2 \ar[drr] \\
&& \bullet \ar[uu] \ar[rr] \ar[dll] && \bullet \ar[uu] \ar[rr] \ar[ddll] && \bullet \ar[uu] \ar[ddll] \\
v_1 \ar[drr] \\
&& \bullet \ar[uu] \ar[rr] \ar[dll] && \bullet \ar[uu] \ar[rr] \ar[ddll] && \bullet \ar[uu] \ar[ddll] \\
v_9 \ar[drr] \\
&& \bullet \ar[uu] \ar[rr] \ar[ddll] && \bullet \ar[uu] \ar[rr] \ar[ddl] && \bullet \ar[uu] \ar[ddl] \\ \\
v_8 &&& v_7 \ar[uul] && v_6 \ar[uul] &&& v_5 \ar[uull]
}}
$$
\caption{The labelling $L_{\top}$ (left) and corresponding quiver $\widetilde{Q}_{\top}$, isomorphic to $\R^*_{5,9}$ (right), in the case $k=4$, $n=9$.}
\label{fig:toplabelling}
\end{figure}
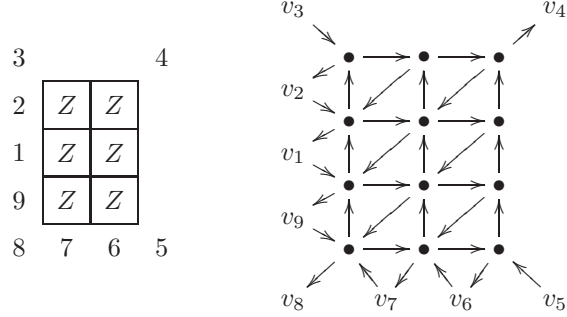

For $(i,j)\in R_{\mathbb{Z}}$, let $[\widetilde{M}_{k,n}(i,j)]$ be the Pl\"{u}cker coordinate in
$\xx_{\top}$ associated to the vertex $(i,j)$ of $\widetilde{Q}_{\top}$.

\begin{lemma} \label{l:tcoincide}
For all $(i,j)\in R_{\mathbb{Z}}$, we have that
$\twist{[M_{k,n}(k-i,j)]}$ is equal to a monomial in the coefficients
multiplied by $[\sigma^k(\widetilde{M}_{k,n}(i,j))]$.
Hence, up to coefficients, 
$$\twist{[M_{k,n}(k-i,j)]}=[\sigma^k(\widetilde{M}_{k,n}(i,j))].$$
\end{lemma}
\begin{proof}
We have
\begin{align} \label{e:tcoincide}
\begin{split}
\widetilde{M}_{k,n}(i,j) &=\varrho^{k-i}(M_{k,n}(i,j)) \\
&= \varrho^{k-i}(\{1,\ldots ,i\}\cup [i+j+1,j+k]).
\end{split}
\end{align}
By Lemma~\ref{l:rkntwist} we have that
$\twist{[M_{k,n}(k-i,j)]}$ is equal to $[I]$ multiplied by a monomial
in the coefficients, where
\begin{align*}
I&=[j+1,k-i+j]\cup [n-i+1,n] \\
&=[n+1-i,n]\cup [j+1,j+k-i] \\
&=\varrho^{-i}(\{1,\ldots ,i\}\cup [i+j+1,j+k]) \\
&=\varrho^{-k}(\widetilde{M}_{k,n}(i,j)),
\end{align*}
by~\eqref{e:tcoincide}, giving the result.
\end{proof}

\begin{remark} \label{r:green}
It follows from Lemma~\ref{l:knownseeds}, parts (a) and (c),
that there
is an isomorphism from $Q_{\top}$ to $Q_{\perp}$ taking $(i,j)$ to $(k-i,j)$
for $1\leq i\leq k-1$ and $1\leq j\leq n-k-1$. Since $(Q_{\perp},\pi(\mathbf{x}_{\perp}))$ and $(Q_{\top},\pi(\mathbf{x_{\top}}))$
are both seeds of $\mathbb{C}[\Gr_{k,n}]_1$
(by Lemma~\ref{l:knownseeds}(b) and Remark~\ref{r:specialize})
there is an automorphism ${\gamma}$ of $\mathbb{C}[\Gr_{k,n}]_1$
sending $\pi([M_{k,n}(i,j)])$ to $\pi([\widetilde{M}_{k,n}(k-i,j)])$ for all $i,j$.
\end{remark}

\begin{corollary} \label{c:greenidentity}
The following identity holds:
$$\gamma^2\pi=\pi\sigma^{-k}.$$
\end{corollary}
\begin{proof}
By Lemma~\ref{l:tcoincide}, $\sigma^{-k}\twistbracket{[M_{k,n}(i,j)]}$
is equal to a monomial in the coefficients multiplied by
$[\widetilde{M}_{k,n}(k-i,j)]$. So, using Remark~\ref{r:green},
\begin{align*}
\pi \sigma^{-k} \twistbracket{[M_{k,n}(i,j)]} &=
\pi [\widetilde{M}_{k,n}(k-i,j)] \\
&= \gamma\pi [M_{k,n}(i,j)].
\end{align*}
Hence, for any $x\in \mathbb{C}[\Gr_{k,n}]$, we have
$\pi \sigma^{-k}\twist{x}=\gamma\pi(x)$.
By Corollary~\ref{c:periodic}, $\pi \dtwistbracket{x}=\pi\sigma^k(x)$
for any $x\in \mathbb{C}[\Gr_{k,n}]$. So, using Lemma~\ref{l:rotatesigns}, we have:
\begin{align*}
\gamma^2\pi(x) &=
\gamma \pi \sigma^{-k} \twistbracket{x} \\
&= \pi \sigma^{-k} \twistbracket{\sigma^{-k}\twistbracket{x}} \\
&= \pi \dtwistbracket{\sigma^{-2k}(x)} \\
&= \pi \sigma^{-k}(x),
\end{align*}
giving the result.
\end{proof}

Let $\mu_{\boldsymbol{\alpha}}$ be the sequence of mutations coming from
the sequence $\boldsymbol{\alpha}$ (see Definition~\ref{d:alpha}).
We have the following result.

\begin{prop} \label{p:mutation}
\begin{itemize}
\item[(a)] 
Up to coefficients,
$\sigma^k(\mathbf{x}_{\top})=\twist{\mathbf{x}(\R_{k,n})}$.
\item[(b)] $\mu_{\boldsymbol{\alpha}}(\xx(\R_{k,n}),\widetilde{Q}(\R_{k,n}))=(\xx_{\top},\widetilde{Q}_{\top})$.
\item[(c)] The sequence $\mu_{\boldsymbol{\alpha}}$ of mutations, when applied to $(\xx(\R_{k,n}),\widetilde{Q}(\R_{k,n}))$,
passes only through seeds corresponding to Postnikov diagrams.
\end{itemize}
\end{prop}
\begin{proof}
Part (a) follows from Lemmas~\ref{l:tcoincide} and~\ref{l:rkntwist}, noting
Lemma~\ref{l:rotatesigns}.
Part (b) follows from Lemma~\ref{l:mutationstep}. For part (c),
we note that, for each mutation in $\mu_{\boldsymbol{\alpha}}$, there are
two arrows pointing towards the mutation vertex and two arrows
pointing away, by Lemma~\ref{l:local}.
It follows that the mutation corresponds to
a quadrilateral move in the Postnikov diagram (see Figure~\ref{fig:quadrilateralmove}).
\end{proof}

It remains to check that $\boldsymbol{\alpha}$ is a maximal green
sequence. We first recall the definition.

Let $Q$ be a quiver with no loops or $2$-cycles, with vertices
$Q_0=\{1,\ldots ,l\}$ and arrows $Q_1$. The \emph{framing}
$\widehat{Q}$ of $Q$ is the quiver with vertices
$\widehat{Q}_0=\{1,\ldots ,2l\}$. Writing $r'=r+l$ for
$r=1,\ldots ,n$, the arrows are
$$\widehat{Q}_1=Q_1\sqcup \{r\rightarrow r'\,:\,r\in Q_0\}.$$
Then $\widehat{Q}$ is an exchange quiver (see the start of Section~\ref{s:clusterstructure}).
Let $\Mut(\widehat{Q})$ denote the mutation class of $\widehat{Q}$, i.e. the set of quivers which can be obtained from
$\widehat{Q}$ by iterated mutation. Let $R\in \Mut(\widehat{Q})$. Then a vertex $r\in Q_0$ (thus non-frozen) is said to be \emph{green} (respectively, \emph{red}) if every arrow between $r$ and a vertex $s'$, $s\in Q_0$, points towards (respectively,
away from) $s'$.

A sequence $r_1,\ldots ,r_m$ of vertices of $Q_0$ is said
to be a \emph{green} sequence for $Q$ if $r_j$ is green
in $\mu_{r_{j-1}}\cdots \mu_{r_1}(\widehat{Q})$ for
$j=1,\ldots ,m$.
If every non-frozen vertex in $\mu_{r_m}\cdots \mu_{r_1}(\widehat{Q})$ is red then $r_1,\ldots ,r_m$ is said to be a
\emph{maximal green sequence for $Q$}.

We recall the following result:

\begin{theorem} \cite[Thm.\ 4.4, \S9.1]{BDP14} \label{t:dynkingreen}
Let $Q$ be a Dynkin quiver with vertices $Q_0$. Let $r_1,\ldots ,r_N$ be a sequence of vertices of $Q_0$ such that:
\begin{itemize}
\item[(a)] $r_1,\ldots ,r_N$ is an admissible sequence of
sinks in $Q$ (i.e. $r_j$ is a sink in $\mu_{r_{j-1}}\cdots \mu_{r_1}(Q)$ for $j=1,\ldots ,N$), and
\item[(b)] $s_{r_1}\cdots s_{r_N}$ is a reduced expression
for the longest element $w_0$ of the Weyl group $W$ of
$Q$, where $s_r$ is the simple reflection in $W$
corresponding to $r\in Q_0$.
\end{itemize}
Then $r_1,\ldots ,r_N$ is a maximal green sequence for $Q$.
\end{theorem}

For $1\leq j\leq n-k-1$, let
$$V_j=\{(i,j)\,:\,1\leq i\leq k-1\}\subseteq R_{\mathbb{Z}}.$$
Note that we can regard $V_j$ as a subset of the
set of vertices of $Q_{\omega}$ for any $\omega\in \Omega$.
Let $\widehat{Q}_{\perp}$ be the framing of $Q_{\perp}$,
with arrows $(i,j)\rightarrow (i,j)'$ for all $(i,j)\in R_{\mathbb{Z}}$.

For any $\omega\in \Omega$, let $\boldsymbol{\alpha}(\omega)$ be the 
sequence of elements of $R_{\mathbb{Z}}$ obtained by 
applying $\text{pr}$ to the elements of
$\{\omega'\in \Omega\,:\,\omega'<\omega\}$ written in the
order $<$.
Thus $\boldsymbol{\alpha}(\top)=\boldsymbol{\alpha}$ (see Definition~\ref{d:alpha}).
By Lemma~\ref{l:mutationstep}, we have
$\mu_{\boldsymbol{\alpha}(\omega)} Q_{\perp}=Q_{\omega}$.
We define $\Gamma_{\omega}=\mu_{\boldsymbol{\alpha}(\omega)}\widehat{Q}_{\perp}$.
In particular, $\G_{\perp}=\widehat{Q}_{\perp}$.

For $1\leq j\leq n-k-1$, let $\boldsymbol{\alpha}^j(\omega)$ be the
subsequence of $\boldsymbol{\alpha}(\omega)$ consisting of only those entries which are vertices in $V_j$.
For $1\leq j\leq n-k-1$, let $\G^j_{\omega}$ be the induced
subquiver of $\G_{\omega}$ on the vertices $V_j\cup V'_j$ and let
$Q^j_{\omega}$ be the induced subquiver of $Q_{\omega}$ on the vertices
$V_j$. Note that $\G^j_{\perp}=\widehat{Q}^j_{\perp}$, the
framing of $Q^j_{\perp}$.

\begin{lemma} \label{l:subsequencegreen}
Fix $1\leq j\leq n-k-1$. Then $\boldsymbol{\alpha}^j$ is a maximal green
sequence for $Q^j_{\perp}$.
\end{lemma}
\begin{proof}
The sequence $\boldsymbol{\alpha}^j$ is an admissible sequence of sinks for $Q^j_{\perp}$ giving a reduced expression
for $w_0$, so this follows from Theorem~\ref{t:dynkingreen}.
\end{proof}

We can now prove the following:

\begin{lemma} \label{l:greenmutation}
Let $\omega\in \Omega$. Then, for any $1\leq j\leq n-k-1$,
we have:
\begin{itemize}
\item[(a)] $$\G^j_{\omega}=\mu_{\boldsymbol{\alpha}^j(\omega)} \widehat{Q}^j_{\perp}$$
\item[(b)] Let $a$ be an arrow in $\G_{\omega}$
with an endpoint in $V'_j$. Then the other endpoint
of $a$ lies in $V_j$.
\item[(c)] If $\omega=(x,y;l)$ then the vertex $(x,y)$ in $\G_{\omega}$ is green.
\end{itemize}
\end{lemma}
\begin{proof}
We prove this by induction on the the length of
$\boldsymbol{\alpha}(\omega)$.
The result clearly holds when $\omega=\perp$, since
$\G_{\perp}=\widehat{Q}_{\perp}$. Suppose that the
result holds for
$\omega=(x,y;l)\in\Omega\setminus \{\top\}$.
We will show that it holds for $\omega^+$.

Note that $\G_{\omega^+}=\mu_{(x,y)}\G_{\omega}$.
By Lemma~\ref{l:local}, we have that

\noindent (i) An arrow in $\G_{\omega}$ between $(x,y)$ and a non-frozen vertex of $\G_{\omega}$ in $V_j$ for some $j\not=y$ must point away from $(x,y)$.

Hence:

\noindent (ii)
If $j\not=y$, there is no path of length $2$ in $\G_{\omega}$ 
starting at a vertex of $V_j$ and ending at a vertex of
$V_j$ and passing through $(x,y)$, i.e. no path of the
form:
$$\xymatrix{
(i,j) \ar[r] & (x,y) \ar[r] & (i',j) \\
}$$
where $1\leq i,i'\leq k-1$.

By (c) for $\omega$, we have:

\noindent (iii)
Any arrow in $\G_{\omega}$ between $(x,y)$ and a vertex of $V'_y$ must point away from $(x,y)$.

By (b) for $\omega$, there can be no arrow in $\G_{\omega}$ between $(x,y)$ and a vertex of $V'_j$ with $j\not=y$. So:

\noindent (iv)
  For $j\not=y$, there can be no path in $\G_{\omega}$ of length $2$ (in either direction) between a vertex in $V'_j$ and a vertex in $V_l$, for any $l$, passing through $(x,y)$, i.e. no path of the form:
$$\xymatrix{
(i_1,j)' \ar[r] & (x,y) \ar[r] & (i_2,l)
}
\quad\quad\quad
\xymatrix{
(i_2,l) \ar[r] & (x,y) \ar[r] & (i_1,j)'
}
$$
where $1\leq i_1,i_2 \leq k-1$ and $1\leq l\leq n-k-1$.

It follows from (ii) and (iv) (with $l=j$) that:

\noindent (v)
For $j\not=y$,
applying $\mu_{(x,y)}$ to $\G_{\omega}$ has no effect on the induced subquiver of $\G_{\omega}$ on the vertices
$V_j\cup V'_j$. In other words, the induced subquiver
of $\mu_{(x,y)}\G_{\omega}$ on $V_j\cup V'_j$ coincides
with $\mu_{(x,y)}\G^j_{\omega}$.

It follows from (iv) that applying $\mu_{(x,y)}$ to
$\G_{\omega}$ does not introduce or delete any arrows between a vertex in $V'_j$, $j\not=y$, and $V_l$ for any $l$.
By (i) and (iii), there are no paths of length $2$ in $\G_{\omega}$ (passing through $(x,y)$) between a vertex of $V'_y$ and a vertex of $V_l$, $l\not=y$,
i.e. no paths of the form:
$$\xymatrix{
(i,y)' \ar[r] & (x,y) \ar[r] & (i',l)
}
\quad\quad\quad
\xymatrix{
(i',l) \ar[r] & (x,y) \ar[r] & (i,y)'
}$$
where $1\leq i,i'\leq k-1$ and $1\leq l\leq n-k-1$.
Hence, applying $\mu_{(x,y)}$ to $\G_{\omega}$
does not introduce any arrows between vertices in $V'_y$
and vertices of $V_l$, $l\not=y$.
We have shown that $(b)$ holds for $\omega^+$.

Note that if $Q'$ is a full subquiver of a quiver $Q$, then
mutating $Q$ at a vertex $v$ of $Q'$ has the same effect
on $Q'$ as mutating $Q'$ at $v$. Hence
\begin{align*}
\G_{\omega^+}^y &= \mu_{(x,y)}\G_{\omega}^y & \text{ (since 
$\G_{\omega^+}=\mu_{(x,y)}\G_{\omega}$)} \\
&= \mu_{(x,y)}\mu_{\boldsymbol{\alpha}^y}(\omega)\widehat{Q}_{\perp}^j &\text{ (by part (a) for $\omega$)} \\ 
&=\mu_{\boldsymbol{\alpha}^y}(\omega^+) \widehat{Q}_{\perp}^j,
\end{align*}
giving part (a) for $\omega^+$ in the case $j=y$.

If $j\not=y$ then we have
\begin{align*}
\G_{\omega^+}^j &= \G_{\omega}^j \text{ (by (v))} \\
&= \mu_{\boldsymbol{\alpha}^j(\omega)}\widehat{Q}^j_{\perp}
\text{ (by part (a) for $\omega$)} \\
&=\mu_{\boldsymbol{\alpha}^j(\omega^+)}\widehat{Q}^j_{\perp} \text{ (as $j\not=y$).}
\end{align*}
It follows that part (a) for $\omega^+$ holds for $j\not=y$ and hence for all $j$.
If $\omega^+\not=\top$ then part (c) for $\omega^+$ follows from part (a) for
$\omega^+$ (with $j=y$), part (b) for $\omega^+$ and
the fact that $\boldsymbol{\alpha}^j$ is a maximal green sequence
for $Q^j_{\perp}$ (Lemma~\ref{l:subsequencegreen}).

By induction, the result holds for all $\omega\in \Omega$.
\end{proof}

\begin{prop} \label{p:isgreen}
The sequence $\boldsymbol{\alpha}$ is a maximal green sequence for
$Q(\R_{k,n})=Q_{\perp}$.
\end{prop}
\begin{proof}
By Lemma~\ref{l:greenmutation}(c), the vertex $(x,y)$ in $\G_{\omega}$ is
green for all $\omega\in \Omega\setminus \{\top\}$, from
which it follows that $\boldsymbol{\alpha}$ is a green sequence for
$Q_{\perp}$. 

Suppose that $1\leq j\leq n-k-1$. Then, by
Lemma~\ref{l:subsequencegreen}, every non-frozen
vertex in $\mu_{\boldsymbol{\alpha}^j}\widehat{Q}^j_{\perp}$
is red. By Lemma~\ref{l:greenmutation}(a) with $\omega=\top$,
$\G^j_{\top}=\mu_{\boldsymbol{\alpha}^j}\widehat{Q}^j_{\perp}$.
Hence, by Lemma~\ref{l:greenmutation}(b) with $\omega=\top$,
every non-frozen vertex in $\G_{\top}$ is red. It follows
that $\boldsymbol{\alpha}$ is a maximal green sequence for
$Q(\R_{k,n})=Q_{\perp}$.
\end{proof}

We can now sum up our results in this section as follows:

\begin{theorem} \label{t:greensummary}
Suppose that $k\not=1,n-1$. Then there is a maximal green sequence, passing only through seeds corresponding to
Postnikov diagrams, taking the seed $(\xx(\R_{k,n}),\widetilde{Q}(\R_{k,n}))$ for
$\mathbb{C}[\Gr_{k,n}]$ to a seed $(\xx_{\top},\widetilde{Q}_{\top})$ whose
quiver is isomorphic to $\widetilde{Q}(\R_{n-k,n}^*)$. The image of the cluster in $\mathbf{x}_{\top}$ under $\sigma^k$ coincides, up to coefficients,
with $\twist{\mathbf{x}(\R_{k,n})}$.
\end{theorem}

\begin{proof}
Note that we may ignore the cases $k=1$ or $n-1$, since then all cluster variables are coefficients. If $k\not=2,n-2$ then the result follows from Lemma~\ref{l:knownseeds},
Propositions~\ref{p:mutation} and~\ref{p:isgreen}.
So, suppose that $k=2$ (the case $k=n-2$ is similar).
Then the seed
$(\xx(\R_{2,n}),\widetilde{Q}(\R_{2,n}))$ is
as shown on the left hand side of Figure~\ref{f:case2}, 
where we have labelled each vertex by the corresponding cluster variable or coefficient
in $\xx(\R_{2,n})$. We see that:
$$\mathbf{x}(\R_{2,n})=\{[1j]\,:\,3\leq j\leq n-1\}.$$
By Remark~\ref{r:twistk2},
we have that $\twist{[1,j]}=[j-1,n]$ for $j=3,\ldots ,n-1$.
It is easy to check directly that mutating at the
vertices initially labelled with $[1,3],\ldots ,[1,n-1]$, in this ordering, transforms
the seed $(\xx(\R_{2,n}),\widetilde{Q}(\R_{2,n}))$
to the seed shown on the right hand side of
Figure~\ref{f:case2}. It is also easy to check that the
quiver in this seed is isomorphic to $\widetilde{Q}(\R_{n-2,n}^*)$. The cluster in this seed is
$$\mathbf{x}=\{[2,j+1]\,:\,3\leq j\leq n-1\}.$$
Since $\sigma^2([2,j+1])=[j-1,n]$ for $3\leq j\leq n-1$,
we see that $\sigma^2(\mathbf{x})=\twist{\mathbf{x}(\R_{2,n})}$.
It is clear that the mutation sequence above passes only
through Postnikov diagrams (indeed, in this case, every seed arises from a Postnikov diagram~\cite[Cor. 2]{scott06}).
Furthermore, the sequence above is a maximal
green sequence for $Q(\R_{2,n})$ by~\cite[Lemma 2.20]{BDP14}.
\end{proof}

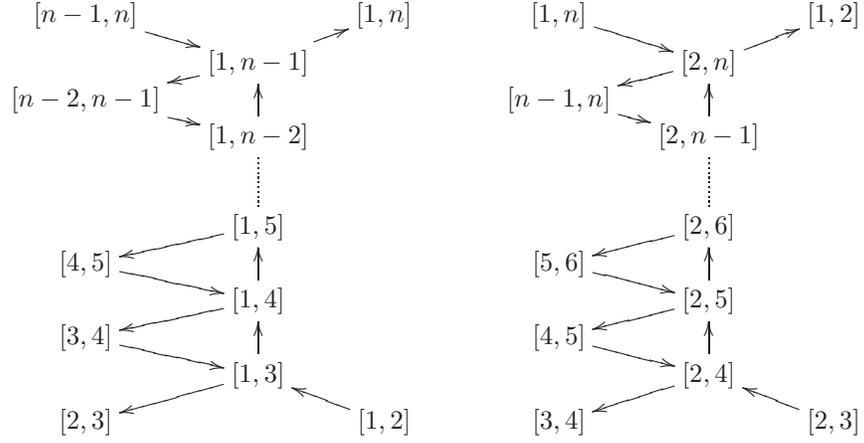
\begin{figure}
$$
\vcenter{
\xymatrix@R=-2pt@C=3pt{
[n-1,n] \ar[ddrr] && && [1,n] \\ \\
&& [1,n-1] \ar[uurr] \ar[dll] \\
[n-2,n-1] \ar[drr] \\
&& [1,n-2] \ar[uu] \\ \\ \\ \\ \\ \\
&& [1,5] \ar@{.}[uuuuuu] \ar[dll] \\
[4,5] \ar[rrd] \\
&& [1,4] \ar[uu] \ar[dll] \\
[3,4] \ar[drr] \\
&& [1,3] \ar[uu] \ar[ddll] \\ \\
[2,3] && && [1,2] \ar[uull]
}}
\quad\quad\quad
\vcenter{
\xymatrix@R=-2pt@C=3pt{
[1,n] \ar[ddrr] && && [1,2] \\ \\
&& [2,n] \ar[uurr] \ar[dll] \\
[n-1,n] \ar[drr] \\
&& [2,n-1] \ar[uu] \\ \\ \\ \\ \\ \\
&& [2,6] \ar@{.}[uuuuuu] \ar[dll] \\
[5,6] \ar[rrd] \\
&& [2,5] \ar[uu] \ar[dll] \\
[4,5] \ar[drr] \\
&& [2,4] \ar[uu] \ar[ddll] \\ \\
[3,4] && && [2,3] \ar[uull]
}}
$$
\caption{The seed $(\widetilde{x}(\R_{2,n}),\widetilde{Q}(\R_{2,n}))$ (left) and
its image after mutating at the maximal green sequence in the part of the
proof of Theorem~\ref{t:greensummary} for $k=2$ (right).}
\label{f:case2}
\end{figure}

\section{Surfaces and future directions}
\label{s:surfaces}
The approach developed here can be generalised in a straightforward way to a
\emph{surface graph} --- a bipartite graph $G$ equipped with an embedding into a surface $\Sigma$ with (or without) boundary $\partial \Sigma$, in such a way that no two edges of $G$
cross within $\Sigma$ and each face of $G$ is homeomorphic to a disk;
by definition faces are the connected components of the complement of
$G$ in $\Sigma$. An example of a surface graph is shown in Figure~\ref{fig:bipartite1}.

\begin{figure}[h]
\begin{center}
\includegraphics[width=2.5in]{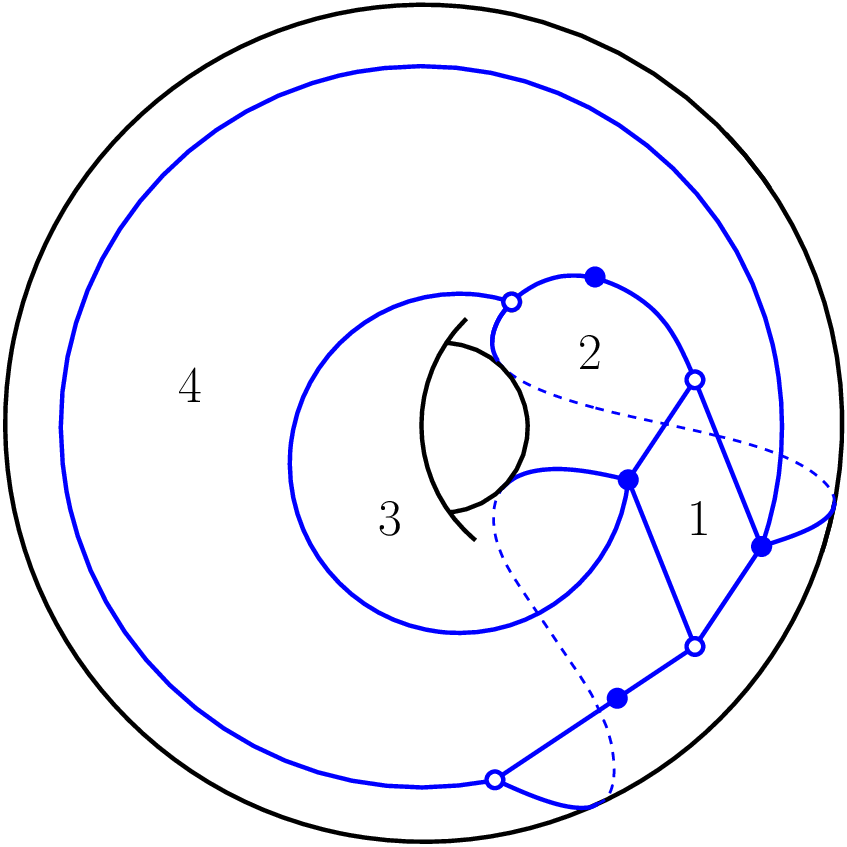}
\end{center}
\caption{Bipartite graph $G$ embedded on a torus $\Sigma$ (drawn in black).
In this case $\partial \Sigma$ is empty and $n = k = 0$ and there are $4$ faces.}
\label{fig:bipartite1}
\end{figure}

By choosing a transcendence basis --- whose elements label the faces of $G$ --- we obtain a seed (and a cluster algebra) whose quiver $Q$ is the {\it face dual} graph of $G$. The vertices of $Q$ correspond to faces of $G$. For each common edge separating a pair of faces $E$ and $F$, an arrow is drawn from the vertex corresponding to face $E$ to the vertex corresponding to face $F$, in such a way that the white vertex lies to the left when crossing the edge from $E$ to $F$; as usual oriented $2$-cycles are annihilated afterwards. See Figure~\ref{fig:bipartite2} for an example.

\begin{figure}[h]
\begin{center}
\includegraphics[width=4in]{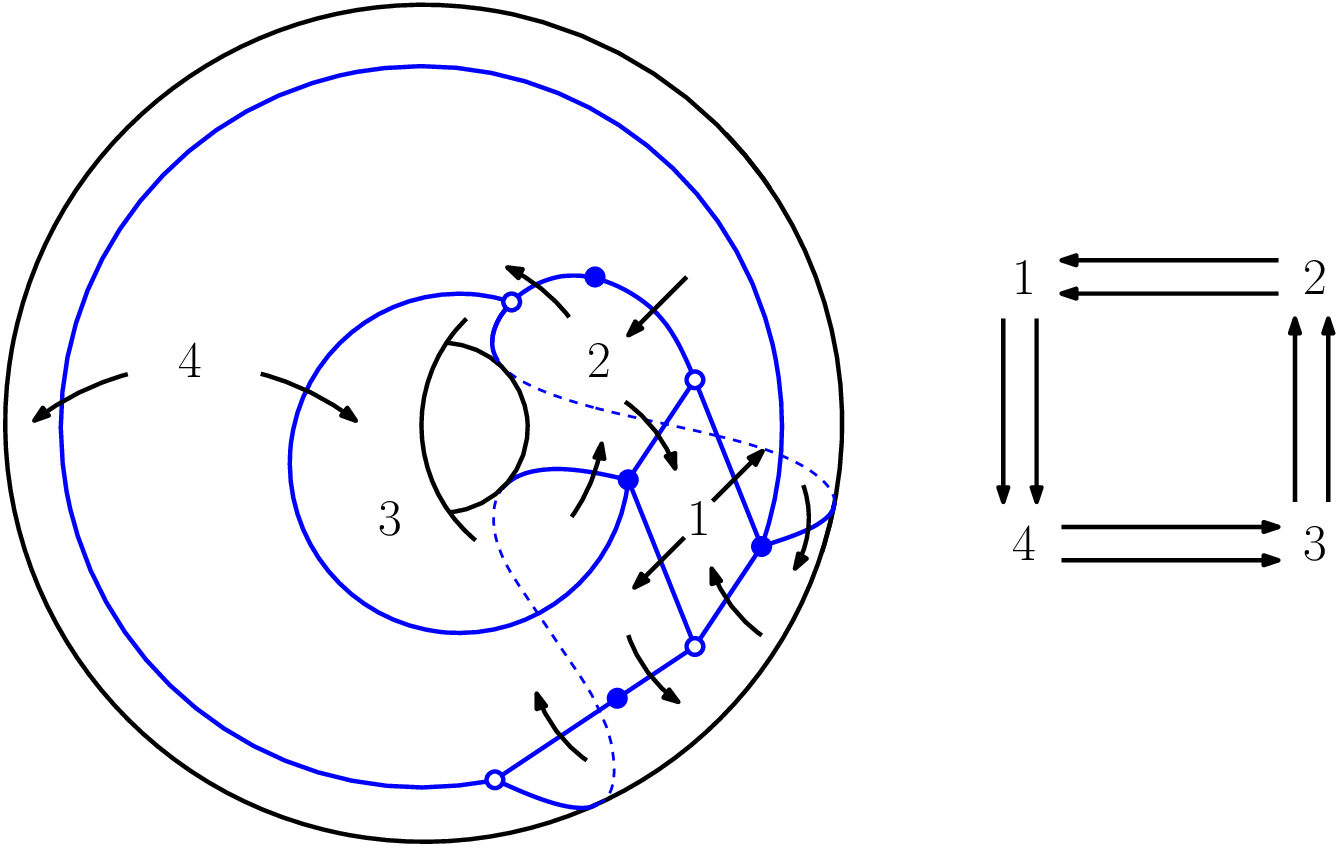}
\end{center}
\caption{The quiver $Q$ of an embedded bipartite graph $G \hookrightarrow \Sigma$: initial stage of the quiver construction (superimposed in black on the left) and then redrawn (on the right) after removing all oriented two cycles.}
\label{fig:bipartite2}
\end{figure}

The concepts explained in Sections~\ref{s:clusterstructure}
and ~\ref{s:dimer} --- the definition of edge weights, the blow-up and
blow-down equivalences, quadrilateral mutation ---
are all local notions, i.e.\ defined and/or constructed with a 
neighbourhood of the participating edges and vertices, and for this 
reason can be used unambiguously in the surface case. 
(See Figure~\ref{fig:bipartite3} for an example of a quadrilateral move 
performed on a torus.)
In addition, the weight of a dimer configuration (as in 
Lemma~\ref{l:blowdown}) will remain invariant under blow-ups and 
blow-downs, and therefore so will the dimer partition function (as in 
Corollary~\ref{c:blowdown}). Similarly, Proposition~\ref{prop:flipmoveinvariance}
generalizes to the surface setting,  ensuring that the
$\uu_{G}(I)$ are invariant under quadrilateral moves. 
In the remainder of this section we formulate some
questions concerning the general surface case.

\begin{figure}[h]
\begin{center}
\includegraphics[width=4in]{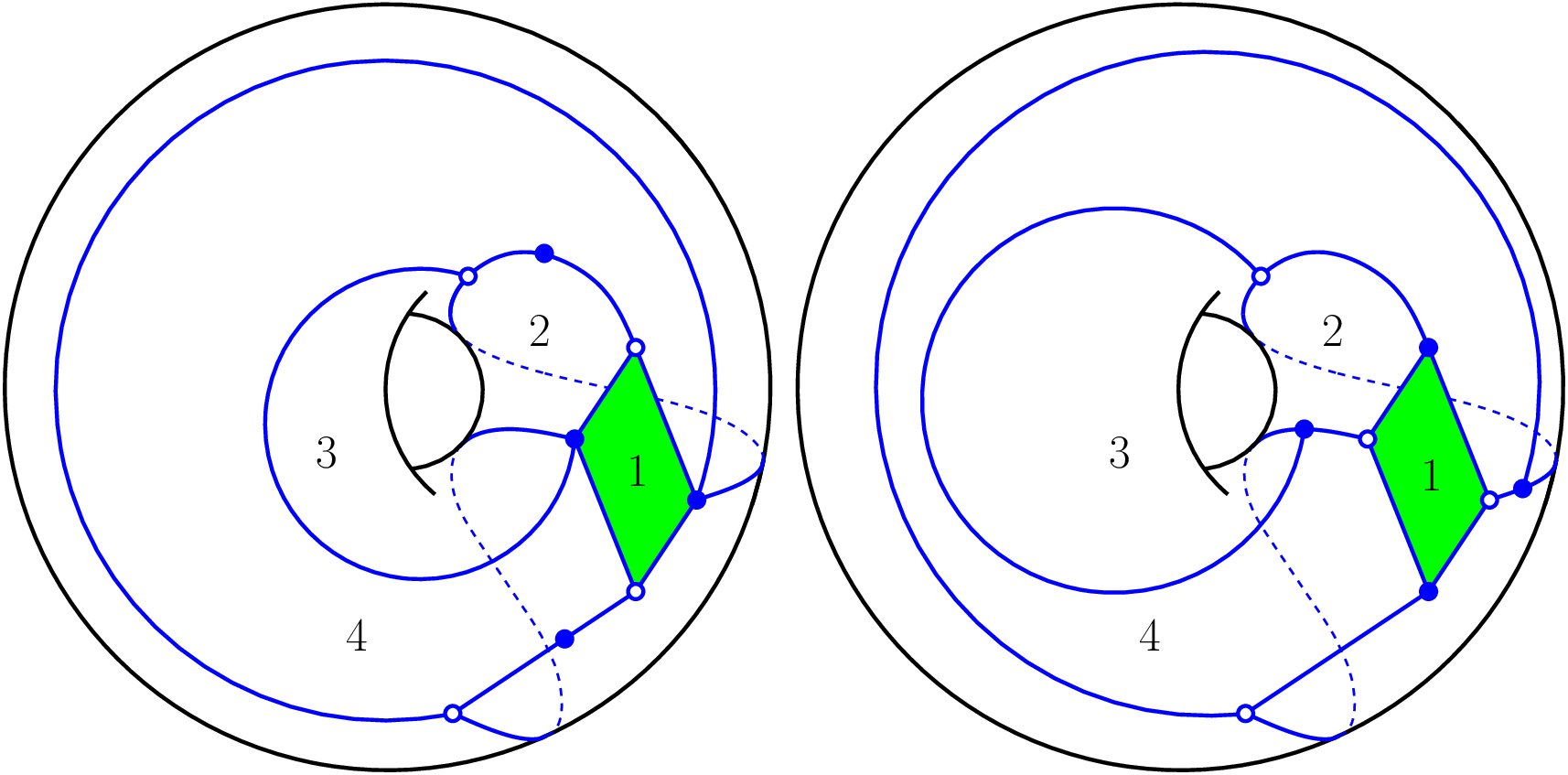}
\end{center}
\caption{Mutation of a embedded bipartite graph $G  \hookrightarrow \Sigma$
associated to a quadrilateral face (shaded in green)}
\label{fig:bipartite3}
\end{figure}
\bigskip
\indent $\bullet$
Can the dimer partition functions for specific boundary conditions be on $G\cap \partial \Sigma$ be identified with specific cluster variables?  If so, which cluster variables are these? Can this be done with reference to the faces of
$G$?
A satisfactory answer to this
question ought to explain how to define and express an analogue of the BFZ-\emph{twist}~\cite{BFZ96, BZ97}
for the cluster algebra associated to $G$ in local coordinates.
On a combinatorial level this question is related to determining those boundary
conditions which admit precisely one \emph{dimer configuration}
for $G$.

\bigskip
\indent $\bullet$ How to view \emph{FZ}-\emph{mutation} in this context? The simplest type of mutation, the so-called quadrilateral move, corresponds to a special kind of local rotation in $G$ which fixes $\Sigma$ and $G \cap \partial \Sigma$ and which conserves the dimer partition functions. Higher
order mutation, however, not only changes $G$ but also $\Sigma$; for instance
hexagonal mutation increases the genus by adding a handle. Since the topology
changes some care is needed in analysing how the dimer partition functions are transformed.

\bigskip
\indent $\bullet$ 
In cases where such a cluster algebra arising from a surface graph can be realised as (or is related to) the coordinate ring $\mathbb{C}[V]$ of some reasonable quasi-projective algebraic variety $V$ (e.g.~\cite{Goncharov}), or in the phase spaces considered in~\cite{GK13}, what is the geometric meaning of the BFZ-twist analogue?

\vskip 0.2cm

\textbf{Acknowledgements:}
Both authors would like to thank the referees for their
comments and suggestions on an earlier version of this mansucript,
and also David Speyer for his helpful comments.

Jeanne would like to thank Professors Claus 
Michael Ringel and Henning Krause
for the opportunity to visit the SFB and 
algebra group in Bielefeld: it has been a 
very productive time. She would also like to 
thank Robert Marsh for numerous fruitful 
discussions during her time at Leeds where 
the principal ideas for this work first 
germinated, and for coming to Bielefeld and 
energizing the common project (and her too).

Robert would like to thank Jeanne Scott, 
Henning Krause and the group at the SFB 701 
in Bielefeld for making him very welcome 
during a visit in September 2013 when part 
of the work for this article was carried 
out. He would also like to thank Bernard
Leclerc and Konstanze Rietsch for
some helpful conversations.

\bibliographystyle{plain}

\begin{thebibliography}{99}
\bibitem{ACCERV14}
M. Alim, S. Cecotti, C. Cordova, S. Espahbodi, A. Rastogi, and C. Vafa.
\emph{$N=2$ quantum field theories and their BPS quivers.}
Adv. Theor. Math. Phys. Volume 18, Number 1 (2014), 27--127.

\bibitem{ABCGPT}
N. Arkani-Hamed, J. L. Bourjaily, F. Cachazo, A. B. Goncharov, A. Postnikov and J. Trnka,
\emph{Scattering Amplitudes and the Positive Grassmannian.}
Preprint arXiv:1212.5605 [hep-th], 2012.

\bibitem{ADS14}
I. Assem, G. Dupont and R. Schiffler,
\emph{On a category of cluster algebras.}
J. Pure Appl. Algebra 218 (2014), no. 3, 553–582. 

\bibitem{CAIII}
A. Berenstein, S. Fomin, and A. Zelevinsky,
\emph{Cluster algebras. III. Upper bounds and double Bruhat cells.}
Duke Math. J. \textbf{126} (2005), no. 1, 1--52.

\bibitem{BFZ96}
A. Berenstein, S. Fomin and A. Zelevinsky,
\emph{Parametrizations of canonical bases and totally positive matrices.}
Adv. Math. \textbf{122} (1996), no. 1, 49–-149.

\bibitem{BZ97}
A. Berenstein and A. Zelevinsky,
\emph{Total positivity in Schubert varieties.}
Comm. Math. Helvetici., {\bf 72} (1997), 128--166.

\bibitem{borel91}
A. Borel,
\emph{Linear algebraic groups.}
Second edition. Graduate Texts in Mathematics, \textbf{126}.
Springer-Verlag, New York, 1991.

\bibitem{BDP14}
T. Br\"{u}stle, G. Dupont and M. P\'{e}rotin.
\emph{On maximal green sequences.}
Int. Math. Res. Not. 2014, no. 16, 4547--4586.

\bibitem{canakci}
I. \c{C}anak\c{c}\i\ and R. Schiffler,
\emph{Snake graph calculus and cluster algebras from surfaces.}
J. Algebra \textbf{382} (2013), 240--281.

\bibitem{carollprice03}
G. Carroll and G. Price,
\emph{Two new combinatorial models for the Ptolemy recurrence.}
Unpublished memo (2003) (see~\cite{musiker11} for details).

\bibitem{CCV}
S. Cecotti, C. C\'{o}rdova and C. Vafa,
\emph{Braids, walls and mirrors.}
Preprint arXiv:1110.2115 [hep-th], 2011.

\bibitem{CNV}
S. Cecotti, A. Neitzke, and C. Vafa.
\emph{R-Twisting and 4d/2d Correspondences.}
Preprint arXiv:1006.3435 [hep-th], 2010.

\bibitem{ciucu98}
M. Ciucu,
\emph{A complementation theorem for perfect matchings of graphs having a cellular completion.}
J. Combin. Theory Ser. A \textbf{81} (1998), no. 1, 34--68.

\bibitem{fominzelevinsky99}
S. Fomin and A. Zelevinsky,
\emph{Double Bruhat cells and total positivity.}
J. Amer. Math. Soc. \textbf{12} (1999), no. 2, 335--380.

\bibitem{fominzelevinsky02}
S. Fomin and A. Zelevinsky,
\emph{Cluster algebras. I. Foundations.}
J. Amer. Math. Soc. \textbf{15} (2002), no. 2, 497--529.

\bibitem{fominzelevinsky07}
S. Fomin and A. Zelevinsky,
\emph{Cluster algebras. IV. Coefficients.}
Compos. Math. 143 (2007), no. 1, 112–164. 

\bibitem{GSV03}
M. Gekhtman, M. Shapiro and A. Vainshtein.
\emph{Cluster algebras and Poisson geometry.}
Mosc. Math. J. \textbf{3} (2003), no. 3, 899--934, 1199.

\bibitem{GSV10}
M. Gekhtman, M. Shapiro and A. Vainshtein,
\emph{Cluster algebras and Poisson geometry.}
Mathematical Surveys and Monographs, 167. American Mathematical Society, Providence, RI, 2010.

\bibitem{GLS08}
C. Geiss, B. Leclerc and J. Schr\"{o}er.
\emph{Partial flag varieties and preprojective algebras.}
Ann. Inst. Fourier (Grenoble) \textbf{58} (2008), no. 3, 825--876.

\bibitem{GLS11}
C. Geiss, B. Leclerc, J. Schr\"{o}er,
\emph{Kac-Moody groups and cluster algebras.}
Adv. Math. 228 (2011), no. 1, 329–433.

\bibitem{GLS12}
C. Geiss, B. Leclerc, and J. Schr\"{o}er,
\emph{Generic bases for cluster algebras and the Chamber ansatz.}
J. Amer. Math. Soc. 25 (2012), no. 1, 21--76. 

\bibitem{Goncharov}
A. B. Goncharov,
\emph{Ideal webs and moduli spaces of local systems on surfaces}, in progress.

\bibitem{GK13}
A. B. Goncharov and R. Kenyon
\emph{Dimers and cluster integrable systems.}
Ann. Sci. \'{E}c. Norm. Sup\'{e}r. (4) 46 (2013), no. 5, 
747--813.

\bibitem{keller11}
B. Keller,
\emph{On cluster theory and quantum dilogarithm identities.} Representations of algebras and related topics,
85--116, EMS Ser. Congr. Rep., Eur. Math. Soc., Z\"{u}rich, 2011. 

\bibitem{keller13}
B. Keller,
\emph{The periodicity conjecture for pairs of Dynkin diagrams.}
Ann. of Math. (2) 177 (2013), no. 1, 111--170.

\bibitem{KPW00}
R. W. Kenyon, J. G. Propp, D. B. Wilson,
\emph{Trees and matchings.}
Electron. J. Combin. \textbf{7} (2000), Research Paper 25, 34 pp.

\bibitem{kuo04}
E. H. Kuo,
\emph{Applications of graphical condensation for enumerating matchings and tilings.}
Theoret. Comput. Sci. \textbf{319} (2004), no. 1--3, 29--57.

\bibitem{leclerc93}
B. Leclerc,
\emph{On identities satisfied by minors of a matrix.}
Adv. Math. \textbf{100} (1993), no. 1, 101--132.

\bibitem{lusztig98}
G. Lusztig,
\emph{Total positivity in partial flag manifolds.}
Representation Theory \textbf{2} (1998), 70–-78.

\bibitem{mullerspeyer}
G. Muller and D. E. Speyer,
\emph{Cluster algebras of Grassmannians are locally acyclic.}
Preprint arXiv:1401.5137v3 [math.CO], 2014.

\bibitem{musiker11}
G. Musiker,
\emph{A graph theoretic expansion formula for cluster algebras of
classical type.}
Ann. Comb. \textbf{15} (2011), no. 1, 147--184.

\bibitem{MS10}
G. Musiker and R. Schiffler,
\emph{Cluster expansion formulas and perfect matchings.}
J. Algebraic Combin. \textbf{32} (2010), no. 2, 187--209.

\bibitem{MSW11}
G. Musiker, R. Schiffler and L. Williams,
\emph{Positivity for cluster algebras from surfaces.}
Adv. Math. \textbf{227} (2011), no. 6, 2241--2308.

\bibitem{MSW13}
G. Musiker, R. Schiffler and L. Williams,
\emph{Bases for cluster algebras from surfaces.}
Compos. Math. \textbf{149} (2013), no. 2, 217--263.

\bibitem{postnikov}
A. Postnikov,
\emph{Total positivity, Grassmannians and networks.}
Preprint arXiv:math/0609764v1 [math.CO], 2006.

\bibitem{RVT93}
F. Ravanini, A. Valleriani and R. Tateo,
\emph{Dynkin TBAs, Internat. J. Modern Phys.}
A 8 (1993), 1707--1727.

\bibitem{rietschwilliams08}
K.~Rietsch and L.~Williams,
\emph{The totally nonnegative part of $G/P$ is a CW complex.}
Transform. Groups \textbf{13} (2008), no. 3-4, 839–-853.

\bibitem{schiffler08}
R. Schiffler,
\emph{A cluster expansion formula ($A_n$ case).}
Electron. J. Combin. \textbf{15} (2008), no. 1, Research paper 64, 9 pp.

\bibitem{scott06}
J.~Scott,
\emph{Grassmannians and cluster algebras.}
Proc. London Math. Soc. (3) \textbf{92} (2006), no. 2, 345--380.

\bibitem{talaska11}
K.~Talaska,
\emph{Combinatorial formulas for $\Le$-coordinates in a totally nonnegative Grassmannian.}
J. Combin. Theory Ser. A \textbf{118} (2011), no. 1, 58--66.

\bibitem{volkov07}
A. Y. Volkov,
\emph{On the periodicity conjecture for Y-systems.}
Comm. Math. Phys. 276 (2007), no. 2, 509--517.

\bibitem{zamolodchikov91}
A. B. Zamolodchikov,
\emph{On the thermodynamic Bethe ansatz equations for
reactionless ADE scattering theories.}
Phys. Lett. B 253 (1991), 391--394.
\end{thebibliography}

\end{document}